\documentclass[11pt,reqno]{amsart}  
\usepackage{amsmath,amssymb,amsthm, comment,graphicx,color, cite}
\usepackage{tikz}
\usepackage{fancyhdr}
\usepackage{epsfig}
\usepackage{mathrsfs}

\usepackage[margin=1.1in]{geometry}

\allowdisplaybreaks

\newtheorem{theorem}{Theorem}[section]
\newtheorem{lemma}{Lemma}[section]
\newtheorem{proposition}{Proposition}[section]

\theoremstyle{definition}

\theoremstyle{remark}
\newtheorem{remark}{Remark}[section]

\numberwithin{equation}{section}


\allowdisplaybreaks

\begin{document}
\title[
Transonic contact discontinuity in a nozzle]
{Stability of transonic contact discontinuity for two-dimensional steady compressible Euler flows in a finitely long nozzle}

\author[F. Huang]{Feimin Huang}
\address{Institute of Applied Mathematics,
Academy of Mathematics and Systems Science,
Chinese Academy of Sciences, Beijing 100190,
China}
\email{fhuang@amt.ac.cn}

\author[J. Kuang]{Jie Kuang}
\address{ Innovation Academy for Precision Measurement Science and
Technology, Chinese Academy of Sciences, Wuhan 430071, China;  Wuhan Institute of Physics and Mathematics,
Chinese Academy of Sciences, Wuhan 430071, China;
Institute of Applied Mathematics,
Academy of Mathematics and Systems Science,
Chinese Academy of Sciences, Beijing 100190,
China}
\email{jkuang@wipm.ac.cn, \ jkuang@apm.ac.cn}

\author[D. Wang]{Dehua Wang}
\address{Department of Mathematics,
University of Pittsburgh,
Pittsburgh, PA 15260, USA.}
\email{dwang@math.pitt.edu}

\author[W. Xiang]{Wei Xiang}
\address{Department of Mathematics, City University of Hong Kong, Kowloon, Hong Kong, China}
\email{weixiang@cityu.edu.hk}

\keywords{Transonic flow, contact discontinuity, free boundary, compressible Euler flow, finitely long nozzle.}
\subjclass[2010]{35B07, 35B20, 35D30; 76J20, 76L99, 76N10}

\date{}

\begin{abstract}

We consider the stability of transonic contact discontinuity for the two-dimensional steady compressible Euler flows in a finitely long nozzle. This is the first work on the  mixed-type problem of transonic flows  across a contact discontinuity as a free boundary  in nozzles. We start with the Euler-Lagrangian transformation to straighten the contact discontinuity in the new coordinates. However, the upper nozzle wall in the subsonic region depending on the mass flux becomes a free boundary after the transformation. Then we develop new ideas and techniques to solve the free-boundary problem in three steps: (1) we fix the free boundary and generate a new iteration scheme to solve the corresponding fixed boundary value problem of the hyperbolic-elliptic mixed type by building  some powerful  estimates for both the first-order hyperbolic equation  and a second-order nonlinear elliptic equation in a Lipschitz domain; (2) we update the new free boundary by constructing a mapping that has a fixed point; (3) we establish via the inverse Lagrangian coordinate transformation that the original free interface problem admits a unique piecewise smooth transonic solution near the background state, which consists of a smooth subsonic flow and a smooth supersonic flow with a contact discontinuity.
\end{abstract}

\maketitle
\section{Introduction }\setcounter{equation}{0}
We are concerned with the stability of steady transonic contact discontinuity for the compressible flows in a {two-dimensional} (2D) finitely long nozzle.
The underlying equations are the 2D steady full compressible Euler equations of the following form:
\begin{eqnarray}\label{eq:1.1}
\left\{
\begin{array}{llll}
     \partial_x(\rho u)+\partial_y(\rho v)=0, \\[5pt]
     \partial_{x}(\rho u^{2}+p)+\partial_{y}(\rho uv)=0, \\[5pt]
   \partial_{x}(\rho uv)+\partial_{y}(\rho v^{2}+p)=0, \\[5pt]
     \partial_{x}\big((\rho E+p)u\big)+\partial_{y}\big((\rho E+p)v\big)=0,
     \end{array}
     \right.
\end{eqnarray}
where $(u,v)$, $p$ and $\rho$ stand for the velocity, pressure and density, respectively;  and the total energy $E$ is given by
\begin{eqnarray}\label{eq:1.2}
E=\frac{1}{2}(u^2+v^2)+e(\rho, p).
\end{eqnarray}
Here $e$ is the {\color{black} internal} energy that is a function of $(\rho,p)$ through the thermodynamics relations.
For the ideal gas

\begin{eqnarray}\label{eq:1.6}
p ={\color{black}A(S)\rho^{\gamma}},
\qquad\mbox{and}\qquad e=\frac{\kappa}{\gamma-1}
\rho^{\gamma-1} e^{\frac{S}{c_{\nu}}}, \qquad{\color{black}\mbox{for}\quad A(S)=\kappa e^{\frac{S}{c_{\nu}}},}
\end{eqnarray}
where $S$ is the entropy, and $\gamma>1$, $\kappa$ and $c_{\nu}$ are all positive constants. The sonic speed of the flow for the ideal gas is 
\begin{equation}\label{eq:1.7}
c=\sqrt{\frac{\gamma p}{\rho}}.
\end{equation}
The Mach number {\color{black} and the flow slope (i.e., the tangent function of flow angle) are} defined by
\begin{eqnarray}\label{eq:1.7b}
M=\frac{\sqrt{u^{2}+v^{2}}}{c} {\color{black}\qquad\mbox{and}\qquad \omega=\frac{v}{u}}.
\end{eqnarray}
The flow is called supersonic if $M>1$, subsonic if $M<1$ and sonic if $M=1$.
For  supersonic flows  the system \eqref{eq:1.1} is a hyperbolic system of conservation laws, 
 for subsonic flows it is of {hyperbolic-elliptic composite type}, and for transonic flows
it is of hyperbolic-elliptic composite and mixed type.
%
For the smooth solutions to the system \eqref{eq:1.1}, the Bernoulli function
\begin{eqnarray}\label{eq:1.9}
 B= \frac{1}{2}(u^2+v^2)+\frac{\gamma p}{(\gamma-1)\rho},
\end{eqnarray}
 and the entropy $S$ satisfy the following:
\begin{eqnarray}\label{eq:1.8}
u\partial_{x}B+v\partial_{y}B=0\qquad\mbox{and}\qquad  u\partial_{x}S+v\partial_{y}S=0,
\end{eqnarray}
and consequently the Bernoulli function and entropy are preserved along the streamlines.

As discussed in Courant-Friedrichs \cite{cf}, compressible fluids in nozzle exhibit abundant nonlinear phenomena, for example, supersonic bubble, Mach shock configuration, jet flow, and their interactions. All these phenomena are formulated by elementary waves such as contact discontinuities. Due to the importance in   applications (e.g.,  aerodynamics)  and complex nonlinear phenomena, rigorous mathematical analysis of flows in a nozzle is of great interest but a formidable task.

Many works have been done on   the   compressible Euler flow in nozzles.
The problem of the transonic shock in a nozzle, governed by an equation of the hyperbolic-elliptic mixed-type (see \cite{csx3, fk}), has been extensively studied, see  \cite{ccf,cf1,cf2,csx1,csx2,cheny,fly, fx,lxy1,lxy2,y} and their references for the recent progress.
The  problem of  the subsonic flow in 
nozzles was first studied in \cite{bl}.  A   well-posedness result for the subsonic irrotational flow in a {2D} 
infinitely long nozzle with a given appropriate incoming mass flux was obtained in \cite{xx1}, and
  was  extended to the isentropic Euler flow 
with small non-zero vorticity in 
\cite{xx2} and  then  in  \cite{dxx} for large non-zero vorticity with sign condition on the second-order derivative of the incoming horizontal velocity. For the full Euler flow, the well-posedness   was obtained in \cite{cdx} for the smallness non-zero vorticity  and 
  in \cite{chwx} for large non-zero vorticity without any additional conditions on the vorticity. For other related problems, one can see \cite{cdsw,chw,chwx,wx1} for the sonic-subsonic limit, \cite{chwx1} for the incompressible limit, \cite{chengdx,chengdx1,chengdx2,qx} for the jet flow, \cite{dengwx} for the axisymmetric flow with nontrivial swirl, and \cite{dwx} for the subsonic flow in a finite nozzle. 
For the supersonic flow in nozzles, {one  basic feature is that a shock will be generated 
if the nozzles are taking sufficiently long (see \cite[Appendix]{hkwx}).} 
We see that most of these results are on the nozzles with special structures, for instance, the supersonic flow through an expanding nozzle, which was proposed by Courant-Friedrichs in \cite{cf} and was studied recently in \cite{cq,wx2,xy}.

\par The study of 
the vortex sheets in steady compressible fluids is also an interesting topic, which has drawn a lot of attention recently.
For the subsonic flow, 
the stability of an almost flat 
 contact discontinuity in two dimensional nozzles of infinite  length was established in  \cite{bm}; and see 
 \cite{bp1, bp2} for  further related results. Recently, the uniqueness and existence of the contact discontinuity, which is large, i.e., not a perturbation of the straight one, was obtained in \cite{chwx}.
For the supersonic flow,
{the  stability 
  over a 2D Lipschitz wall  in the BV space was proved in \cite{czz}}.
Recently, 
the well-posedness theory for the steady supersonic compressible Euler flows through a 2D finitely long nozzle with a contact discontinuity 
was established in \cite{hkwx}. 
For other related problems on the steady supersonic contact discontinuity over a wedge with a sharp convex corner,
we refer the reader to \cite{cky,dy,kyz,qx,wy1,wy2,wy3,xiangzz}. 
The contact discontinuity in the Mach reflections was also studied in \cite{csx4,chenf, chf}.

In this paper, we study the stability of steady transonic contact discontinuity in a {2D} finitely long nozzle,
which is the first work on this topic. The transonic contact discontinuity is a free boundary that separates the subsonic flow on the upper layer and  supersonic flow on the lower layer in the nozzle. {The length of the nozzle is taken to be finite since  singularities will generally develop from the smooth supersonic flow when the nozzle is infinitely long (see the Appendix of \cite {hkwx}).
Moreover, the loss of the regularity for the supersonic flow
will lead to the low regularities of the boundary as well as the data along the boundary for the subsonic region,  which makes the elliptic boundary value problem for the subsonic flow difficult to deal with.}
The problem of transonic contact discontinuity {in a nozzle} is different from the problems studied in
\cite{ccf, cf1,cf2,cf3,csx1,csx2, cf} for the transonic shock in a nozzle where
the supersonic state can be solved {priorly}, and also different from the ones in \cite{csx4,chenf,chf} where the domain concerned is a sector type. 

Mathematically, our problem can be formulated as a nonlinear free boundary value problem governed by the hyperbolic-elliptic composite and mixed-type equations in a nozzle with the contact discontinuity as a free interface. 
The main difficulties 
stem from the fact that the transonic contact discontinuity is a free interface, the states on both sides are unknown, and the contact discontinuity is characteristic from the supersonic side, which requires
that {the mixed-type equations should be solved  first in the subsonic region and then in the supersonic region together}. 
To fix the free boundary, thanks to the fundamental feature of the contact discontinuity, namely, the flow velocity on its both sides is parallel to the tangent of the interface, we can straighten the contact discontinuity, as well as the up and lower nozzle walls,  by employing the Euler-Lagrangian coordinate transformation such that the free interface is fixed in the new coordinates.
However, the upper nozzle wall becomes a free boundary in the new coordinates, since it is a function of the mass flux which can not be determined by the incoming flux completely. Thus, the original free boundary value problem after transformation
 actually becomes a new free boundary value problem, denoted by $(\mathbf{NP})$ in the Lagrangian coordinates. 
We develop a boundary iteration scheme to tackle the new free boundary value problem. More precisely, for a given incoming mass flux
$m^{(\rm e)}$, we solve the nonlinear fixed boundary value problem $(\mathbf{FP})$, then use the solution to update the new mass flux through the conservation of the mass (see \eqref{eq:3.64}). Hence we define a map $\mathcal{T}$ and then show that it is well-defined and contractive, so that it admits a unique fixed point (see Section 7 below). Therefore, the remaining task is to establish the existence and uniqueness of the fixed boundary value problem $(\mathbf{FP})$.

When the flow is $C^{1}$-smooth, the Bernoulli  function $B$ and the entropy $S$ can be solved by \eqref{eq:1.8}
 depending completely on the incoming flow. With this property in hand, for the flow in the subsonic region $\tilde{\Omega}^{(\rm e)}$,
the Euler equations in the Lagrangian coordinates can be further reduced into a nonlinear second-order elliptic equations (see Section 3.2 below)
by employing the { stream} function $\varphi$. While in the supersonic region $\tilde{\Omega}^{(\rm h)}$, due to the genuine nonlinearity of the characteristic fields for the Euler system, a pair of 
Riemann invariants $z_{\pm}$ (see Section 3.3 below) can be found such that
  the Euler equations in the Lagrangian coordinates can be written as a $2\times 2$ diagonal form for $z_{\pm}$.
Therefore,  solving the Euler equations in the region $\tilde{\Omega}^{(\rm e)}\cup\tilde{\Omega}^{(\rm h)}$ is equivalent to
solving the boundary value problem $(\mathbf{FP})$ for the { stream} function $\varphi$ and Riemann invariants $z_{\pm}$ with a contact discontinuity separating the regions. More precisely, we will construct the approximate solutions for the problem $(\mathbf{FP})$ by linearizing it near the background state,    denoted by $(\mathbf{FP})_{\rm n}$,  and then developing a  well-defined iteration scheme
to show that 
the sequence of approximate solutions 
constructed by $(\mathbf{FP})_{\rm n}$ is convergent.
The main difficulty for the linearized problem $(\mathbf{FP})_{\rm n}$ is that the solutions for $z_{\pm}$ in the supersonic region $\tilde{\Omega}^{(\rm h)}$ cannot be determined completely because of 
the loss of normal components on the contact discontinuity.
To overcome these difficulties, we first replace the boundary condition $\partial_{\xi}\varphi=\tan\frac{z_{-}+z_{+}}{2}$
on $\tilde{\Gamma}_{\rm cd}$ by $\partial_{\xi}\varphi=\omega_{\rm cd}$ for any given flow slope $\omega_{\rm cd}$ in $(\mathbf{FP})_{\rm n}$ and formulate the modified linearized boundary value problem $(\widetilde{\mathbf{FP}})_{\rm n}$.
The advantage of this replacement  is that we can avoid the singularities of the solution $\varphi$ near the corner point $\mathcal{O}=(0,0)$  being  transformed into the supersonic region $\tilde{\Omega}^{(\rm h)}$, otherwise it would lead  to the loss of regularities for $z_{\pm}$.
Then we use the equation \eqref{eq:3.79} to update a new $\tilde{\omega}_{\rm cd}$, which creates a map $\mathbf{T}_{\omega}$.
Due to 
lack of the contractivity  for $\mathbf{T}_{\omega}$, we employ the implicit function theorem  to show that $\mathbf{T}_{\omega}$ admits a unique fixed point. This step will be achieved in Section 5.

\par The existence and uniqueness of the solutions for the modified linearized boundary value problem $(\widetilde{\mathbf{FP}})_{\rm n}$ can be proved as follows. We first solve the elliptic boundary value problem in $\tilde{\Omega}^{(\rm e)}$ by applying the second-order linear elliptic theory, and obtain the solutions for $\varphi$ and the regularity near the corner $\mathcal{O}=(0,0)$.  
Then, with the solution $\varphi$, we employ the continuity  of the pressure $\tilde{p}$ on $\tilde{\Gamma}_{\rm cd}$ to derive a boundary condition for $z$ on $\tilde{\Gamma}_{\rm cd}$. Next, we need to solve the initial-boundary value problem for $z$ in the supersonic region $\tilde{\Omega}^{(\rm h)}$ through the characteristic method within different subregions generated by the reflections on the contact discontinuity $\tilde{\Gamma}_{\rm cd}$ as well as the reflections on the lower nozzle wall. Finally, from the above procedures, several $C^{2,\alpha}$ and $C^{1,\alpha}$  estimates for the solutions to 
the modified linearized boundary value problem $(\widetilde{\mathbf{FP}})_{\rm n}$ are derived. With these estimates as well as the estimates for $\omega_{\rm cd}$ obtained in Section 5, one can deduce 
in Section 6 that the map generated by the iteration scheme is contractive 
in {\color{black}$C^{1,\alpha}(\tilde{\Omega}^{(\rm e)})\times C^{0,\alpha}(\tilde{\Omega}^{(\rm h)})$}-norm.
Using this property together with the compactness in the function space $C^{2,\alpha}(\tilde{\Omega}^{(\rm e)})\times C^{1,\alpha}(\tilde{\Omega}^{(\rm h)})$,
we can further obtain the convergence of the approximate solutions with  a unique limit
that  is the solution of the problem $(\mathbf{FP})$.

\par The rest of the paper is organized as follows.
In Section 2, we 
first formulate the transonic flow problem in nozzles with a contact discontinuity mathematically, i.e., {Problem A}, and then state the main theorem of the paper.
In Section 3,  we use the Euler-Lagrangian coordinate transformation   to straighten the free boundary
 and then   further formulate this problem in the new coordinates, i.e., {Problem B}.
In order to solve {Problem B}, the {stream} function and the generalized Riemann invariants are introduced to reduce the Euler equations in the Lagrangian coordinates  to the nonlinear second-order elliptic equation in the subsonic region $\tilde{\Omega}^{(\rm e)}$ and the first-order diagonalized system of hyperbolic equations in the supersonic region $\tilde{\Omega}^{(\rm h)}$, and state the main theorem in the Lagrangian coordinates.
We then develop some iteration scheme to prove the main theorem in the Lagrangian coordinates by first fixing the free boundary
and then using the flow slope as the boundary condition to solve the fixed boundary value problem.
In Section 4, we  consider the linearized fixed boundary value problem with  the flow slope as the boundary condition introduced in Section 3 and some 
estimates for the approximate solutions are established case by case.
In Section 5, we will show that the mapping for the flow slope has a unique fixed point by employing the implicit function theorem.
In Section 6, we show the convergence of the approximate solutions obtained by the iteration scheme in Section 3
and then complete the proof of existence and uniqueness for the fixed boundary value problem by using the contraction mapping arguments.
Finally, in Section 7, we prove the main result in the Lagrangian coordinates by constructing a boundary iteration map and show
that it is one to one and contractive and thus has a unique fixed point.


\section{Mathematical Problems and Main Results}\setcounter{equation}{0}
In this section, we first formulate the stability problem of transonic flows in a {2D} finitely long nozzle (see Fig. \ref{fig1.1}) for the Euler equations \eqref{eq:1.1} with a contact discontinuity as a free interface, and then present the main theorem of the paper.

For two given functions $g_{+}(x)$ and $g_{-}(x)$ {\color{black}satisfying $g_{-}(0)<0<g_{+}(0)$}, set
\begin{equation}\label{eq:1.10}
\Omega:=\big\{(x,y)\in \mathbb{R}^{2}: 0<x<L,\ g_{-}(x)<y<g_{+}(x)\big\},
\end{equation}
which describes the domain in the nozzle.
Denoted by $\Gamma_{-}$ and $\Gamma_{+}$  the lower and upper walls of nozzle   defined in the following: 
\begin{eqnarray}\label{eq:1.13}
\begin{split}
\Gamma_{-}:=\big\{(x,y): 0<x<L,\ y=g_{-}(x) \big\}, \ \
\Gamma_{+}:=\big\{(x,y):0<x<L,\ y=g_{+}(x) \big\}.
\end{split}
\end{eqnarray}
Let the location of the contact discontinuity be $\Gamma_{\rm cd}=\big\{y=g_{\rm cd}(x), \ 0<x<L \big\}$ with $g_{\rm cd}(0)=0$, which divides the domain
$\Omega$ into the subsonic and supersonic  regions:
\begin{eqnarray}\label{eq:2.9}
\Omega^{(\rm e)}:=\Omega\cap\big\{g_{\rm cd}(x)<y<g_{+}(x)\big\},\ \
\Omega^{(\rm h)}:=\Omega\cap\big\{g_{-}(x)<y<g_{\rm cd}(x) \big\}.
\end{eqnarray}
The entrance of the nozzle in the subsonic and supersonic regions is described as
\begin{eqnarray}\label{eq:1.11}
\begin{split}
\Gamma^{(\rm e)}_{\rm in}:=\big\{(x,y):x=0,\ 0<y<g_{+}(0) \big\},\ \
\Gamma^{(\rm h)}_{\rm in}:=\big\{(x,y):x=0,\ g_{-}(0)<y<0 \big\},
\end{split}
\end{eqnarray}
and the exit of the nozzle in the subsonic region is denoted by
\begin{eqnarray}\label{eq:1.12x}
\begin{split}
\Gamma^{(\rm e)}_{\rm ex}:=\big\{(x,y):x=L,\ g_{\rm cd}(L)<y<g_{+}(L)\big\}.
\end{split}
\end{eqnarray}

\vspace{10pt}
\begin{figure}[ht]
	\begin{center}
		\begin{tikzpicture}[scale=1.1]
		\draw [thick][->] (-5.7,1.9)to[out=30, in=-160](-4.8, 1.9);
		\draw [thick][->] (-5.7,1.4)to[out=30, in=-160](-4.8, 1.4);
		\draw [thick][->] (-5.7,0.5)to[out=30, in=-160](-4.8, 0.5);
		\draw [thick][->] (-5.7,0.0)to[out=30, in=-160](-4.8, 0.0);
		\draw [thick][->] (1.2,1.73)to[out=30, in=-160](2.0, 1.73);
		\draw [thick][->] (1.2,1.4)to[out=30, in=-160](2.0, 1.4);
		\draw [line width=0.05cm](-4.5, 2.3)to
		[out=10,in=-160](-1,2.1)to [out=20,in=-170](2.2,2.3);
		\draw [line width=0.05cm](-4.5, -0.3)to
		[out=-10,in=160](-1,-0.1)to [out=-20,in=170](2.2,-0.5);
		\draw [line width=0.03cm][dashed][red] (-4.5,1) to [out=0, in=0]
		(-3.3, 0.9)to [out=-10,in=170](-1,1.1)to [out=-20,in=170](2.2,1.1);
		\draw [thick] (-4.5,2.3)--(-4.5, -0.3);
		\draw [thick] (2.2,2.3)--(2.2, -0.5);
		\node at (2.8,2.3) {$\Gamma_{+}$};
		\node at (2.8,-0.4) {$\Gamma_{-}$};
		\node at (2.8, 1.1) {$\Gamma_{\rm cd}$};
		\node at (-1.0, 1.6) {$\Omega^{(\rm e)}$};
		\node at (-1.0, 0.4) {$\Omega^{(\rm h)}$};
		\node at (-5.3, 2.2) {$V^{(\rm e)}_{0}(y)$};
		\node at (-5.3, 0.9) {$U^{(\rm h)}_{0}(y)$};
		\node at (1.6, 2.0) {$\omega_{\rm e}(y)$};
		\node at (-4.4, -0.8) {$x=0$};
		\node at (2.3, -0.8) {$x=L$};
		\end{tikzpicture}
	\end{center}
	\caption{Transonic flow in a finitely long nozzles with contact discontinuity}\label{fig1.1}
\end{figure}

 \vspace{2pt}
\begin{figure}[ht]
\begin{center}
\begin{tikzpicture}[scale=1.3]
\draw [thin][->] (-5,0.4) --(-4,0.4);
\draw [thin][->] (-5,0) --(-4,0);
\draw [thin][->] (-5,-0.4) --(-4,-0.4);
\draw [thin][->] (-5,-1.0) --(-4,-1.0);
\draw [thin][->] (-5.0,-1.4) --(-4,-1.4);
\draw [thin][->] (-5.0,-1.8) --(-4,-1.8);
\draw [thin][->] (0.5,0) --(1.6,0);
\draw [thin][->] (0.5,-0.4) --(1.6,-0.4);
\draw [line width=0.06cm] (-3.7,0.6) --(1.8,0.6);
\draw [line width=0.03cm][dashed][red] (-3.7,-0.7) --(1.8,-0.7);
\draw [line width=0.06cm] (-3.7,-2) --(1.8,-2);
\draw [thin] (-3.7,-2) --(-3.7,0.6);
\draw [thin] (1.8,-2) --(1.8,0.6);
\node at (2.4, -0.7) {$y=0$};
\node at (-4.5, 0.6) {$\underline{V}^{(\rm e)}$};
\node at (-4.5, -0.8) {$\underline{U}^{(\rm h)}$};
\node at (1.2, 0.2) {$\underline{\omega}_{\rm e}=0$};
\node at (2.4, 0.6) {$y=1$};
\node at (2.4, -2) {$y=-1$};
\node at (-3.8, -2.4) {$x=0$};
\node at (1.7, -2.4) {$x=L$};
\node at (-1, -1) {$\underline{\Omega}$};
\end{tikzpicture}
\caption{Transonic flow in a finitely long flat nozzle with contact discontinuity}\label{fig2.1}
\end{center}
\end{figure}
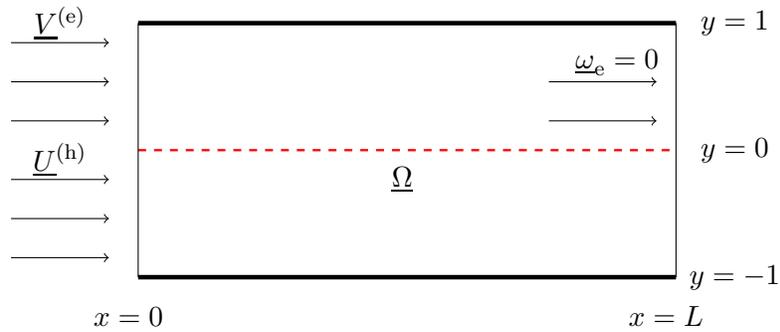

A special case is that a uniform transonic incoming flow goes through a {2D} finitely long flat nozzle,
with a flat transonic contact discontinuity    dividing the flat nozzle into two layers with subsonic and supersonic constant states on the corresponding regions;
see Fig. \ref{fig2.1}.
Let the nozzle with flat boundaries be described as:
\begin{equation}\label{eq:2.1}
\underline{\Omega}:=\big\{(x,y)\in \mathbb{R}^{2}: 0<x<L,\ -1<y<1 \big\},
\end{equation}
%
  the solution in the subsonic region be
\begin{equation}\label{eq:2.4}
\underline{U}^{(\rm e)}:=(\underline{u}^{(\rm e)}, 0,\underline{p}^{(\rm e)}, \underline{\rho}^{(\rm e)})^{\top},
\end{equation}
and  the solution in the supersonic region be
\begin{equation}\label{eq:2.3}
\underline{U}^{(\rm h)}:= \big(\underline{u}^{(\rm h)}, 0,\underline{p}^{(\rm h)},
\underline{\rho}^{(\rm h)}\big)^{\top}.
\end{equation}
These two layers (top layer is the subsonic region and lower layer is the supersonic region) are separated by the straight line $y = 0$. The line $y = 0$ is the contact discontinuity, which divides the domain $\underline{\Omega}$ into two regions, 
\begin{eqnarray*}
	&&\underline{\Omega}^{(\rm e)}:=\underline{\Omega}\cap\big\{0<y<1 \big\},\quad
	\underline{\Omega}^{(\rm h)}:=\underline{\Omega}\cap\big\{ -1<y<0 \big\}.
\end{eqnarray*}
%
Let
\begin{equation}\label{eq:2.2}
\underline{V}^{(\rm e)}:=\big(\underline{p}^{(\rm e)}, \underline{B}^{(\rm e)},\underline{S}^{(\rm e)}\big)^{\top},
\end{equation}
and let $\underline{\omega}^{(\rm e)}=0$ be the angle of the velocity in $\underline{\Omega}^{(\rm e)}$. 
The horizontal velocities, densities and pressures in the top and bottom layers
are all positive, and the pressures in this two 
layers are equal,  
 {i.e.}, $\underline{p}^{(\rm e)}=\underline{p}^{(\rm h)}=\underline{p}$. Moreover, the supersonic and subsonic conditions hold for some $\delta_{0}>0$,
\begin{equation}\label{eq:2.5}
\underline{u}^{(\rm h)}-\underline{c}^{(\rm h)}>\delta_{0}\qquad\mbox{and}\qquad \underline{c}^{(\rm e)}-\underline{u}^{(\rm e)}>\delta_{0},
\end{equation}
where $\underline{c}^{(i)}=\sqrt\frac{\gamma \underline{p}^{(i)}}{\underline{\rho}^{(i)}}$, for $i= \rm e$ or $\rm h$.
Let
\begin{eqnarray}\label{eq:2.6}
\underline{U}(x,y)=\left\{
\begin{array}{llll}
\underline{U}^{(\rm e)},\quad\quad (x, y)\in \underline{\Omega}^{(\rm e)},\\[5pt]
\underline{U}^{(\rm h)},\quad\quad (x, y)\in \underline{\Omega}^{(\rm h)}.
\end{array}
\right.
\end{eqnarray}
Obviously, $\underline{U}(x,y)$ is a weak solution of the 
boundary value problem governed by the Euler system \eqref{eq:1.1} in $\underline{\Omega}$ with 
the boundary $\{(x,y):0<x<L,\ y=0\}$ as the transonic contact discontinuity. The flow is supersonic in $\underline{\Omega}^{(\rm h)}$ and subsonic in $\underline{\Omega}^{(\rm e)}$.
We call the solution $\underline{U}(x,y)$ the \emph{background solution}.


Based on the background solution, we now introduce the problem considered in this paper. Assume that the initial incoming flow $U_{0}(y)$ at $x=0$ is of the following form:
\begin{eqnarray}\label{eq:2.7}
U_{0}(y)=\left\{
\begin{array}{llll}
V^{(\rm e)}_{0}(y),\quad\quad y\in \Gamma^{(\rm e)}_{\rm in},\\[5pt]
U^{(\rm h)}_{0}(y),\quad\quad y\in \Gamma^{(\rm h)}_{\rm in},
\end{array}
\right.
\end{eqnarray}
where $U^{(\rm h)}_{0}(y)=\big(u^{(\rm h)}_{0},v^{(\rm h)}_{0},p^{(\rm h)}_{0}, \rho^{(\rm h)}_{0}\big)^{\top}(y)$
and 
$V^{(\rm e)}_{0}(y)=\big(p^{(\rm e)}_{0}, B^{(\rm e)}_{0}, S^{(\rm e)}_{0}\big)^{\top}(y)$.
Moreover, $V^{(\rm e)}_{0}(y)$ and $\ U^{(\rm h)}_{0}(y)$ satisfy the following compatible conditions:
\begin{eqnarray}\label{eq:2.8}
\begin{split}
{\color{black}v^{(\rm h)}_{0}(0)=0,\ \ \ p^{(\rm e)}_{0}(0)=p^{(\rm h)}_{0}(0),\ \ \ (p^{(\rm e)}_{0})'(0)=(B^{(\rm e)}_{0})'(0)=(S^{(\rm e)}_{0})'(0)=0,}
\end{split}
\end{eqnarray}
and $U_{0}(y)$ satisfies the  compatible conditions at the corner point $(0,g_{-}(0))$.

Denote the flow field in
$\Omega^{(i)},\ (i=\rm e, \rm h)$, by $U^{(i)}=\big(u^{(i)}, v^{(i)}, p^{(i)}, \rho^{(i)}\big)$. Then, on the nozzle walls $\Gamma_{-}$ and $\Gamma_{+}$,
the flows satisfy that 
\begin{eqnarray}\label{eq:2.10}
\big(u^{(\rm h)}, v^{(\rm h)}\big)\cdot \mathbf{n}_{-}=0,
\quad\mbox{on}\quad \Gamma_{-},\qquad  \big(u^{(\rm e)}, v^{(\rm e)}\big)\cdot \mathbf{n}_{+}=0,\quad\mbox{on}\quad \Gamma_{+},
\end{eqnarray}
where $\mathbf{n}_{-}=(g'_{-},-1)$ and $\mathbf{n}_{+}=(-g'_{+},1)$ represent the outer normal vectors of the lower and upper walls $\Gamma_{-},\ \Gamma_{+}$, respectively.

Along the contact discontinuity $y=g_{\rm cd}(x)$,
  the following relations hold 
\begin{eqnarray}\label{eq:2.11}
(u, v)\cdot \mathbf{n}_{\rm cd}=0,
\quad [\frac{v}{u}]=[p]=0, \qquad\mbox{on}\quad \Gamma_{\rm cd},
\end{eqnarray}
where $\mathbf{n}_{\rm cd}=(g'_{\rm cd},-1)$ is the normal vector on $\Gamma_{\rm cd}$ and
the symbol $[\ ]$ stands for the jump of the states on both sides of the contact discontinuity.
Finally, in the subsonic region $\Omega^{(\rm e)}$, the flow slope at the exit
$\Gamma^{(\rm e)}_{\rm ex}$ is given by
\begin{eqnarray}\label{eq:2.12}
\omega^{(\rm e)}(L,y)=\omega_{\rm e}(y),
\end{eqnarray}
{
satisfying
\begin{eqnarray}\label{eq:2.12b}
\omega_{\rm e}(g_{\rm cd}(L))= \frac{v}{u} (L, g_{\rm cd}(L)).
\end{eqnarray}
}

In summary, the problem we will study in this paper is described as follows. \smallskip

\par $\mathbf{Problem }$ $\mathbf{A.}$ For a given transonic incoming flow $U_{0}(y)$ in  \eqref{eq:2.7} at the entrance
satisfying  \eqref{eq:2.8} {and the  compatible conditions at the corner point $(0,g_{-}(0))$},  and a given flow slope
$\omega_{\rm e}(y)$ in \eqref{eq:2.12} at the exit $\Gamma^{(\rm e)}_{\rm ex}$  {satisfying \eqref{eq:2.12b}}, find a unique piecewise smooth transonic solution
$\big(U(x,y),g_{\rm cd}(x)\big)$ 
that  is separated by the contact discontinuity $\Gamma_{\rm cd}$
satisfying {the Euler system \eqref{eq:1.1} in the weak sense and the boundary conditions} \eqref{eq:2.10}-\eqref{eq:2.11}.
Moreover, the solution $\big(U(x,y),g_{\rm cd}(x)\big)$ in $\Omega$ is a small perturbation
of the background solution $(\underline{U},0)$ in $\underline{\Omega}$.

\smallskip
A function $U(x,y)=(u,v,p,\rho)^{\top}$ of {Problem A} is called a weak solution
to the Euler equations \eqref{eq:1.1}
provided that the following
\begin{eqnarray}\label{eq:2.13}
\begin{split}
&\iint_{\Omega} \Big(\rho u\partial_x\zeta+\rho v\partial_y\zeta\Big)dxdy=0, \\[5pt]
&\iint_{\Omega} \Big({\color{black}(}\rho u^{2}+p) \partial_{x}\zeta+\rho uv\partial_{y}\zeta\Big)dxdy=0, \\[5pt]
&\iint_{\Omega}\Big(\rho uv\partial_{x}\zeta+(\rho v^{2}+p)\partial_{y}\zeta\Big)dxdy=0, \\[5pt]
&\iint_{\Omega} \Big(\big((\rho E+p)u\big)\partial_{x}\zeta+\big((\rho E+p)v\big)\partial_{y}\zeta\Big)dxdy=0,
\end{split}
\end{eqnarray}
holds for any $\zeta \in C_0^{\infty}(\Omega)$. 

\par We will give a positive answer to {Problem A}. Before  providing the main theorem of this paper, let us introduce the weighted H\"{o}lder spaces  (c.f. {\cite{bm}, \cite{ccf},} \cite{gt}).
%
Let $\Sigma$ be an open subset of $\partial \Omega$. Denote by  $X=(x,y)$ a point in $\Omega$. Set
\begin{eqnarray}\label{eq:2.15}
 \delta_{X}={\rm dist}(X,\Sigma),\quad \delta_{X,X'}=\min (\delta_{X}, \delta_{X'}),
 \quad X, X'\in \Omega.
\end{eqnarray}
Then, for any integer $m\ge 0$, $k\in \mathbb{R}$ and $\alpha \in(0,1)$, {and a function $u$ defined on $\Omega$}, we define
\begin{eqnarray}\label{eq:2.16}
\begin{split}
&\| u\|_{m,0,\Omega}
 =\sum_{0\leq|\beta|\leq m}\sup_{X\in \Omega}|D^{\beta}u(X)|,\\[5pt]
&[u]_{m,\alpha,\Omega}
=\sum_{|\beta|=m}\sup_{X,\ X'\in \Omega, X\neq X'}\frac{|D^{\beta}u(X)-D^{\beta}u(X')|}
 {|X-X'|^{\alpha}},
\end{split}
\end{eqnarray}
and
\begin{eqnarray}\label{eq:2.17}
\begin{split}
&\| u\|^{(k,\Sigma)}_{m,0,\Omega}
 =\sum_{0\leq|\beta|\leq m}\sup_{X\in \Omega}
 \Big(\delta^{\max(|\beta|+k,0)}_{X}|D^{\beta}u(X)|\Big),\\[5pt]
   &[u]^{(k,\Sigma)}_{m,\alpha,\Omega}
 =\sum_{|\beta|=m}\sup_{X,\ X'\in \Omega, X\neq X'}
 \Big(\delta^{\max(m+\alpha+k,0)}_{X,X'}\frac{|D^{\beta}u(X)-D^{\beta}u(X')|}
 {|X-X'|^{\alpha}}\Big),\\[5pt]
 &\| u\|_{m,\alpha,\Omega}=\| u\|_{m,0,\Omega}+[u]_{m,\alpha,\Omega},\quad
 \| u\|^{(k,\Sigma)}_{m,\alpha,\Omega}=\| u\|^{(k,\Sigma)}_{m,0,\Omega}
 +[u]^{(k,\Sigma)}_{m,\alpha,\Omega},
 \end{split}
\end{eqnarray}
where $\beta=(\beta_{1}, \beta_{2})$  is a multi-index with
$ \beta_{j}\ge 0 \, (j=1,2)$,  $|\beta|=\beta_{1}+\beta_{2}$,  and $D^{\beta}=\partial^{\beta_{1}}_{x}\partial^{\beta_{2}}_{y}$.
We denote by $C^{m, \alpha}(\Omega)$ and $C^{m, \alpha}_{(k, \Sigma)}(\Omega)$ the  function spaces defined below:
\begin{eqnarray}\label{eq:2.18}
\begin{split}
C^{m, \alpha}(\Omega)=\big\{u: \|u\|_{m,\alpha,\Omega}<\infty\big\}, \quad
C^{m, \alpha}_{(k, \Sigma)}(\Omega)=\big\{u: \|u\|^{(k,\Sigma)}_{m,\alpha,\Omega}<\infty\big\}.
\end{split}
\end{eqnarray}
For a vector-valued function $\mathbf{u}=(u_{1}, u_{2},\cdots, u_{n})$, define
\begin{eqnarray}\label{eq:2.19}
\begin{split}
&\|\mathbf{u}\|_{m,\alpha,\Omega}=\sum^{n}_{i=1}\|u_{i}\|_{m,\alpha,\Omega},\quad
\|\mathbf{u}\|^{(k,\Sigma)}_{m,\alpha,\Omega}=\sum^{n}_{i=1}\|u_{i}\|^{(k,\Sigma)}_{m,\alpha,\Omega}.
\end{split}
\end{eqnarray}

{\color{black} If $u$ is a function  
defined on an open subset $\Sigma$ of $\partial\Omega$, let $\Upsilon$ be a subset of the set $\overline{\Sigma}$,   the weighted H\"{o}lder
norm $\|u\|^{(k, \Upsilon)}_{m,\alpha; \Sigma}$ can be defined similarly to \eqref{eq:2.16}-\eqref{eq:2.18} by changing $\Omega$ and $\Sigma$ to $\Sigma$ and $\Upsilon$, respectively.
The standard H\"{o}lder norm $\|u\|_{m,\alpha; \Sigma}$ can also be defined similarly to that in \eqref{eq:2.19}.}

Finally, we define $\Sigma^{(\rm e)}=\partial \Omega^{(\rm e)}\backslash\{\Gamma^{(\rm e)}_{\rm in}\cup\Gamma_{\rm cd}\}$, {\color{black}and
$O=(0,0)$, $P_{\rm e}=(L,g_{\rm cd}(L))$, $Q_{\rm e}=(L,g_{+}(L))$}.

Then the main theorem of this paper can be stated as follows.
\begin{theorem}[Main Theorem]\label{thm:2.1}
There exist constants $\alpha_0\in (0,1)$ and $\epsilon_{0}>0$ 
depending only on $\underline{U}$ and $L$,  such that for any given $\alpha\in(0,\alpha_0)$ and $\epsilon\in(0,\epsilon_0)$, if 
\begin{eqnarray}\label{eq:2.20}
\begin{split}
&
{ \big\|V^{(\rm e)}_{0}-\underline{V}^{(\rm e)}\big\|_{1, \alpha; \Gamma^{(\rm e)}_{\rm in}}}+\big\|U^{(\rm h)}_{0}-\underline{U}^{(\rm h)}\big\|_{1, \alpha; \Gamma^{(\rm h)}_{\rm in}}
 +{\color{black} \big\|\omega_{\rm e}\big\|^{(-1-\alpha,\{P_{\rm e}, Q_{\rm e}\})}_{2,\alpha; \Gamma^{(\rm e)}_{\rm ex}}}\\[5pt]
&\qquad \qquad\qquad\qquad\qquad\qquad \ \ \ +\big\|g_{-}+1\big\|_{2, \alpha; \Gamma_{-}}
+\big\|g_{+}-1\big\|_{2, \alpha; \Gamma_{+}}\leq \epsilon,
\end{split}
\end{eqnarray}
and
\begin{eqnarray}\label{eq:2.21}
\begin{split}
\underline{M}^{(\rm h)}=\frac{\underline{u}^{(\rm h)}}{\underline{c}^{(\rm h)}}>\sqrt{1+\frac14L^{2}},
\end{split}
\end{eqnarray}
 there exists a unique solution $(U(x,y), g_{\rm cd})\in H^{1}_{\rm loc}(\Omega)\times C^{2,\alpha}([0,L))$ to
\emph{Problem A} with the following properties:

{\rm (i)}\ The solution $U$ consists of the supersonic flow $U^{(\rm h)} \in C^{1,\alpha}(\Omega^{(\rm h)})$ and subsonic flow
{ $U^{(\rm e)} \in C^{1,\alpha}_{(-\alpha, \Sigma^{(\rm e)}\backslash\{O\})}(\Omega^{(\rm e)})$}
separated by $y=g_{\rm cd}(x)$,  and the following estimate holds:
\begin{eqnarray}\label{eq:2.22}
\begin{split}
{\big\|U^{(\rm e)}-\underline{U}^{(\rm e)}\big\|^{(-\alpha, \Sigma^{(\rm e)}\backslash\{O\})}
_{1, \alpha; \Omega^{(\rm e)}}}
+\big\|U^{(\rm h)}-\underline{U}^{(\rm h)}\big\|_{1, \alpha; \Omega^{(\rm h)}}\leq C_{0}\epsilon;
\end{split}
\end{eqnarray}

{\rm (ii)}\ The contact discontinuity $y=g_{\rm cd}(x)$ is a stream line with $g_{\rm cd}(0)=0$
and satisfies
\begin{eqnarray}\label{eq:2.23}
\big\|g_{\rm cd}\big\|_{2, \alpha; \Gamma_{\rm cd}\cup\{O\}}\leq C_{0}\epsilon,
\end{eqnarray}
 where $C_{0}>0$ is a constant depending
only on $\underline{U}$ and $L$.
\end{theorem}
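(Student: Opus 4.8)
The plan is to reduce \emph{Problem A} to the free boundary problem $(\mathbf{NP})$ in the Lagrangian coordinates (following \S 3), and then to carry out the three-step scheme announced in the introduction: solve the fixed boundary problem $(\mathbf{FP})$, update the free boundary by a contraction, and pull everything back through the inverse Lagrangian transformation. Concretely, I would first apply the Euler--Lagrangian transformation, using that the velocity is parallel to the tangent along $\Gamma_{\rm cd}$, $\Gamma_\pm$, to straighten the contact discontinuity and both nozzle walls. In the new coordinates $\tilde\Omega^{(\rm e)}$ becomes a domain with an unknown upper boundary (depending on the mass flux $m^{(\rm e)}$), $\tilde\Omega^{(\rm h)}$ is fixed, and the interface $\tilde\Gamma_{\rm cd}=\{\xi=0\}$ is fixed. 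In $\tilde\Omega^{(\rm e)}$, since $B$ and $S$ propagate along streamlines by \eqref{eq:1.8}, they are determined by the incoming data $V^{(\rm e)}_0$; introducing the stream function $\varphi$ reduces the subsonic Euler system to a single second-order quasilinear elliptic equation for $\varphi$ with oblique/Dirichlet-type data on $\tilde\Gamma^{(\rm e)}_{\rm in}$, $\tilde\Gamma_+$, $\tilde\Gamma^{(\rm e)}_{\rm ex}$ and the interface. In $\tilde\Omega^{(\rm h)}$, the supersonic Euler system diagonalizes into the $2\times2$ system for the generalized Riemann invariants $z_\pm$, with Cauchy data from $U^{(\rm h)}_0$ on $\tilde\Gamma^{(\rm h)}_{\rm in}$, the solid-wall reflection on $\tilde\Gamma_-$, and interface coupling on $\tilde\Gamma_{\rm cd}$ through $[p]=0$ and $[v/u]=0$.

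Next I would analyze the fixed boundary problem $(\mathbf{FP})$ by linearizing about the background state $\underline U$, producing $(\mathbf{FP})_{\rm n}$, and replacing the genuine interface condition $\partial_\xi\varphi=\tan\frac{z_-+z_+}{2}$ by $\partial_\xi\varphi=\omega_{\rm cd}$ for an arbitrary admissible flow angle $\omega_{\rm cd}$, obtaining the modified linearized problem $(\widetilde{\mathbf{FP}})_{\rm n}$. For this I would: (a) solve the linear second-order elliptic boundary value problem for $\varphi$ in the Lipschitz domain $\tilde\Omega^{(\rm e)}$, obtaining $C^{2,\alpha}$ estimates away from the corner $\mathcal O$ and the weighted estimate $\|\varphi\|^{(-\alpha,\Sigma^{(\rm e)}\setminus\{\mathcal O\})}_{2,\alpha}\le C\epsilon$ using the weighted Hölder spaces of \eqref{eq:2.17}, together with the corner regularity at $\mathcal O$ from the mixed Dirichlet--oblique data (this is where the choice of $\omega_{\rm cd}$ rather than the nonlinear condition keeps the corner singularity out of the hyperbolic region); (b) use $\varphi$ and the continuity $[p]=0$ to extract a boundary datum for, say, $z\equiv z_-$ along $\tilde\Gamma_{\rm cd}$; (c) solve the initial--boundary value problem for $(z_+,z_-)$ in $\tilde\Omega^{(\rm h)}$ by the characteristic method, tracking the finitely many subregions produced by successive reflections off $\tilde\Gamma_{\rm cd}$ and $\tilde\Gamma_-$ — here the geometric condition \eqref{eq:2.21}, $\underline M^{(\rm h)}>\sqrt{1+\frac14 L^2}$, guarantees that the characteristics are steep enough that the wave pattern closes up within the finite strip $0<x<L$ without the reflected waves re-entering the entrance, yielding $C^{1,\alpha}(\tilde\Omega^{(\rm h)})$ bounds on $z_\pm$ linear in $\epsilon$. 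Then I would close the loop in $\omega_{\rm cd}$: the solved $z_\pm$ give $\tilde\omega_{\rm cd}=\tan\frac{z_-+z_+}{2}$, defining a map $\mathbf T_\omega$; since it is not manifestly contractive, I would invoke the implicit function theorem (the linearized operator being invertible by the background-state analysis) to get a unique fixed point $\omega_{\rm cd}$, hence a unique solution of $(\mathbf{FP})_{\rm n}$.

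Having $(\mathbf{FP})_{\rm n}$ solvable with uniform $C^{2,\alpha}\times C^{1,\alpha}$ estimates, I would run the nonlinear iteration: feed the $n$-th approximate solution into the coefficients of $(\mathbf{FP})_{\rm n}$ to get the $(n{+}1)$-st. Using the linear estimates one shows the iteration map is a contraction in the weaker norm $C^1(\tilde\Omega^{(\rm e)})\times C^0(\tilde\Omega^{(\rm h)})$, while the uniform higher bounds give compactness in $C^{2,\alpha}\times C^{1,\alpha}$; passing to the limit produces a unique solution of $(\mathbf{FP})$ with the stated estimates, for $\epsilon$ small depending only on $\underline U$. Then I would treat the outer free boundary: for given incoming mass flux $m^{(\rm e)}$ solve $(\mathbf{FP})$, use conservation of mass \eqref{eq:3.64} to define an updated flux, obtaining the map $\mathcal T$; a direct estimate using the solution's dependence on $m^{(\rm e)}$ shows $\mathcal T$ is well-defined and contractive on a small ball, so it has a unique fixed point, solving $(\mathbf{NP})$. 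Finally I would invert the Lagrangian transformation: the recovered $(\rho,u,v,p)$ and the interface $g_{\rm cd}$ with $g_{\rm cd}(0)=0$ satisfy \eqref{eq:1.1} weakly together with \eqref{eq:2.10}--\eqref{eq:2.11}, the regularity transfers to $U^{(\rm h)}\in C^{1,\alpha}(\Omega^{(\rm h)})$, $U^{(\rm e)}\in C^{1,\alpha}_{(-\alpha,\Sigma^{(\rm e)}\setminus\{O\})}(\Omega^{(\rm e)})$, and the estimates \eqref{eq:2.22}--\eqref{eq:2.23} follow; uniqueness in the perturbation class follows from the uniqueness at each stage.

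I expect the main obstacle to be step (c) combined with the fixed-point in $\omega_{\rm cd}$: because $\tilde\Gamma_{\rm cd}$ is characteristic from the supersonic side, the datum for $z_\pm$ on the interface is only as regular as $\varphi|_{\tilde\Gamma_{\rm cd}}$ allows and the normal component on the interface is genuinely lost, so one must carefully decouple which Riemann invariant is prescribed and which is free, control the regularity loss at each reflection, and verify that \eqref{eq:2.21} is exactly the condition making the finitely many reflected wave interactions close within $0<x<L$; the lack of contractivity of $\mathbf T_\omega$ then forces the implicit-function-theorem argument, whose hypothesis (invertibility of the linearization at the background state) must be checked by hand.
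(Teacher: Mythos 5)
Your proposal follows essentially the same architecture as the paper: Euler--Lagrangian straightening, reduction to the free boundary problem $(\mathbf{NP})$, the stream-function/Riemann-invariant reformulation, the modified linearized problem $(\widetilde{\mathbf{FP}})_{\rm n}$ with the auxiliary flow angle $\omega_{\rm cd}$, the elliptic/hyperbolic split with corner-regularity control at $\mathcal{O}$, the implicit-function-theorem fixed point for $\mathbf{T}_{\omega}$, the $C^{1}\times C^{0}$ contraction for $\mathcal{J}$ in the nonlinear iteration, the conservation-of-mass map $\mathcal{T}$ for the outer free boundary $\eta=m^{(\rm e)}$, and finally the inverse Lagrangian transformation. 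This is the route taken in \S 3--\S 7.

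The one substantive misattribution concerns where the Mach-number condition \eqref{eq:2.21} actually enters. You place it in step (c), asserting that it is what makes ``the finitely many reflected wave interactions close within $0<x<L$'' and thereby yields the $C^{1,\alpha}$ bounds on $z_{\pm}$. In the paper, the initial--boundary value problem for $z_{\pm}$ in $\tilde{\Omega}^{(\rm h)}$ is solved \emph{without} any such restriction: Propositions 4.2--4.5 handle an arbitrary number of reflections by an induction $\tilde\Omega^{(\rm h)}=\cup_{k}\tilde\Omega^{(\rm h)}_{k}$ with $\ell=[L/\xi^{*}_{0}]+1$ steps, and the resulting estimates hold for any $\underline{M}^{(\rm h)}>1$. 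The hypothesis $\underline{M}^{(\rm h)}\geq\sqrt{1+L^{2}/4}$, i.e., $m^{(\rm h)}/\underline{\lambda}_{+}=\sqrt{(\underline{M}^{(\rm h)})^{2}-1}\geq L/2$, is invoked in two distinct later places: (i) in Step~3 of Proposition~\ref{prop:5.1}, to conclude that at the background state the reflected characteristic from the lower wall does not reach $\tilde\Gamma_{\rm cd}$ before $\xi=L$, so $\widehat{\dot{z}^{(\rm n,0)}}_{-}\equiv0$ and the linearized interface condition reduces to a pure oblique condition \eqref{eq:5.17}--\eqref{eq:5.18}, which is exactly what makes $\partial\mathscr{F}/\partial\omega_{\rm cd}$ an isomorphism; and (ii) in Lemma~\ref{lem:6.1}, to close the estimate \eqref{eq:6.3} so that $\mathcal{J}$ is a contraction. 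You correctly note that the invertibility of the linearization at the background state ``must be checked by hand'' for the implicit function theorem, but your proposal does not identify \eqref{eq:2.21} as the hypothesis that makes that check succeed (and also not as an ingredient of the contraction in \S 6); as written, the justification for the IFT step would still be missing. This is the gap to fill: without $\underline{M}^{(\rm h)}\geq\sqrt{1+L^{2}/4}$ the reflected $\lambda_{+}$-characteristic can loop back to $\tilde\Gamma_{\rm cd}$ inside the nozzle, the interface data couple both Riemann invariants, and the oblique boundary value problem \eqref{eq:5.18} for $\widehat{\dot{\varphi}^{(\rm n,0)}}_{\omega_{\rm cd}}$ no longer stands alone.

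A smaller inaccuracy: the reflected waves you worry about ``re-entering the entrance'' cannot re-enter $\tilde{\Gamma}^{(\rm h)}_{\rm in}$, since the flow is uniformly supersonic to the right; the relevant boundary that reflected $\lambda_+$-characteristics must not reach too early is the contact discontinuity $\tilde\Gamma_{\rm cd}$.
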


\bigskip

\section{Mathematical Reformulation of  {Problem A}}\setcounter{equation}{0}

In this section, we first reformulate     {Problem A} into a new free boundary value problem,  {i.e.,}  {Problem B} in the Lagrangian coordinates through the Euler-Lagrangian coordinate transformation, and then state the main theorem for    {Problem B}
in the new coordinates. Later  we further introduce the {stream} function $\varphi$ and Riemann invariant $z$ to reduce  {Problem B}  to a new free boundary value problem for $(\varphi, z)$, {i.e.,}  {Problem C}. Finally, some strategies and iteration schemes are developed to solve  {Problem C}.

\subsection{Mathematical problem in the Lagrangian coordinates}\setcounter{equation}{0}
Note that the contact discontinuity $\Gamma_{\rm cd}$ is a free streamline   by \eqref{eq:2.11}. We shall
  straighten the free interface $\Gamma_{\rm cd}$ 
in term of the Euler-Lagrangian coordinate  transformation and reformulate {Problem A}
into  a new free boundary value problem in the Lagrangian coordinates.

\par Let $\big(U(x,y), g_{\rm cd}(x)\big)$ be a solution to  {Problem A}. Define
\begin{eqnarray}\label{eq:3.1}
\begin{split}
m^{(\rm e)}=\int^{g_{+}(0)}_{g_{\rm cd}(0)}\rho^{(\rm e)} u^{(\rm e)}(0,\tau)d\tau\qquad\mbox{and}\qquad m^{(\rm h)}=\int^{g_{\rm cd}(0)}_{g_{-}(0)}\rho^{(\rm h)}_{0} u^{(\rm h)}_{0}d\tau.
\end{split}
\end{eqnarray}
Then, 
it follows from $\eqref{eq:1.1}_1$ that
\begin{eqnarray}\label{eq:3.2}
\begin{split}
\int^{g_{+}(x)}_{g_{\rm cd}(x)}\rho^{(\rm e)} u^{(\rm e)}(x, \tau)d\tau=m^{(\rm e)}, 
\qquad \int^{g_{\rm cd}(x)}_{g_{-}(x)}\rho^{(\rm h)} u^{(\rm h)}(x,\tau)d\tau=m^{(\rm h)},
\end{split}
\end{eqnarray}
and
\begin{eqnarray}\label{eq:3.3}
\begin{split}
\int^{g_{+}(x)}_{g_{-}(x)}\rho u(x,\tau)d\tau=m^{(\rm e)}+m^{(\rm h)},
\end{split}
\end{eqnarray}
for any $0<x<L$.

{\color{black}We remark that since $\rho^{(\rm e)}(0,y)$ and $u^{(\rm e)}(0,y)$ are not given in the boundary conditions, $m^{(\rm e)}$ defined by \eqref{eq:3.1} cannot be determined in $\Omega^{(\rm e)}$ priorly.
Thus  $m^{(\rm e)}$ is an unknown quantity.}


Let
\begin{eqnarray}\label{eq:3.4}
\begin{split}
\eta(x,y)=\int^{y}_{g_{-}(x)}\rho u(x,\tau)d\tau-m^{(\rm h)}.
\end{split}
\end{eqnarray}
By $\eqref{eq:1.1}_1$ and the boundary condition \eqref{eq:2.10}, it is easy to see that
\begin{eqnarray}\label{eq:3.5}
\begin{split}
\frac{\partial \eta(x,y)}{\partial x}=-\rho v,\quad  \frac{\partial \eta(x,y)}{\partial y}=\rho u.
 \end{split}
\end{eqnarray}
Thus we can introduce the Lagrangian coordinate transformation $\mathcal{L}$ as
\begin{eqnarray}\label{eq:3.6}
\mathcal{L} : \ \left\{
\begin{array}{llll}
\xi &= x, \\[5pt]
\eta &= \eta(x,y).
\end{array}
\right.
\end{eqnarray}
By a direct computation, we have
\begin{equation}\label{eq:3.7}
\frac{\partial (\xi,\eta)}{\partial (x,y)}=
\Big(
\begin{array}{ccc}
  1 & 0 \\
 -\rho v & \rho u\\
\end{array}
\Big).
\end{equation}
Then, it is easy to see that
\begin{eqnarray}\label{eq:3.16}
\begin{split}
\left\{
\begin{array}{llll}
\partial_{x} &=\partial_{\xi}- \rho v \partial_{\eta},\\[5pt]
\partial_{y} &=\rho u \partial_{\eta},
\end{array}
\right.
\end{split}
\end{eqnarray}
thus  the system \eqref{eq:1.1} in the Lagrangian coordinates becomes
\begin{eqnarray}\label{eq:3.17}
\begin{split}
\left\{
\begin{array}{llll}
\partial_{\xi}\Big(\frac{1}{\tilde{\rho} \tilde{u}}\Big)
- \partial_{\eta}\Big(\frac{\tilde{v}}{\tilde{u}}\Big)=0,\\[5pt]
\partial_{\xi}\Big(\tilde{u}+\frac{\tilde{p}}{\tilde{\rho} \tilde{u}}\Big)
-\partial_{\eta}\Big(\frac{\tilde{p} \tilde{v}}{\tilde{u}}\Big)=0,\\[5pt]
\partial_{\xi}\tilde{v}+\partial_{\eta}\tilde{p}=0,
\end{array}
\right.
\end{split}
\end{eqnarray}
with the Bernoulli laws:
\begin{eqnarray}\label{eq:3.18}
\frac{1}{2}(\tilde{u}^2+\tilde{v}^2)+\frac{\gamma \tilde{p}}{(\gamma-1)\tilde{\rho}}
=\left\{
\begin{array}{llll}
\tilde{B}^{(\rm e)}_{0}(\eta) ,    &\ \ \ \ (\xi,\eta)\in \tilde{\Omega}^{(\rm e)}, \\[5pt]
\tilde{B}^{(\rm h)}_{0}(\eta),  &\ \ \ \ (\xi,\eta)\in \tilde{\Omega}^{(\rm h)}.
\end{array}
\right.
\end{eqnarray}
Here we use the fact that the Bernoulli function is only a function of $\eta$, because by $\eqref{eq:1.1}_4$,  $\tilde{B}^{(i)}_{0}(\eta)$ for $i= \rm e, h$ are conserved along the streamlines. 

The boundary conditions {\color{black}\eqref{eq:2.10}} in the new coordinates become
\begin{eqnarray}\label{eq:3.19}
\frac{\tilde{v}^{(\rm e)}}{\tilde{u}^{(\rm e)}}\bigg|_{\tilde{\Gamma}_{+}}=g'_{+}(\xi),
\quad\quad \frac{\tilde{v}^{(\rm h)}}{\tilde{u}^{(\rm h)}}\bigg|_{\tilde{\Gamma}_{-}}=g'_{-}(\xi),
\end{eqnarray}
and the Rankine-Hugoniot conditions on  $\tilde{\Gamma}_{\rm cd}$ are
\begin{eqnarray}\label{eq:3.20}
\frac{\tilde{v}^{(\rm h)}}{\tilde{u}^{(\rm h)}}\bigg|_{\tilde{\Gamma}_{\rm cd}}
=\frac{\tilde{v}^{(\rm e)}}{\tilde{u}^{(\rm e)}}\bigg|_{\tilde{\Gamma}_{\rm cd}}=g'_{\rm cd}(\xi),
\quad\quad \tilde{p}^{(\rm h)}\big|_{\tilde{\Gamma}_{\rm cd}}=\tilde{p}^{(\rm e)}\big|_{\tilde{\Gamma}_{\rm cd}}.
\end{eqnarray}
We remark that \eqref{eq:3.20} indicates that $\frac{\tilde{v}}{\tilde{u}}$ and $\tilde{p}$ are continuous across $\tilde{\Gamma}_{\rm cd}$.

\par As shown in Fig. \ref{fig2.3}, in the Lagrangian coordinates the domain $\Omega$ becomes
\begin{equation*}
\tilde{\Omega}=\big\{(\xi,\eta)\in \mathbb{R}^{2}:
0<\xi<L,\ -m^{(\rm h)}<\eta<m^{(\rm e)} \big\},
\end{equation*}
the entrance of the nozzle in the subsonic and supersonic regions is
\begin{eqnarray*}
\begin{split}
\tilde{\Gamma}^{(\rm e)}_{\rm in}:=\big\{(\xi,\eta):0<\eta<m^{(\rm e)},\ \xi=0 \big\},\ \
\tilde{\Gamma}^{(\rm h)}_{\rm in}:=\big\{(\xi,\eta):-m^{(\rm h)}<\eta<0,\ \xi=0  \big\},
\end{split}
\end{eqnarray*}
the exit of the nozzle in the subsonic region is
\begin{eqnarray*}\label{eq:1.12}
\begin{split}
\tilde{\Gamma}^{(\rm e)}_{\rm ex}:=\big\{(\xi,\eta):0<\eta<m^{(\rm e)},\ \xi=L \big\}{\color{black},}
\end{split}
\end{eqnarray*}
and the lower and upper nozzle walls are
\begin{equation}\label{3.13}
\tilde{\Gamma}_{-}:=\big\{(\xi,\eta):\eta=-m^{(\rm h)},\ 0<\xi<L\big\},\ \
\tilde{\Gamma}_{+}:=\big\{(\xi,\eta):\eta=m^{(\rm e)},\  0<\xi<L\big\}.
\end{equation}
Moreover, on $\Gamma_{\rm cd}$, we have
\begin{eqnarray*}
\eta(x, g_{\rm cd}(x))=\int^{g_{\rm cd}(x)}_{g_{-}(x)}\rho u(x,\tau)d\tau-m^{(\rm h)}=0.
\end{eqnarray*}
Hence, the free interface $\Gamma_{\rm cd}$ becomes the following fixed straight line
\begin{eqnarray}\label{eq:3.9}
\tilde{\Gamma}_{\rm cd}:=\big\{(\xi,\eta):\eta=0,\ 0<\xi<L\big\}.
\end{eqnarray}

\vspace{5pt}
\begin{figure}[ht]
\begin{center}
\begin{tikzpicture}[scale=1.4]
\draw [thin][->] (-5,0)to[out=30, in=-150](-4,0.1);
\draw [thin][->] (-5,-0.4)to[out=30, in=-150](-4,-0.3);
\draw [thin][->] (-5.0,-1.4)to[out=30, in=-150](-4,-1.3);
\draw [thin][->] (-5.0,-1.8)to[out=30, in=-150](-4,-1.7);
\draw [thin][->] (0.6,-0.25)to[out=30, in=-150](1.6,-0.15);
\draw [thin][->] (0.6,-0.55)to[out=30, in=-150](1.6,-0.45);
\draw [line width=0.06cm] (-3.7,0.6) --(1.8,0.6);
\draw [line width=0.03cm][dashed][red] (-3.7,-0.9) --(1.8,-0.9);
\draw [line width=0.06cm] (-3.7,-2) --(1.8,-2);
\draw [thin] (-3.7,-2) --(-3.7,0.6);
\draw [thin](1.8,-2) --(1.8,0.6);
\node at (-4.5, 0.4) {$\tilde{V}^{(\rm e)}_{0}(\eta)$};
\node at (-4.5, -1.0) {$\tilde{U}^{(\rm h)}_{0}(\eta)$};
\node at (1.2, 0.15) {$\tilde{\omega}_{\rm e}(\eta)$};
\node at (2.4, 0.6) {$\tilde{\Gamma}_{+}$};
\node at (2.4, -0.9) {$\tilde{\Gamma}_{\rm cd}$};
\node at (2.4, -2) {$\tilde{\Gamma}_{-}$};
\node at (-1, -0.4) {$\tilde{\Omega}$};
\node at (1.8, -2.4) {$\xi=L$};
\node at (-3.8, -2.4) {$\xi=0$};
\end{tikzpicture}
\end{center}
\caption{ Transonic contact discontinuity flows in the Lagrangian coordinates}\label{fig2.3}
\end{figure}
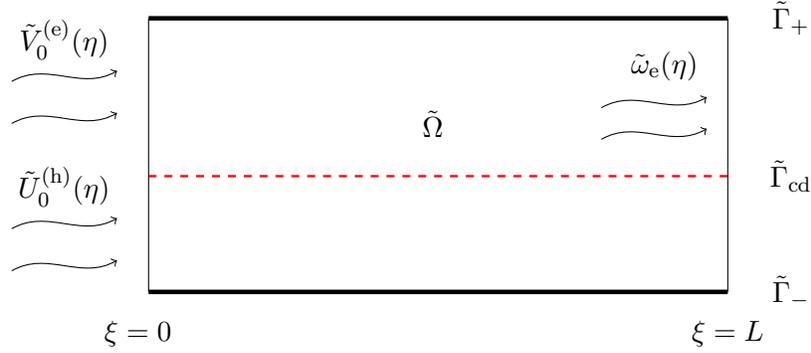
\vspace{5pt}

Denote
\begin{eqnarray}\label{eq:3.10}
\begin{split}
&\tilde{\Omega}^{(\rm e)}:=\tilde{\Omega}\cap\{0<\eta<m^{(\rm e)}\big\},
&\tilde{\Omega}^{(\rm h)}:=\tilde{\Omega}\cap\big\{-m^{(\rm h)}<\eta<0\big\},
\end{split}
\end{eqnarray}
and set
\begin{eqnarray}\label{eq:3.11}
\tilde{U}^{(i)}(\xi,\eta)=(\tilde{u}^{(i)}(\xi,\eta),\tilde{v}^{(i)}(\xi,\eta), \tilde{p}^{(i)}(\xi,\eta), \tilde{\rho}^{(i)}(\xi,\eta))^{\top},
\quad \forall(\xi,\eta)\in\tilde{\Omega}^{(i)}, \ \  i=\rm e, \rm h,
\end{eqnarray}
as the corresponding solutions in $\tilde{\Omega}^{(\rm e)}$ and $\tilde{\Omega}^{(\rm h)}$, respectively.
Then the background solution \eqref{eq:2.6} in the new coordinates is
\begin{eqnarray}\label{eq:3.12}
\underline{\tilde{U}}(\xi,\eta)=\left\{
\begin{array}{llll}
\underline{U}^{(\rm e)},\quad\quad (\xi, \eta)\in \tilde{\underline{\Omega}}^{(\rm e)},\\[5pt]
\underline{U}^{(\rm h)},\quad\quad (\xi, \eta)\in \tilde{\underline{\Omega}}^{(\rm h)},
\end{array}
\right.
\end{eqnarray}
where
\begin{eqnarray*}
&&\tilde{\underline{\Omega}}^{(\rm e)}:=\big\{(\xi, \eta): 0<\xi<L, 0<\eta<\underline{m}^{(\rm e)} \big\},
\ \
\tilde{\underline{\Omega}}^{(\rm h)}:=\big\{(\xi, \eta): 0<\xi<L, -\underline{m}^{(\rm h)} <\eta<0 \big\},
\end{eqnarray*}
and $\underline{m}^{(i)}=\underline{\rho}^{(i)}\underline{u}^{(i)},\ i=\rm e, \rm h$.

At the inlet $\xi=0$, the flow is given by
\begin{eqnarray}\label{eq:3.13}
\tilde{U}_{0}(\eta)=\left\{
\begin{array}{llll}
\tilde{V}^{(\rm e)}_{0}(\eta),\quad\quad  \eta \in \tilde{\Gamma}^{(\rm e)}_{\rm in},\\[5pt]
\tilde{U}^{(\rm h)}_{0}(\eta),\quad\quad  \eta \in \tilde{\Gamma}^{(\rm h)}_{\rm in},
\end{array}
\right.
\end{eqnarray}
where
$\tilde{V}^{(\rm e)}_{0}(\eta)=\big(\tilde{p}^{(\rm e)}_{0}, \tilde{B}^{(\rm e)}_{0}, \tilde{S}^{(\rm e)}_{0}\big)(\eta)$ and
$\tilde{U}^{(\rm h)}_{0}(\eta)=(\tilde{u}^{(\rm h)}_{0},\tilde{v}^{(\rm h)}_{0}, \tilde{p}^{(\rm h)}_{0},
\tilde{\rho}^{(\rm h)}_{0})(\eta)$
satisfy  the following compatible conditions:
\begin{eqnarray}\label{eq:3.14}
\begin{split}
{\color{black}\tilde{v}^{(\rm h)}_{0}(0)=0,\ \ \  \tilde{p}^{(\rm e)}_{0}(0)=\tilde{p}^{(\rm h)}_{0}(0), \ \ \ (\tilde{p}^{(\rm e)}_{0})'(0)=(\tilde{B}^{(\rm e)}_{0})'(0)=(\tilde{S}^{(\rm e)}_{0})'(0)=0.}
\end{split}
\end{eqnarray}
At the corner point {$(0, -m^{(\rm h)})$}, the compatible conditions also hold for $\tilde{U}^{(\rm h)}_{0}$.

Finally, the boundary condition at the exit of the nozzle in the subsonic region is
\begin{eqnarray}\label{eq:3.15}
\tilde{\omega}^{(\rm e)}(L,\eta)=\tilde{\omega}_{\rm e}(\eta),
\end{eqnarray}
{satisfying
\begin{eqnarray}\label{eq:3.15b}
\tilde{\omega}_{\rm e}(0)=\Big(\frac{v}{u}\Big)(L,0),
\end{eqnarray}
}
where $\tilde{\omega}_{\rm e}(\eta)=\omega_{\rm e}\big(y(L, \eta)\big)$.

With the above preparation, we can reformulate  {Problem A} again in the Euler coordinates 
as a new problem in the Lagrangian coordinates with  the upper nozzle wall as a free boundary, {i.e.},  {Problem B}.

\smallskip
\par $\mathbf{Problem }$ $\mathbf{B.}$ For the transonic incoming flow given by \eqref{eq:3.13} at the entrance satisfying \eqref{eq:3.14}
and the compatible conditions at the corner {$(0, -m^{(\rm h)})$}
and a given flow slope \eqref{eq:3.15} at the outlet {\color{black}satisfying \eqref{eq:3.15b}},
find a transonic piecewise  smooth solution $\tilde{U}(\xi,\eta)$ with the straight line $\tilde{\Gamma}_{\rm cd}$ as a contact discontinuity
and a constant $m^{(\rm e)}>0$, satisfying the Euler equations \eqref{eq:3.17} with \eqref{eq:3.18} in $\tilde{\Omega}^{(\rm e)}\cup\tilde{\Omega}^{(h)}$,
the slip boundary condition \eqref{eq:3.19} on $\tilde{\Gamma}_{\pm}$, the Rankine-Hugoniot condition \eqref{eq:3.20} on $\tilde{\Gamma}_{\rm cd}$,
and the boundary condition \eqref{eq:3.15} at  the exit $\tilde{\Gamma}_{\rm ex}^{\rm (e)}$. Moreover, the flow is subsonic in $\tilde{\Omega}^{(\rm e)}$, supersonic in $\tilde{\Omega}^{(\rm h)}$,
and is a small perturbation
of background solution $\underline{\tilde{U}}$ by \eqref{eq:3.12} in $\tilde{\underline{\Omega}}$.

Set $\tilde{\Sigma}^{(\rm e)}=\partial \tilde{\Omega}^{(\rm e)}\backslash \{\tilde{\Gamma}_{\rm in}\cup \tilde{\Gamma}_{\rm cd}\}$, and
$\mathcal{O}=(0,0)$ and $\mathcal{P}_{\rm e}=(L,0)$, $\mathcal{Q}_{\rm e}=(L,m^{(\rm e)})$.

Our main result in the Lagrangian coordinates is the following.

\begin{theorem}\label{thm:3.1}
For a given transonic incoming flow  \eqref{eq:3.13} at the entrance with \eqref{eq:3.14}
and a given flow slope \eqref{eq:3.15} at the outlet { satisfying the compatible conditions at the corner {$(0,-m^{(h)})$} and \eqref{eq:3.15b}}, there exist constants $\alpha_0\in (0,1)$ and $\tilde{\epsilon}_{0}>0$ depending only on $\tilde{\underline{U}}$ and $L$,
such that for any $\alpha\in(0,\alpha_0)$ and $\tilde{\epsilon} \in (0, \tilde{\epsilon}_{0})$, 
if the given data satisfy
\begin{eqnarray}\label{eq:3.21}
\begin{split}
&
{\big\|\tilde{V}^{(\rm e)}_{0}-\underline{V}^{(\rm e)}\big\|_{1, \alpha; \Gamma^{(\rm e)}_{\rm in}}} +\big\|\tilde{U}^{(\rm h)}_{0}-\underline{U}^{(\rm h)}\big\|_{1, \alpha; \tilde{\Gamma}^{(\rm h)}_{\rm in}}
+\big\|\tilde{\omega}_{\rm e}\big\|^{(-1-\alpha, \{\mathcal{P}_{\rm e}, \mathcal{Q}_{\rm e}\})}_{2,\alpha; \tilde{\Gamma}^{(\rm e)}_{\rm ex}}\\[5pt]
&\qquad\qquad\qquad\qquad\qquad\qquad\ \ \  +\big\|g_{-}+1\big\|_{2, \alpha; \tilde{\Gamma}_{-}}+\big\|g_{+}-1\big\|_{2, \alpha; \tilde{\Gamma}_{+}}\leq  \tilde{\epsilon},
\end{split}
\end{eqnarray}
and
\begin{eqnarray}
\underline{M}^{(\rm h)}=\frac{\underline{u}^{(\rm h)}}{\underline{c}^{(\rm h)}}>\sqrt{1+\frac{1}{4}L^{2}},
\end{eqnarray}
{where $\underline{V}^{(\rm e)}$ is given by \eqref{eq:2.2}},
then \emph{Problem B} admits a positive constant $m^{(\rm e)}$ and a unique solution $\tilde{U}(\xi,\eta)\in H^{1}_{\rm loc}(\tilde{\Omega})$, which consists of the subsonic flow $\tilde{U}^{(\rm e)}(\xi, \eta) \in { C^{1,\alpha}_{(-\alpha,\tilde{\Sigma}^{(\rm e)}\backslash \{\mathcal{O}\})}}(\tilde{\Omega}^{(\rm e)})$
and supersonic flow $\tilde{U}^{(\rm h)}(\xi, \eta) \in C^{1,\alpha}(\tilde{\Omega}^{(\rm h)})$
with $\eta=0$ as the contact discontinuity.
Moreover, it holds that
\begin{eqnarray}\label{eq:3.23}
\begin{split}
{\big\|\tilde{U}^{(\rm e)}-\underline{U}^{(\rm e)}\big\|^{(-\alpha, \tilde{\Sigma}^{(\rm e)}\backslash \{\mathcal{O}\})}_{1, \alpha; \tilde{\Omega}^{(\rm e)}}}
+\big\|\tilde{U}^{(\rm h)}-\underline{U}^{(\rm h)}\big\|_{1, \alpha; \tilde{\Omega}^{(\rm h)}}
+|m^{(\rm e)}-\underline {m}^{(\rm e)}|\leq \tilde{C}_{0}\tilde{\epsilon},
\end{split}
\end{eqnarray}
where $m^{(\rm e)}$ satisfies
\begin{eqnarray}\label{eq:3.23m}
\begin{split}
\int^{m^{(\rm e)}}_{0}\Big(\frac{1}{\tilde{\rho}^{(\rm e)}\tilde{u}^{(\rm e)}}\Big)(0,\tau)d\tau=g_{+}(0),
\end{split}
\end{eqnarray}
and the constant $\tilde{C}_{0}>0$ depends only on $\tilde{\underline{U}}$ and $L$.
\end{theorem}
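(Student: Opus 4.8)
The plan is to solve \emph{Problem B} by a three-layer iteration, peeling off one difficulty at a time. \textbf{Outer layer: the free mass flux $m^{(\rm e)}$.} First I would freeze $m^{(\rm e)}$ in a small closed interval $I_\epsilon$ about $\underline m^{(\rm e)}$; this fixes the strip $\tilde\Omega=\tilde\Omega^{(\rm e)}\cup\tilde\Omega^{(\rm h)}$ and turns \emph{Problem B} into a fixed boundary value problem $(\mathbf{FP})$. Granting that $(\mathbf{FP})$ is uniquely solvable for each such $m^{(\rm e)}$ (the main content, treated below), I define $\mathcal T(m^{(\rm e)})$ by requiring the resulting subsonic solution to satisfy \eqref{eq:3.23m}, i.e. $\int_0^{\mathcal T(m^{(\rm e)})}(\tilde\rho^{(\rm e)}\tilde u^{(\rm e)})^{-1}(0,\tau)\,d\tau=g_+(0)$. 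Using the a priori bound \eqref{eq:3.23} and the strict positivity $\underline\rho^{(\rm e)}\underline u^{(\rm e)}>0$, I expect $\mathcal T$ to map $I_\epsilon$ into itself with Lipschitz constant $O(\epsilon)\ll 1$, so Banach's theorem yields the unique admissible $m^{(\rm e)}$; inverting the Euler--Lagrangian map $\mathcal L$ then recovers the solution of \emph{Problem A}.

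\textbf{Reduction and the angle trick.} For fixed $m^{(\rm e)}$, in $\tilde\Omega^{(\rm e)}$ the transported quantities $\tilde B,\tilde S$ are known functions of $\eta$ from the inlet data, so introducing the stream function $\varphi$ I rewrite the first two equations of \eqref{eq:3.17} as one quasilinear second-order equation for $\varphi$, elliptic because $M^{(\rm e)}<1$; in $\tilde\Omega^{(\rm h)}$ I diagonalize the hyperbolic part by generalized Riemann invariants $z_\pm$, obtaining a $2\times 2$ diagonal transport system (this is \emph{Problem C}). The two strips couple along $\tilde\Gamma_{\rm cd}=\{\eta=0\}$ through continuity of $\tilde p$ and the angle identity $\partial_\xi\varphi=\tan\frac{z_-+z_+}{2}$; but the latter would carry the corner singularity of $\varphi$ at $\mathcal O$ into the hyperbolic region and destroy the $C^{1,\alpha}$ regularity of $z_\pm$. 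To prevent this I replace it by $\partial_\xi\varphi=\omega_{\rm cd}$ for an \emph{a priori prescribed} smooth angle $\omega_{\rm cd}$, giving the modified linearized problem $(\widetilde{\mathbf{FP}})_{\rm n}$, and recover the genuine $\omega_{\rm cd}$ afterwards as the fixed point of the map $\mathbf T_\omega$ it induces; since $\mathbf T_\omega$ need not contract, I would invoke the implicit function theorem (as in \S 5) to get its unique fixed point near $0$.

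\textbf{Solving $(\widetilde{\mathbf{FP}})_{\rm n}$.} I would solve the elliptic problem for $\varphi$ in the Lipschitz domain $\tilde\Omega^{(\rm e)}$ first: with the oblique condition on $\tilde\Gamma_+$, the inlet data on $\tilde\Gamma^{(\rm e)}_{\rm in}$, the exit angle condition on $\tilde\Gamma^{(\rm e)}_{\rm ex}$, and $\partial_\xi\varphi=\omega_{\rm cd}$ on $\tilde\Gamma_{\rm cd}$, linear second-order elliptic theory in weighted H\"{o}lder spaces yields a unique $\varphi\in C^{2,\alpha}_{(-1-\alpha,\tilde\Sigma^{(\rm e)}\setminus\{\mathcal O\})}(\tilde\Omega^{(\rm e)})$, the weight tracking the regularity loss at the corner $\mathcal O$ where a Dirichlet-type condition meets an oblique one. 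Substituting this $\varphi$ into the pressure-continuity relation on $\tilde\Gamma_{\rm cd}$ furnishes Cauchy data on $\{\eta=0\}$ for one Riemann invariant; together with the supersonic inlet data on $\tilde\Gamma^{(\rm h)}_{\rm in}$ and the linear slip relation between $z_+$ and $z_-$ on $\tilde\Gamma_-$, I integrate the diagonal system along characteristics. Because the characteristics reflect between $\tilde\Gamma_{\rm cd}$ and $\tilde\Gamma_-$, the supersonic strip divides into finitely many subregions --- and here the geometric Mach condition $\underline M^{(\rm h)}>\sqrt{1+\tfrac14 L^2}$ enters, guaranteeing that characteristics issuing from $\mathcal O$ never re-enter $\tilde\Omega^{(\rm e)}$ and that finitely many reflections exhaust $\tilde\Omega^{(\rm h)}$ --- and I propagate the $C^{1,\alpha}$ bounds subregion by subregion, checking compatibility at the reflection corners. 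Assembling these estimates gives the linearized analogue of \eqref{eq:3.23}.

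\textbf{Closing the nonlinear iteration; the main obstacle.} Finally I would run a Picard iteration for $(\mathbf{FP})$: insert the $n$-th approximation into all coefficients, solve $(\widetilde{\mathbf{FP}})_{\rm n}$, update $\omega_{\rm cd}$ by the implicit-function-theorem step, and show the resulting map contracts in the weak norm $C^1(\tilde\Omega^{(\rm e)})\times C^0(\tilde\Omega^{(\rm h)})$; contractivity at low regularity together with the uniform $C^{2,\alpha}\times C^{1,\alpha}$ bounds yields, via Arzel\`{a}--Ascoli, convergence of the whole sequence to the unique solution of $(\mathbf{FP})$ with the claimed regularity, and uniqueness follows by applying the same estimate to the difference of two solutions. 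I expect the crux to be the elliptic--hyperbolic interaction at the characteristic free interface $\tilde\Gamma_{\rm cd}$: one must simultaneously confine the corner singularity of $\varphi$ at $\mathcal O$ so it is not fed into the hyperbolic side, control the limited regularity of the Cauchy data passed to $z_\pm$, and deal with the non-contractive angle map --- the device of freezing $\omega_{\rm cd}$ and closing it separately through the implicit function theorem is exactly what renders all three tractable.
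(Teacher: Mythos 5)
Your proposal follows the paper's route essentially step by step: an outer Banach iteration on $m^{(\rm e)}$ via \eqref{eq:3.23m}, reduction to the $(\varphi,z)$ formulation (Problem C) with stream function in $\tilde\Omega^{(\rm e)}$ and Riemann invariants in $\tilde\Omega^{(\rm h)}$, freezing of the tangent-angle condition on $\tilde\Gamma_{\rm cd}$ by an auxiliary $\omega_{\rm cd}$ and recovering it through the implicit function theorem because the angle map is not a contraction, solving the linearized elliptic problem for $\varphi$ first in weighted H\"older spaces with corner-regularity control at $\mathcal O$ and then integrating the diagonal hyperbolic system along characteristics subregion by subregion, and finally closing the nonlinear iteration by contraction in the weak norm $C^1\times C^0$ together with compactness in $C^{2,\alpha}\times C^{1,\alpha}$. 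One small correction: the condition $\underline M^{(\rm h)}>\sqrt{1+L^2/4}$ does not prevent characteristics from ``re-entering $\tilde\Omega^{(\rm e)}$'' (they never leave the supersonic strip); its actual role, via $\underline m^{(\rm h)}/\underline\lambda_+=\sqrt{(\underline M^{(\rm h)})^2-1}\geq L/2$, is to make one full reflection cycle span at least the nozzle length, which forces the homogeneous background invariant $\widehat{\dot z}_-$ to vanish identically and is precisely what makes $\partial\mathscr F/\partial\omega_{\rm cd}$ an isomorphism in the implicit-function-theorem step.
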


\begin{remark}\label{rem:3.1}
Note that
\begin{equation*}
\det\bigg(\frac{\partial (\xi,\eta)}{\partial (x,y)}\bigg)=\rho u>0,
\end{equation*}
if $\tilde{\epsilon}>0$ is taking sufficiently small. 
Then the inverse Lagrangian transformation $\mathcal{L}^{-1}$ exists and is
\begin{eqnarray*}
\mathcal{L}^{-1} : \ \left\{
\begin{array}{llll}
x &= \xi, \\[5pt]
y &= \int^{\eta}_{-m^{(h)}}\Big(\frac{1}{\rho u}\Big)(x,\tau)d\tau+g_{-}(x).
\end{array}
\right.
\end{eqnarray*}
Therefore, if Theorem \ref{thm:3.1} holds, then from the inverse Lagrangian transformation $\mathcal{L}^{-1}$ 
we can show Theorem \ref{thm:2.1} by setting 
$U(x,y)=\tilde{U}(\xi(x,y), \eta(x,y))$. 
Furthermore, the contact discontinuity $g_{\rm cd}$ is given by
\begin{eqnarray*}
g_{\rm cd}(x)=\int^{0}_{-m^{(\rm h)}}\Big(\frac{1}{\rho u}\Big)(x,\tau)d\tau+g_{-}(x), \  x\in [0, L).
\end{eqnarray*}
Obviously,
\begin{eqnarray*}
g'_{\rm cd}(x)=\frac{v}{u}(x,g_{\rm cd}(x)), \  x\in [0, L),
\end{eqnarray*}
which belongs to {\color{black}$C^{1,\alpha}$ and implies that $g_{\rm cd}(x)\in C^{2,\alpha}([0,L))$}. 
Hence, 
we only need to 
prove Theorem \ref{thm:3.1}.
\end{remark}

\subsection{Equations \eqref{eq:3.17} in the subsonic region
$\tilde{\Omega}^{(\rm e)}$}
In the subsonic region $\tilde{\Omega}^{(\rm e)}$, the Euler equations \eqref{eq:3.17} are a hyperbolic-elliptic coupled system. If the flow is $C^{1}$ in $\tilde{\Omega}^{(\rm e)}$, then by the direct computation, one has
\begin{eqnarray}\label{eq:3.24}
\partial_{\xi}\Big(\frac{\tilde{p}^{(\rm e)}}{(\tilde{\rho}^{(\rm e)})^{\gamma}}\Big)=0,
\end{eqnarray}
which implies that
\begin{eqnarray}\label{eq:3.25}
\tilde{S}^{(\rm e)}(\xi, \eta)=A^{-1}\Big(\frac{\tilde{p}^{(\rm e)}}{(\tilde{\rho}^{(\rm e)})^{\gamma}}\Big)(\xi, \eta)
=A^{-1}\Big(\frac{\tilde{p}^{(\rm e)}_{0}}{(\tilde{\rho}^{(\rm e)}_{0})^{\gamma}}\Big)(\eta)=\tilde{S}^{(\rm e)}_{0}(\eta),
\end{eqnarray}
holds along each streamline.

\par Next in $\tilde{\Omega}^{(\rm e)}$, we reformulate the Euler equations \eqref{eq:3.17} as a second-order elliptic equation by using the { stream} function. By $\eqref{eq:3.17}_{1}$, define the { stream} function $\varphi$ by
\begin{eqnarray}\label{eq:3.27}
\partial_{\xi}\varphi=\frac{\tilde{v}^{(\rm e)}}{\tilde{u}^{(\rm e)}},\quad
\partial_{\eta}\varphi=\frac{1}{\tilde{\rho}^{(\rm e)}\tilde{u}^{(\rm e)}}.
\end{eqnarray}
Then
\begin{eqnarray}\label{eq:3.28}
\tilde{u}^{(\rm e)}=\frac{1}{\tilde{\rho}^{(\rm e)} \partial_{\eta}\varphi},\quad
\tilde{v}^{(\rm e)}=\frac{\partial_{\xi}\varphi}{\tilde{\rho}^{(\rm e)} \partial_{\eta}\varphi},
\end{eqnarray}
and  Bernoulli's law \eqref{eq:3.18} can be reduced to
\begin{eqnarray}\label{eq:3.29}
\frac{\gamma A(\tilde{S}^{(\rm e)}_{0})}{\gamma-1}(\tilde{\rho}^{(\rm e)})^{\gamma+1}
-\tilde{B}^{(\rm e)}_{0}(\tilde{\rho}^{(\rm e)})^{2}
+\frac{(\partial_{\xi}\varphi)^{2}+1}{2(\partial_{\eta}\varphi)^{2}}=0.
\end{eqnarray}

\par We have the following property for $\tilde{\rho}^{(\rm e)}$.
\begin{proposition}\label{prop:3.1}
In the subsonic region, the equation \eqref{eq:3.29} admits a unique solution
$\tilde{\rho}^{(\rm e)}
=\tilde{\rho}^{(\rm e)}\big(D\varphi; \tilde{B}^{(\rm e)}_{0}, \tilde{S}^{(\rm e)}_{0}\big)$. Moreover,
\begin{eqnarray}\label{eq:3.30}
\begin{split}
&\frac{\partial \tilde{\rho}^{(\rm e)}}{\partial (\partial_{\xi}\varphi)}
=- \frac{\partial_{\xi}\varphi}{\tilde{\rho}^{(\rm e)}(\partial_{\eta}\varphi)^{2}
\Big((\tilde{c}^{(\rm e)})^{2}-\frac{1+(\partial_{\xi}\varphi)^{2}}{(\tilde{\rho}^{(\rm e)})^{2}(\partial_{\eta}\varphi)^{2}}\Big)},\\[5pt]
&\frac{\partial \tilde{\rho}^{(\rm e)}}{\partial(\partial_{\eta}\varphi)}
=\frac{(\partial_{\xi}\varphi)^{2}+1}{\tilde{\rho}^{(\rm e)} (\partial_{\eta}\varphi)^{3}
\Big((\tilde{c}^{(\rm e)})^{2}-\frac{1+(\partial_{\xi}\varphi)^{2}}{(\tilde{\rho}^{(\rm e)})^{2}(\partial_{\eta}\varphi)^{2}}\Big)},
\end{split}
\end{eqnarray}
where $D\varphi=\big(\partial_{\xi}\varphi, \partial_{\eta}\varphi\big)$ and
$\tilde{c}^{(\rm e)}=\sqrt{\gamma A(\tilde{S}^{(\rm e)}_{0})(\tilde{\rho}^{(\rm e)})^{\gamma-1}}$.
\end{proposition}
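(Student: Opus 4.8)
The plan is to treat \eqref{eq:3.29} as a scalar equation $F(\tilde\rho^{(\rm e)};D\varphi)=0$ for the unknown $\tilde\rho^{(\rm e)}$ with $D\varphi=(\partial_\xi\varphi,\partial_\eta\varphi)$ viewed as parameters (together with the frozen data $\tilde B_0^{(\rm e)},\tilde S_0^{(\rm e)}$), and to extract both the existence/uniqueness of a root and the derivative formulas \eqref{eq:3.30} from the implicit function theorem applied near the background state. First I would write
\begin{equation*}
F(\r;q_1,q_2)=\frac{\gamma A(\tilde S_0^{(\rm e)})}{\gamma-1}\r^{\gamma+1}-\tilde B_0^{(\rm e)}\r^{2}+\frac{q_1^{2}+1}{2q_2^{2}},
\end{equation*}
where $q_1=\partial_\xi\varphi$, $q_2=\partial_\eta\varphi$, and compute
\begin{equation*}
\partial_\r F=(\gamma+1)A(\tilde S_0^{(\rm e)})\r^{\gamma}-2\tilde B_0^{(\rm e)}\r=\r\Big((\gamma+1)A\r^{\gamma-1}-2\tilde B_0^{(\rm e)}\Big).
\end{equation*}
Using Bernoulli's law in the form $\tilde B_0^{(\rm e)}=\tfrac12(\tilde u^2+\tilde v^2)+\tfrac{\gamma}{\gamma-1}A\r^{\gamma-1}$ and $(\tilde c^{(\rm e)})^2=\gamma A\r^{\gamma-1}$, one checks that
\begin{equation*}
(\gamma+1)A\r^{\gamma-1}-2\tilde B_0^{(\rm e)}=(\gamma-1)\Big(\frac{(\tilde c^{(\rm e)})^2}{\gamma-1}-\frac{\tilde u^2+\tilde v^2}{\gamma-1}\Big)\cdot\frac{?}{?},
\end{equation*}
the point being that $\partial_\r F$ is, up to a positive factor, proportional to $(\tilde c^{(\rm e)})^2-(\tilde u^2+\tilde v^2)=(\tilde c^{(\rm e)})^2-\frac{1+q_1^2}{\r^2 q_2^2}$, which is strictly positive in the subsonic region by \eqref{eq:2.5} and smallness of the perturbation. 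Hence $\partial_\r F\neq0$ near the background, so the implicit function theorem yields a unique $C^1$ (indeed smooth, since $F$ is smooth in $\r$ and polynomial/rational in $q$) solution branch $\tilde\rho^{(\rm e)}=\tilde\rho^{(\rm e)}(D\varphi;\tilde B_0^{(\rm e)},\tilde S_0^{(\rm e)})$ through the value $\underline{\rho}^{(\rm e)}$ at the background; uniqueness among nearby densities follows from monotonicity of $F$ in $\r$.

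Next, for the derivative formulas, I would differentiate $F(\tilde\rho^{(\rm e)};q_1,q_2)\equiv0$ with respect to $q_1$ and $q_2$ respectively. This gives
\begin{equation*}
\partial_\r F\cdot\frac{\partial\tilde\rho^{(\rm e)}}{\partial q_1}+\frac{q_1}{q_2^2}=0,\qquad
\partial_\r F\cdot\frac{\partial\tilde\rho^{(\rm e)}}{\partial q_2}-\frac{q_1^2+1}{q_2^3}=0,
\end{equation*}
so that $\partial\tilde\rho^{(\rm e)}/\partial q_1=-q_1/(q_2^2\,\partial_\r F)$ and $\partial\tilde\rho^{(\rm e)}/\partial q_2=(q_1^2+1)/(q_2^3\,\partial_\r F)$. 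Substituting the identity $\partial_\r F=\tilde\rho^{(\rm e)}\big((\tilde c^{(\rm e)})^2-\frac{1+q_1^2}{(\tilde\rho^{(\rm e)})^2 q_2^2}\big)$ established in the previous step reproduces exactly \eqref{eq:3.30}. Thus the entire proposition reduces to verifying that algebraic identity for $\partial_\r F$.

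The main obstacle — really the only nontrivial point — is the algebraic manipulation showing $\partial_\r F=\tilde\rho^{(\rm e)}\big((\tilde c^{(\rm e)})^2-(\tilde u^2+\tilde v^2)\big)$, i.e. that $(\gamma+1)A\r^{\gamma-1}-2\tilde B_0^{(\rm e)}=(\tilde c^{(\rm e)})^2-(\tilde u^2+\tilde v^2)$ once $\tilde B_0^{(\rm e)}$ is eliminated via Bernoulli. Concretely, from \eqref{eq:3.18} and \eqref{eq:3.28} one has $\tilde u^2+\tilde v^2=\frac{1+q_1^2}{(\tilde\rho^{(\rm e)})^2 q_2^2}$ and $2\tilde B_0^{(\rm e)}=(\tilde u^2+\tilde v^2)+\frac{2\gamma}{\gamma-1}A\r^{\gamma-1}$, whence $(\gamma+1)A\r^{\gamma-1}-2\tilde B_0^{(\rm e)}=(\gamma+1)A\r^{\gamma-1}-\frac{2\gamma}{\gamma-1}A\r^{\gamma-1}-(\tilde u^2+\tilde v^2)$; the coefficient of $A\r^{\gamma-1}$ collapses to $\frac{(\gamma+1)(\gamma-1)-2\gamma}{\gamma-1}=\frac{\gamma^2-1-2\gamma}{\gamma-1}=\frac{(\gamma-1)^2-2}{?}$ — one must be careful here, and in fact the clean route is to note $(\gamma+1)-\frac{2\gamma}{\gamma-1}=\frac{(\gamma+1)(\gamma-1)-2\gamma}{\gamma-1}=\frac{\gamma^2-2\gamma-1}{\gamma-1}$, which is \emph{not} simply $1$, so the correct identity must instead be derived by differentiating Bernoulli's law \eqref{eq:3.29} directly in $\r$ rather than eliminating $\tilde B_0^{(\rm e)}$ na\"ively; differentiating $\frac{\gamma A}{\gamma-1}\r^{\gamma+1}-\tilde B_0^{(\rm e)}\r^2+\frac{1+q_1^2}{2q_2^2}=0$ and using that $\tilde B_0^{(\rm e)}$ is \emph{independent of $\r$} gives $\partial_\r F$ directly, and then one substitutes $\tilde B_0^{(\rm e)}=\frac{1}{\r^2}\big(\tilde B_0^{(\rm e)}\r^2\big)=\frac{1}{\r^2}\big(\frac{\gamma A}{\gamma-1}\r^{\gamma+1}+\frac{1+q_1^2}{2q_2^2}\big)$ back in. Carrying this through yields $\partial_\r F=\frac{1}{\r}\big((\tilde c^{(\rm e)})^2\r^2-\frac{1+q_1^2}{q_2^2}\big)\cdot\frac{1}{\r}=\r\big((\tilde c^{(\rm e)})^2-\frac{1+q_1^2}{\r^2q_2^2}\big)$ after simplification, which is the sought identity; the subsonic condition then guarantees this factor is bounded away from zero, completing the proof.
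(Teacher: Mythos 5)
Your approach is the same as the paper's: view \eqref{eq:3.29} as $F(\tilde\rho^{(\rm e)};q_1,q_2)=0$, observe that $\partial_{\rho}F$ is a positive multiple of $(\tilde c^{(\rm e)})^2-(\tilde q^{(\rm e)})^2$ in the subsonic region so the implicit function theorem applies, and then read off \eqref{eq:3.30} by implicit differentiation in $q_1$ and $q_2$. (The paper inserts $\chi=\frac{q_1^2+1}{2q_2^2}$ as an intermediate variable and chain-rules through it, but that is only a notational convenience; your direct differentiation with respect to $q_j$ is equivalent.)

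However, your first computation of $\partial_\rho F$ is wrong: from $F=\frac{\gamma A}{\gamma-1}\rho^{\gamma+1}-\tilde B_0^{(\rm e)}\rho^2+\frac{q_1^2+1}{2q_2^2}$ you should get
\begin{equation*}
\partial_\rho F=\frac{\gamma(\gamma+1)}{\gamma-1}A\rho^{\gamma}-2\tilde B_0^{(\rm e)}\rho,
\end{equation*}
not $(\gamma+1)A\rho^{\gamma}-2\tilde B_0^{(\rm e)}\rho$ — you dropped the factor $\frac{\gamma}{\gamma-1}$. This is what makes the coefficient ``collapse'' badly for you. With the correct derivative, and using $F=0$ to write $\tilde B_0^{(\rm e)}=\frac{\gamma A}{\gamma-1}\rho^{\gamma-1}+\frac12(\tilde u^2+\tilde v^2)$, the coefficient of $A\rho^{\gamma-1}$ is $\frac{\gamma(\gamma+1)-2\gamma}{\gamma-1}=\gamma$, so $\partial_\rho F=\rho\big(\gamma A\rho^{\gamma-1}-(\tilde u^2+\tilde v^2)\big)=\rho\big((\tilde c^{(\rm e)})^2-(\tilde u^2+\tilde v^2)\big)$ exactly as needed; no change of ``route'' is required, only fixing the arithmetic. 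You do state this final form, but the displayed intermediate expression $\frac{1}{\rho}\big((\tilde c^{(\rm e)})^2\rho^2-\frac{1+q_1^2}{q_2^2}\big)\cdot\frac{1}{\rho}$ equals $(\tilde c^{(\rm e)})^2-\frac{1+q_1^2}{\rho^2q_2^2}$, not $\rho\big((\tilde c^{(\rm e)})^2-\frac{1+q_1^2}{\rho^2q_2^2}\big)$, so that line is internally inconsistent and should be cleaned up. Once the derivative is computed correctly, the rest of your argument — implicit function theorem for existence/uniqueness, implicit differentiation for \eqref{eq:3.30} — is exactly the paper's proof.
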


\begin{proof}
Define
\begin{eqnarray*}
{\mathcal{B}(\tilde{\rho}^{(\rm e)},\chi, \tilde{B}^{(\rm e)}_{0},\tilde{S}^{(\rm e)}_{0})}:=\frac{\gamma A(\tilde{S}^{(\rm e)}_{0})}{\gamma-1}(\tilde{\rho}^{(\rm e)})^{\gamma+1}
 -\tilde{B}^{(\rm e)}_{0}(\tilde{\rho}^{(\rm e)})^{2}+\chi,
\end{eqnarray*}
where $\chi=\frac{(\partial_{\xi}\varphi)^{2}+1}{2(\partial_{\eta}\varphi)^{2}}$.
For any point in $\tilde{\Omega}^{(\rm e)} $, we have
\begin{eqnarray*}
\begin{split}
\frac{\partial{\mathcal{B}(\tilde{\rho}^{(\rm e)},\chi, \tilde{B}^{(\rm e)}_{0},\tilde{S}^{(\rm e)}_{0})}}{\partial \tilde{\rho}^{(\rm e)}}
= \tilde{\rho}^{(\rm e)}\big((\tilde{c}^{(\rm e)})^{2}-(\tilde{u}^{(\rm e)})^{2}-(\tilde{v}^{(\rm e)})^{2}\big)>0,
\end{split}
\end{eqnarray*}
and
\begin{eqnarray*}
\begin{split}
\frac{\partial{\mathcal{B}(\tilde{\rho}^{(\rm e)},\chi, \tilde{B}^{(\rm e)}_{0},\tilde{S}^{(\rm e)}_{0})}}{\partial \chi}=1.
\end{split}
\end{eqnarray*}
Then,  for any $(\xi,\eta)\in\tilde{\Omega}^{(\rm e)}$,
\begin{eqnarray}\label{eq:3.31x}
\begin{split}
\frac{\partial\tilde{\rho}^{(\rm e)}}{\partial \chi}=-\frac{1}{\tilde{\rho}^{(\rm e)}
\big((\tilde{c}^{(\rm e)})^{2}-(\tilde{u}^{(\rm e)})^{2}-(\tilde{v}^{(\rm e)})^{2}\big)}<0.
\end{split}
\end{eqnarray}
Therefore, from equation \eqref{eq:3.29}, we can get a continuously differentiable function $\tilde{\rho}^{(\rm e)}$
with respect to $\chi$, i.e., $\tilde{\rho}^{(\rm e)}=\tilde{\rho}^{(\rm e)}(D\varphi; \tilde{B}^{(\rm e)}_{0},\tilde{S}^{(\rm e)}_{0})$.
Finally, because
\begin{equation*}
\frac{\partial \chi}{\partial(\partial_{\xi}\varphi)}=\frac{\partial_{\xi}\varphi}{(\partial_{\eta}\varphi)^{2}},\quad \ \
\frac{\partial \chi}{\partial (\partial_{\eta}\varphi)}=-\frac{(\partial_{\xi}\varphi)^{2}+1}{(\partial_{\eta}\varphi)^{3}},
\end{equation*}
it follows from \eqref{eq:3.31x} that \eqref{eq:3.30} holds.
\end{proof}

\begin{remark}\label{rem:3.2}
It follows from \eqref{eq:3.25} and Proposition \ref{prop:3.1} that the pressure $\tilde{p}^{(\rm e)}$ is also a function of $D\varphi$, $\tilde{B}^{(\rm e)}_{0}$ and $\tilde{S}^{(\rm e)}_{0}$, that is,
$\tilde{p}^{(\rm e)}=\tilde{p}^{(\rm e)}\big(D\varphi; \tilde{B}^{(\rm e)}_{0}, \tilde{S}^{(\rm e)}_{0}\big)$.
\end{remark}

Now, by \eqref{eq:3.28}, Proposition \ref{prop:3.1} and Remark \ref{rem:3.2},
the equations {\color{black}\eqref{eq:3.17}} can be reduced to the following nonlinear elliptic equation of second-order:
\begin{eqnarray}\label{eq:3.31}
\begin{split}
\partial_{\xi}\Big(\frac{\partial_{\xi}\varphi}{\tilde{\rho}^{(\rm e)} \partial_{\eta}\varphi}\Big)
+\partial_{\eta}\tilde{p}^{(\rm e)}(D\varphi;\tilde{B}^{(\rm e)}_{0},\tilde{S}^{(\rm e)}_{0})=0.
\end{split}
\end{eqnarray}
Next, for the boundary conditions of $\varphi$, from \eqref{eq:3.19}, \eqref{eq:3.13}, {\color{black}\eqref{eq:3.15}} and \eqref{eq:3.27}, we know that
\begin{eqnarray}\label{eq:3.32}
\begin{split}
\partial_{\xi}\varphi=\tilde{\omega}_{\rm e}(\eta),\qquad\mbox{on }\  \tilde{\Gamma}^{(\rm e)}_{\rm ex},
\end{split}
\end{eqnarray}
\begin{eqnarray}\label{eq:3.33}
\begin{split}
\varphi=g_{+}(\xi),\qquad\mbox{on } \ \tilde{\Gamma}^{(\rm e)}_{+},
\end{split}
\end{eqnarray}
and
\begin{eqnarray}\label{eq:3.34}
\begin{split}
\tilde{p}^{(\rm e)}(D\varphi; \tilde{B}^{(\rm e)}_{0}, \tilde{S}^{(\rm e)}_{0})=\tilde{p}_{0}(\eta),
\qquad\mbox{on }\ \tilde{\Gamma}^{(\rm e)}_{\rm in}.
\end{split}
\end{eqnarray}

\subsection{Equations \eqref{eq:3.17} in the supersonic region $\tilde{\Omega}^{(\rm h)}$}
For smooth solutions in the supersonic region $\tilde{\Omega}^{(\rm h)}$,
we can rewrite the system \eqref{eq:3.17} as the first-order nonlinear symmetric form:
{
\begin{eqnarray}\label{eq:3.35}
\mathcal{A}^{(\rm h)}_{1}(\mathcal{V}^{(\rm h)})\partial_{\xi}\mathcal{V}^{(\rm h)}+\mathcal{A}^{(\rm h)}_2(\mathcal{V}^{(\rm h)})\partial_{\eta}\mathcal{V}^{(\rm h)}=0,
\end{eqnarray}
}
where {$\mathcal{V}^{(\rm h)}(\xi, \eta)=\big(\tilde{u}^{(\rm h)}, \tilde{v}^{(\rm h)}, \tilde{p}^{(\rm h)}\big)^{\top}(\xi, \eta)$}
and
\begin{equation*}
{\mathcal{A}^{(\rm h)}_{1}(\mathcal{V}^{(\rm h)})}=\left(
\begin{array}{ccc}
\tilde{u}^{(\rm h)} &0 & \frac{1}{\tilde{\rho}^{(\rm h)}} \\[5pt]
0& \tilde{u}^{(\rm h)} & 0 \\[5pt]
\frac{1}{\tilde{\rho}^{(\rm h)}} & 0 & \frac{\tilde{u}^{(\rm h)}}{(\tilde{c}^{(\rm h)})^{2}(\tilde{\rho}^{(\rm h)})^{2}}
\end{array}
\right), \quad
{\mathcal{A}^{(\rm h)}_{2}(\mathcal{V}^{(\rm h)})}=\left(
\begin{array}{ccc}
0 &0 & -\tilde{v}^{(\rm h)} \\[5pt]
0& 0 & \tilde{u}^{(\rm h)} \\[5pt]
-\tilde{v}^{(\rm h)} & \tilde{u}^{(\rm h)} & 0
\end{array}
\right).
\end{equation*}

System \eqref{eq:3.35} is hyperbolic in $\tilde{\Omega}^{(\rm h)}$. By direct computation, the eigenvalue and the corresponding right eigenvectors are
\begin{eqnarray}\label{eq:3.36}
\begin{split}
&\lambda_{-}=\frac{\tilde{\rho}^{(\rm h)} \tilde{u}^{(\rm h)} (\tilde{c}^{(\rm h)})^{2}}
{(\tilde{u}^{(\rm h)})^{2}-(\tilde{c}^{(\rm h)})^{2}}\Big(\frac{\tilde{v}^{(\rm h)}}{\tilde{u}^{(\rm h)}}
-\frac{\sqrt{(\tilde{q}^{(\rm h)})^{2}-(\tilde{c}^{(\rm h)})^{2}}}{\tilde{c}^{(\rm h)}}\Big),\\[5pt]
&\lambda_{0}=0,  \\[5pt]
&\lambda_{+}=\frac{\tilde{\rho}^{(\rm h)} \tilde{u}^{(\rm h)} (\tilde{c}^{(\rm h)})^{2}}
{(\tilde{u}^{(\rm h)})^{2}-(\tilde{c}^{(\rm h)})^{2}}\Big(\frac{\tilde{v}^{(\rm h)}}{\tilde{u}^{(\rm h)}}
+\frac{\sqrt{(\tilde{q}^{(\rm h)})^{2}-(\tilde{c}^{(\rm h)})^{2}}}{\tilde{c}^{(\rm h)}}\Big),
\end{split}
\end{eqnarray}
where $\tilde{q}^{(\rm h)}=\sqrt{(\tilde{u}^{(\rm h)})^{2}+(\tilde{v}^{(\rm h)})^{2}}$ and
\begin{eqnarray}\label{eq:3.37}
\begin{split}
&r_{-}=\Big(\frac{\lambda_{-}}{\tilde{\rho}^{(\rm h)}}+\tilde{v}^{(\rm h)}, -\tilde{u}^{(\rm h)}, -\lambda_{-}\tilde{u}^{(\rm h)}\Big)^{\top},\\[5pt]
&r_{0}=\big(\tilde{u}^{(\rm h)},\tilde{v}^{(\rm h)}, 0\big)^{\top},   \\[5pt]
&r_{+}=\Big(\frac{\lambda_{+}}{\tilde{\rho}^{(\rm h)}}+\tilde{v}^{(\rm h)}, -\tilde{u}^{(\rm h)},
 -\lambda_{+}\tilde{u}^{(\rm h)}\Big)^{\top},
\end{split}
\end{eqnarray}
respectively. The corresponding left eigenvectors are
\begin{eqnarray}\label{eq:3.38}
l_{\pm}=(r_{\pm})^{\top}, \qquad l_{0}=(r_{0})^{\top}.
\end{eqnarray}

Define
\begin{eqnarray}\label{eq:3.43}
\tilde{\omega}^{(\rm h)}:=\frac{\tilde{v}^{(\rm h)}}{\tilde{u}^{(\rm h)}},
\quad \ \ \Lambda^{(\rm h)}:=\frac{\sqrt{(\tilde{q}^{(\rm h)})^{2}-(\tilde{c}^{(\rm h)})^{2}}}{\tilde{\rho}^{(\rm h)} \tilde{c}^{(\rm h)} (\tilde{u}^{(\rm h)})^{2}}.
\end{eqnarray}
Multiplying the system \eqref{eq:3.35} by $l_{\pm}$ and $l_{0}$,
and by Bernoulli's law \eqref{eq:3.18}, we have
{\begin{eqnarray}\label{eq:3.45}
\begin{split}
&&\partial_{\xi}\tilde{\omega}^{(\rm h)}+\lambda_{-}\partial_{\eta}\tilde{\omega}^{(\rm h)}-\Lambda^{(\rm h)}\big(\partial_{\xi}\tilde{p}^{(\rm h)}+\lambda_{-}\partial_{\eta}\tilde{p}^{(\rm h)}\big)=0, \\[5pt]
&&\partial_{\xi}\tilde{\omega}^{(\rm h)}+\lambda_{+}\partial_{\eta}\tilde{\omega}^{(\rm h)}+\Lambda^{(\rm h)}\big(\partial_{\xi}\tilde{p}^{(\rm h)}+\lambda_{+}\partial_{\eta}\tilde{p}^{(\rm h)}\big)=0,
\end{split}
\end{eqnarray}
}
and
\begin{eqnarray}\label{eq:3.41}
\partial_{\xi}\Big(\frac{\tilde{p}^{(\rm h)}}{(\tilde{\rho}^{(\rm h)})^{\gamma}}\Big)=0,
\end{eqnarray}
which implies that
\begin{eqnarray}\label{eq:3.42}
\tilde{S}^{(\rm h)}(\xi, \eta)=A^{-1}\Big(\frac{\tilde{p}^{(\rm h)}}{(\tilde{\rho}^{(\rm h)})^{\gamma}}\Big)(\xi, \eta)
=A^{-1}\Big(\frac{\tilde{p}^{(\rm h)}_{0}}{(\tilde{\rho}^{(\rm h)}_{0})^{\gamma}}\Big)(\eta)=\tilde{S}^{(\rm h)}_{0}(\eta),
\end{eqnarray}
holds along each streamline.

\begin{remark}\label{rem:3.3}
When  the solution $(\tilde{\omega}^{(\rm h)},\tilde{p}^{(\rm h)})$ of the system \eqref{eq:3.45} is known,
 we can get the solution  $(\tilde{u}^{(\rm h)}, \tilde{v}^{(\rm h)})$ by \eqref{eq:3.43} and
\eqref{eq:3.18} as the following: 
\begin{eqnarray}\label{eq:3.46}
\begin{split}
&\tilde{u}^{(\rm h)}=\sqrt{\frac{2\Big((\gamma-1)\tilde{B}^{(\rm h)}_{0}-\gamma \big(A(\tilde{S}^{(\rm h)}_{0}))(\tilde{p}^{(\rm h)})^{\gamma-1}\big)^{\frac{1}{\gamma}}\Big)}
{(\gamma-1)\big(1+(\tilde{\omega}^{(\rm h)})^{2}\big)}},\\[5pt]
&\tilde{v}^{(\rm h)}=\tilde{\omega}^{(\rm h)}\sqrt{\frac{2\Big((\gamma-1)\tilde{B}^{(\rm h)}_{0}
-\gamma\big(A(\tilde{S}^{(\rm h)}_{0}))(\tilde{p}^{(\rm h)})^{\gamma-1}\big)^{\frac{1}{\gamma}}\Big)}{(\gamma-1)\big(1+(\tilde{\omega}^{(\rm h)})^{2}\big)}},
\end{split}
\end{eqnarray}
where $\tilde{S}^{(\rm h)}_{0}$ is given by \eqref{eq:3.42}.
\end{remark}

Therefore, in the supersonic region, it remains to solve equations \eqref{eq:3.45}. To this end,
we will further 
introduce the Riemann invariants.

Let $\mathscr{U}=(\tilde{\omega}^{(\rm h)},\tilde{p}^{(\rm h)})^{\top}$,
then the system \eqref{eq:3.45} can be rewritten as
 \begin{eqnarray}\label{eq:3.47}
\partial_{\xi}\mathscr{U}+\mathscr{F}(\mathscr{U})\partial_{\eta}\mathscr{U}=0,
\end{eqnarray}
 where
\begin{equation}\label{eq:3.48}
\mathscr{F}(\mathscr{U})=\left(
\begin{array}{ccc}
\frac{\tilde{\rho}^{(\rm h)} (\tilde{c}^{(\rm h)})^{2}\tilde{v}^{(\rm h)}}{(\tilde{u}^{(\rm h)})^{2}-(\tilde{c}^{(\rm h)})^{2}} &
\frac{(\tilde{u}^{(\rm h)})^{2}+(\tilde{v}^{(\rm h)})^{2}-(\tilde{c}^{(\rm h)})^{2}}{\tilde{u}^{(\rm h)}
\big((\tilde{u}^{(\rm h)})^{2}-(\tilde{c}^{(\rm h)})^{2}\big)} \\[10pt]
\frac{(\tilde{\rho}^{(\rm h)})^{2} (\tilde{c}^{(\rm h)})^{2}(\tilde{u}^{(\rm h)})^{3}}
{(\tilde{u}^{(\rm h)})^{2}-(\tilde{c}^{(\rm h)})^{2}}  & \frac{\tilde{\rho}^{(\rm h)} (\tilde{c}^{(\rm h)})^{2}\tilde{v}^{(\rm h)}}
{(\tilde{u}^{(\rm h)})^{2}-(\tilde{c}^{(\rm h)})^{2}}
\end{array}
\right).
\end{equation}
 The generalized Riemann invariants $z_{\pm}$ for system \eqref{eq:3.47} are defined as 
\begin{eqnarray}\label{eq:3.50}
z_{-}=\arctan\tilde{\omega}^{(\rm h)}+\Theta\big(\tilde{p}^{(\rm h)}; \tilde{B}^{(\rm h)}_{0}, \tilde{S}^{(\rm h)}_{0}\big),
\ \ z_{+}=\arctan\tilde{\omega}^{(\rm h)}-\Theta\big(\tilde{p}^{(\rm h)}; \tilde{B}^{(\rm h)}_{0}, \tilde{S}^{(\rm h)}_{0}\big),
\end{eqnarray}
where
{\small\begin{eqnarray}\label{eq:3.51}
\begin{split}
\Theta\big(\tilde{p}^{(\rm h)}; \tilde{B}^{(\rm h)}_{0}, \tilde{S}^{(\rm h)}_{0}\big)
=&\int^{\tilde{p}^{(\rm h)}}\frac{\sqrt{(\gamma-1)\big(A(\tilde{S}^{(\rm h)}_{0})\big)^{\frac{1}{\gamma}}\Big(2(\gamma-1) \tilde{B}^{(\rm h)}_{0}-\gamma(\gamma+1)
\big(A(\tilde{S}^{(\rm h)}_{0})\big)^{\frac{1}{\gamma}}\tau^{\frac{\gamma-1}{\gamma}}}\Big)}
{2\gamma^{\frac{1}{2}}
\Big((\gamma-1)\tilde{B}^{(\rm h)}_{0}-\gamma\big(A(\tilde{S}^{(\rm h)}_{0})\big)^{\frac{1}{\gamma}}\tau^{1-\frac{1}{\gamma}}\Big)\tau^{\frac{\gamma+1}{2\gamma}}}d\tau.
\end{split}
\end{eqnarray}}
  Let $z=(z_{-},z_{+})^{\top} $. System \eqref{eq:3.47} can be reduced  to the following characteristic form:
\begin{eqnarray}\label{eq:3.53x}
\partial_{\xi}z+ \textrm{diag} \big( \lambda_{+}, \lambda_{-} \big)\partial_{\eta} z=0.
\end{eqnarray}

Finally, we remark that once $z$ is solved,   the solution in the supersonic region is solved by the following proposition.
\begin{proposition}\label{prop:3.2}
If the flow is supersonic in $\tilde{\Omega}^{(\rm h)}$, then, for any given $z=(z_{-}, z_{+})$,
$(\tilde{p}^{(\rm h)},\tilde{\omega}^{(\rm h)})$ can be uniquely solved by  \eqref{eq:3.50} as
$\tilde{p}^{(\rm h)}=\tilde{p}^{(\rm h)}\big(z; \tilde{B}^{(\rm h)}_{0},\tilde{S}^{(\rm h)}_{0}\big)$
and $\tilde{\omega}^{(\rm h)}=\tilde{\omega}^{(\rm h)}(z)$.
\end{proposition}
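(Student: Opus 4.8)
The plan is to decouple the two relations in \eqref{eq:3.50} by forming their sum and their difference. Adding them cancels $\Theta$ and gives $\arctan\tilde{\omega}^{(\rm h)}=\frac{z_-+z_+}{2}$, hence $\tilde{\omega}^{(\rm h)}=\tan\frac{z_-+z_+}{2}$ is determined explicitly and uniquely by $z$ alone; here one uses that for $z$ near the background value the quantity $\frac{z_-+z_+}{2}$ stays in $(-\frac{\pi}{2},\frac{\pi}{2})$ (it equals $0$ at the background state, where $\underline{\omega}^{(\rm h)}=0$), so the branch of $\arctan$ is unambiguous. Subtracting the two relations gives $\Theta\big(\tilde{p}^{(\rm h)};\tilde{B}^{(\rm h)}_{0},\tilde{S}^{(\rm h)}_{0}\big)=\frac{z_--z_+}{2}$, so the remaining task is to invert $\Theta$ in its first argument with $\tilde{B}^{(\rm h)}_{0},\tilde{S}^{(\rm h)}_{0}$ held fixed.

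To invert $\Theta$, I would prove it is strictly monotone in $\tilde{p}^{(\rm h)}$. By \eqref{eq:3.51}, $\partial_{\tilde{p}^{(\rm h)}}\Theta$ is precisely the integrand evaluated at $\tau=\tilde{p}^{(\rm h)}$, so it suffices to show this integrand has a fixed sign. Using $\tilde{p}^{(\rm h)}=A(\tilde{S}^{(\rm h)}_{0})(\tilde{\rho}^{(\rm h)})^{\gamma}$ and $(\tilde{c}^{(\rm h)})^{2}=\gamma\tilde{p}^{(\rm h)}/\tilde{\rho}^{(\rm h)}=\gamma A(\tilde{S}^{(\rm h)}_{0})^{1/\gamma}(\tilde{p}^{(\rm h)})^{1-1/\gamma}$, together with Bernoulli's law \eqref{eq:3.18} rewritten as $(\gamma-1)\tilde{B}^{(\rm h)}_{0}-\gamma A(\tilde{S}^{(\rm h)}_{0})^{1/\gamma}(\tilde{p}^{(\rm h)})^{1-1/\gamma}=\frac{\gamma-1}{2}(\tilde{q}^{(\rm h)})^{2}$, the denominator of the integrand becomes a strictly positive multiple of $(\tilde{q}^{(\rm h)})^{2}$, while the bracket under the square root in the numerator collapses to $2(\gamma-1)\tilde{B}^{(\rm h)}_{0}-\gamma(\gamma+1)A(\tilde{S}^{(\rm h)}_{0})^{1/\gamma}(\tilde{p}^{(\rm h)})^{(\gamma-1)/\gamma}=(\gamma-1)\big((\tilde{q}^{(\rm h)})^{2}-(\tilde{c}^{(\rm h)})^{2}\big)$. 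Supersonicity forces $(\tilde{q}^{(\rm h)})^{2}>(\tilde{c}^{(\rm h)})^{2}$, so the radicand is strictly positive and the integrand is a well-defined, strictly positive quantity; therefore $\Theta(\,\cdot\,;\tilde{B}^{(\rm h)}_{0},\tilde{S}^{(\rm h)}_{0})$ is $C^{1}$ and strictly increasing on the range of supersonic pressures.

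With strict monotonicity in hand, the intermediate value theorem --- or the inverse function theorem, which additionally yields the $C^{1}$ dependence of the inverse on $(z,\tilde{B}^{(\rm h)}_{0},\tilde{S}^{(\rm h)}_{0})$ that later estimates will need --- produces a unique $\tilde{p}^{(\rm h)}$ with $\Theta\big(\tilde{p}^{(\rm h)};\tilde{B}^{(\rm h)}_{0},\tilde{S}^{(\rm h)}_{0}\big)=\frac{z_--z_+}{2}$, i.e. $\tilde{p}^{(\rm h)}=\tilde{p}^{(\rm h)}\big(z;\tilde{B}^{(\rm h)}_{0},\tilde{S}^{(\rm h)}_{0}\big)$. (Since $\Theta$ in \eqref{eq:3.51} is only an indefinite integral, one fixes the additive constant once and for all, e.g. by normalizing the lower limit at $\underline{p}^{(\rm h)}$; for $z$ near $\underline{z}$ the resulting $\tilde{p}^{(\rm h)}$ stays in a neighborhood of $\underline{p}^{(\rm h)}$ where the flow remains supersonic, so the construction is self-consistent.) Finally, inserting this $\tilde{p}^{(\rm h)}$ and $\tilde{\omega}^{(\rm h)}=\tan\frac{z_-+z_+}{2}$ into \eqref{eq:3.46} recovers $(\tilde{u}^{(\rm h)},\tilde{v}^{(\rm h)})$, so the whole state $\mathcal{V}^{(\rm h)}$ becomes a function of $z$ (and the incoming data $\tilde{B}^{(\rm h)}_{0},\tilde{S}^{(\rm h)}_{0}$), as claimed. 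The only genuinely nontrivial point is the sign computation for the integrand in the second step; once Bernoulli's law is used to reduce the cumbersome expressions to $(\tilde{q}^{(\rm h)})^{2}-(\tilde{c}^{(\rm h)})^{2}$, everything else is routine.
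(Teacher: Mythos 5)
Your proposal is correct and follows essentially the same route as the paper: decouple \eqref{eq:3.50} by sum and difference, observe that $\partial_{\tilde p^{(\rm h)}}\Theta=\sqrt{(\tilde q^{(\rm h)})^2-(\tilde c^{(\rm h)})^2}/\big(\tilde\rho^{(\rm h)}\tilde c^{(\rm h)}(\tilde q^{(\rm h)})^2\big)>0$ under supersonicity, and invoke the implicit/inverse function theorem. You merely spell out the Bernoulli-law algebra behind the sign of $\partial_{\tilde p^{(\rm h)}}\Theta$ and flag the arctan branch and integration constant, which the paper leaves implicit.
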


\begin{proof}
By \eqref{eq:3.50}, we obtain that
\begin{eqnarray*}\label{eq:3.53}
\begin{split}
\tilde{\omega}^{(\rm h)}=\tan\Big(\frac{z_{-}+z_{+}}{2}\Big), \quad
\Theta\big(\tilde{p}^{(\rm h)}; \tilde{B}^{(\rm h)}_{0},\tilde{S}^{(\rm h)}_{0}\big)=\frac{1}{2}(z_{-}-z_{+}).
\end{split}
\end{eqnarray*}
For $\tilde{q}^{(\rm h)}>\tilde{c}^{(\rm h)}$, one has
\begin{eqnarray*}
\partial_{\tilde{p}^{(\rm h)}}\Theta\big(\tilde{p}^{(\rm h)}; \tilde{B}^{(\rm h)}_{0},\tilde{S}^{(\rm h)}_{0}\big)
=\frac{\sqrt{(\tilde{q}^{(\rm h)})^{2}-(\tilde{c}^{(\rm h)})^{2}}}
{\tilde{\rho}^{(\rm h)}\tilde{c}^{(\rm h)}(\tilde{q}^{(\rm h)})^{2}}>0,
\end{eqnarray*}
then, by the straightforward computation, it follows that
\begin{eqnarray*}
\begin{split}
\frac{\partial \tilde{p}^{(\rm h)}}{\partial z_{-}}=\frac{1}{2\partial_{\tilde{p}^{(\rm h)}}
\Theta\big(\tilde{p}^{(\rm h)}; \tilde{B}^{(\rm h)}_{0},\tilde{S}^{(\rm h)}_{0}\big)}, \quad
\frac{\partial \tilde{p}^{(\rm h)}}{\partial z_{+}}=-\frac{1}{2\partial_{\tilde{p}^{(\rm h)}}
\Theta\big(\tilde{p}^{(\rm h)}; \tilde{B}^{(\rm h)}_{0},\tilde{S}^{(\rm h)}_{0}\big)}.
\end{split}
\end{eqnarray*}
Thus the proposition follows from the implicit function theorem. 
\end{proof}

\par Finally, for the boundary conditions of $z$, from \eqref{eq:3.19}, \eqref{eq:3.13}
and \eqref{eq:3.50}, we know that
\begin{equation}
\label{eq:3.54}
z(\xi, \eta)=z_{0}(\eta), \quad\  \   \ \mbox{on} \ \  \ \tilde{\Gamma}^{(\rm h)}_{\rm in},
\end{equation}
and
\begin{eqnarray}\label{eq:3.55}
\begin{split}
z_{-}+z_{+}=2\arctan g'_{-}(\xi), \quad\  \   \ \mbox{on} \ \ \  \tilde{\Gamma}_{-}.
\end{split}
\end{eqnarray}

\subsection{Free boundary value problem for $(\varphi, z)$}
From the above two subsections,
 {Problem B} can be reformulated as the following nonlinear free boundary
value problem for $(\varphi, z)$.

\smallskip
\par $\mathbf{Problem }$ $\mathbf{C.}$  For the given data $\big(\tilde{p}^{(\rm e)}_{0}(\eta),\tilde{B}^{(\rm e)}_{0}(\eta), \tilde{S}^{(\rm e)}_{0}(\eta)\big)$ at the entrance $\tilde{\Gamma}^{(\rm e)}_{\rm in}$, the given data $\big(z_{0}(\eta),\tilde{B}^{(\rm h)}_{0}(\eta), \tilde{S}^{(\rm h)}_{0}(\eta)\big)$ at the entrance $\tilde{\Gamma}^{(\rm h)}_{\rm in} $ satisfying \eqref{eq:3.14}, and the given data $\tilde{\omega}_{\rm e}$ at the exit $\tilde{\Gamma}^{(\rm e)}_{\rm ex}$, find a piecewise smooth transonic flow $\big(\varphi, z\big)(\xi,\eta)$ with a contact discontinuity 
and a positive constant $m^{(\rm e)}$ for the following nonlinear free boundary value problem:
\begin{eqnarray}\label{eq:3.56}
(\mathbf{NP})\quad   \left\{
\begin{array}{llll}
\partial_{\xi}\Big(\frac{\partial_{\xi}\varphi}{\tilde{\rho}^{(\rm e)}
(D\varphi;\tilde{B}^{(\rm e)}_{0},\tilde{S}^{(\rm e)}_{0}) \partial_{\eta}\varphi}\Big)
+\partial_{\eta}\tilde{p}^{(\rm e)}(D\varphi;\tilde{B}^{(\rm e)}_{0},\tilde{S}^{(\rm e)}_{0})=0,
&\ \ \  \mbox{in} \ \ \ \tilde{\Omega}^{(\rm e)},   \\[5pt]
\partial_{\xi}z+\textrm{diag}( \lambda_{+}, \lambda_{-})\partial_{\eta}z=0, &\ \ \ \mbox{in}\ \ \  \tilde{\Omega}^{(\rm h)},  \\[5pt]
\tilde{p}^{(\rm e)}=\tilde{p}^{(\rm e)}_{0}(\eta),     &\ \ \  \mbox{on} \ \ \ \tilde{\Gamma}^{(\rm e)}_{\rm in},\\[5pt]
\partial_{\xi}\varphi=\tilde{\omega}_{\rm e}(\eta),  &\ \ \ \mbox{on} \ \ \ \tilde{\Gamma}^{(\rm e)}_{\rm ex},\\[5pt]
z=z_{0}(\eta),   &\ \ \   \mbox{on} \ \ \ \tilde{\Gamma}^{(\rm h)}_{\rm in}, \\[5pt]
\partial_{\xi}\varphi= g'_{+}(\xi), &\ \ \  \mbox{on} \ \ \ \tilde{\Gamma}_{+},\\[5pt]
\partial_{\xi}\varphi=\tilde{\omega}^{(\rm h)}, \  \ \tilde{p}^{(\rm e)}=\tilde{p}^{(\rm h)},
&\ \ \ \mbox{on} \ \ \ \tilde{\Gamma}_{\rm cd},\\[5pt]
z_{-}+z_{+}=2\arctan g'_{-}(\xi), &\ \ \ \mbox{on} \ \ \ \tilde{\Gamma}_{-},
\end{array}
\right.
\end{eqnarray}
where $m^{(\rm e)}$ on $\tilde{\Gamma}^{(\rm e)}_{+}$ is determined by \eqref{eq:3.23m}.
To make the solution $\varphi$  unique, without loss of the generality, we prescribe the one-point condition:
\begin{equation}\label{3.80}
\varphi(0,0)=\underline{\varphi}(0,0)=0.
\end{equation}

\par We have the following theorem for the nonlinear free boundary value problem $(\mathbf{NP})$.

\begin{theorem}\label{thm:3.2}
For any given data $\big(\tilde{p}^{(\rm e)}_{0},\tilde{B}^{(\rm e)}_{0}, \tilde{S}^{(\rm e)}_{0}\big)(\eta)$ at the entrance $\tilde{\Gamma}^{(\rm e)}_{\rm in}$, $\big(z_{0},\tilde{B}^{(\rm h)}_{0}, \tilde{S}^{(\rm h)}_{0}\big)(\eta)$ at the entrance $\tilde{\Gamma}^{(\rm h)}_{\rm in} $ satisfying \eqref{eq:3.14} and the  compatible conditions at the corner point {$(0, -m^{(\rm h)})$}, and $\tilde{\omega}_{\rm e}$ at the exit $\tilde{\Gamma}^{(\rm e)}_{\rm ex}$ with \eqref{eq:3.15} and \eqref{eq:3.15b}, there exist a pair of constants $\alpha_0\in(0,1)$ and {\color{black}$\tilde{\epsilon}_{0}>0$} 
depending only on $\tilde{\underline{U}}$ and $L$, such that, if the given data satisfy
\begin{eqnarray}\label{eq:3.58}
\begin{split}
&{\big\|\tilde{p}^{(\rm e)}_{0}-\underline{p}^{(\rm e)}\big\|}_{1,\alpha;\tilde{\Gamma}^{(\rm e)}_{\rm in}}
+\sum_{k=\rm e, \rm h}\bigg(\big\|\tilde{B}^{(\rm k)}_{0}-\underline{B}^{(\rm k)}
\big\|_{1,\alpha;\tilde{\Gamma}^{(\rm k)}_{\rm in}}
+\big\|\tilde{S}^{(\rm k)}_{0}-\underline{S}^{(\rm k)}\big\|_{1,\alpha;\tilde{\Gamma}^{(\rm k)}_{\rm in}}\bigg)\\[5pt]
&\quad\ \  +{\color{black}\big\|\tilde{\omega}_{\rm e}\big\|^{(-1-\alpha, \{\mathcal{P}_{\rm e}, \mathcal{Q}_{\rm e}\})}_{2,\alpha;\tilde{\Gamma}^{(\rm e)}_{\rm ex}}}
+\big\|z_{0}-\underline{z}\big\|_{1,\alpha; \tilde{\Gamma}^{(\rm h)}_{\rm in}}
+\big\|g_{+}-1\big\|_{2,\alpha;\tilde{ \Gamma}_{+}}
+\big\|g_{-}+1\big\|_{2,\alpha;  \tilde{\Gamma}_{-}} \leq {\color{black}\tilde{\epsilon}},
\end{split}
\end{eqnarray}
and
\begin{eqnarray}
\underline{M}^{(\rm h)}=\frac{\underline{u}^{(\rm h)}}{\underline{c}^{(\rm h)}}\geq \sqrt{1+L^{2}/4},
\end{eqnarray}
for any $\alpha\in(0,\alpha_{0})$ and {\color{black}$\tilde{\epsilon} \in (0, \tilde{\epsilon}_{0})$},
then the problem $(\mathbf{NP})$ with \eqref{3.80} admits a unique solution $z\in  C^{1,\alpha}(\tilde{\Omega}^{(\rm h)})$, $\varphi\in {C^{2,\alpha}_{(-1-\alpha, \tilde{\Sigma}^{(\rm e)}\backslash \{\mathcal{O}\})}}(\tilde{\Omega}^{(\rm e)})$, and a positive
constant $m^{(\rm e)}$. Moreover, the following estimate holds:
\begin{eqnarray}\label{eq:3.60}
\begin{split}
\big\|\varphi-\underline{\varphi}\big\|^{(-1-\alpha, \tilde{\Sigma}^{(\rm e)}\backslash \{\mathcal{O}\})}_{2,\alpha;\tilde{\Omega}^{(\rm e)}}
+\big\|z-\underline{z}\big\|_{1,\alpha;\tilde{\Omega}^{(\rm h)}}+\big|m^{(\rm e)}-\underline{m}^{(\rm e)}\big|\leq \tilde{C}_{0} {\color{black}\tilde{\epsilon}},
\end{split}
\end{eqnarray}
where $m^{(\rm e)}$ satisfies
\begin{eqnarray}\label{eq:3.61}
\begin{split}
\int^{m^{(\rm e)}}_{0}\partial_{\eta}\varphi(0, \tau)d\tau=g_{+}(0).
\end{split}
\end{eqnarray}
Here, $\tilde{C}_{0}=\tilde{C}_{0}(\underline{U},L)$ is a positive constant independent of {\color{black}$\tilde{\epsilon}$}.
\end{theorem}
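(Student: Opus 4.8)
The plan is to prove Theorem~\ref{thm:3.2} by decoupling the determination of the free constant $m^{(\rm e)}$ (equivalently, the free upper wall in the Lagrangian picture) from the fixed--boundary transonic problem, and then solving the latter by a linearization/iteration argument that respects the hyperbolic--elliptic structure across $\tilde{\Gamma}_{\rm cd}$.

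\emph{Step 1: Reduction to a fixed--boundary problem $(\mathbf{FP})$ and the mass--flux map.} For each admissible constant $m^{(\rm e)}$ close to $\underline{m}^{(\rm e)}$, I freeze $m^{(\rm e)}$ in the domain $\tilde{\Omega}^{(\rm e)}$, obtaining the fixed--boundary version $(\mathbf{FP})$ of the problem \eqref{eq:3.56}. Assuming $(\mathbf{FP})$ is solvable with the estimate to be proved in Steps 2--4, I plug its solution into the mass--conservation identity \eqref{eq:3.61} to produce an updated constant $\mathcal{T}(m^{(\rm e)})$. Since $\partial_{\eta}\varphi=\tfrac{1}{\tilde{\rho}^{(\rm e)}\tilde{u}^{(\rm e)}}>0$ by \eqref{eq:3.27} stays close to $\tfrac{1}{\underline{m}^{(\rm e)}}$, and the dependence of the solution on $m^{(\rm e)}$ is $O(\tilde{\varepsilon})$, the map $\mathcal{T}$ is Lipschitz with small constant on a tiny interval around $\underline{m}^{(\rm e)}$, hence a contraction with a unique fixed point; this is carried out in \S 7 and yields $|m^{(\rm e)}-\underline{m}^{(\rm e)}|\le \tilde{C}_0\tilde{\varepsilon}$.

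\emph{Step 2: The elliptic subproblem with an auxiliary contact angle.} For $(\mathbf{FP})$, I linearize around $\underline{\tilde{U}}$ and set up an iteration producing approximate solutions $(\mathbf{FP})_{\rm n}$. The coupling obstruction is that on $\tilde{\Gamma}_{\rm cd}$ the flow is characteristic from the supersonic side, so the Riemann invariants $z_\pm$ cannot be recovered there from the Rankine--Hugoniot relations \eqref{eq:3.20} alone. I therefore introduce a free contact angle $\omega_{\rm cd}$, replace $\partial_\xi\varphi=\tilde{\omega}^{(\rm h)}=\tan\frac{z_-+z_+}{2}$ on $\tilde{\Gamma}_{\rm cd}$ by $\partial_\xi\varphi=\omega_{\rm cd}$, and first solve the modified linearized problem $(\widetilde{\mathbf{FP}})_{\rm n}$: the second--order linear elliptic equation for $\varphi$ on $\tilde{\Omega}^{(\rm e)}$ (the linearization of \eqref{eq:3.31}, with $\tilde{\rho}^{(\rm e)}$ as in Proposition~\ref{prop:3.1}) carrying the oblique condition \eqref{eq:3.32} at $\tilde{\Gamma}^{(\rm e)}_{\rm ex}$, the Dirichlet condition \eqref{eq:3.33} at $\tilde{\Gamma}_+$, the linearized pressure condition \eqref{eq:3.34} at $\tilde{\Gamma}^{(\rm e)}_{\rm in}$, and $\partial_\xi\varphi=\omega_{\rm cd}$ on $\tilde{\Gamma}_{\rm cd}$. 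Because the boundary conditions change type at the corners and the domain is only Lipschitz, I invoke the linear elliptic theory in the weighted H\"older spaces $C^{2,\alpha}_{(-1-\alpha, \tilde{\Sigma}^{(\rm e)}\backslash \{\mathcal{O}\})}$, normalizing by \eqref{3.80}, to obtain existence, uniqueness, and a weighted estimate for $\varphi$; using $\omega_{\rm cd}$ rather than $\tan\frac{z_-+z_+}{2}$ on $\tilde{\Gamma}_{\rm cd}$ is precisely what keeps the corner singularity of $\varphi$ at $\mathcal{O}$ out of the data for the supersonic region.

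\emph{Step 3: The hyperbolic subproblem and the closing of the contact--angle map.} With $\varphi$ in hand, the continuity of pressure across $\tilde{\Gamma}_{\rm cd}$, together with Remark~\ref{rem:3.2} and Proposition~\ref{prop:3.2}, becomes a boundary relation for $z$ on $\tilde{\Gamma}_{\rm cd}$. I then solve the diagonal system \eqref{eq:3.53x} in $\tilde{\Omega}^{(\rm h)}$ by the method of characteristics, with Cauchy data \eqref{eq:3.54} on $\tilde{\Gamma}^{(\rm h)}_{\rm in}$, the reflection condition \eqref{eq:3.55} on $\tilde{\Gamma}_-$, and the derived condition on $\tilde{\Gamma}_{\rm cd}$, propagating through the finitely many subregions cut out by the $\lambda_\pm$--characteristics reflecting off $\tilde{\Gamma}_{\rm cd}$ and $\tilde{\Gamma}_-$; the hypothesis $\underline{M}^{(\rm h)}>\sqrt{1+L^2/4}$ bounds the characteristic slopes so that only boundedly many reflections occur and the construction closes with a $C^{1,\alpha}(\tilde{\Omega}^{(\rm h)})$ estimate. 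Feeding $\tilde{\omega}^{(\rm h)}=\tan\frac{z_-+z_+}{2}$ on $\tilde{\Gamma}_{\rm cd}$ back into the contact condition defines the map $\mathbf{T}_\omega:\omega_{\rm cd}\mapsto\tilde{\omega}_{\rm cd}$. This map need not be contractive, so I differentiate the contact relation in $\omega_{\rm cd}$, check invertibility of the linearization near the background (where the defect vanishes to first order), and apply the implicit function theorem to obtain a unique fixed point $\omega_{\rm cd}$ with norm $O(\tilde{\varepsilon})$; this is \S 5.

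\emph{Step 4: Convergence of the iteration and conclusion.} Collecting the weighted $C^{2,\alpha}$ estimate for $\varphi$, the $C^{1,\alpha}$ estimate for $z$, and the estimate for $\omega_{\rm cd}$, I show in \S 6 that the iteration map for $(\mathbf{FP})_{\rm n}$ is a contraction in the weaker norm $C^{1}(\tilde{\Omega}^{(\rm e)})\times C^{0}(\tilde{\Omega}^{(\rm h)})$; combined with uniform bounds in $C^{2,\alpha}(\tilde{\Omega}^{(\rm e)})\times C^{1,\alpha}(\tilde{\Omega}^{(\rm h)})$ and the compact embedding, the approximate solutions converge to a fixed point, which is the unique solution of $(\mathbf{FP})$ and satisfies \eqref{eq:3.60}. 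Combining this with the fixed point of $\mathcal{T}$ from Step 1 produces the solution $(\varphi,z,m^{(\rm e)})$ of \eqref{eq:3.56} with \eqref{3.80}, and uniqueness follows from the contraction estimates of Steps 1 and 4. The main difficulty throughout is the interaction at the characteristic free interface $\tilde{\Gamma}_{\rm cd}$: one must solve the elliptic and hyperbolic parts simultaneously while the normal trace of $z$ there is lost, and at the same time prevent the corner singularity of $\varphi$ at $\mathcal{O}$ from contaminating the supersonic region — it is exactly to handle this that the auxiliary angle $\omega_{\rm cd}$, the weighted H\"older spaces, and the implicit--function--theorem closure are introduced.
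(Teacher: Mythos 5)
Your proposal is correct and follows essentially the same route as the paper: the outer contraction for the mass flux $m^{(\rm e)}$ via the map $\mathcal{T}$ (\S 7), the inner iteration $(\mathbf{FP})_{\rm n}$ with the auxiliary contact angle $\omega_{\rm cd}$ replacing $\tan\frac{z_-+z_+}{2}$ on $\tilde{\Gamma}_{\rm cd}$ to shield the supersonic region from the corner singularity at $\mathcal{O}$ (\S 4), the implicit-function-theorem closure of the angle map $\mathbf{T}_\omega$ (\S 5), and the contraction of $\mathcal{J}$ in the weaker $C^{1}\times C^{0}$ norm combined with compactness (\S 6). The decomposition, the key difficulty you isolate (loss of the normal trace of $z$ on the characteristic free interface together with the corner singularity of $\varphi$), and the tools you invoke (weighted H\"older estimates for the Lipschitz domain, characteristic propagation with finitely many reflections under the Mach-number bound, IFT for $\mathbf{T}_\omega$) all coincide with those used in the paper.
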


\begin{remark}\label{rem:3.4}
As   in Proposition \ref{prop:3.1} and Proposition \ref{prop:3.2}, Remark \ref{rem:3.2}, and Remark \ref{rem:3.3},
we know that Theorem \ref{thm:3.1} is a   corollary of Theorem \ref{thm:3.2},  which indicates
that it suffices to consider the solutions to {Problem C} instead of  {Problem B}.
Therefore, we are devoted to prove Theorem \ref{thm:3.2} in the rest of the paper. 
\end{remark}

{
We will solve $(\mathbf{NP})$ in two steps:

\emph{Step\ 1. Assign a value for $m^{(\rm e)}$ and solve the corresponding fixed boundary value problem.}
Let $\underline{m}^{(\rm e)}=\underline{\rho}^{(\rm e)}\underline{u}^{(\rm e)}$, which depends on the background solution given in \eqref{eq:3.12}. 
For any $\sigma>0$, define
\begin{eqnarray}\label{eq:3.62}
\mathcal{M}_{\sigma}=\big\{m^{(\rm e)}:\ m^{(\rm e)}>0, \ \mbox{and} \ \  \big|m^{(\rm e)}-\underline{m}^{(\rm e)}\big| \leq \sigma \big\}.
\end{eqnarray}
For any given $m^{(\rm e)} \in \mathcal{M}_{\sigma}$, define $\tilde{\Gamma}^{(\rm h)}_{+}$ by \eqref{3.13},  and then   consider the following nonlinear fixed boundary value problem for $(\mathbf{NP})$:

\begin{eqnarray}\label{eq:3.63}
(\mathbf{FP})\quad   \left\{
\begin{array}{llll}
\partial_{\xi}\Big(\frac{\partial_{\xi}\varphi}{\tilde{\rho}^{(\rm e)}
(D\varphi;\tilde{B}^{(\rm e)}_{0},\tilde{S}^{(\rm e)}_{0}) \partial_{\eta}\varphi}\Big)
+\partial_{\eta}\tilde{p}^{(\rm e)}(D\varphi;\tilde{B}^{(\rm e)}_{0},\tilde{S}^{(\rm e)}_{0})=0,
&\ \ \  \mbox{in} \ \ \ \tilde{\Omega}^{(\rm e)},   \\[5pt]
\partial_{\xi}z+\textrm{diag}( \lambda_{+}, \lambda_{-})\partial_{\eta}z=0,
&\ \ \ \mbox{in}\ \ \  \tilde{\Omega}^{(\rm h)},  \\[5pt]
\tilde{p}^{(\rm e)}=\tilde{p}^{(\rm e)}_{\rm 0}(\eta),     &\ \ \  \mbox{on}\ \ \ \tilde{\Gamma}^{(\rm e)}_{\rm in},\\[5pt]
\partial_{\xi}\varphi=\tilde{\omega}_{\rm e}(\eta),
&\ \ \  \mbox{on}\ \ \ \tilde{\Gamma}^{(\rm e)}_{\rm ex},\\[5pt]
z=z_{0}(\eta),   &\ \ \   \mbox{on}\ \ \ \tilde{\Gamma}^{(\rm h)}_{\rm in}, \\[5pt]
{\partial_{\xi}\varphi= g'_{+}(\xi)}, &\ \ \  \mbox{on}\ \ \ \tilde{\Gamma}_{+},\\[5pt]
\partial_{\xi}\varphi=\tan\Big(\frac{z_{-}+z_{+}}{2}\Big),   &\ \ \ \mbox{on}\ \ \ \tilde{\Gamma}_{\rm cd},\\[5pt]
z_{-}-z_{+}=2\Theta(\tilde{p}^{(\rm e)},\tilde{B}^{(\rm h)}_{0}, \tilde{S}^{(\rm h)}_{0}),
&\ \ \ \mbox{on}\ \ \ \tilde{\Gamma}_{\rm cd},\\[5pt]
z_{-}+z_{+}=2\arctan g'_{-}(x), &\ \ \ \mbox{on}\ \ \ \tilde{\Gamma}_{-},
\end{array}
\right.
\end{eqnarray}
where $\tilde{p}^{(\rm e)}_{0}$ and $\tilde{p}^{(\rm h)}_{0}$ satisfy \eqref{eq:3.14}.
An outline on solving the above problem $(\mathbf{FP})$ will be given in the Section \ref{sec35} below.

\emph{Step\ 2. Update the approximate boundary $\tilde{\Gamma}^{(\rm e)}_{+}$, (i.e., $\eta=m^{(\rm e)}$).}
To update the free boundary, we solve the following problem:
\begin{eqnarray}\label{eq:3.64}
\left\{
\begin{array}{llll}
\int^{m^{(\rm e)}}_{0}\partial_{\eta}\varphi(0,\tau)d\tau=g_{+}(0), \\[5pt]
 \varphi(0,0)=0.
\end{array}
\right.
\end{eqnarray}
Then we define a map $\mathcal{T}$ such that $\tilde{m}^{(\rm e)}=\mathcal{T}(m^{(\rm e)})$. 
We will show that

\smallskip
\par (1)\ $\mathcal{T}$ is well-defined, i.e., $\mathcal{T}$ exists uniquely and maps from $\mathcal{M}_{\sigma}$ to $\mathcal{M}_{\sigma}$;
\par (2)\ $\mathcal{T}$ is a contraction.

\noindent
This step will be achieved in Section 7.
}

{\subsection{Solving the \emph{Step 1}, i.e., nonlinear fixed boundary value problem $(\mathbf{FP})$}\label{sec35}

\vspace{2pt}
\begin{figure}[ht]
\begin{center}
\begin{tikzpicture}[scale=1.2]
\draw [line width=0.08cm] (-5,1.5) --(3.5,1.5);
\draw [line width=0.08cm] (-5,-2) --(3.5,-2);
\draw [thick](-5,-2) --(-5,1.5);
\draw [thick](3.5,-2) --(3.5,1.5);
\draw [line width=0.08cm][dashed][red](-5,0) --(3.5,0);
\draw [thick][blue](-5,-2) --(-2.5,0);
\draw [thick][black](-5,0)--(-2.5,-2) ;
\draw [thick][black](-2.5,0)--(0,-2) ;
\draw [thick][blue](-2.5,-2) --(0,0);
\node at (-5.5, -1) {$\tilde{\Gamma}^{(\rm h)}_{\rm in}$};
\node at (-5.5, 1) {$\tilde{\Gamma}^{(\rm e)}_{\rm in}$};
\node at (3.9, 0.8) {$\tilde{\Gamma}^{(\rm e)}_{\rm ex}$};
\node at (-1, 1.8) {$\tilde{\Gamma}_{+}$};
\node at (-1, 0.3) {$\tilde{\Gamma}_{\rm cd}$};
\node at (-1, -2.3) {$\tilde{\Gamma}_{-}$};
\node at (-5.5, 0) {$\mathcal{O}$};
\node at (4.2, 1.5) {$\eta=m^{(\rm e)}$};
\node at (4.0, 0) {$\eta=0$};
\node at (4.3, -1.9) {$\eta=-m^{(\rm h)}$};
\node at (0, 0.3) {$\xi^{*}_{\rm cd, 1}$};
\node at (-2.5, 0.3) {$\xi^{*}_{\rm cd, 0}$};
\node at (-5, -2.3) {$0$};
\node at (-2.4, -2.3) {$\xi^{*}_{-, 0}$};
\node at (0.1, -2.3) {$\xi^{*}_{-, 1}$};
\node at (3.5, -2.3) {$L$};
\node at (1.7, -0.9) {$\tilde{\Omega}^{(\rm h)}$};
\node at (1.7, 1) {$\tilde{\Omega}^{(\rm e)}$};
\end{tikzpicture}
\caption{Iteration algorithm for the problem $(\mathbf{FP})$}\label{fig3.4}
	\end{center}
\end{figure}
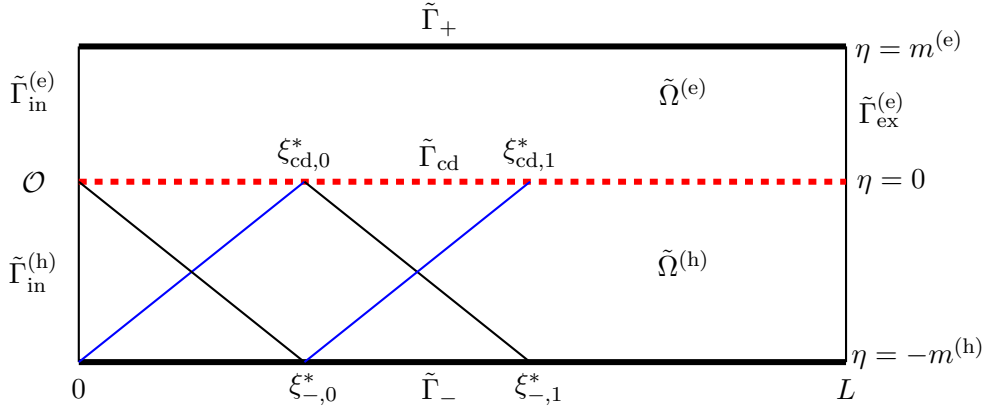

In order to solve $(\mathbf{FP})$, we will formulate an iteration scheme. 
We first construct a sequence of approximate solutions by linearizing the problem $(\mathbf{FP})$ \emph{partially} near the background state $\tilde{\underline{U}}$,
and then show its convergence to a unique limit that satisfies the problem $(\mathbf{FP})$.

\emph{Step\ 1.1.}\ The linearized problem that is a boundary value problem for elliptic-hyperbolic mixed-type equations will be solved {\color{black} first in $\tilde{\Omega}^{(\rm e)}$ and then in $\tilde{\Omega}^{(\rm h)}$
together} 
 (see Fig. \ref{fig3.4}). To do this, we define the iteration set as
\begin{eqnarray}\label{eq:3.75}
\begin{split}
\mathscr{K}_{{\color{black}\epsilon_{I}}}=\Big\{(\varphi^{(\rm n)}, z^{(\rm n)}):\ & \big\| \varphi^{(\rm n)}-\underline{\varphi}\big\|^{(-1-\alpha, \tilde{\Sigma}^{(\rm e)}\backslash \{\mathcal{O}\})}_{2, \alpha; \tilde{\Omega}^{(\rm e)}}+\big\|z^{(\rm n)}-\underline{z}\big\|_{1,\alpha; \tilde{\Omega}^{(\rm h)}}
\leq {\color{black}\epsilon_{I}} \Big\},
\end{split}
\end{eqnarray}
for some $0<{\color{black}\epsilon_{I}}<1$.

For any given $(\varphi^{(\rm n-1)}, z^{(\rm n-1)})\in\mathscr{K}_{2{\color{black}\epsilon_{I}}}$, the linearized boundary value problem of $(\mathbf{FP})$ is
\begin{eqnarray}\label{eq:3.68}
(\mathbf{FP})_{\rm n}\quad
\left\{
\begin{array}{llll}
\partial_{\xi}\mathcal{N}_{1}(D\varphi^{(\rm n)};\tilde{B}^{(\rm e)}_{0},\tilde{S}^{(\rm e)}_{0})
+\partial_{\eta}\mathcal{N}_{2}(D\varphi^{(\rm n)};\tilde{B}^{(\rm e)}_{0},\tilde{S}^{(\rm e)}_{0})=0,
&\ \  \mbox{in}\ \ \  \tilde{\Omega}^{(\rm e)}, \\[5pt]
\partial_{\xi}\delta z^{(\rm n)}+ \textrm{diag}( \lambda^{(\rm n-1)}_{+}, \lambda^{(\rm n-1)}_{-})\partial_{\eta}\delta z^{(\rm n)}=0,
&\ \  \mbox{in}\ \ \ \tilde{\Omega}^{(\rm h)},  \\[5pt]
\delta\varphi^{(\rm n)}=g^{(\rm n)}_{0}(\eta),
&\ \   \mbox{on}\ \ \ \tilde{\Gamma}^{(\rm e)}_{\rm in},\\[5pt]
\partial_{\xi}\delta\varphi^{(\rm n)}=\tilde{\omega}_{\rm e}(\eta),  &\ \   \mbox{on}\ \ \ \tilde{\Gamma}^{(\rm e)}_{\rm ex},\\[5pt]
\delta z^{(\rm n)}=\delta z_{0}(\eta),  &\ \   \mbox{on}\ \ \ \tilde{\Gamma}^{(\rm h)}_{\rm in},\\[5pt]
\delta\varphi^{(\rm n)}=g^{(\rm n)}_{+}(\xi),
&\ \  \mbox{on}\ \ \ \tilde{\Gamma}_{+},\\[5pt]
\delta\varphi^{(\rm n)}=\int^{\xi}_{0}\tan\Big(\frac{\delta z^{(\rm n)}_{-}+\delta z^{(\rm n)}_{+}}{2}\Big)(\tau,0)d\tau,
&\ \   \mbox{on}\ \ \ \tilde{\Gamma}_{\rm cd},\\[5pt]
\delta z^{(\rm n)}_{-}-\delta z^{(\rm n)}_{+}=2\beta^{(\rm n-1)}_{\rm cd, 1}\partial_{\xi}\delta\varphi^{(\rm n)}
+2\beta^{(\rm n-1)}_{\rm cd, 2}\partial_{\eta}\delta\varphi^{(\rm n)}+2c_{\rm cd}(\xi),
&\ \   \mbox{on}\ \ \ \tilde{\Gamma}_{\rm cd},\\[5pt]
\delta z^{(\rm n)}_{-}+\delta z^{(\rm n)}_{+}=2\arctan g'_{-}(\xi), &\ \  \mbox{on}\ \ \ \tilde{\Gamma}_{-},
\end{array}
\right.
\end{eqnarray}
where $\delta\varphi^{(\rm n)}(0,0)=0$, 
\begin{eqnarray}\label{eq:3.69}
\begin{split}
&\mathcal{N}_{1}(D\varphi^{(\rm n)};\tilde{B}^{(\rm e)}_{0},\tilde{S}^{(\rm e)}_{0})
=\frac{\partial_{\xi}\varphi^{(\rm n)}}{\tilde{\rho}^{(\rm e)}(D\varphi^{(\rm n)};\tilde{B}^{(\rm e)}_{0},\tilde{S}^{(\rm e)}_{0})\partial_{\eta}\varphi^{(\rm n)}}, \\[5pt]
&\mathcal{N}_{2}(D\varphi^{(\rm n)};\tilde{B}^{(\rm e)}_{0},\tilde{S}^{(\rm e)}_{0})
=\tilde{p}^{(\rm e)}(D\varphi^{(\rm n)};\tilde{B}^{(\rm e)}_{0},\tilde{S}^{(\rm e)}_{0}),
\end{split}
\end{eqnarray}
\begin{eqnarray}\label{eq:3.70}
\begin{split}
&\delta z^{(\rm n)}:= z^{(\rm n)}-\underline{z}, \ \ \
\delta z^{(\rm n)}_{0}(\eta):= z^{(\rm n)}_{0}(\eta)-\underline{z},\\[5pt]
&\delta \varphi^{(\rm n)}:=\varphi^{(\rm n)}-\underline{\varphi},\ \ \
\lambda^{(\rm n-1)}_{\pm}:=\lambda_{\pm}(z^{(\rm n-1)};\tilde{B}^{(\rm h)}_{0},\tilde{S}^{(\rm h)}_{0}),
\end{split}
\end{eqnarray}

\small\begin{equation}\label{eq:3.72}
\begin{split}
\beta^{(\rm n-1)}_{\rm cd, 1}
&:=\int^{1}_{0}\partial_{p^{(\rm e)}}\Theta\big(\underline{\tilde{p}}^{(\rm e)}_{0}+\tau(\tilde{p}^{(\rm e)}_{\rm n-1,0}
-\underline{\tilde{p}}^{(\rm e)}_{0});\tilde{B}^{(\rm h)}_{0},\tilde{S}^{(\rm h)}_{0}\big)d\tau\\[5pt]
&\qquad \ \ \ \times
\int^{1}_{0}\partial_{\partial_{\xi}\varphi}\tilde{p}^{(\rm e)}\big(D\underline{\varphi}+\tau D\delta\varphi^{(\rm n-1)};\tilde{B}^{(\rm e)}_{0},\tilde{S}^{(\rm e)}_{0}\big)d\tau,\\[5pt]
\beta^{(\rm n-1)}_{\rm cd, 2}
&:=\int^{1}_{0}\partial_{\tilde{p}^{(\rm e)}}\Theta\big(\underline{\tilde{p}}^{(\rm e)}_{0}+\tau(\tilde{p}^{(\rm e)}_{\rm n-1,0}
-\underline{\tilde{p}}^{(\rm e)}_{0});\tilde{B}^{(\rm h)}_{0},\tilde{S}^{(\rm h)}_{0}\big)d\tau\\[5pt]
&\qquad \ \ \ \times
\int^{1}_{0}\partial_{\partial_{\eta}\varphi}\tilde{p}^{(\rm e)}\big(D\underline{\varphi}+\tau D\delta\varphi^{(\rm n-1)};\tilde{B}^{(\rm e)}_{0},\tilde{S}^{(\rm e)}_{0}\big)d\tau,
\end{split}
\end{equation}
\begin{eqnarray}\label{eq:3.73}
\begin{split}
c_{\rm cd}(\xi):=\Theta(\underline{\tilde{p}}^{(\rm e)}_{0}; B^{(\rm h)}_{0}, S^{(\rm h)}_{0})
-\Theta(\underline{p}^{(\rm e)}; \underline{B}^{(\rm h)},  \underline{S}^{(\rm h)}),
\end{split}
\end{eqnarray}
and
\begin{eqnarray}\label{eq:3.73a}
\begin{split}
&g^{(\rm n)}_{0}(\eta):=\frac{1}{\underline{\beta}_{0,2}}\int^{\eta}_{0}\Big(\tilde{p}^{(\rm e)}_{0}(\mu)
-\underline{\tilde{p}}^{(\rm e)}_{0}(\mu)-\beta^{\rm (n)}_{0,1}
\partial_{1}\delta\varphi^{(\rm n)}-(\beta^{\rm (n)}_{0,2}-\underline{\beta}_{0,2})
\partial_{2}\delta\varphi^{(\rm n)}\Big)d\mu,\\[5pt]
&g^{(\rm n)}_{+}(\xi):=g_{+}(\xi)-g_{+}(0)+g^{(\rm n)}_{0}(m^{(\rm e)}).
\end{split}
\end{eqnarray}
Here
\begin{eqnarray}\label{eq:3.74}
\begin{split}
\tilde{p}^{(\rm e)}_{\rm n-1,0}(\xi, \eta):= \tilde{p}^{(\rm e)}\big(D\varphi^{(\rm n-1)};\tilde{B}^{(\rm e)}_{0},\tilde{S}^{(\rm e)}_{0}\big),
\  \  \ \
\underline{\tilde{p}}^{(\rm e)}_{0}(\eta):= \tilde{p}^{(\rm e)}\big(D\underline{\varphi};\tilde{B}^{(\rm e)}_{0},\tilde{S}^{(\rm e)}_{0}\big).
\end{split}
\end{eqnarray}
and
\begin{eqnarray}\label{eq:3.71}
\begin{split}
\beta^{(\rm n)}_{0, 1}
:=\int^{1}_{0}\partial_{\partial_{\xi}\varphi}\tilde{p}^{(\rm e)}\big(D\underline{\varphi}+\nu D\delta\varphi^{(\rm n)};\tilde{B}^{(\rm e)}_{0},\tilde{S}^{(\rm e)}_{0}\big)d\nu,\\[5pt]
\beta^{(\rm n)}_{0, 2}
:=\int^{1}_{0}\partial_{\partial_{\eta}\varphi}\tilde{p}^{(\rm e)}\big(D\underline{\varphi}+\nu D\delta\varphi^{(\rm n)};\tilde{B}^{(\rm e)}_{0},\tilde{S}^{(\rm e)}_{0}\big)d\nu.
\end{split}
\end{eqnarray}

{\color{black}We remark that the linearized boundary condition $\eqref{eq:3.68}_{8}$ on $\tilde{\Gamma}_{\rm cd}$ is derived by first taking the difference between the boundary condition $\eqref{eq:3.63}_{8}$ on $\tilde{\Gamma}_{\rm cd}$ and the corresponding background state,   and then applying the mean value theorem of the integral form. At the fixed point, the second boundary condition $\eqref{eq:3.68}_{8}$ on $\tilde{\Gamma}_{\rm cd}$ will recover the second boundary condition $\eqref{eq:3.63}_{8}$ on $\tilde{\Gamma}_{\rm cd}$.}

\par Now, we can introduce the map
\begin{eqnarray}\label{eq:3.80}
\mathcal{J}:\  \mathscr{K}_{2{\color{black}\epsilon_I}}\longrightarrow \mathscr{K}_{2{\color{black}\epsilon_I}}.
\end{eqnarray}
For the given function $(\varphi^{(\rm n-1)}, z^{(\rm n-1)})\in \mathscr{K}_{2{\color{black}\epsilon_I}}$,
we solve problem $(\widetilde{\mathbf{FP}})_{\rm n}$ to obtain the function
$(\varphi^{(\rm n)}, z^{(\rm n)})\in \mathscr{K}_{2{\color{black}\epsilon_I}}$.
Then the map $\mathcal{J}$ is defined as
\begin{eqnarray}\label{eq:3.81}
(\varphi^{(\rm n)}, z^{(\rm n)}):=\mathcal{J}(\varphi^{(\rm n-1)}, z^{(\rm n-1)}).
\end{eqnarray}
For the map $\mathcal{J}$, we are going to verify the following: 
\smallskip
\par (1)\ \ $\mathcal{J}$ is well-defined, i.e., $\mathcal{J}$ exists and maps from
$\mathscr{K}_{2{\color{black}\epsilon_I}}$ to $\mathscr{K}_{2{\color{black}\epsilon_I}}$;
\par (2)\ \ $\mathcal{J}$ is a contraction.

\noindent
These two facts will be achieved by  Lemma \ref{lem:6.1} in Section 6.
Then,   it follows from the Banach fixed point theorem that the sequence is convergent and its limit is a solution of  the problem $(\mathbf{FP})$.
{\color{black}In fact, by taking the tangential derivatives on the boundary conditions $\eqref{eq:3.68}_{3}$, $\eqref{eq:3.68}_{6}$ and $\eqref{eq:3.68}_{7}$
along the boundaries $\tilde{\Gamma}^{(\rm e)}_{\rm in}$, $\tilde{\Gamma}_{+}$ and $\tilde{\Gamma}_{\rm cd}$, it is easy to see that the solution of problem $(\mathbf{FP})_{\rm n}$ with $(\varphi^{(\rm n)}, z^{(\rm n)})=(\varphi^{(\rm n-1)}, z^{(\rm n-1)})$
is actually the solution of problem $(\mathbf{FP})$}.

\emph{Step\ 1.2. } One of the main difficulties in showing the existence and uniqueness of problem $(\mathbf{FP})_{\rm n}$ in $\mathscr{K}_{\epsilon}$ is due to the requirement of the higher order regularity of the solution $\varphi$ near the corner point. In order to overcome this difficulty,  we need to define a Banach space:
{\color{black}\begin{eqnarray}\label{eq:3.76}
\mathcal{W}=\big\{\omega_{\rm cd}:\ \omega_{\rm cd}(0)=\omega'_{\rm cd}(0)=0, \omega_{\rm cd}(L)=\tilde{\omega}_{\rm e}(0), \
 \|\omega_{\rm cd}\|^{(-\alpha, \{\mathcal{P}_{\rm e}\})}_{1,\alpha; \tilde{\Gamma}_{\rm cd}}<\infty \big\}.
\end{eqnarray}
}
Given  any $\omega_{\rm cd}\in \mathcal{W}$ satisfying
\begin{eqnarray}\label{eq:3.77}
\|\omega_{\rm cd}\|^{(-\alpha, \{\mathcal{P}_{\rm e}\})}_{1,\alpha; \tilde{\Gamma}_{\rm cd}}\leq \varrho,
\end{eqnarray}
for a 
sufficiently small constant $\varrho>0$,
let us consider the following nonlinear problem:
\begin{eqnarray}\label{eq:3.78}
(\widetilde{\mathbf{FP}})_{\rm n}\quad
\left\{
\begin{array}{llll}
\partial_{\xi}\mathcal{N}_{1}(D\varphi^{(\rm n)};\tilde{B}^{(\rm e)}_{0},\tilde{S}^{(\rm e)}_{0})
+\partial_{\eta}\mathcal{N}_{2}(D\varphi^{(\rm n)};\tilde{B}^{(\rm e)}_{0},\tilde{S}^{(\rm e)}_{0})=0,
&\ \  \mbox{in}\ \ \  \tilde{\Omega}^{(\rm e)}, \\[5pt]
\partial_{\xi}\delta z^{(\rm n)}+ \textrm{diag}( \lambda^{(\rm n-1)}_{+}, \lambda^{(\rm n-1)}_{-})\partial_{\eta}\delta z^{(\rm n)}=0,
&\ \  \mbox{in}\ \ \ \tilde{\Omega}^{(\rm h)},  \\[5pt]
\delta\varphi^{(\rm n)}=g^{(\rm n)}_{0}(\eta),
&\ \   \mbox{on}\ \ \ \tilde{\Gamma}^{(\rm e)}_{\rm in},\\[5pt]
\partial_{\xi}\delta\varphi^{(\rm n)}=\tilde{\omega}_{\rm e}(\eta),  &\ \   \mbox{on}\ \ \ \tilde{\Gamma}^{(\rm e)}_{\rm ex},\\[5pt]
\delta z^{(\rm n)}=\delta z_{0}(\eta),  &\ \   \mbox{on}\ \ \ \tilde{\Gamma}^{(\rm h)}_{\rm in},\\[5pt]
\delta\varphi^{(\rm n)}=g^{(\rm n)}_{+}(\xi),
&\ \  \mbox{on}\ \ \ \tilde{\Gamma}_{+},\\[5pt]
\delta\varphi^{(\rm n)}=\int^{\xi}_{0}\omega_{\rm cd}(\nu)d\nu,
&\ \   \mbox{on}\ \ \ \tilde{\Gamma}_{\rm cd},\\[5pt]
\delta z^{(\rm n)}_{-}-\delta z^{(\rm n)}_{+}=2\beta^{(\rm n-1)}_{\rm cd, 1}\partial_{\xi}\delta\varphi^{(\rm n)}
+2\beta^{(\rm n-1)}_{\rm cd, 2}\partial_{\eta}\delta\varphi^{(\rm n)}+2c_{\rm cd}(\xi),
&\ \   \mbox{on}\ \ \ \tilde{\Gamma}_{\rm cd},\\[5pt]
\delta z^{(\rm n)}_{-}+\delta z^{(\rm n)}_{+}=2\arctan g'_{-}(\xi), &\ \  \mbox{on}\ \ \ \tilde{\Gamma}_{-},
\end{array}
\right.
\end{eqnarray}
in $\mathscr{K}_{2\epsilon_{I}}$ for some $\epsilon_{I}>0$ sufficiently small with $g^{(\rm n)}_{0}$ and $g^{(\rm n)}_{+}$ given by \eqref{eq:3.73a}.
The  \emph{a prior} estimates of the solution
$(\delta\varphi^{(\rm n)}, \delta z^{(\rm n)})$ to the problem
$(\widetilde{\mathbf{FP}})_{\rm n}$ will be obtained in Section 4.

{\em Step\  1.3.}\  With the above solution $(\delta\varphi^{(\rm n)}, \delta z^{(\rm n)})$,
we define a iteration map $\mathbf{T}_{\omega}: \mathcal{W}\longrightarrow \mathcal{W}$, i.e.,  $\tilde{\omega}_{\rm cd}=\mathbf{T}_{\omega}(\omega_{\rm cd}, \boldsymbol{\omega}_{0})$ by the equation
\begin{eqnarray}\label{eq:3.79}
\tilde{\omega}_{\rm cd}=\tan\Big(\frac{\delta z^{(\rm n)}_{-}+\delta z^{(\rm n)}_{+}}{2}\Big),
\end{eqnarray}
{where $\boldsymbol{\omega}_{0}=( \tilde{p}^{(\rm e)}_{0}, \tilde{B}^{(\rm e)}_{0}, \tilde{S}^{(\rm e)}_{0}, z_{0}, \tilde{B}^{(\rm h)}_{0},
\tilde{S}^{(\rm h)}_{0},\tilde{\omega}_{\rm e}, g_{+}, g_{-})$.}

Then, by applying the implicit function theorem, we shall show in Section 5 that $\mathbf{T}_{\omega}$ has a fixed point $\omega_{\rm cd}$ satisfying the equation \eqref{eq:3.79}.}

\smallskip

In fact, for problem $(\mathbf{FP})$, we have

\begin{theorem}\label{thm:3.3}
	For any given $m^{(\rm e)} \in \mathcal{M}_{\sigma}$ {with $\sigma>0$ sufficiently small}, there exist some constants $\alpha_0\in(0,1)$ and {\color{black}$\tilde{\epsilon}_{1}>0$}
 depending only on $\tilde{\underline{U}}$ and $L$, such that, if the data satisfy
	\begin{eqnarray}\label{eq:3.65}
	\begin{split}
	&{\big\|\tilde{p}^{(\rm e)}_{0}-\underline{p}^{(\rm e)}\big\|_{1,\alpha;\tilde{\Gamma}^{(\rm e)}_{\rm in}}}
	+\sum_{k=\rm e, \rm h}\bigg(\big\|\tilde{B}^{(\rm k)}_{0}-\underline{B}^{(\rm k)},
	\big\|_{1,\alpha;\tilde{\Gamma}^{(\rm k)}_{\rm in}}
	+\big\|\tilde{S}^{(\rm k)}_{0}-\underline{S}^{(\rm k)}\big\|_{1,\alpha;\tilde{\Gamma}^{(\rm k)}_{\rm in}}\bigg)\\[5pt]
	&\quad\ \  +{\color{black}\big\|\tilde{\omega}_{\rm e}\big\|^{(-1-\alpha, \{\mathcal{P}_{\rm e},\mathcal{Q}_{\rm e} \})}_{2,\alpha;\tilde{\Gamma}^{(\rm e)}_{\rm ex}}}
	+\big\|z_{0}-\underline{z}\big\|_{1,\alpha; \tilde{\Gamma}^{(\rm h)}_{\rm in}}
	+\big\|g_{+}-1\big\|_{2,\alpha;\tilde{ \Gamma}_{+}}
	+\big\|g_{-}+1\big\|_{2,\alpha;  \tilde{\Gamma}_{-}} \leq {\color{black}\tilde{\epsilon}},
	\end{split}
	\end{eqnarray}
	and
	\begin{eqnarray}\label{eq:3.66}
	\underline{M}^{(\rm h)}=\frac{\underline{u}^{(\rm h)}}{\underline{c}^{(\rm h)}}\geq \sqrt{1+L^{2}/4},
	\end{eqnarray}
for any ${\color{black}\tilde{\epsilon} \in (0, \tilde{\epsilon}_{1})}$ and $\alpha\in(0,\alpha_0)$, then the fixed boundary value problem $(\mathbf{FP})$ 
admits a unique 
solution $(\varphi,z)\in { C^{2,\alpha}_{(-1-\alpha, \tilde{\Sigma}^{(\rm e)}\backslash \{\mathcal{O}\})}}(\tilde{\Omega}^{(\rm e)})\times C^{1,\alpha}(\tilde{\Omega}^{(\rm h)})$ with 
\begin{eqnarray}\label{eq:3.67}
\begin{split}
{\big\|\varphi-\underline{\varphi}\big\|^{(-1-\alpha, \tilde{\Sigma}^{(\rm e)}\backslash \{\mathcal{O}\})}_{2,\alpha;\tilde{\Omega}^{(\rm e)}}}+\big\|z-\underline{z}\big\|_{1,\alpha;\tilde{\Omega}^{(\rm h)}}\leq \tilde{C}_{1} {\color{black}\tilde{\epsilon}},
\end{split}
\end{eqnarray}
where the constant $\tilde{C}_{1}>0$ depends only on $\tilde{\underline{U}}$ and $L$. 
\end{theorem}

\bigskip

\section{
The Solutions to the Linearized Problem $(\widetilde{\mathbf{FP}})_{\rm n}$
in  $\tilde{\Omega}^{(\rm e)}\cup\tilde{\Omega}^{(\rm h)}$}\setcounter{equation}{0}


In this section we will first consider  the solutions $(\varphi^{(\rm n)}, z^{(\rm n)})$ to the problem $(\widetilde{\mathbf{FP}})_{\rm n}$ near $(\underline{\varphi},\underline{z})$ and then establish some {\it a priori} estimates for
$(\varphi^{(\rm n)}, z^{(\rm n)})$   to show that the map $\mathcal{J}$ is well defined.

\begin{theorem}\label{thm:4.1}
{\color{black}For any given $m^{(\rm e)} \in \mathcal{M}_{\sigma}$ with $\sigma>0$ sufficiently small}, {\color{black}there exist some constants $\alpha_0\in(0,1)$, $\mathcal{C}^{*}_0>0$ and $\epsilon^{*}_{I}>0$ depending only on $\tilde{\underline{U}}$ and $L$, and some constant $\varrho_{0}>0$ depending on $\tilde{\epsilon}$ and $\tilde{\underline{U}}$, such that for any $\alpha\in(0,\alpha_0)$ and $\tilde{\epsilon}<\epsilon_I\in (\mathcal{C}^{*}_0 \tilde{\epsilon},\epsilon^{*}_{I})$, $\varrho\in (0,\varrho_{0})$ with $\tilde{\epsilon}>0$ sufficiently small},
if $(\delta \varphi^{(\rm n-1)},\delta z^{(\rm n-1)})\in \mathscr{K}_{2\epsilon_I}$ {\color{black}and $\omega_{\rm cd}\in \mathcal{W}$ with $\|\omega_{\rm cd}\|^{(-\alpha, \{\mathcal{P}_{\rm e}\})}_{1,\alpha; \tilde{\Gamma}_{\rm cd}}\leq \varrho$}, then
the linearized problem $(\widetilde{\mathbf{FP}})_{\rm n}$ admits
a unique solution $(\delta \varphi^{(\rm n)},\delta z^{(\rm n)})\in\mathscr{K}_{2{\color{black}\epsilon_I}}$ satisfying
\begin{eqnarray}\label{eq:4.1}
\begin{split}
&{\big\|\delta \varphi^{(\rm n)}\big\|^{(-1-\alpha, \tilde{\Sigma}^{(\rm e)}\backslash \{\mathcal{O}\})}_{2, \alpha; \tilde{\Omega}^{(\rm e)}}}
+\|\delta z^{(\rm n)}\|_{1, \alpha; \tilde{\Omega}^{(\rm h)}}\\[5pt]
&\ \  \leq C^{*}\bigg({\big\|\delta\tilde{p}^{(\rm e)}_{0}\big\|_{1,\alpha;\tilde{\Gamma}^{(\rm e)}_{\rm in}}}
+\big\|\delta z_{0}\big\|_{1,\alpha; \tilde{\Gamma}^{(\rm h)}_{\rm in}}
+\sum_{\rm k=\rm e, h}\big\|\delta\tilde{B}^{(\rm k)}_{0}\big\|_{1,\alpha;\tilde{\Gamma}^{(\rm k)}_{\rm in}}
+\sum_{\rm k=\rm e, h}\big\|\delta\tilde{S}^{(\rm k)}_{0}\big\|_{1,\alpha;\tilde{\Gamma}^{(\rm k)}_{\rm in}}\\[5pt]
&\quad\qquad \ \ +{\color{black}\big\|\tilde{\omega}_{\rm e}\big\|^{(-1-\alpha, \{\mathcal{P}_{\rm e}, \mathcal{Q}_{\rm e}\})}_{2,\alpha;\tilde{\Gamma}^{(\rm e)}_{\rm ex}}
+\big\|\omega_{\rm cd}\big\|^{(-\alpha, \{\mathcal{P}_{\rm e}\})}_{1,\alpha;\tilde{\Gamma}_{\rm cd}}}+\big\|g_{+}-1\big\|_{2,\alpha;\tilde{ \Gamma}_{+}}
+\big\|g_{-}+1\big\|_{2,\alpha;  \tilde{\Gamma}_{-}}\bigg),
\end{split}
\end{eqnarray}
where $C^{*}>0$ {depends} only on $\tilde{\underline{U}}$ and $L$. Here, $\delta\tilde{p}^{(\rm e)}_{0}=\tilde{p}^{(\rm e)}_{0}-\underline{p}^{(\rm e)}$ and
$\delta\tilde{B}^{(\rm k)}_{0}=\tilde{B}^{(\rm k)}_{0}-\underline{B}^{(\rm k)}$, $\delta\tilde{S}^{(\rm k)}_{0}=\tilde{S}^{(\rm k)}_{0}-\underline{S}^{(\rm k)}$ for $\rm k=\rm e, h$.
\end{theorem}

\par We will show Theorem \ref{thm:4.1} by solving the problem $(\widetilde{\mathbf{FP}})_{\rm n}$ {\color{black} first in $\tilde{\Omega}^{(\rm e)}$ and then in $\tilde{\Omega}^{(\rm h)}$
together}.
To this end, we will first consider the problem $(\widetilde{\mathbf{FP}})_{\rm n}$ for
$\delta \varphi^{(\rm n)}$ in the subsonic region $\tilde{\Omega}^{(\rm e)}$, then we can deduce the boundary condition of $\delta z^{(\rm n)}$ on $\tilde{\Gamma}_{\rm cd}$ by gluing the solution $\delta \varphi^{(\rm n)}$ obtained in
$\tilde{\Omega}^{(\rm e)}$. With this boundary condition for $\delta z^{(\rm n)}$ on $\tilde{\Gamma}_{\rm cd}$, we can
solve the problem $(\widetilde{\mathbf{FP}})_{\rm n}$ for $\delta z^{(\rm n)}$ in the supersonic region $\tilde{\Omega}^{(\rm h)}$
by employing the characteristics method.

\subsection{Estimates of solutions in the subsonic region $\tilde{\Omega}^{(\rm e)}$ for problem $(\widetilde{\mathbf{FP}})_{\rm n}$}
Let us consider the problem $(\widetilde{\mathbf{FP}})_{\rm n}$ in the subsonic region $\tilde{\Omega}^{(\rm e)}$ (see Fig.  \ref{fig4.5}).
Problem $(\widetilde{\mathbf{FP}})_{\rm n}$ in $\tilde{\Omega}^{(\rm e)}$ is the following boundary value problem for $\varphi^{(\rm n)}$:
\begin{eqnarray}\label{eq:4.2}
\left\{
\begin{array}{llll}
\partial_{\xi}\mathcal{N}_{1}(D\varphi^{(\rm n)};\tilde{B}^{(\rm e)}_{0},\tilde{S}^{(\rm e)}_{0})
+\partial_{\eta}\mathcal{N}_{2}(D\varphi^{(\rm n)};\tilde{B}^{(\rm e)}_{0},\tilde{S}^{(\rm e)}_{0})=0,
&\ \ \ \mbox{in}\ \ \  \tilde{\Omega}^{(\rm e)}, \\[5pt]
\delta\varphi^{(\rm n)}=g^{(\rm n)}_{0}(\eta),
&\ \ \  \mbox{on}\ \ \ \tilde{\Gamma}^{(\rm e)}_{\rm in},\\[5pt]
\partial_{\xi}\delta\varphi^{(\rm n)}=\tilde{\omega}_{\rm e}(\eta),  &\ \ \   \mbox{on}\ \ \ \tilde{\Gamma}^{(\rm e)}_{\rm ex},\\[5pt]
\delta\varphi^{(\rm n)}=g^{(\rm n)}_{+}(\xi), &\ \ \ \mbox{on}\ \ \ \tilde{\Gamma}_{+},\\[5pt]
\delta\varphi^{(\rm n)}=\int^{\xi}_{0}\omega_{\rm cd}(\nu)d\nu, &\ \ \  \mbox{on}\ \ \ \tilde{\Gamma}_{\rm cd},
\end{array}
\right.
\end{eqnarray}
where $\delta\varphi^{(n)}(0,0)=0$, and $\mathcal{N}_{j}\ (j=1,2)$, $c_{\rm cd}(\xi)$, and $g^{(\rm n)}_{0}(\eta)$, $g^{(\rm n)}_{+}(\eta)$
are given in \eqref{eq:3.69} and \eqref{eq:3.73}-\eqref{eq:3.73a}, respectively. Our main result in this section is the following proposition.
\vspace{2pt}
\begin{figure}[ht]
	\begin{center}
		\begin{tikzpicture}[scale=1.2]
		\draw [line width=0.08cm] (-5,1.5) --(3.0,1.5);
		\draw [line width=0.08cm][dashed][blue] (-5,-1.5) --(3.0,-1.5);
		\draw [line width=0.08cm](-5,-1.5) --(-5,1.5);
		\draw [line width=0.08cm](3.0,-1.5) --(3.0,1.5);
		\node at (-5.5, 0) {$\tilde{\Gamma}^{(\rm e)}_{\rm in}$};
		\node at (3.5, 0) {$\tilde{\Gamma}^{(\rm e)}_{\rm ex}$};
		\node at (-1, 1.8) {$\tilde{\Gamma}_{+}$};
		\node at (-1, -1.8) {$\tilde{\Gamma}_{\rm cd}$};
		\node at (-5.0, -1.9) {$\xi=0$};
		\node at (3.7, 1.5) {$\eta=m^{(\rm e)}$};
		\node at (3.6, -1.4) {$\eta=0$};
		\node at (3.0, -1.9) {$\xi=L$};
		\node at (-1, 0) {$\tilde{\Omega}^{(\rm e)}$};
		\end{tikzpicture}
		\caption{Boundary value problem for $(\widetilde{\mathbf{FP}})_{\rm n}$ in $\tilde{\Omega}^{(\rm e)}$}\label{fig4.5}
	\end{center}
\end{figure}

\begin{proposition}\label{prop:4.1}
{\color{black}For any given $m^{(\rm e)} \in \mathcal{M}_{\sigma}$ with $\sigma>0$ sufficiently small}, {\color{black}there exist some constants $\alpha_0\in(0,1)$, $\mathcal{C}_{\rm e,0}>0$ and $\varepsilon_{\rm e,0}>0$ depending only on $\tilde{\underline{U}}$ and $L$, such that for $\tilde{\epsilon}<\varepsilon_{\rm e}\in (\mathcal{C}_{\rm e,0}\tilde{\epsilon}, \varepsilon_{\rm e,0})$ with $\tilde{\epsilon}>0$ sufficiently small, $\varrho\in (0, \mathcal{C}^{-1}_{\rm e,0}\varepsilon_{\rm e})$ and $\alpha\in(0,\alpha_0)$,
if $\omega_{\rm cd}\in \mathcal{W}$ with $\|\omega_{\rm cd}\|^{(-\alpha, \{\mathcal{P}_{\rm e}\})}_{1,\alpha; \tilde{\Gamma}_{\rm cd}}\leq \varrho$,
then the problem \eqref{eq:4.2} admits a unique solution $\varphi^{(\rm n)}\in { C^{2,\alpha}_{(-1-\alpha; \tilde{\Sigma}^{(\rm e)}\backslash \{\mathcal{O}\})}}\big(\tilde{\Omega}^{(\rm e)}\big)$
satisfying
$\big\|\delta \varphi^{(\rm n)}\big\|^{(-1-\alpha, \tilde{\Sigma}^{(\rm e)}\backslash \{\mathcal{O}\})}_{2, \alpha; \tilde{\Omega}^{(\rm e)}}\leq \varepsilon_{\rm e}$. Moreover, it holds that}
\begin{eqnarray}\label{eq:4.3}
\begin{split}
{\big\|\delta \varphi^{(\rm n)}\big\|^{(-1-\alpha, \tilde{\Sigma}^{(\rm e)}\backslash \{\mathcal{O}\})}_{2, \alpha; \tilde{\Omega}^{(\rm e)}}}
&\leq C^{*}_{40}\bigg({\big\|\delta\tilde{p}^{(\rm e)}_{0}\big\|_{1,\alpha;\tilde{\Gamma}^{(\rm e)}_{\rm in}}}
+\big\|\delta\tilde{B}^{(\rm e)}_{0}\big\|_{1,\alpha;\tilde{\Gamma}^{(\rm e)}_{\rm in}}
+\big\|\delta\tilde{S}^{(\rm e)}_{0}\big\|_{1,\alpha;\tilde{\Gamma}^{(\rm e)}_{\rm in}}\\[5pt]
&\qquad\quad  \ \ +{\color{black}\big\|\tilde{\omega}_{\rm e}\big\|^{(-1-\alpha, \{\mathcal{P}_{\rm e}, \mathcal{Q}_{\rm e}\})}_{2,\alpha;\tilde{\Gamma}^{(\rm e)}_{\rm ex}}
+\big\|\omega_{\rm cd}\big\|^{(-\alpha, \{\mathcal{P}_{\rm e}\})}_{1,\alpha;\tilde{\Gamma}_{\rm cd}}}
+\big\|g_{+}-1\big\|_{2,\alpha;\tilde{ \Gamma}_{+}}\bigg),
\end{split}
\end{eqnarray}
where the constant $C^{*}_{40}>0$ depends only on $\tilde{\underline{U}}$ and $L$, and $\delta\tilde{p}^{(\rm e)}_{0}$, $\delta\tilde{B}^{(\rm e)}_{0}$, $\delta\tilde{S}^{(\rm e)}_{0}$
are given by Theorem \ref{thm:4.1}.
\end{proposition}

\begin{remark}\label{rem:4.1}
From \eqref{eq:4.3}, it is easy to prove that there exists a constant $\tilde{C}^{*}_{40}>0$
depending only on $\tilde{\underline{U}}$, $L$ and $\alpha_0$ such that
\begin{eqnarray}\label{eq:4.4}
\begin{split}
{\color{black}\big\|\delta \varphi^{(\rm n)}\big\|^{(-1-\alpha, \{\mathcal{P}_{\rm e}\})}_{2,\alpha; \bar{\tilde{\Gamma}}_{\rm cd}}}
&\leq \tilde{C}^{*}_{40}\bigg({\big\|\delta\tilde{p}^{(\rm e)}_{0}\big\|_{1,\alpha;\tilde{\Gamma}^{(\rm e)}_{\rm in}}}
+\big\|\delta\tilde{B}^{(\rm e)}_{0}\big\|_{1,\alpha;\tilde{\Gamma}^{(\rm e)}_{\rm in}}
+\big\|\delta\tilde{S}^{(\rm e)}_{0}\big\|_{1,\alpha;\tilde{\Gamma}^{(\rm e)}_{\rm in}}\\[5pt]
&\qquad\quad  \ \ +{\color{black}\big\|\tilde{\omega}_{\rm e}\big\|^{(-1-\alpha, \{\mathcal{P}_{\rm e}, \mathcal{Q}_{\rm e}\})}_{2,\alpha;\tilde{\Gamma}^{(\rm e)}_{\rm ex}}
+\big\|\omega_{\rm cd}\big\|^{(-\alpha, \{\mathcal{P}_{\rm e}\})}_{1,\alpha;\tilde{\Gamma}_{\rm cd}}}
+\big\|g_{+}-1\big\|_{2,\alpha;\tilde{ \Gamma}_{+}}\bigg).
\end{split}
\end{eqnarray}
\end{remark}

\begin{proof} To prove Proposition \ref{prop:4.1}, 
we will first consider the existence and uniqueness of solutions to the problem \eqref{eq:4.2} in the weight H\"{o}lder space due to the complexity of boundary conditions and the corner points of the domain. This can be achieved by developing a nonlinear iteration scheme near the background state together with the Banach fixed point argument.
Next, for the solution of the nonlinear problem, we will further show the $C^{2, \alpha}$-regularity of the solution $\varphi^{(\rm n)}$ near the corner point $\mathcal{O}=(0,0)$ to ensure that the solution $z^{(\rm n)}$ in the supersonic region $\tilde{\Omega}^{(\rm h)}$ is also solvable.
The proof is divided into three steps.
	
\smallskip
\emph{{Step\ 1.}\ \   Linearization of problem \eqref{eq:4.2} in $\tilde{\Omega}^{(\rm e)}$}.	
{\color{black}For fixed $\rm n$, define the iteration set}
{
\begin{eqnarray*}
\begin{split}
{\bf\Xi}_{\varepsilon_{\rm e}}=\big\{\varphi^{(\rm n)}:\ \|\varphi^{(\rm n)}-\underline{\varphi}\|^{(-1-\alpha, \tilde{\Sigma}^{(\rm e)})}_{2, \alpha; \tilde{\Omega}^{(\rm e)} }\leq \varepsilon_{\rm e}\big\}.
\end{split}
\end{eqnarray*}
 }
Let
\begin{eqnarray} \label{eq:4.5}
\begin{split}
\partial_{1}:=\partial_{\xi},\quad  \partial_{2}:=\partial_{\eta}.
\end{split}
\end{eqnarray}
Notice that the background state $\underline{\varphi}$ satisfies
\begin{eqnarray} \label{eq:4.6}
\begin{split}
\partial_{1}\mathcal{N}_{1}(D\underline{\varphi};{\color{black}\underline{B}^{(\rm e)},\underline{S}^{(\rm e)}})+
\partial_{2}\mathcal{N}_{2}(D\underline{\varphi}; {\color{black}\underline{B}^{(\rm e)},\underline{S}^{(\rm e)}})=0.
\end{split}
\end{eqnarray}
Taking the difference of equations $\eqref{eq:4.2}_{1}$ and \eqref{eq:4.6}, and then linearizing the resulting equation, we have
\begin{eqnarray*}
\begin{split}
\sum_{i,j=1,2}\partial_{i}\big(a^{(\delta\varphi^{(\rm n)})}_{ij}\partial_{j}\delta\hat{\varphi}^{(\rm n)}\big)
=\sum_{i=1,2}\partial_{i}b_{i},
\end{split}
\end{eqnarray*}
where
\begin{eqnarray*}
\begin{split}
a^{(\delta\varphi^{(\rm n)})}_{ij}=\int^{1}_{0}{\color{black}\partial_{\partial\varphi^{(\rm n)}_j}}\mathcal{N}_{i}
\big(D\underline{\varphi}+\tau D\delta\varphi^{(\rm n)};\tilde{B}^{(\rm e)}_{0},\tilde{S}^{(\rm e)}_{0}\big)d\tau,\ \  i,j=1,2,
\end{split}
\end{eqnarray*}
and
\begin{eqnarray*}
\begin{split}
b_{i}=\mathcal{N}_{i}(D\underline{\varphi}; \tilde{B}^{(\rm e)}_{0},\tilde{S}^{(\rm e)}_{0})
-\mathcal{N}_{i}(D\underline{\varphi};  \underline{B}^{(\rm e)},\underline{S}^{(\rm e)}),\ \ i=1,2.
\end{split}
\end{eqnarray*}

{\color{black}Therefore, we can derive the following linearized elliptic problem for the problem \eqref{eq:4.2} in $\tilde{\Omega}^{(\rm e)}$:}
\begin{eqnarray}\label{eq:4.7}
\left\{
\begin{array}{llll}
\sum_{i,j=1,2}\partial_{i}\big(a^{(\delta\varphi^{(\rm n)})}_{ij}\partial_{j}\delta\hat{\varphi}^{(\rm n)}\big)
=\sum_{i=1,2}\partial_{i}b_{i}, &\ \ \ \mbox{in}\ \ \  \tilde{\Omega}^{(\rm e)}, \\[5pt]
\delta\hat{\varphi}^{(\rm n)}=g^{(\rm n)}_{0}(\eta),
&\ \ \  \mbox{on}\ \ \ \tilde{\Gamma}^{(\rm e)}_{\rm in},\\[5pt]
\partial_{1}\delta\hat{\varphi}^{(\rm n)}=\tilde{\omega}_{\rm e},  &\ \  \ \mbox{on}\ \ \ \tilde{\Gamma}^{(\rm e)}_{\rm ex},\\[5pt]
\delta\hat{\varphi}^{(\rm n)}=g^{(\rm n)}_{+}(\xi), &\ \ \  \mbox{on}\ \ \ \tilde{\Gamma}_{+},\\[5pt]
\delta\hat{\varphi}^{(\rm n)}=\int^{\xi}_{0}\omega_{\rm cd}(\nu)d\nu, &\ \  \  \mbox{on}\ \ \ \tilde{\Gamma}_{\rm cd},
\end{array}
\right.
\end{eqnarray}
where $g^{(\rm n)}_{0}$ and $g^{(\rm n)}_{+}$ are given by \eqref{eq:3.73a}.

Define the map $\hat{\mathcal{T}}:\ \bf\Xi_{\varepsilon_{\rm e}}\longrightarrow \bf\Xi_{\varepsilon_{\rm e}}$ as follows.
For a given function ${\varphi}^{(\rm n)}\in \bf\Xi_{\varepsilon_{\rm e}}$, we solve the linearized
fixed boundary value problem \eqref{eq:4.7}. Denote the solution by $\hat{\varphi}^{(\rm n)}$. Then
\begin{eqnarray}\label{eq:4.15}
\delta\hat{\varphi}^{(n)}:=\hat{\mathcal{T}}(\delta{\varphi}^{(\rm n)}), \qquad \mbox{for} \quad \delta\hat{\varphi}^{(\rm n)}:=\hat{\varphi}^{(\rm n)}-\underline{\varphi}.
\end{eqnarray}

\par \emph{{Step\ 2.}\ \ Solve $\hat{\varphi}^{(n)}$ by the fixed point argument.}
%
For the coefficients $a^{(\delta\varphi^{(\rm n)})}_{ij} (i,j=1,2)$, it is easy to see that
\begin{eqnarray}\label{eq:4.12}
\begin{split}
\big\|a^{(\delta\varphi^{(\rm n)})}_{ij}-a^{(\delta\underline{\varphi})}_{ij}\big\|^{(-\alpha, \tilde{\Sigma}^{(\rm e)})}
_{1,\alpha; \tilde{\Omega}^{(\rm e)}}\leq \mathcal{C} \varepsilon_{\rm e},
\end{split}
\end{eqnarray}
where $\mathcal{C}$ is a positive constant depending only on $\underline{U}^{(\rm e)}$ and $\alpha$.
Moreover, we have that
\begin{eqnarray} \label{eq:4.8a}
\begin{split}
e_{1}:= a^{(\delta\underline{\varphi})}_{11}=\underline{u}^{(e)}>0,\  \
e_{2}:= a^{(\delta\underline{\varphi})}_{22}=\frac{\underline{\rho}^{(e)}(\underline{u}^{(e)})^{3}(\underline{c}^{(e)})^{2}}
{(\underline{c}^{(e)})^{2}-(\underline{u}^{(e)})^{2}}>0,
\end{split}
\end{eqnarray}
and
$a^{(\delta\underline{\varphi})}_{12}=a^{(\delta\underline{\varphi})}_{21}=0$,
which yields
\begin{eqnarray}\label{eq:4.9}
\begin{split}
a^{(\delta\underline{\varphi})}_{11}a^{(\delta\underline{\varphi})}_{22}-a^{(\delta\underline{\varphi})}_{12}a^{(\delta\underline{\varphi})}_{21}
=\frac{\underline{\rho}^{(e)}(\underline{u}^{(e)})^{4}(\underline{c}^{(e)})^{2}}
{(\underline{c}^{(e)})^{2}-(\underline{u}^{(e)})^{2}}>0.
\end{split}
\end{eqnarray}

If $\varepsilon_{\rm e}>0$ is sufficiently small, there exist constants $\check{\lambda},\ \hat{\lambda}>0$
depending only on $\underline{U}^{(e)}$, such that for any $\varphi^{(\rm n)}\in \bf\Xi_{\varepsilon_{\rm e}}$,
\begin{eqnarray} \label{eq:4.11}
\begin{split}
\check{\lambda}|\mathbf{\vartheta}|\leq \sum_{i,j=1,2}a^{(\delta\varphi^{(\rm n)})}_{ij}\vartheta_{i}\vartheta_{j}
\leq \hat{\lambda} |\mathbf{\vartheta}|, \ \   \ \mathbf{\vartheta}=(\vartheta_{1}, \vartheta_{2}).
\end{split}
\end{eqnarray}

From \eqref{eq:3.71}, at the background solution the coefficients $\beta^{(\rm n)}_{0, 1}$ and $\beta^{(\rm n)}_{0, 2}$ are
\begin{eqnarray}\label{eq:4.13}
\begin{split}
&\left.\underline{\beta}_{0, 1}=\beta^{(\rm n)}_{0,1}\right.\Big|_{\varphi^{(\rm n)}=\underline{\varphi},
\tilde{B}^{(\rm e)}_{0}=\underline{B}^{(\rm e)}, \tilde{S}^{(\rm e)}_{0}=\underline{S}^{(\rm e)}}=0,\\[5pt]
&\left.\underline{\beta}_{0, 2}=\beta^{(\rm n)}_{0,2}\right.\Big|_{\varphi^{(\rm n)}=\underline{\varphi},
\tilde{B}^{(\rm e)}_{0}=\underline{B}^{(\rm e)}, \tilde{S}^{(\rm e)}_{0}=\underline{S}^{(\rm e)}}
=\frac{(\underline{\rho}^{(e)}\underline{c}^{(e)})^{2}(\underline{u}^{(e)})^{3}}{(\underline{c}^{(e)})^{2}-(\underline{u}^{(e)})^{2}}>0.
\end{split}
\end{eqnarray}

{\color{black}Notice that for sufficiently small constant $\sigma>0$ defined by \eqref{eq:3.62}, one has $|\eta|\leq |m^{(\rm e)}|<\mathcal{C}$ for $(0,\eta)\in \tilde{\Gamma}^{(\rm e)}_{\rm in}$ where the constant
$\mathcal{C}>0$ depends only on $\underline{U}^{(\rm e)}$}. Then, we can deduce that
\begin{eqnarray}\label{4.7e}
\begin{split}
&\big\|g^{(\rm n)}_{0}\big\|^{{\color{black}(-1-\alpha, \{\mathcal{O},\mathcal{Q}_{\rm I} \})}}_{2,\alpha;\tilde{\Gamma}^{(\rm e)}_{\rm in}}\\[5pt]
&=\frac{1}{\underline{\beta}_{0,2}}
\bigg\|\int^{\eta}_{0}\Big(\tilde{p}^{(\rm e)}_{0}(\mu)
-\underline{\tilde{p}}^{(\rm e)}_{0}(\mu)-\beta^{\rm (n)}_{0,1}
\partial_{1}\delta\varphi^{(\rm n)}-(\beta^{\rm (n)}_{0,2}-\underline{\beta}_{0,2})
\partial_{2}\delta\varphi^{(\rm n)}\Big)d\mu\bigg\|^{{\color{black}(-1-\alpha, \{\mathcal{O},\mathcal{Q}_{\rm I} \})}}_{2,\alpha;\tilde{\Gamma}^{(\rm e)}_{\rm in}}\\[5pt]
&\leq \mathcal{C}\bigg({\big\|\delta \tilde{p}^{(\rm e)}_{\rm 0}\big\|_{1,\alpha;\tilde{\Gamma}^{(\rm e)}_{\rm in}}}
+\big\|\delta\tilde{B}^{(\rm e)}_{0}\big\|_{1,\alpha;\tilde{\Gamma}^{(\rm e)}_{\rm in}}
+\big\|\delta\tilde{S}^{(\rm e)}_{0}\big\|_{1,\alpha;\tilde{\Gamma}^{(\rm e)}_{\rm in}}\\[5pt]
&\ \ \ + \Big( {\big\|\delta\varphi^{(\rm n)}\big\|^{(-1-\alpha,
\tilde{\Sigma}^{(\rm e)})}_{2,\alpha;\tilde{\Omega}^{(\rm e)}}}
+\big\|\delta\tilde{B}^{(\rm e)}_{0}\big\|_{1,\alpha;\tilde{\Gamma}^{(\rm e)}_{\rm in}}
+\big\|\delta\tilde{S}^{(\rm e)}_{0}\big\|_{1,\alpha;\tilde{\Gamma}^{(\rm e)}_{\rm in}}\Big)
{\big\|\delta\varphi^{(\rm n)}\big\|^{(-1-\alpha,
\tilde{\Sigma}^{(\rm e)})}_{2,\alpha;\tilde{\Omega}^{(\rm e)}}}\bigg),
\end{split}
\end{eqnarray}
\begin{eqnarray}\label{4.7x}
\begin{split}
&\big\|g^{(\rm n)}_{+}\big\|^{{\color{black}(-1-\alpha, \{\mathcal{Q}_{\rm I},\mathcal{Q}_{\rm e} \})}}_{2,\alpha;\tilde{\Gamma}_{+}}\\[5pt]
&=\big\|g_{+}(\xi)-g_{+}(0)+g^{(\rm n)}_{0}(m^{(\rm e)})\big\|^{{\color{black}(-1-\alpha, \{\mathcal{Q}_{\rm I},\mathcal{Q}_{\rm e} \})}}_{2,\alpha;\tilde{\Gamma}_{+}}\\[5pt]
& \leq \bigg\|\frac{1}{\underline{\beta}_{0,2}}\int^{m^{(\rm e)}}_{0}\Big(\tilde{p}^{(\rm e)}_{0}(\mu)
-\underline{\tilde{p}}^{(\rm e)}_{0}(\mu)-\beta^{\rm (n)}_{0,1}
\partial_{1}\delta\varphi^{(\rm n)}-(\beta^{\rm (n)}_{0,2}-\underline{\beta}_{0,2})
\partial_{2}\delta\varphi^{(\rm n)}\Big)d\mu\bigg\|^{{\color{black}(-1-\alpha, \{\mathcal{Q}_{\rm I},\mathcal{Q}_{\rm e} \})}}_{2,\alpha;\tilde{\Gamma}^{(\rm e)}_{\rm in}}\\[5pt]
&\ \ \ + \big\|g_{+}(\xi)-g_{+}(0)\big\|^{{\color{black}(-1-\alpha, \{\mathcal{Q}_{\rm I},\mathcal{Q}_{\rm e} \})}}_{2,\alpha;\tilde{\Gamma}_{+}}\\[5pt]
&\leq \mathcal{C}\bigg(\big\|g_{+}-1\big\|_{2,\alpha;\tilde{ \Gamma}_{+}}+{\big\|\delta \tilde{p}^{(\rm e)}_{\rm 0}\big\|_{1,\alpha;\tilde{\Gamma}^{(\rm e)}_{\rm in}}}
+\big\|\delta\tilde{B}^{(\rm e)}_{0}\big\|_{1,\alpha;\tilde{\Gamma}^{(\rm e)}_{\rm in}}
+\big\|\delta\tilde{S}^{(\rm e)}_{0}\big\|_{1,\alpha;\tilde{\Gamma}^{(\rm e)}_{\rm in}}\\[5pt]
&\ \ \ + \Big( {\big\|\delta\varphi^{(\rm n)}\big\|^{(-1-\alpha,
\tilde{\Sigma}^{(\rm e)})}_{2,\alpha;\tilde{\Omega}^{(\rm e)}}}
+\big\|\delta\tilde{B}^{(\rm e)}_{0}\big\|_{1,\alpha;\tilde{\Gamma}^{(\rm e)}_{\rm in}}
+\big\|\delta\tilde{S}^{(\rm e)}_{0}\big\|_{1,\alpha;\tilde{\Gamma}^{(\rm e)}_{\rm in}}\Big)
{\big\|\delta\varphi^{(\rm n)}\big\|^{(-1-\alpha,
\tilde{\Sigma}^{(\rm e)})}_{2,\alpha;\tilde{\Omega}^{(\rm e)}}}\bigg),
\end{split}
\end{eqnarray}
and
\begin{eqnarray}\label{4.7f}
\begin{split}
\big\|\delta\hat{\varphi}\big\|^{{\color{black}(-1-\alpha,\{\mathcal{O}, \mathcal{P}_{\rm e}\})}}_{2,\alpha;{\color{black}\tilde{\Gamma}_{\rm cd}}}
 =\bigg\|\int^{\xi}_{0}\omega_{\rm cd}(\nu)d \nu\bigg\|^{{\color{black}(-1-\alpha,\{\mathcal{O}, \mathcal{P}_{\rm e}\})}}_{2,\alpha;{\color{black}\tilde{\Gamma}_{\rm cd}}}
\leq \mathcal{C}\big\|\omega_{\rm cd}\big\|^{{\color{black}(-\alpha, \{\mathcal{P}_{\rm e}\})}}_{1,\alpha; \tilde{\Gamma}_{\rm cd}},
\end{split}
\end{eqnarray}
where $\mathcal{C}>0$ depends only on $\underline{U}^{(\rm e)}$, $L$ and $\alpha$, {\color{black}and $\mathcal{Q}_{\rm I}=(0, m^{(\rm e)})$}.

Now we can apply Theorem 2  in \cite{lg1} and  Theorem 3.2 in \cite{lg2} as well as Lemma 6.29 in \cite{gt}
to conclude that the problem \eqref{eq:4.7} admits a
unique solution $\hat{\varphi}^{(\rm n)}\in {C^{2,\alpha}_{(-1-\alpha, \tilde{\Sigma}^{(\rm e)})}}\big(\tilde{\Omega}^{(\rm e)}\big)$ which satisfies
\begin{eqnarray}\label{eq:4.14}
\begin{split}
&\quad \ \big\|\delta\hat{\varphi}^{(\rm n)}\big\|^{(-1-\alpha,\tilde{\Sigma}^{(\rm e)})}_{2,\alpha;\tilde{\Omega}^{(\rm e)}}\\[5pt]
&\leq \mathcal{C}\bigg(\big\|g^{(\rm n)}_{0}\big\|^{{\color{black}(-1-\alpha, \{\mathcal{O}, \mathcal{Q}_{\rm I}\})}}_{2,\alpha;\tilde{\Gamma}^{(\rm e)}_{\rm in}}
+\big\|g^{(\rm n)}_{+}\big\|^{{\color{black}(-1-\alpha, \{\mathcal{Q}_{\rm I},\mathcal{Q}_{\rm e} \})}}_{2,\alpha;\tilde{\Gamma}_{+}}
+\big\|\tilde{\omega}_{\rm e}\big\|^{{\color{black}(-1-\alpha, \{\mathcal{P}_{\rm e}, \mathcal{Q}_{\rm e}\})}}_{2,\alpha;\tilde{\Gamma}^{(\rm e)}_{\rm ex}}\\[5pt]
&\qquad\qquad \ +\sum_{i=1,2}\big\|b_{i}\big\|^{(-\alpha, \tilde{\Sigma}^{(\rm e)})}_{1,\alpha; \tilde{\Omega}^{(\rm e)}}
+\big\|\delta\hat{\varphi}\big\|^{{\color{black}(-1-\alpha,\{\mathcal{O}, \mathcal{P}_{\rm e}\})}}_{2,\alpha;\tilde{\Gamma}_{\rm cd}}\bigg)\\[5pt]
&\leq \mathcal{C}\Big( {\big\|\delta\varphi^{(\rm n)}\big\|^{(-1-\alpha,
\tilde{\Sigma}^{(\rm e)})}_{2,\alpha;\tilde{\Omega}^{(\rm e)}}}
+\big\|\delta\tilde{B}^{(\rm e)}_{0}\big\|_{1,\alpha;\tilde{\Gamma}^{(\rm e)}_{\rm in}}
+\big\|\delta\tilde{S}^{(\rm e)}_{0}\big\|_{1,\alpha;\tilde{\Gamma}^{(\rm e)}_{\rm in}}\Big)\big\|\delta\varphi^{(\rm n)}\big\|^{(-1-\alpha,
\tilde{\Sigma}^{(\rm e)})}_{2,\alpha;\tilde{\Omega}^{(\rm e)}}\\[5pt]
&\ \ \  \ +\mathcal{C}\bigg({\big\|\delta\tilde{p}^{(\rm e)}_{0}\big\|_{1,\alpha;\tilde{\Gamma}^{(\rm e)}_{\rm in}}}
+\big\|\delta\tilde{B}^{(\rm e)}_{0}\big\|_{1,\alpha;\tilde{\Gamma}^{(\rm e)}_{\rm in}}
+\big\|\delta\tilde{S}^{(\rm e)}_{0}\big\|_{1,\alpha;\tilde{\Gamma}^{(\rm e)}_{\rm in}}
+\big\|\tilde{\omega}_{\rm e}\big\|^{{\color{black}(-1-\alpha, \{\mathcal{P}_{\rm e}, \mathcal{Q}_{\rm e}\})}}_{2,\alpha;\tilde{\Gamma}^{(\rm e)}_{\rm ex}}\\[5pt]
&\ \ \ \ \ \ \  \  \ \ \  \ \ \ +\big\|g_{+}-1\big\|_{2,\alpha;\tilde{ \Gamma}_{+}}+{\big\|\omega_{\rm cd}\big\|^{{\color{black}(-\alpha, \{\mathcal{P}_{\rm e}\})}}_{1,\alpha; \tilde{\Gamma}_{\rm cd}}}\bigg),
\end{split}
\end{eqnarray}
where $\mathcal{C}>0$ depends only on $\underline{U}^{(\rm e)}$, $L$ and $\alpha$.

Because ${\varphi}^{(\rm n)} \in \bf\Xi_{\varepsilon_{\rm e}}$, it follows from \emph{{Step\ 1}} that $\hat{\varphi}^{(\rm n)}$ is uniquely solved, then $\hat{\mathcal{T}}$ is well-defined.
Moreover, we have
\begin{eqnarray}\label{eq:4.16b}
\begin{split}
{\color{black}\big\|\delta\hat{\varphi}^{(\rm n)}\big\|^{(-1-\alpha,\tilde{\Sigma}^{(\rm e)})}_{2,\alpha;\tilde{\Omega}^{(\rm e)}}
\leq \mathcal{C}\varepsilon^2_{\rm e}+\mathcal{C}\tilde{\epsilon}\varepsilon_{\rm e}+\mathcal{C}(\tilde{\epsilon}+\varrho).}
\end{split}
\end{eqnarray}
{\color{black}Choosing $\varepsilon'_{\rm e, 0}=\min\{\frac{1}{4\mathcal{C}},\frac{1}{2(\mathcal{C}+1)}, 1 \}$, $\mathcal{C}'_{\rm e,0}=4\mathcal{C}$ and taking $\tilde{\epsilon}>0$ sufficiently small with
$\tilde{\epsilon}<\varepsilon_{\rm e}\in (\mathcal{C}'_{\rm e,0}\tilde{\epsilon}, \varepsilon'_{\rm e, 0})$,
 we see by \eqref{eq:4.16b} that $\hat{\varphi}^{(\rm n)}\in \bf\Xi_{\varepsilon_{\rm e}}$ for $\varrho\in (0, \mathcal{C'}_{\rm e,0}^{-1}\varepsilon_{\rm e})$, 
which means that $\hat{\mathcal{T}}$ map $\bf\Xi_{\varepsilon_{\rm e}}$ to $\bf\Xi_{\varepsilon_{\rm e}}$ itself.}

Next, we will show that $\hat{\mathcal{T}}$ is a contraction map in $\bf\Xi_{\varepsilon_{\rm e}}$. 
Choose $\varphi^{(\rm n)}_{1},\varphi^{(\rm n)}_{2} \in \bf\Xi_{\varepsilon_{\rm e}}$.
Set $\delta\hat{\varphi}^{(\rm n)}_{k}:=\hat{\mathcal{T}}(\delta{\varphi}^{(\rm n)}_{k}),\ k=1,2$.
Then, 
we have
\begin{eqnarray} \label{eq:4.17x}
\begin{split}
\sum_{i,j=1,2}\partial_{i}\big(a^{(\delta\varphi^{(\rm n)}_{k})}_{ij}\partial_{j}\delta\hat{\varphi}^{(\rm n)}_{k}\big)
=\sum_{i=1,2}\partial_{i}b_{i} , \qquad \mbox{for} \quad k=1,2.
\end{split}
\end{eqnarray}
Define
\[\delta\Phi^{(\rm n)}=\delta\varphi^{(\rm n)}_{1}-\delta\varphi^{(\rm n)}_{2}, \quad
\delta \hat{\Phi}^{(\rm n)}=\delta\hat{\varphi}^{(\rm n)}_{1}-\delta\hat{\varphi}^{(\rm n)}_{2}.
\]
Then, $\delta \hat{\Phi}$ satisfies the following boundary value problem:
\begin{eqnarray}\label{eq:4.17}
\left\{
\begin{array}{llll}
&\sum_{i,j=1,2}\partial_{i}\big(a^{(\delta\varphi^{(\rm n)}_{1})}_{ij}\partial_{j}\delta \hat{\Phi}^{(\rm n)}\big)\\[5pt]
&\qquad\qquad\  =-\sum_{i,j=1,2}\partial_{i}\Big(\big(a^{(\delta\varphi^{(\rm n)}_{1})}_{ij}-a^{(\delta\varphi^{(\rm n)}_{2})}_{ij}\big)
\partial_{j}\delta\hat{\varphi}^{(\rm n )}_{2}\Big),
&\ \ \ \mbox{in} \ \ \  \tilde{\Omega}^{(\rm e)}, \\[5pt]
&\delta\hat{\Phi}^{(\rm n)}=g^{(\rm n)}_{0,1}(\eta)-g^{(\rm n)}_{0,2}(\eta),
&\ \ \  \mbox{on} \ \ \ \tilde{\Gamma}^{(\rm e)}_{\rm in},\\[5pt]
&\partial_{1}\delta\hat{\Phi}^{(\rm n)}=0,  &\ \ \  \mbox{on} \ \ \ \tilde{\Gamma}^{(\rm e)}_{\rm ex},\\[5pt]
&\delta\hat{\Phi}^{(\rm n)}=g^{(\rm n)}_{+,1}(\xi)-g^{(\rm n)}_{+,2}(\xi), &\ \ \  \mbox{on} \ \ \ \tilde{\Gamma}_{+},\\[5pt]
&\delta\hat{\Phi}^{(\rm n)}=0, &\ \ \  \mbox{on} \ \ \ \tilde{\Gamma}_{\rm cd},
\end{array}
\right.
\end{eqnarray}
where $g^{(\rm n)}_{0,1}(\eta)$, $g^{(\rm n)}_{0,2}(\eta)$ represent the right hand side of \eqref{eq:3.73a} by replacing $\varphi^{(\rm n)}$
with $\varphi^{(\rm n)}_{1}$ and $\varphi^{(\rm n)}_{2}$, respectively. Notice that
\begin{eqnarray*}
\begin{split}
&g^{(\rm n)}_{0,1}(\eta)-g^{(\rm n)}_{0,2}(\eta)\\[5pt]
&=\frac{1}{\underline{\beta}_{0,2}}\int^{\eta}_{0}
\bigg(\beta^{\rm (n)}_{0,1}|_{\varphi^{(\rm n)}=\varphi^{(\rm n)}_{1}}
\partial_{1}\delta\varphi^{(\rm n)}_{1}+\Big(\beta^{\rm (n)}_{0,2}|_{\varphi^{(\rm n)}=\varphi^{(\rm n)}_{1}}-\underline{\beta}_{0,2}\Big)
\partial_{2}\delta\varphi^{(\rm n)}_{1}
-\beta^{\rm (n)}_{0,1}|_{\varphi^{(\rm n)}=\varphi^{(\rm n)}_{2}}\partial_{1}\delta\varphi^{(\rm n)}_{2}\\[5pt]
&\ \ \ \ \ \ \ \ \ \ \   -\Big(\beta^{\rm (n)}_{0,2}|_{\varphi^{(\rm n)}=\varphi^{(\rm n)}_{2}}-\underline{\beta}_{0,2}\Big)
\partial_{2}\delta\varphi^{(\rm n)}_{2}\bigg)d\mu\\[5pt]
&=\frac{1}{\underline{\beta}_{0,2}}\Bigg(\sum_{j=1,2}\int^{\eta}_{0}\Big(\beta^{\rm (n)}_{0,j}|_{\varphi^{(\rm n)}=\varphi^{(\rm n)}_{1}}-\underline{\beta}_{0,j}\Big)\partial_{j}\delta \Phi^{\rm (n)}d\mu
+\int^{\eta}_{0}\partial_{j}\delta \varphi^{(\rm n)}_{2}\Big(\beta^{\rm (n)}_{0,j}|_{\varphi^{(\rm n)}=\varphi^{(\rm n)}_{1}}
-\beta^{\rm (n)}_{0,j}|_{\varphi^{(\rm n)}=\varphi^{(\rm n)}_{2}}\Big)d\mu\Bigg).
\end{split}
\end{eqnarray*}
Then, {\color{black}similarly to \eqref{4.7e}-\eqref{4.7f}}, 
we get that,  for $\sigma>0$  sufficiently small,
\begin{eqnarray*}
&&\big\|g^{(\rm n)}_{0,1}-g^{(\rm n)}_{0,2}\big\|^{{\color{black}(-1-\alpha, \{\mathcal{O}, \mathcal{Q}_{\rm I}\})}}_{2,\alpha;\tilde{\Gamma}^{(\rm e)}_{\rm in}}\\[5pt]
&&\ \ \ \leq \frac{1}{\underline{\beta}_{0,2}}\bigg\|\sum_{j=1,2}\int^{\eta}_{0}\Big(\beta^{\rm (n)}_{0,j}|_{\varphi^{(\rm n)}=\varphi^{(\rm n)}_{1}}-\underline{\beta}_{0,j}\Big)\partial_{j}\delta \Phi^{\rm (n)}d\mu\bigg\|^{{\color{black}(-1-\alpha, \{\mathcal{O}, \mathcal{Q}_{\rm I}\})}}_{2,\alpha;\tilde{\Gamma}^{(\rm e)}_{\rm in}}\\[5pt]
&& \ \ \ \ \ \ \ \ \ +\frac{1}{\underline{\beta}_{0,2}}\bigg\|\sum_{j=1,2}\int^{\eta}_{0}\partial_{j}\delta \varphi^{(\rm n)}_{2}\Big(\beta^{\rm (n)}_{0,j}|_{\varphi^{(\rm n)}=\varphi^{(\rm n)}_{1}}
-\beta^{\rm (n)}_{0,j}|_{\varphi^{(\rm n)}=\varphi^{(\rm n)}_{2}}\bigg)d\mu\Big\|^{{\color{black}(-1-\alpha, \{\mathcal{O}, \mathcal{Q}_{\rm I}\})}}_{2,\alpha;\tilde{\Gamma}^{(\rm e)}_{\rm in}}\\[5pt]
&&\ \ \ \leq {\color{black}\mathcal{C}\big(\varepsilon_{\rm e}+\tilde{\epsilon}\big)}
\big\|\delta\Phi^{(\rm n)}\big\|^{(-1-\alpha,
\tilde{\Sigma}^{(\rm e)})}_{2,\alpha;\tilde{\Omega}^{(\rm e)}},
\end{eqnarray*}
where the constant $\mathcal{C}>0$ depends only on $\underline{U}^{(\rm e)}$, $L$ and $\alpha$.
In the same way, for $g^{(\rm n)}_{+,1}(\xi)-g^{(\rm n)}_{+,2}(\xi)$ one also has
\begin{eqnarray*}
\big\|g^{(\rm n)}_{+,1}-g^{(\rm n)}_{+,2}\big\|^{{\color{black}(-1-\alpha, \{\mathcal{Q}_{\rm I}, \mathcal{Q}_{\rm e}\})}}_{2,\alpha;{\color{black}\tilde{\Gamma}_{+}}}
 \leq {\color{black}\mathcal{C}\big(\varepsilon_{\rm e}+\tilde{\epsilon}\big)}\big\|\delta\Phi^{(\rm n)}\big\|^{(-1-\alpha,
\tilde{\Sigma}^{(\rm e)})}_{2,\alpha;\tilde{\Omega}^{(\rm e)}},
\end{eqnarray*}
provided that $\sigma>0$ is sufficiently small, where the constant $\mathcal{C}>0$ depends only on $\underline{U}^{(\rm e)}$, $L$ and $\alpha$.

Thus, it follows from the estimate \eqref{eq:4.14} that
\begin{eqnarray*}
\begin{split}
{\big\|\delta\hat{\Phi}^{(\rm n)}\big\|^{(-1-\alpha,
\tilde{\Sigma}^{(\rm e)})}_{2,\alpha;\tilde{\Omega}^{(\rm e)}}}
&\leq \mathcal{C} \bigg(\sum_{i,j=1,2}\big\|\big(a^{(\delta\varphi^{(\rm n)}_{1})}_{ij}-a^{(\delta\varphi^{(\rm n)}_{2})}_{ij}\big)
\partial_{j}\delta\hat{\varphi}^{(\rm n)}_{2}\big\|
^{{\color{black}(-\alpha; \tilde{\Sigma}^{(\rm e)})}}
_{1,\alpha;\tilde{\Omega}^{(\rm e)}}\bigg)\\[5pt]
&\ \ \  +\mathcal{C}\bigg(\big\|g^{(\rm n)}_{0,1}-g^{(\rm n)}_{0,2}\big\|^{{\color{black}(-1-\alpha, \{\mathcal{O}, \mathcal{Q}_{\rm I}\})}}_{2,\alpha;{\color{black}\tilde{\Gamma}^{(\rm e)}_{\rm in}}}
+\big\|g^{(\rm n)}_{+,1}-g^{(\rm n)}_{+,2}\big\|^{{\color{black}(-1-\alpha, \{\mathcal{Q}_{\rm I}, \mathcal{Q}_{\rm e}\})}}_{2,\alpha;{\color{black}\tilde{\Gamma}_{+}}}\bigg)\\[5pt]
& \leq {\color{black}\mathcal{C}\big(\varepsilon_{\rm e}+\tilde{\epsilon}\big)}\big\|\delta\Phi^{(\rm n)}\big\|^{(-1-\alpha,
\tilde{\Sigma}^{(\rm e)})}_{2,\alpha;\tilde{\Omega}^{(\rm e)}},
\end{split}
\end{eqnarray*}
where 
$\mathcal{C}>0$ depends only on $\underline{U}^{(\rm e)}$, $L$ and $\alpha$.

Therefore, we show that $\mathcal{T}$ is a contraction map by letting {\color{black}$\tilde{\epsilon}>0$} sufficiently small so that
${\color{black}\tilde{\epsilon}}<\varepsilon_{\rm e}\in (\mathcal{C}''_{\rm e,0}\tilde{\epsilon}, \varepsilon''_{\rm e, 0})$ with
$\mathcal{C}''_{\rm e,0}=1$ and $\varepsilon''_{\rm e, 0}= \frac{1}{4\mathcal{C}}$.
As a result,  the existence of solutions to the problem \eqref{eq:4.2} and the estimate \eqref{eq:4.3} are established by \eqref{eq:4.14}
for $\varepsilon_{\rm e}\in (\mathcal{C}''_{\rm e,0}\tilde{\epsilon}, \varepsilon''_{\rm e, 0})$.

For the uniqueness, consider two solutions $\varphi^{(\rm n)}_{1},\ \varphi^{(\rm n)}_{2}\in \bf\Xi_{\varepsilon_{\rm e}}$
for any 
$\varepsilon_{\rm e}\in (\mathcal{C}''_{\rm e,0}\tilde{\epsilon}, \varepsilon''_{\rm e, 0})$ and fixed $\rm n$. 
Notice that $\hat{\mathcal{T}}$ is a contraction map, then  we have
\begin{eqnarray*}
\begin{split}
{\big\|\varphi^{(\rm n)}_{1}-\varphi^{(\rm n)}_{2}\big\|^{(-1-\alpha,
\tilde{\Sigma}^{(\rm e)})}_{2,\alpha;\tilde{\Omega}^{(\rm e)}}}
\leq\frac{1}{2}{\big\|\varphi^{(\rm n)}_{1}-\varphi^{(\rm n)}_{2}\big\|^{(-1-\alpha,
\tilde{\Sigma}^{(\rm e)})}_{2,\alpha;\tilde{\Omega}^{(\rm e)}}},
\end{split}
\end{eqnarray*}
which implies that $\varphi^{(\rm n)}_{1}=\varphi^{(\rm n)}_{2}$.

{\color{black} Finally, we take  $\mathcal{C}_{\rm e,0}=\max\{\mathcal{C}'_{\rm e,0}, \mathcal{C}''_{\rm e,0}\}$ and $\varepsilon_{\rm e, 0}=\min\{\varepsilon'_{\rm e, 0}, \varepsilon''_{\rm e, 0}\}$,
then, for $\tilde{\epsilon}>0$ sufficiently small, if $\tilde{\epsilon}<\varepsilon_{\rm e}\in (\mathcal{C}_{\rm e,0}\tilde{\epsilon}, \varepsilon_{\rm e, 0})$ and $\varrho\in (0, \mathcal{C}^{-1}_{\rm e,0}\varepsilon_{\rm e, 0})$,
the problem \eqref{eq:4.2} admits a unique solution $\varphi^{(\rm n)}\in { C^{2,\alpha}_{(-1-\alpha; \tilde{\Sigma}^{(\rm e)})}}\big(\tilde{\Omega}^{(\rm e)}\big)$
satisfying $\big\|\delta \varphi^{(\rm n)}\big\|^{(-1-\alpha, \tilde{\Sigma}^{(\rm e)})}_{2, \alpha; \tilde{\Omega}^{(\rm e)}}\leq \varepsilon_{\rm e}$ and the estimate \eqref{eq:4.14}. }

\smallskip
\emph{{Step 3.}\ \  Higher order regularity of the solution near the corner point $\mathcal{O}$}.
We divide this step into two sub-steps.

(Step 3.1) {\it Introduce a new function and reformulate the problem \eqref{eq:4.2} near the corner point $\mathcal{O}$.} \quad
Set
\begin{eqnarray}\label{eq:4.19}
{\color{black}\psi^{(\rm n)}=\partial_{1}\varphi^{(\rm n)}.}
\end{eqnarray}
Then it is easy to see that
\begin{eqnarray}\label{eq:4.20}
\begin{split}
{\color{black}\partial^{2}_{11}\varphi^{(\rm n)}=\partial_{1}\psi^{(\rm n)},\quad
\partial^{2}_{12}\varphi^{(\rm n)}=\partial_{2}\psi^{(\rm n)}.}
\end{split}
\end{eqnarray}
By the equation $\eqref{eq:4.2}_{1}$, we have
\begin{eqnarray}\label{eq:4.21}
\begin{split}
\sum_{i,j=1, 2}\mathcal{N}^{(\rm n)}_{ij}\partial^{2}_{ij}\varphi^{(\rm n)}=
-\partial_{\tilde{B}^{(\rm e)}_{0}}\mathcal{N}_{2}\cdot(\tilde{B}^{(e)}_{0})'
-\partial_{\tilde{S}^{(\rm e)}_{0}}\mathcal{N}_{2}\cdot(\tilde{S}^{(\rm e)}_{0})',
\end{split}
\end{eqnarray}
which gives
\begin{eqnarray}\label{eq:4.21a}
\begin{split}
\partial^{2}_{22}\varphi^{(\rm n)}
& =-\frac{\mathcal{N}^{(\rm n)}_{11}}{\mathcal{N}^{(\rm n)}_{22}}\partial^{2}_{11}\varphi^{(\rm n)}
-\frac{2\mathcal{N}^{(\rm n)}_{12}}{\mathcal{N}^{(\rm n)}_{22}}\partial^{2}_{12}\varphi^{(\rm n)}
-\frac{\partial_{\tilde{B}^{(\rm e)}_{0}}\mathcal{N}_{2}\cdot(\tilde{B}^{(e)}_{0})'
+\partial_{\tilde{S}^{(\rm e)}_{0}}\mathcal{N}_{2}\cdot(\tilde{S}^{(\rm e)}_{0})'}{\mathcal{N}^{(\rm n)}_{22}}\\[5pt]
& =-\frac{\mathcal{N}^{(\rm n)}_{11}}{\mathcal{N}^{(\rm n)}_{22}}\partial_{1}\psi^{(\rm n)}
-\frac{2\mathcal{N}^{(\rm n)}_{12}}{\mathcal{N}^{(\rm n)}_{22}}\partial_{2}\psi^{(\rm n)}
-{\color{black}\frac{\partial_{\tilde{B}^{(\rm e)}_{0}}\mathcal{N}_{2}\cdot(\tilde{B}^{(e)}_{0})'
+\partial_{\tilde{S}^{(\rm e)}_{0}}\mathcal{N}_{2}\cdot(\tilde{S}^{(\rm e)}_{0})'}{\mathcal{N}^{(\rm n)}_{22}},}
\end{split}
\end{eqnarray}
where
\begin{eqnarray}\label{eq:4.22}
\begin{split}
&\mathcal{N}^{(\rm n)}_{11}=\partial_{\partial_{1}\varphi^{(\rm n)}}\mathcal{N}_{1}\big(D\varphi^{(\rm n)}; \tilde{B}^{(\rm e)}_{0},\tilde{S}^{(\rm e)}_{0}\big)
=\frac{(\tilde{\rho}^{(\rm e)}_{\rm n})^{2}(\tilde{c}^{(\rm e)}_{\rm n})^{2}(\partial_{2}\varphi^{(\rm n)})^{2}-1}{(\tilde{\rho}^{(\rm e)}_{\rm n})^{3} (\partial_{2}\varphi^{(\rm n)})^{3}
\Big((\tilde{c}^{(\rm e)}_{\rm n})^{2}-\frac{1+(\partial_{1}\varphi^{(\rm n)})^{2}}{(\tilde{\rho}^{(\rm e)}_{\rm n})^{2}(\partial_{2}\varphi^{(\rm n)})^{2}}\Big)},\\[5pt]
&\mathcal{N}^{(\rm n)}_{22}=\partial_{\partial_{2}\varphi^{(\rm n)}}\mathcal{N}_{2}\big(D\varphi^{(\rm n)}; \tilde{B}^{(\rm e)}_{0},\tilde{S}^{(\rm e)}_{0}\big)
=\frac{(\tilde{c}^{(\rm e)}_{\rm n})^{2}\big((\partial_{1}\varphi^{(\rm n)})^{2}+1\big)}{\tilde{\rho}^{(\rm e)}_{\rm n} (\partial_{2}\varphi^{(\rm n)})^{3}
\Big((\tilde{c}^{(\rm e)}_{\rm n})^{2}-\frac{1+(\partial_{1}\varphi^{(\rm n)})^{2}}{(\tilde{\rho}^{(\rm e)}_{\rm n})^{2}(\partial_{2}\varphi^{(\rm n)})^{2}}\Big)},
\end{split}
\end{eqnarray}
and
\begin{eqnarray}\label{eq:4.23}
\begin{split}
\mathcal{N}^{(\rm n)}_{12}=\mathcal{N}^{(\rm n)}_{21}
&=\partial_{\partial_{2}\varphi^{(\rm n)}}\mathcal{N}_{1}\big(D\varphi^{(\rm n)}; \tilde{B}^{(\rm e)}_{0},\tilde{S}^{(\rm e)}_{0}\big)
=-\frac{(\tilde{c}^{(\rm e)}_{\rm n})^{2}\partial_{1}\varphi^{(\rm n)}}{\tilde{\rho}^{(\rm e)}_{\rm n} (\partial_{2}\varphi^{(\rm n)})^{2}
\Big((\tilde{c}^{(\rm e)}_{\rm n})^{2}-\frac{1+(\partial_{1}\varphi^{(\rm n)})^{2}}{(\tilde{\rho}^{(\rm e)}_{\rm n})^{2}(\partial_{2}\varphi^{(\rm n)})^{2}}\Big)}.
\end{split}
\end{eqnarray}
Obviously, $\mathcal{N}^{(\rm n)}_{ij}$ satisfies \eqref{eq:4.12}--\eqref{eq:4.11}, and
\begin{eqnarray}\label{eq:4.24}
\begin{split}
\mathcal{N}^{(\rm n)}_{11}\mathcal{N}^{(\rm n)}_{22}-\mathcal{N}^{(\rm n)}_{12}\mathcal{N}^{(\rm n)}_{21}
&=\frac{(\tilde{c}^{(\rm e)}_{\rm n})^{2}}{(\tilde{\rho}^{(\rm e)}_{\rm n})^{2} (\partial_{2}\varphi^{(\rm n)})^{4}
\Big((\tilde{c}^{(\rm e)}_{\rm n})^{2}-\frac{1+(\partial_{1}\varphi^{(\rm n)})^{2}}{(\tilde{\rho}^{(\rm e)}_{\rm n})^{2}(\partial_{2}\varphi^{(\rm n)})^{2}}\Big)}\\[5pt]
&=\frac{(\tilde{c}^{(\rm e)}_{\rm n})^{2}(\tilde{\rho}^{(\rm e)}_{\rm n})^{2}(\tilde{u}^{(\rm e)}_{\rm n})^{2}}
{(\tilde{c}^{(\rm e)}_{\rm n})^{2}-(\tilde{q}^{(\rm e)}_{\rm n})^{2}}>0,
\end{split}
\end{eqnarray}
for any $\tilde{\rho}^{(\rm e)}_{\rm n} \tilde{u}^{(\rm e)}_{\rm n}> 0$ in $\tilde{\Omega}^{(\rm e)}$,
where $\tilde{\rho}^{(\rm e)}_{\rm n}, \tilde{c}^{(\rm e)}_{\rm n}, \tilde{u}^{(\rm e)}_{\rm n}$
and $\tilde{q}^{(\rm e)}_{\rm n}$ are functions of $D\varphi^{(\rm n)}, \tilde{B}^{(\rm e)}_{0}, \tilde{S}^{(\rm e)}_{0}$.
Moreover, we know that
\begin{eqnarray}\label{eq:4.24a}
\begin{split}
\mathcal{N}^{(\rm n)}_{11}\big|_{\varphi^{(\rm n)}=\underline{\varphi},\tilde{B}^{(\rm e)}_{0}=\underline{B}^{(\rm e)},
\tilde{S}^{(\rm e)}_{0}=\underline{S}^{(\rm e)}}>0, \ \ \
\mathcal{N}^{(\rm n)}_{22}\big|_{\varphi^{(\rm n)}=\underline{\varphi},\tilde{B}^{(\rm e)}_{0}=\underline{B}^{(\rm e)},
\tilde{S}^{(\rm e)}_{0}=\underline{S}^{(\rm e)}}>0,
\end{split}
\end{eqnarray}
and
\begin{eqnarray}\label{eq:4.24b}
\begin{split}
\mathcal{N}^{(\rm n)}_{12}\big|_{\varphi^{(\rm n)}=\underline{\varphi},\tilde{B}^{(\rm e)}_{0}=\underline{B}^{(\rm e)},
\tilde{S}^{(\rm e)}_{0}=\underline{S}^{(\rm e)}}=\mathcal{N}^{(\rm n)}_{21}\big|_{\varphi^{(\rm n)}=\underline{\varphi},\tilde{B}^{(\rm e)}_{0}=\underline{B}^{(\rm e)},
\tilde{S}^{(\rm e)}_{0}=\underline{S}^{(\rm e)}}=0.
\end{split}
\end{eqnarray}

\par In order to derive the equation for $\psi^{(\rm n)}$, we take $\partial_{1}$ on \eqref{eq:4.21}
and then substitute \eqref{eq:4.20} into the resulting equation to obtain
 {\color{black}\begin{eqnarray}\label{eq:4.25}
\begin{split}
\sum_{i,j=1, 2}\mathcal{N}^{(\rm n)}_{ij}\partial^{2}_{ij}\psi^{(\rm n)}+\sum_{k,j=1,2}b^{(\rm n)}_{kj}\partial_{k}\psi^{(\rm n)}\partial_{j}\psi^{(\rm n)}
=f^{(\rm n)},
 \end{split}
\end{eqnarray}}
where $\mathcal{N}^{(\rm n)}_{ij}\ (i,j=1,2)$ are given by \eqref{eq:4.22}--\eqref{eq:4.23} with   \eqref{eq:4.24}, and
{\color{black}
\begin{align*}
\begin{split}
&b^{(\rm n)}_{k1}=\partial_{\partial_{k}\varphi^{(\rm n)}}\mathcal{N}^{(\rm n)}_{11}
-\frac{\mathcal{N}^{(\rm n)}_{11}}{\mathcal{N}^{(\rm n)}_{22}}\partial_{\partial_{k}\varphi^{(\rm n)}}\mathcal{N}^{(\rm n)}_{22},\\[5pt]
&b^{(\rm n)}_{k2}=2\Big(\partial_{\partial_{k}\varphi^{(\rm n)}}\mathcal{N}^{(\rm n)}_{12}
-\frac{\mathcal{N}^{(\rm n)}_{12}}{\mathcal{N}^{(\rm n)}_{22}}\partial_{\partial_{k}\varphi^{(\rm n)}}\mathcal{N}^{(\rm n)}_{12}\Big),
\end{split}
\end{align*}
for $ k=1,2$ }and{\color{black}
\begin{align}\label{eq:4.25b}
\begin{split}
f^{(\rm n)}&=\sum_{i,j=1,2}\Big(\frac{\partial_{\partial_{k}\varphi^{(\rm n)}}\mathcal{N}^{(\rm n)}_{22}\partial_{\tilde{B}^{(\rm e)}_{0}}\mathcal{N}_{2}}{\mathcal{N}^{(\rm n)}_{22}}
-\partial^2_{\tilde{B}^{(\rm e)}_{0}\partial_{k}\varphi^{(\rm n)}}\mathcal{N}_{2}\Big)\partial^2_{k1}\varphi^{(\rm n)}\cdot(\tilde{B}^{(e)}_{0})'\\[5pt]
&\quad +\sum_{i,j=1,2}\Big(\frac{\partial_{\partial_{k}\varphi^{(\rm n)}}\mathcal{N}^{(\rm n)}_{22}\partial_{\tilde{S}^{(\rm e)}_{0}}\mathcal{N}_{2}}{\mathcal{N}^{(\rm n)}_{22}}
-\partial^2_{\tilde{S}^{(\rm e)}_{0}\partial_{k}\varphi^{(\rm n)}}\mathcal{N}_{2}\Big)\partial^2_{k1}\varphi^{(\rm n)}\cdot(\tilde{S}^{(e)}_{0})'.
\end{split}
\end{align}
}
Next, let us consider the boundary conditions of $\psi^{(\rm n)}$. Firstly, at the entrance $\tilde{\Gamma}^{(\rm e)}_{\rm in}$, {\color{black}we rewrite $\eqref{eq:4.2}_{2}$ as
\begin{eqnarray}\label{eq:4.27}
\begin{split}
\tilde{p}^{(\rm e)}(D\varphi^{(\rm n)},\tilde{B}^{(e)}_{0}, \tilde{S}^{(e)}_{0})=\tilde{p}^{(\rm e)}_{0}(\eta).
\end{split}
\end{eqnarray}
}
{\color{black}Then, taking derivative $\partial_{2}$ on \eqref{eq:4.27} and by \eqref{eq:4.19} and \eqref{eq:4.21a} to deduce that}
\begin{eqnarray}\label{eq:4.27b}
\begin{split}
\tilde{\beta}^{(\rm n)}_{0, 1}\partial_{1}\psi^{(\rm n)}
+\tilde{\beta}^{ (\rm n)}_{0,2}\partial_{2}\psi^{(\rm n)}=g^{(\rm n)},
&\ \ \  \mbox{on}\ \ \ {\color{black}\tilde{\Gamma}^{(\rm e)}_{\rm in}},
\end{split}
\end{eqnarray}
where
\begin{eqnarray}\label{eq:4.27a}
\begin{split}
\tilde{\beta}^{(\rm n)}_{0, 1}={\color{black}-\frac{\partial_{\partial_{2}\varphi^{(\rm n)}}\tilde{p}^{(\rm e)}\mathcal{N}^{(\rm n)}_{11}}{\mathcal{N}^{(\rm n)}_{22}}},
\qquad  \tilde{\beta}^{(\rm n)}_{0, 2}={\color{black}\partial_{\partial_{1}\varphi^{(\rm n)}}\tilde{p}^{(\rm e)}-\frac{2\partial_{\partial_{2}\varphi^{(\rm n)}}\tilde{p}^{(\rm e)}\mathcal{N}^{(\rm n)}_{12}}{\mathcal{N}^{(\rm n)}_{22}}},
\end{split}
\end{eqnarray}
and{\color{black}
\begin{eqnarray}\label{eq:4.27b2}
\begin{split}
g^{(\rm n)}&=(\tilde{p}^{(\rm e)}_{0})'+\Big(\frac{\partial_{\partial_{2}\varphi^{(\rm n)}}\tilde{p}^{(\rm e)}\partial_{\tilde{B}^{(\rm e)}_{0}}\mathcal{N}_{2}}{\mathcal{N}^{(\rm n)}_{22}}
-\partial_{\tilde{B}^{(\rm e)}_{0}}\tilde{p}^{(\rm e)}\Big)(\tilde{B}^{(\rm e)}_{0})'\\[5pt]
&\qquad+\Big(\frac{\partial_{\partial_{2}\varphi^{(\rm n)}}\tilde{p}^{(\rm e)}\partial_{\tilde{S}^{(\rm e)}_{0}}\mathcal{N}_{2}}{\mathcal{N}^{(\rm n)}_{22}}
-\partial_{\tilde{S}^{(\rm e)}_{0}}\tilde{p}^{(\rm e)}\Big)(\tilde{S}^{(\rm e)}_{0})'.
\end{split}
\end{eqnarray}
}
{\color{black}Here
\begin{eqnarray}\label{eq:4.27d}
\begin{split}
&\partial_{\partial_{1}\varphi^{(\rm n)}}\tilde{p}^{(\rm e)}=-\frac{(\tilde{c}^{(\rm e)}_{\rm n})^{2}\partial_{1}\varphi^{(\rm n)}}{\tilde{\rho}^{(\rm e)}_{\rm n} (\partial_{2}\varphi^{(\rm n)})^{2}
\Big((\tilde{c}^{(\rm e)}_{\rm n})^{2}-\frac{1+(\partial_{1}\varphi^{(\rm n)})^{2}}{(\tilde{\rho}^{(\rm e)}_{\rm n})^{2}(\partial_{2}\varphi^{(\rm n)})^{2}}\Big)},\\[5pt]
&\partial_{\partial_{2}\varphi^{(\rm n)}}\tilde{p}^{(\rm e)}=\frac{(\tilde{c}^{(\rm e)}_{\rm n})^{2}\big(1+(\partial_{1}\varphi^{(\rm n)})^2\big)}{\tilde{\rho}^{(\rm e)}_{\rm n} (\partial_{2}\varphi^{(\rm n)})^{3}
\Big((\tilde{c}^{(\rm e)}_{\rm n})^{2}-\frac{1+(\partial_{1}\varphi^{(\rm n)})^{2}}{(\tilde{\rho}^{(\rm e)}_{\rm n})^{2}(\partial_{2}\varphi^{(\rm n)})^{2}}\Big)}.
\end{split}
\end{eqnarray}
}
Furthermore, from the construction of the approximate solutions, and by Proposition \ref{prop:3.1}, \eqref{eq:3.13}, \eqref{eq:4.24a}, \eqref{eq:4.24b},
 \eqref{eq:4.27a} and \eqref{eq:4.27d}, we know that
\begin{eqnarray}\label{eq:4.27c}
\begin{split}
&\tilde{\beta}^{(\rm n)}_{0, 1}\big|_{\varphi^{(\rm n)}=\underline{\varphi},\tilde{B}^{(\rm e)}_{0}=\underline{B}^{(\rm e)},
\tilde{S}^{(\rm e)}_{0}=\underline{S}^{(\rm e)}}\\[5pt]
&\ \ \ ={\color{black}-\frac{\partial_{\partial_{2}\varphi^{(\rm n)}}\tilde{p}^{(\rm e)}\big|_{\varphi^{(\rm n)}=\underline{\varphi},\tilde{B}^{(\rm e)}_{0}=\underline{B}^{(\rm e)},\tilde{S}^{(\rm e)}_{0}=\underline{S}^{(\rm e)}}\mathcal{N}^{(\rm n)}_{11}\big|_{\varphi^{(\rm n)}=\underline{\varphi},\tilde{B}^{(\rm e)}_{0}=\underline{B}^{(\rm e)},\tilde{S}^{(\rm e)}_{0}=\underline{S}^{(\rm e)}}}{\mathcal{N}^{(\rm n)}_{22}\big|_{\varphi^{(\rm n)}=\underline{\varphi},\tilde{B}^{(\rm e)}_{0}=\underline{B}^{(\rm e)},\tilde{S}^{(\rm e)}_{0}=\underline{S}^{(\rm e)}}}<0},\\[5pt]
&\tilde{\beta}^{(\rm n)}_{0, 2}\big|_{\varphi^{(\rm n)}=\underline{\varphi},\tilde{B}^{(\rm e)}_{0}=\underline{B}^{(\rm e)},
\tilde{S}^{(\rm e)}_{0}=\underline{S}^{(\rm e)}}\\[5pt]
&\ \ \ ={\color{black}\partial_{\partial_{1}\varphi^{(\rm n)}}\tilde{p}^{(\rm e)}\big|_{\varphi^{(\rm n)}=\underline{\varphi},\tilde{B}^{(\rm e)}_{0}=\underline{B}^{(\rm e)},
\tilde{S}^{(\rm e)}_{0}=\underline{S}^{(\rm e)}}}\\[5pt]
&\ \ \  \ \ \ {\color{black}-\frac{2\partial_{\partial_{2}\varphi^{(\rm n)}}\tilde{p}^{(\rm e)}\big|_{\varphi^{(\rm n)}=\underline{\varphi},\tilde{B}^{(\rm e)}_{0}=\underline{B}^{(\rm e)},\tilde{S}^{(\rm e)}_{0}=\underline{S}^{(\rm e)}}\mathcal{N}^{(\rm n)}_{12}\big|_{\varphi^{(\rm n)}=\underline{\varphi},\tilde{B}^{(\rm e)}_{0}=\underline{B}^{(\rm e)},
\tilde{S}^{(\rm e)}_{0}=\underline{S}^{(\rm e)}}}{\mathcal{N}^{(\rm n)}_{22}\big|_{\varphi^{(\rm n)}=\underline{\varphi},\tilde{B}^{(\rm e)}_{0}=\underline{B}^{(\rm e)},
\tilde{S}^{(\rm e)}_{0}=\underline{S}^{(\rm e)}}}}\\[5pt]
&\ \ \ =0,
\end{split}
\end{eqnarray}
{\color{black} where $\partial_{\partial_{j}\varphi^{(\rm n)}}\tilde{p}^{(\rm e)}$ for $j=1,2$ are given by \eqref{eq:4.27d}.}

Since $\partial_{1}\delta\varphi^{(\rm n)}=\omega_{\rm cd}$ on $\tilde{\Gamma}_{\rm cd}$, then we have, on the contact discontinuity,
\begin{eqnarray}\label{eq:4.28}
\begin{split}
{\color{black}\psi^{(\rm n)}=\omega_{\rm cd}(\xi)}, \ \ \ \
\mbox{on} \ \ \ \    \tilde{\Gamma}_{\rm cd}.
\end{split}
\end{eqnarray}

(Step 3.2)  {\it Construct a comparison function and apply the maximal principle near the corner point $\mathcal{O}$.} \quad
Let us introduce a coordinate transformation,
\begin{eqnarray*}
\hat{\pi}: \ \  \left\{
\begin{array}{llll}
\hat{\xi}=\sqrt{e_{1}}\xi,\\[5pt]
\hat{\eta}=\sqrt{e_{2}}\eta,
\end{array}
\right.
\end{eqnarray*}
where $e_{1}$, $e_{2}$ are given as in \eqref{eq:4.8a}. Then under the transformation $\hat{\pi}$, the domain $\tilde{\Omega}^{(\rm e)}$ and boundaries $\tilde{\Gamma}^{(\rm e)}_{\rm in}$ and $\tilde{\Gamma}_{\rm cd}$ become
\begin{eqnarray*}
\begin{split}
&\hat{\Omega}^{(\rm e)}=\big\{(\hat{\xi}, \hat{\eta}):\  0<\hat{\xi}<\sqrt{e_{1}}L,\  0<\hat{\eta}<\sqrt{e_{2}}m^{(\rm e)}\big\}, \\[5pt]
&\hat{\Gamma}^{(\rm e)}_{\rm in}=\big\{(\hat{\xi}, \hat{\eta}):\  \hat{\xi}=0,\  0<\hat{\eta}<\sqrt{e_{2}}m^{(\rm e)}\big\},\\[5pt]
&\hat{\Gamma}_{\rm cd}=\big\{(\hat{\xi}, \hat{\eta}):\  0<\hat{\xi}<\sqrt{e_{1}}L,\  \hat{\eta}=0\big\}.
\end{split}
\end{eqnarray*}

Let 
\begin{eqnarray}\label{eq:4.29}
\begin{split}
&\mathscr{L}^{(\rm n)}:=\sum_{i,j=1, 2}\hat{\mathcal{N}}^{(\rm n)}_{ij}\hat{\partial}^{2}_{ij}{\color{black}+\sum_{k,j=1, 2}\hat{b}^{(\rm n)}_{kj}\hat{\partial}_{k}\hat{\partial}_{j}},\ \ \ \
\mathscr{M}^{(\rm n)}:=\hat{\beta}^{(\rm n)}_{0,1}\hat{\partial}_{1}
 +\hat{\beta}^{(\rm n)}_{0,2}\hat{\partial}_{2},
 \end{split}
\end{eqnarray}
where $\hat{\partial}_{1}:=\partial_{\hat{\xi}}, \ \hat{\partial}_{2}:=\partial_{\hat{\eta}}$ and
\begin{eqnarray}\label{eq:4.30}
\begin{split}
&\hat{\mathcal{N}}^{(\rm n)}_{ij}=\frac{\mathcal{N}^{(\rm n)}_{ij}\big(\hat{\pi}^{-1}(\hat{\xi}, \hat{\eta})\big)}{\sqrt{e_{i}e_{j}}},\ \ {\color{black}\hat{b}^{(\rm n)}_{kj}=\frac{b^{(\rm n)}_{kj}\big(\hat{\pi}^{-1}(\hat{\xi}, \hat{\eta})\big)}{\sqrt{e_{k}e_{j}}}},\ \
\hat{\beta}^{(\rm n)}_{0,j}=\frac{\tilde{\beta}^{(\rm n)}_{0,j}\big(\hat{\pi}^{-1}(\hat{\xi}, \hat{\eta})\big)}{\sqrt{e_{j}}},\ \ \
i,k, j=1, 2,
\end{split}
\end{eqnarray}
  satisfying   \eqref{eq:4.24a}, \eqref{eq:4.24b} and \eqref{eq:4.27c}.
Set $\hat{\psi}^{(\rm n)}(\hat{\xi}, \hat{\eta}):=\psi^{(\rm n)}\big(\hat{\pi}^{-1}(\hat{\xi}, \hat{\eta})\big)$.
 Then $\hat{\psi}^{(n)}$ in the new coordinates $(\hat{\xi}, \hat{\eta})$ satisfies the following problem:
\begin{eqnarray}\label{eq:4.31}
\left\{
\begin{array}{llll}
\mathscr{L}^{(\rm n)}(\hat{\psi}^{(\rm n)})=\hat{f}^{(\rm n)},
&\qquad \  \  \ \mbox{in} \quad \hat{\Omega}^{(\rm e)},  \\[5pt]
\mathscr{M}^{(\rm n)}(\hat{\psi}^{(\rm n)})=\hat{g}^{(\rm n)}, &\qquad \ \ \  \mbox{on} \quad \hat{\Gamma}^{(\rm e)}_{\rm in}, \\[5pt]
\hat{\psi}^{(\rm n)}={\color{black}\omega_{\rm cd}(\hat{\xi}/\sqrt{e_{1}})},
&\qquad \ \ \  \mbox{on} \quad  \hat{\Gamma}_{\rm cd},
\end{array}
\right.
\end{eqnarray}
where $\hat{f}^{(\rm n)}$ and $\hat{g}^{(\rm n)}$ are functions {\color{black} of $f^{(\rm n)}$ and ${g}^{(\rm n)}$ under the coordinate transformation of $\hat{\pi}$}.

{\color{black} Note that due to the already obtained $C^{1,\alpha}$ regularity of $\varphi^{(\rm n)}$ up to the origin, one has the $C^{0,\alpha}$ regularity of $\hat{\psi}^{(\rm n)}$,
hence the Dirichlet condition for $\hat{\psi}^{(\rm n)}$ on $\hat{\Gamma}_{\rm cd}$ can be extended to the origin.}

We fix a constant $r_{0}>0$ and let $B_{r_{0}}(\mathcal{O})$ be a disk  with center at $\mathcal{O}$ and radius $r_{0}$. Denote $B^{+}_{r_{0}}=B_{r_{0}}(\mathcal{O})\cap\hat{\Omega}^{(\rm e)}$.
Let $r=\sqrt{\hat{\xi}^{2}+\hat{\eta}^{2}}$, and  $\theta=\arctan\frac{\hat{\eta}}{\hat{\xi}}$.
Define
\begin{eqnarray}\label{eq:4.32}
{\upsilon}(r, \theta)=Kr^{1+\alpha}\sin\big(\tau \theta+\omega\big), \quad (0\leq\theta\leq\frac{\pi}{2}),
\end{eqnarray}
where $\alpha\in(0, 1)$,\ $\tau=\frac{3+\alpha}{2}, \ \omega=\frac{1-\alpha}{8}\pi$ and $K=\hat{K}r^{-1-\alpha}_{0}\varepsilon_{\rm e}$
for $\hat{K}>0$ being a constant to be determined later.
It follows from the direct computation that
\begin{eqnarray*}
\begin{split}
&\hat{\partial}_{1}\upsilon={\color{black}Kr^{\alpha}\big[(1+\alpha)\sin\big(\tau \theta+\omega\big)\cos\theta
-\tau \cos\big(\tau \theta+\omega\big)\sin\theta\big]},\\[5pt]
&\hat{\partial}_{2}\upsilon={\color{black}Kr^{\alpha}\big[(1+\alpha)\sin\big(\tau\theta+\omega\big)\sin\theta
+\tau \cos\big(\tau\theta+\omega\big)\cos\theta\big]},
\end{split}
\end{eqnarray*}
and
\begin{eqnarray}\label{eq:4.33}
\begin{split}
\hat{\Delta} \upsilon&=Kr^{\alpha-1}\big((1+\alpha)^{2}-\tau^{2}\big){\color{black}\sin\big(\tau\theta+\omega\big)}\\[5pt]
&=-\frac{K(3\alpha+5)(1-\alpha)}{4}r^{\alpha-1}{\color{black}\sin\big(\tau\theta+\omega\big)},
\end{split}
\end{eqnarray}
where $\hat{\Delta}:=\hat{\partial}^{2}_{11}+\hat{\partial}^{2}_{22}$.

Now let us consider the boundary value problem \eqref{eq:4.31} near the corner $\mathcal{O}$.
{\color{black} By the estimates \eqref{eq:4.14}, \eqref{eq:4.25b} and the condition \eqref{eq:3.14}, for $\varepsilon_{\rm e}\in (\mathcal{C}_{\rm e,0}\tilde{\epsilon}, \varepsilon_{\rm e, 0})$, we can show that
\begin{eqnarray}\label{eq:4.26}
\begin{split}
|\hat f^{(\rm n)}|\leq \mathcal{C}\tilde{\epsilon}\varepsilon_{\rm e}r^{2\alpha-1}\leq \mathcal{C}\varepsilon^2_{\rm e}r^{2\alpha-1}.
\end{split}
\end{eqnarray}
}
By \eqref{eq:4.25}-\eqref{eq:4.25b} and the estimates \eqref{eq:4.14} and \eqref{eq:4.12}, we have
\begin{eqnarray*}
\begin{split}
&\mathscr{L}^{(\rm n)}\upsilon-\hat{f}^{(\rm n)}\\[5pt]
&\ \  =\hat{\Delta} \upsilon+(\mathscr{L}^{(\rm n)}-\hat{\Delta})\upsilon-\hat{f}^{(\rm n)}\\[5pt]
&\ \  =-\frac{K(3\alpha+5)(1-\alpha)}{4}r^{\alpha-1}\sin\big(\tau\theta +\omega\big)
+\sum_{i,j=1, 2}\Big(\hat{\mathcal{N}}^{(\rm n)}_{ij}-\delta_{ij}\Big)\hat{\partial}^{2}_{ij}\upsilon
+\sum_{k,j=1, 2}\hat{b}^{(\rm n)}_{kj}\hat{\partial}_{k}\upsilon\hat{\partial}_{j}\upsilon-\hat{f}^{(\rm n)}\\[5pt]
&\ \ \leq -\frac{\hat{K}(3\alpha+5)(1-\alpha)}{4}r^{-1-\alpha}_{0}r^{\alpha-1}\varepsilon_{\rm e}\sin\big(\tau\theta+\omega\big)\\
&\qquad\qquad +\mathcal{C}\hat{K}r^{-1-\alpha}_{0}r^{\alpha-1}\varepsilon^{2}_{\rm e}+\mathcal{C}\hat{K}^2r^{-2(1+\alpha)}_{0}r^{2\alpha}\varepsilon^{2}_{\rm e}+\mathcal{C}r^{2\alpha-1}\varepsilon^{2}_{\rm e}\\[5pt]
&\ \
=r^{-1-\alpha}_{0}r^{\alpha-1}\varepsilon_{\rm e}\bigg(-\frac{\hat{K}(3\alpha+5)(1-\alpha)}{4}\sin\big(\tau\theta+\omega\big)
+\mathcal{C}\hat{K}\varepsilon_{\rm e}+\mathcal{C}\hat{K}^2r^{-1-\alpha}_{0}r^{\alpha+1}\varepsilon_{\rm e}+\mathcal{C}r^{\alpha+1}_{0}r^{\alpha}\varepsilon_{\rm e}\bigg)\\[5pt]
&\ \
=r^{-1-\alpha}_{0}r^{\alpha-1}\varepsilon_{\rm e}\bigg(-\frac{\hat{K}(3\alpha+5)(1-\alpha)}{4}\sin\big(\tau\theta+\omega\big)
+\mathcal{C}\hat{K}\varepsilon_{\rm e}+\mathcal{C}\hat{K}^2\varepsilon_{\rm e}+\mathcal{C}r^{2\alpha+1}_{0}\varepsilon_{\rm e}\bigg),
\end{split}
\end{eqnarray*}
where $\mathcal{C}>0$ depends only on the background state $\underline{U}^{(\rm e)}$.
Due to the choice of the constants $\alpha, \theta$ and $\omega$, we know that
\begin{eqnarray*}
-\frac{\hat{K}(3\alpha+5)(1-\alpha)}{4}{\color{black}\sin\big(\tau\theta+\omega\big)}<0.
\end{eqnarray*}
Thus, for $\varepsilon_{\rm e}<\varepsilon_{\rm e,0}$ and $\hat{K}$ sufficiently large, it holds that
\begin{eqnarray}\label{eq:4.35}
\begin{split}
&\mathscr{L}^{(\rm n)}\upsilon-\hat{f}^{(\rm n)}\leq 0.
\end{split}
\end{eqnarray}

\par Next, let us consider the boundary conditions. Obviously, we have
\begin{eqnarray}\label{eq:4.36}
\begin{split}
\left.\hat{\psi}^{(\rm n)}\right.\big|_{ \partial B^{+}_{r_{0}}\cap \hat{\Omega}^{(\rm e)}}
\leq\left. \upsilon \right.\big|_{ \partial B^{+}_{r_{0}}\cap \hat{\Omega}^{(\rm e)}},
\end{split}
\end{eqnarray}
by letting $\hat{K}$ sufficiently large.

On the boundary $\partial B^{+}_{r_{0}}\cap \hat{\Gamma}_{\rm cd}$, {\color{black} due to the $C^{0,\alpha}$ regularity of $\hat{\psi}^{(\rm n)}$, the Dirichlet condition
for $\hat{\psi}^{(\rm n)}$ on $\hat{\Gamma}_{\rm cd}$ can be extended to the origin,} thus  one has
\begin{eqnarray}\label{eq:4.37}
\begin{split}
\hat{\psi}^{(\rm n)}-\upsilon&\leq \mathcal{C}r^{-1-\alpha}_{0}r^{1+\alpha}\Big(\mathcal{C}r^{1+\alpha}_{0}\|\omega_{\rm cd}\|^{(-\alpha, \{\mathcal{P}_{\rm e}\})}_{1,\alpha; \tilde{\Gamma}_{\rm cd}}-\hat{K}\varepsilon_{\rm e}\sin\omega\Big)\\[5pt]
&\leq \mathcal{C}r^{-1-\alpha}_{0}r^{1+\alpha}\Big(\mathcal{C}C_{w}r^{1+\alpha}_{0}\varrho-\hat{K}\varepsilon_{\rm e}\sin\omega\Big)\\[5pt]
&\leq \mathcal{C}r^{-1-\alpha}_{0}r^{1+\alpha}\varepsilon_{\rm e}\Big(\mathcal{C}r^{1+\alpha}_{0}-\hat{K}\sin\omega\Big)\\[5pt]
&\leq 0,
\end{split}
\end{eqnarray}
by choosing {\color{black}$\varrho\in (0, \mathcal{C}^{-1}_{\rm e}\varepsilon_{\rm e})$} and $\hat{K}$ sufficiently large.

Finally, on $\partial B^{+}_{r_{0}} \cap {\hat{\Gamma}^{(\rm e)}_{\rm in}}$,
{\color{black} by the estimate  \eqref{eq:4.14} and the assumptions \eqref{eq:3.14},
we have,  for $\tilde{\epsilon}>0$ sufficiently small and $\tilde{\epsilon}<\varepsilon_{\rm e}\in (\mathcal{C}_{\rm e}\tilde{\epsilon}, \varepsilon_{\rm e, 0})$,
\begin{eqnarray}\label{eq:4.37a}
\begin{split}
|\hat g^{(\rm n)}|\leq \mathcal{C}\tilde{\epsilon} r^{\alpha}\leq \mathcal{C}\varepsilon_{\rm e} r^{\alpha}.
\end{split}
\end{eqnarray}
}
{\color{black}Then, by \eqref{eq:4.37a}, we can deduce that
\begin{eqnarray}\label{eq:4.38}
\begin{split}
&\mathscr{M}^{(\rm n)}(\upsilon)-\hat{g}^{(\rm n)}\\[5pt]
=&\left.\hat{\beta}^{(\rm n)}_{0,1}Kr^{\alpha}\big[(1+\alpha)\sin\big(\tau \theta+\omega\big)\cos\theta
-\tau \cos\big(\tau \theta+\omega\big)\sin\theta\big]\right.\Big|_{ \theta=\frac{\pi}{2}}\\[5pt]
&+\left.\hat{\beta}^{(\rm n)}_{0,2}Kr^{\alpha}\big[(1+\alpha)\sin\big(\tau\theta+\omega\big)\sin\theta
+\tau \cos\big(\tau\theta+\omega\big)\cos\theta\big]\right.\Big|_{\theta=\frac{\pi}{2}}-\hat{g}^{(\rm n)}\\[5pt]
\leq &\hat{K}\Big(-\frac{3+\alpha}{2}\hat{\beta}^{(\rm n)}_{0,1}\cos(\frac{\tau}{2}\pi+\omega)+(1+\alpha)\hat{\beta}^{(\rm n)}_{0,2}
\sin(\frac{\tau}{2}\pi+\omega)\Big)r^{-1-\alpha}_{0}r^{\alpha}\varepsilon_{\rm e}+\mathcal{C}\varepsilon_{\rm e}r^{\alpha}\\[5pt]
=&\bigg(-\frac{\hat{K}(3+\alpha)}{2}\hat{\beta}^{(\rm n)}_{0,1}\big|_{\varphi^{(\rm n)}=\underline{\varphi},\tilde{B}^{(\rm e)}_{0}
=\underline{B}^{(\rm e)},\tilde{S}^{(\rm e)}_{0}=\underline{S}^{(\rm e)}}\cos(\frac{\tau}{2}\pi+\omega)+\mathcal{C}\varepsilon_{\rm e}
+\mathcal{C}r^{1+\alpha}_{0}\bigg)r^{-1-\alpha}_{0}r^{\alpha}\varepsilon_{\rm e}.
\end{split}
\end{eqnarray}}

By choosing 
$r_{0}=\varepsilon_{\rm e}<\varepsilon_{\rm e,0}$, and noting that $r\leq r_0$, we deduce that
\begin{eqnarray}\label{eq:4.38ax}
\begin{split}
\mathscr{M}^{(\rm n)}(\upsilon)-\hat{g}^{(\rm n)}
\leq {\color{black}-\frac{\hat{K}(3+\alpha)}{2}\hat{\beta}^{(\rm n)}_{0,1}\big|_{\varphi^{(\rm n)}=\underline{\varphi},\tilde{B}^{(\rm e)}_{0}
=\underline{B}^{(\rm e)},\tilde{S}^{(\rm e)}_{0}=\underline{S}^{(\rm e)}}\cos(\frac{\tau}{2}\pi+\omega)+\mathcal{C}\varepsilon_{\rm e}.}
\end{split}
\end{eqnarray}
We can take $0<\alpha_{0}<1$ and $\hat{K}>0$ sufficiently large depending only on $\tilde{\underline{U}}$ and $L$  so that
\begin{eqnarray*}
\begin{split}
{\color{black}-\hat{K}(1+\alpha)\hat{\beta}^{(\rm n)}_{0,1}\big|_{\varphi^{(\rm n)}=\underline{\varphi},\tilde{B}^{(\rm e)}_{0}=\underline{B}^{(\rm e)},
\tilde{S}^{(\rm e)}_{0}=\underline{S}^{(\rm e)}}\cos\left(\frac{\tau}{2}\pi+\omega\right)+\mathcal{C}\varepsilon_{\rm e}\leq 0,}
\end{split}
\end{eqnarray*}
holds for any $\alpha\in(0, \alpha_{0})$, where we have used \eqref{eq:4.27c} and the fact:
$\cos\left(\frac{\tau}{2}\pi+\omega\right)=\cos\left(\frac{7+\alpha}{8}\pi\right)<0$. It implies that
\begin{eqnarray*}\label{eq:4.38a}
\begin{split}
\mathscr{M}^{(\rm n)}(\upsilon)-\hat{g}^{(\rm n)}\leq 0.
\end{split}
\end{eqnarray*}

 Then, by the maximal principle, we have $\psi^{(\rm n)}\leq \upsilon$.
Similarly, we can also obtain $\psi^{(\rm n)}\geq -\upsilon$. Thus, we have $|\psi^{(\rm n)}|\leq K r^{1+\alpha}$.
Using the standard scaling technique, we can get $C^{1,\alpha}$-estimate for $\psi^{(n)}$, and then $C^{2,\alpha}$-estimate for $\varphi^{(\rm n)}$ up to the corner point $\mathcal{O}$.
\end{proof}

\subsection{Estimates of solutions to the problem $(\widetilde{\mathbf{FP}})_{\rm n}$ in the supersonic region
$\tilde{\Omega}^{(\rm h)}$}
In this subsection, we will consider the solution $\delta z^{(\rm n)}$ for the initial-boundary value problem
$(\widetilde{\mathbf{FP}})_{\rm n}$ in the supersonic region $\tilde{\Omega}^{(\rm h)}$:
\begin{eqnarray}\label{eq:4.40}
\left\{
\begin{array}{llll}
\partial_{\xi}\delta z^{(\rm n)}+\textrm{diag}( \lambda^{(\rm n-1)}_{+}, \lambda^{(\rm n-1)}_{-})\partial_{\eta}\delta z^{(\rm n)}=0,
&\ \ \ \mbox{in}\ \ \ \tilde{\Omega}^{(\rm h)},  \\[5pt]
\delta z^{(\rm n)}=\delta z_{0}(\eta),  &\ \ \  \mbox{on}\ \ \ \tilde{\Gamma}^{(\rm h)}_{\rm in},\\[5pt]
\delta z^{(\rm n)}_{-}-\delta z^{(\rm n)}_{+}=2\beta^{(\rm n-1)}_{\rm cd, 1}\partial_{\xi}\delta\varphi^{(\rm n)}
+2\beta^{(\rm n-1)}_{\rm cd, 2}\partial_{\eta}\delta\varphi^{(\rm n)}+2c_{\rm cd}(\xi),
&\ \ \  \mbox{on} \ \ \ \tilde{\Gamma}_{\rm cd},\\[5pt]
\delta z^{(\rm n)}_{-}+\delta z^{(\rm n)}_{+}=2\arctan g'_{-}(\xi), &\ \ \ \mbox{on}\ \ \ \tilde{\Gamma}_{-},
\end{array}
\right.
\end{eqnarray}
where the functions $\beta^{(\rm n-1)}_{\rm cd, 1}$, $\beta^{(\rm n-1)}_{\rm cd, 2}$ and $c_{\rm cd}(\xi)$
are defined in {\color{black}\eqref{eq:3.72}-\eqref{eq:3.73}}. We have the following result. 

\begin{proposition}\label{prop:4.2}
For any given $\alpha\in(0,1)$, {\color{black}there exist constants $\mathcal{C}^{*}_{\rm h,0}>0$ and $\epsilon^{*}_{\rm h,0}>0$ depending only on $\underline{\tilde{U}}$, $L$ and $\alpha$, such that for
$\tilde{\epsilon}<\epsilon_{I}\in (\mathcal{C}^{*}_{\rm h,0}\tilde{\epsilon} ,\epsilon^{*}_{\rm h, 0})$ with $\tilde{\epsilon}>0$ is sufficiently small,
if {\color{black}$(\delta z^{(\rm n-1)}, \delta \varphi^{(\rm n-1)})\in \mathscr{K}_{2\epsilon_{I}}$}}, {\color{black}then} the solution $z^{(\rm n)}$ to the problem \eqref{eq:4.40} satisfies
\begin{eqnarray}\label{eq:4.41}
\begin{split}
&\|\delta z^{(\rm n)}_{-}\|_{1,\alpha;\tilde{\Omega}^{(\rm h)}}+\|\delta z^{(\rm n)}_{+}\|_{1,\alpha;\tilde{\Omega}^{(\rm h)}}\\[5pt]
\leq& C^{*}_{41}\bigg(\big\|\delta z^{(\rm n)}_{0}\big\|_{1,\alpha;\tilde{\Gamma}^{(\rm h)}_{\rm in}}
+\sum_{k=\rm e, h}\|\delta\tilde{B}^{(\rm k)}_{0}\|_{1,\alpha; \tilde{\Gamma}^{(\rm k)}_{\rm in}}\\[5pt]
&\qquad\qquad +\sum_{k=\rm e, h}\|\delta\tilde{S}^{(\rm k)}_{0}\|_{1,\alpha; \tilde{\Gamma}^{(\rm k)}_{\rm in}}+\big\|g_{-}+1\big\|_{2,\alpha; \tilde{\Gamma}_{-}}
+{\color{black}\big\|\delta \varphi^{(\rm n)}\big\|^{(-1-\alpha, \{\mathcal{P}_{\rm e}\})}_{2,\alpha; \overline{\tilde{\Gamma}}_{\rm cd}}}\bigg),
\end{split}
\end{eqnarray}
where $C^{*}_{41}>0$ depends only on $\underline{\tilde{U}}$, $L$ and $\alpha$.
\end{proposition}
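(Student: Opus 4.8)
The plan is to solve the linear diagonal hyperbolic system \eqref{eq:4.40} region by region via the method of characteristics, using that the two families are genuinely separated and that the bounded geometry limits the number of reflections. First I would record the structural facts: writing $\underline{\lambda}_{\pm}:=\lambda_{\pm}(\underline{U}^{(\rm h)})$ one has $\underline{\lambda}_{-}<0<\underline{\lambda}_{+}$ constant, so since $\delta z^{(\rm n-1)}\in\mathscr{K}_{2\epsilon}$ the perturbed speeds obey $\lambda^{(\rm n-1)}_{-}\le -c_{0}<0<c_{0}\le\lambda^{(\rm n-1)}_{+}$ and $|\lambda^{(\rm n-1)}_{\pm}|\le C_{0}$ for $\epsilon$ small, with $\|\lambda^{(\rm n-1)}_{\pm}-\underline{\lambda}_{\pm}\|_{1,\alpha;\tilde{\Omega}^{(\rm h)}}\le C(\|\delta z^{(\rm n-1)}\|_{1,\alpha}+\|\delta\tilde{B}^{(\rm h)}_{0}\|_{1,\alpha}+\|\delta\tilde{S}^{(\rm h)}_{0}\|_{1,\alpha})\le C\epsilon$. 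Consequently the integral curves of $\frac{d\eta}{d\xi}=\lambda^{(\rm n-1)}_{\pm}$ are well defined, transversal to $\tilde{\Gamma}_{\rm cd}$ and $\tilde{\Gamma}_{-}$, and lie in $C^{1,\alpha}$ with norm $\le 1+C\epsilon$; the component $\delta z^{(\rm n)}_{-}$ is carried by the speed $\lambda^{(\rm n-1)}_{+}>0$ (its backward characteristics exit through $\tilde{\Gamma}^{(\rm h)}_{\rm in}$ or the lower wall $\tilde{\Gamma}_{-}$), while $\delta z^{(\rm n)}_{+}$ is carried by $\lambda^{(\rm n-1)}_{-}<0$ (its backward characteristics exit through $\tilde{\Gamma}^{(\rm h)}_{\rm in}$ or the contact discontinuity $\tilde{\Gamma}_{\rm cd}$).

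Next I would decompose $\tilde{\Omega}^{(\rm h)}$ into finitely many sub-regions by the down-characteristic issued from $\mathcal{O}=(0,0)$, the up-characteristic issued from the corner $(0,-m^{(\rm h)})$, and their successive reflections off $\tilde{\Gamma}_{-}$ and $\tilde{\Gamma}_{\rm cd}$, producing the break points $\xi^{*}_{-,0},\xi^{*}_{\rm cd,0},\xi^{*}_{-,1},\xi^{*}_{\rm cd,1},\dots$ of Fig.~\ref{fig3.4}. This is where the hypothesis $\underline{M}^{(\rm h)}\ge\sqrt{1+L^{2}/4}$ enters: in the Lagrangian variable a characteristic needs $\Delta\xi=\sqrt{(\underline{M}^{(\rm h)})^{2}-1}\ge L/2$ to cross the full height $\underline{m}^{(\rm h)}$, so within $[0,L]$ there are at most a fixed number (depending only on $\underline{\tilde{U}}$ and $L$) of such sub-regions, which in turn bounds all reflection constants. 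On the first sub-region, adjacent to $\tilde{\Gamma}^{(\rm h)}_{\rm in}$ and below the down-characteristic through $\mathcal{O}$, both characteristics through every point reach $\tilde{\Gamma}^{(\rm h)}_{\rm in}$, so $\delta z^{(\rm n)}$ is reconstructed by transporting $\delta z_{0}$ along them; differentiating the characteristic ODE and a Gronwall estimate, together with the $C^{1,\alpha}$ control of the speeds, yield $\|\delta z^{(\rm n)}\|_{1,\alpha}\le C(\|\delta z^{(\rm n)}_{0}\|_{1,\alpha}+\sum_{k=\rm e, h}\|\delta\tilde{B}^{(\rm k)}_{0}\|_{1,\alpha}+\sum_{k=\rm e, h}\|\delta\tilde{S}^{(\rm k)}_{0}\|_{1,\alpha})$ there.

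Then I would carry the estimate across the reflections one sub-region at a time. At a point where a $(+)$-characteristic hits $\tilde{\Gamma}_{-}$, the relation $\delta z^{(\rm n)}_{-}+\delta z^{(\rm n)}_{+}=2\arctan g'_{-}(\xi)$ determines the outgoing trace $\delta z^{(\rm n)}_{-}$ from the already-estimated incoming $\delta z^{(\rm n)}_{+}$, contributing $C\|g_{-}+1\|_{2,\alpha;\tilde{\Gamma}_{-}}$ with a bounded factor. At a point where a $(-)$-characteristic hits $\tilde{\Gamma}_{\rm cd}$, the relation $\delta z^{(\rm n)}_{-}-\delta z^{(\rm n)}_{+}=2\beta^{(\rm n-1)}_{\rm cd,1}\partial_{\xi}\delta\varphi^{(\rm n)}+2\beta^{(\rm n-1)}_{\rm cd,2}\partial_{\eta}\delta\varphi^{(\rm n)}+2c_{\rm cd}(\xi)$ determines $\delta z^{(\rm n)}_{+}$ from the incoming $\delta z^{(\rm n)}_{-}$, bringing in $C\|\delta\varphi^{(\rm n)}\|_{2,\alpha;\tilde{\Gamma}_{\rm cd}\cup\{\mathcal{O}\}}$ through the trace of $D\delta\varphi^{(\rm n)}$ on $\tilde{\Gamma}_{\rm cd}$ (exactly the quantity bounded in Remark~\ref{rem:4.1}) and $C\|c_{\rm cd}\|\le C(\|\delta\tilde{B}^{(\rm h)}_{0}\|_{1,\alpha}+\|\delta\tilde{S}^{(\rm h)}_{0}\|_{1,\alpha}+\|\delta\tilde{B}^{(\rm e)}_{0}\|_{1,\alpha}+\|\delta\tilde{S}^{(\rm e)}_{0}\|_{1,\alpha})$ by \eqref{eq:3.73}, the factors $\beta^{(\rm n-1)}_{\rm cd,j}$ differing from their background values \eqref{eq:4.13} by $O(\epsilon)$. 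Iterating through the finitely many sub-regions and summing gives \eqref{eq:4.41} with $C^{*}_{41}$ depending only on $\underline{\tilde{U}}$, $L$ and $\alpha$.

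I expect the main obstacle to be preserving the full $C^{1,\alpha}$ regularity through the reflections (rather than only $C^{0}$ or $C^{1}$): at every corner where a characteristic meets $\tilde{\Gamma}_{-}$ or $\tilde{\Gamma}_{\rm cd}$ one must check the first-order compatibility of the data so that the reflected solution does not develop a spurious singularity, and one must verify that the characteristic foliation is itself $C^{1,\alpha}$ so that the change to characteristic coordinates does not spoil Hölder seminorms. The second delicate point is that $C^{*}_{41}$ must not degenerate as $\epsilon\to 0$, which is why one needs both the bound on the number of reflection sub-regions coming from $\underline{M}^{(\rm h)}\ge\sqrt{1+L^{2}/4}$ and the smallness of $\beta^{(\rm n-1)}_{\rm cd,j}$ minus its background value, so that the reflection off $\tilde{\Gamma}_{\rm cd}$ (which couples to the elliptic solution) cannot amplify the norm; the remaining estimates are routine transport bounds along $C^{1,\alpha}$ characteristics.
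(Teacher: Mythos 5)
Your plan coincides with the paper's: decompose $\tilde{\Omega}^{(\rm h)}$ into sub-regions bounded by the characteristics through $\mathcal{O}$ and $(0,-m^{(\rm h)})$ and their reflections, establish the $C^{1,\alpha}$ transport estimate in the inlet triangle by differentiating the characteristic ODE and Gronwall, propagate across each reflection using the boundary conditions on $\tilde{\Gamma}_{-}$ and $\tilde{\Gamma}_{\rm cd}$, and iterate through the uniformly bounded number of sub-regions. The technical core you flag — Hölder regularity of the characteristic foliation so the change of variables does not degrade seminorms — is exactly what the paper's Lemmas 4.1--4.7 supply before Propositions 4.3--4.5 carry out the region-by-region estimate you describe and the final induction sums them, so this is essentially the same argument.
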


\vspace{2pt}
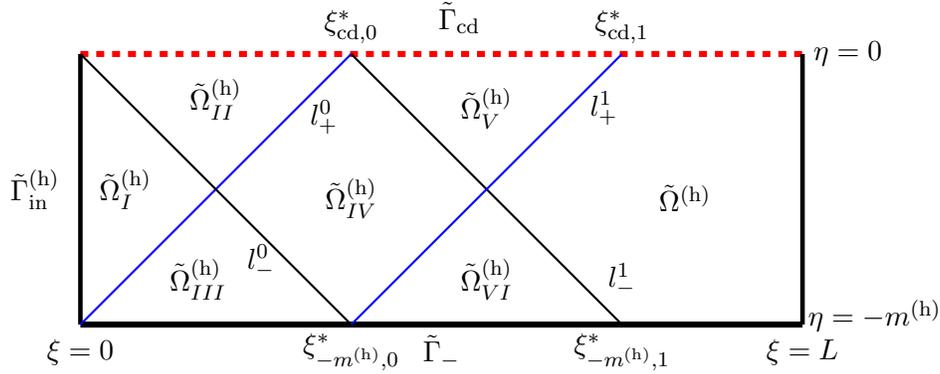
\begin{figure}[ht]
\begin{center}
\begin{tikzpicture}[scale=1.2]
\draw [line width=0.08cm][dashed][red](-5,1.0) --(3.0,1.0);
\draw [line width=0.08cm] (-5,-2) --(3.0,-2);
\draw [line width=0.06cm](-5,-2) --(-5,1.0);
\draw [line width=0.06cm](3.0,-2) --(3.0,1.0);
\draw [thick][blue](-5,-2) --(-2,1);
\draw [thick][black](-5,1)--(-2,-2) ;
\draw [thick][black](-2,1)--(1,-2) ;
\draw [thick][blue](-2,-2) --(1,1);
\node at (3.0, -2.3) {$\xi=L$};
\node at (-5, -2.3) {$\xi=0$};
\node at (3.5, 1) {$\eta=0$};
\node at (3.8, -1.9) {$\eta=-m^{(\rm h)}$};
\node at (-2.0, 1.3) {$\xi^{*}_{\rm cd, 0}$};
\node at (1, 1.3) {$\xi^{*}_{\rm cd, 1}$};
\node at (-2.0, -2.3) {$\xi^{*}_{-m^{(\rm h)}, 0}$};
\node at (1, -2.3) {$\xi^{*}_{-m^{(\rm h)}, 1}$};
\node at (-3.0, -1.3) {$l^{0}_{-}$};
\node at (-2.3, 0.3) {$l^{0}_{+}$};
\node at (0.8, 0.4) {$l^{1}_{+}$};
\node at (1.0, -1.5) {$l^{1}_{-}$};
\node at (1.7, -0.6) {$\tilde{\Omega}^{(\rm h)}$};
\node at (-4.5, -0.5) {$\tilde{\Omega}^{(\rm h)}_{ I}$};
\node at (-3.5,0.5) {$\tilde{\Omega}^{(\rm h)}_{ II}$};
\node at (-3.7, -1.5) {$\tilde{\Omega}^{(\rm h)}_{III}$};
\node at (-2.0, -0.6) {$\tilde{\Omega}^{(\rm h)}_{IV}$};
\node at (-0.5, 0.4) {$\tilde{\Omega}^{(\rm h)}_{V}$};
\node at (-0.5, -1.5) {$\tilde{\Omega}^{(\rm h)}_{VI}$};
\node at (-5.5, -0.5) {$\tilde{\Gamma}^{(\rm h)}_{\rm in}$};
\node at (-1, -2.3) {$\tilde{\Gamma}_{-}$};
\node at (-0.8, 1.4) {$\tilde{\Gamma}_{\rm cd}$};
\end{tikzpicture}
\caption{Initial-boundary value problem for $(\widetilde{\mathbf{FP}})_{\rm n}$ in $\tilde{\Omega}^{(\rm h)}$}\label{fig:4.6}
\end{center}
\end{figure}
The proof of the Proposition \ref{prop:4.2} will be divided into several steps to deal with the different sub-domains
which are constructed as follows (see Fig.\ref{fig:4.6}). 
Denoted by $\tilde{\Omega}_I^{(\rm h)}$   the triangle which is bounded by the inlet $\tilde{\Gamma}^{(\rm h)}_{\rm in}$, the characteristics of \eqref{eq:4.40} 
with the speed $\lambda_+$ issuing 
from the point $(0,-m^{(\rm h)})$ (which we denote by  $l^0_+$) and 
another characteristics of \eqref{eq:4.40} which corresponding to $\lambda_-$ 
issuing from the point $(0,0)$ (which we denote by $l_-^0$).
Let $\tilde{\Omega}^{(\rm h)}_{II}$ be the triangle bounded by $l^0_{\pm}$ and the contact discontinuity $\tilde{\Gamma}_{\rm cd}$. Let $\tilde{\Omega}^{(\rm h)}_{III}$ be the triangle bounded by the lower nozzle walls $\tilde{\Gamma}^{(\rm h)}_{-}$, $l_+^0$ and $l_-^0$.
Let $\tilde{\Omega}^{(\rm h)}_{IV}$ be the diamond bounded by $l^0_{\pm}$, the characteristics $l^1_+$   
determined by $\lambda_{+}$ issuing from the point where $l_-^0$ and the lower nozzle walls $\tilde{\Gamma}^{(\rm h)}_{-}$
intersect,
as well as the characteristics $l^1_-$ 
with the speed $\lambda_{-}$ issuing from the intersection point of $l_+^0$ and the contact discontinuity  $\tilde{\Gamma}_{\rm cd}$. 
Let $\tilde{\Omega}^{(\rm h)}_{V}$ be the triangle bounded by $l^1_{+}$, $l^1_{-}$ and the contact discontinuity $\tilde{\Gamma}_{\rm cd}$.
Let $\tilde{\Omega}^{(\rm h)}_{VI}$ be the triangle bounded by $l^1_{-}$, $l^1_{+}$ and the lower nozzle wall $\tilde{\Gamma}^{(\rm h)}_{-}$.
{Finally, we remark that different from the argument made in \cite{hkwx}, we need to consider the H\"{o}lder estimates, \emph{i.e.}, $C^{1,\alpha}$-norm on the charateristic curves and the solutions instead of the $C^1$-norm used in \cite{hkwx}.}

\subsubsection{Estimates on the characteristic curves} First, let us introduce several lemmas on the H\"{o}lder regularity of the characteristic curves.
For any ${\color{black}\eta_{\pm, 0}}\in [-m^{(\rm h)},0]$, let $\eta=\phi^{(\rm n)}_{\pm}({\color{black}\xi,\eta_{\pm, 0}})$ be the characteristic curves corresponding to $\lambda^{(\rm n-1)}_{\pm}$ issuing from the point $(0,{\color{black}\eta_{\pm, 0}})$:
\begin{eqnarray}\label{eq:4.43}
\begin{split}
\left\{
\begin{array}{llll}
\frac{d\phi^{(\rm n)}_{\pm}}{d\xi}=\lambda^{(\rm n-1)}_{\pm}(\xi,\phi^{(\rm n)}_{\pm}(\tau, {\color{black}\eta_{\pm, 0}})),\\[5pt]
\phi^{(\rm n)}_{\pm}(0,{\color{black}\eta_{\pm, 0}})={\color{black}\eta_{\pm, 0}}.
\end{array}
\right.
\end{split}
\end{eqnarray}
Along the characteristics, we have
\begin{eqnarray}\label{eq:4.44}
\eta={\color{black}\eta_{\pm, 0}}+\int^{\xi}_{0}\lambda^{(\rm n-1)}_{\pm}(\tau,\phi^{(\rm n)}_{\pm}(\tau, {\color{black}\eta_{\pm, 0}}))d\tau.
\end{eqnarray}
From \eqref{eq:4.44}, we can regard $\eta_{+, 0}$ (or $\eta_{-, 0}$) as a function of $(\xi,\eta)$,
\emph{i.e.}, $\eta_{+,0}=\eta_{+, 0}(\xi,\eta)$ (or $\eta_{-,0}=\eta_{-, 0}(\xi,\eta)$).

{\color{black}For any two points $P=(\xi_{P},\eta_{P}), \  Q=(\xi_{Q},\eta_{Q}) \in \tilde{\Omega}^{(\rm h)}_I\cup \tilde{\Omega}^{(\rm h)}_{II}$
(or $P,\, Q \in \tilde{\Omega}^{(\rm h)}_I\cup \tilde{\Omega}^{(\rm h)}_{III}$), there exist unique
points $P_{+}(0)=(0,\eta_{P_{+}(0)}), Q_{+}(0)=(0,\eta_{Q_{+}(0)})\in \tilde{\Gamma}^{(\rm h)}_{\rm in}$
(or  $P_{-}(0)=(0,\eta_{P_{-}(0)}), Q_{-}(0)=(0,\eta_{Q_{-}(0)})\in \tilde{\Gamma}^{(\rm h)}_{\rm in}$)
such that the characteristics corresponding to $\lambda^{(\rm n-1)}_{+}$ (or $\lambda^{(\rm n-1)}_{-}$)
issuing from $P_{+}(0)$ and $Q_{+}(0)$(or $P_{-}(0)$ and $Q_{-}(0)$) pass through $P$ and $Q$.

Define $P_{+}(\xi)=(\xi, \phi_{+}(\xi,\eta_{P_{+}(0)}))$ and $Q_{+}(\xi)=(\xi, \phi_{+}(\xi,\eta_{Q_{+}(0)}))$
(or $P_{-}(\xi)=(\xi, \phi_{-}(\xi,\eta_{P_{-}(0)}))$ and $Q_{-}(\xi)=(\xi, \phi_{-}(\xi,\eta_{Q_{-}(0)}))$). Then
\begin{eqnarray}\label{eq:4.45}
\begin{split}
&\eta_{P_{+}(\xi)}=\phi_{+}(\xi,\eta_{P_{+}(0)})=\eta_{P_{+}(0)}+\int^{\xi}_{0}\lambda^{(\rm n-1)}_{+}(P_{+}(\tau))d\tau, \\[5pt]
&\eta_{Q_{+}(\xi)}=\phi_{+}(\xi,\eta_{Q_{+}(0)})=\eta_{Q_{+}(0)}+\int^{\xi}_{0}\lambda^{(\rm n-1)}_{+}(Q_{+}(\tau))d\tau,
\end{split}
\end{eqnarray}
for $P_{+}, Q_{+}\in \tilde{\Omega}^{(\rm h)}_I\cup \tilde{\Omega}^{(\rm h)}_{II}$, (or
\begin{eqnarray}\label{eq:4.46}
\begin{split}
&\eta_{P_{-}(\xi)}=\phi_{-}(\xi,\eta_{P_{-}(0)})=\eta_{P_{-}(0)}+\int^{\xi}_{0}\lambda^{(\rm n-1)}_{-}(P_{-}(\tau))d\tau, \\[5pt]
&\eta_{Q_{-}(\xi)}=\phi_{-}(\xi,\eta_{Q_{-}(0)})=\eta_{Q_{-}(0)}+\int^{\xi}_{0}\lambda^{(\rm n-1)}_{-}(Q_{-}(\tau))d\tau,
\end{split}
\end{eqnarray}
for $P_{-}, Q_{-}\in \tilde{\Omega}^{(\rm h)}_I\cup \tilde{\Omega}^{(\rm h)}_{III}$).

From the above arguments, we also have $\eta_{P_{\pm}(0)}=\eta_{\pm,0}(P)$ and $\eta_{Q_{\pm}(0)}=\eta_{\pm,0}(Q)$.
Finally, we define the distance in the Euclidean norm as
\begin{align}\label{eq:4.47x}
d(P_{\pm}(\xi),Q_{\pm}(\xi))=\big|P_{\pm}(\xi)-Q_{\pm}(\xi)\big|,\  d^{\alpha}(P_{\pm}(\xi),Q_{\pm}(\xi))=\big|P_{\pm}(\xi)-Q_{\pm}(\xi)\big|^{\alpha}, \ \forall \alpha\in(0,1).
\end{align}
}
Then, we have
\begin{lemma}\label{lem:4.1}
For any $\alpha\in (0,1)$ and $\delta z^{(\rm n-1)}
\in \mathscr{K}_{2\epsilon_{I}}$,
there exist constants $\mathcal{C}_{\rm h,1}>0$ and $\epsilon_{\rm h,1}>0$
$C_{4i}, (i=1,2,3)$ depending only on $\tilde{\underline{U}}$, $L$ and $\alpha$, such that
for $\tilde{\epsilon}<\epsilon_{I} \in (\mathcal{C}_{\rm h,1}\tilde{\epsilon}, \epsilon_{\rm h,1})$ with $\tilde{\epsilon}>0$ sufficiently small, one has
\begin{eqnarray}\label{eq:4.47}
\begin{split}
	(\rm i)\ \ & d^{\alpha}\big({\color{black}P_{\pm}}(\xi),{\color{black}Q_{\pm}}(\xi)\big)\leq C_{41}d^{\alpha}\big(P,Q\big),\quad & \xi\in[0, \xi_{PQ}], \\[5pt]
	(\rm ii)\ \ & d^{\alpha}\big({\color{black}P_{\pm}}(\xi),{\color{black}Q_{\pm}}(\xi)\big)\leq C_{42}d^{\alpha}\big({\color{black}P_{\pm}}(\xi_{PQ}),{\color{black}Q_{\pm}}(\xi_{PQ})\big),\quad & \xi\in[0, \xi_{PQ}], \\[5pt]
	(\rm iii)\ \  & {C^{-1}_{43}|\xi_{PQ}-\xi_{Q}|^{\alpha}\leq d^{\alpha}\big({\color{black}Q_{\pm}(\xi_{PQ})},Q\big)\leq C_{43}|\xi_{PQ}-\xi_{Q}|^{\alpha},}
	\end{split}
	\end{eqnarray}
	for any $P, Q \in \tilde{\Omega}^{(\rm h)}_I\cup \tilde{\Omega}^{(\rm h)}_{II}$ (or $P, Q \in \tilde{\Omega}^{(\rm h)}_I\cup \tilde{\Omega}^{(\rm h)}_{III}$),
	where $\xi_{PQ}=\min\{\xi_{P},\ \xi_{Q} \}$, $P_{\pm}(\xi)=\big(\xi, \phi^{(\rm n)}_{\pm}(\xi, \eta_{P_{\pm}(0)})\big),
	Q_{\pm}(\xi)=\big(\xi, \phi^{(\rm n)}_{\pm}(\xi, \eta_{Q_{\pm}(0)})\big)$, {\color{black}$P_{\pm}(\xi_{PQ})=\big(\xi_{PQ}, \phi^{(\rm n)}_{\pm}(\xi_{PQ}, \eta_{P_{\pm}(0)})\big)$, and $\ Q_{\pm}(\xi_{PQ}) = \big(\xi_{PQ}, \phi^{(\rm n)}_{\pm}(\xi_{PQ}, \eta_{Q_{\pm}(0)})\big)$}.
\end{lemma}

\begin{proof}
	Without loss of generality, we only consider the case that $P, Q \in \tilde{\Omega}^{(\rm h)}_I\cup \tilde{\Omega}^{(\rm h)}_{II}$ and $\xi_{PQ}=\xi_{P}$, since the other cases can be treated similarly.
	By \eqref{eq:4.43}, we know that
	\begin{eqnarray*}
	\begin{split}
	\frac{\partial}{\partial \xi}\Big(\frac{\partial \phi^{(\rm n)}_{+}}{\partial \eta_{+,0}}\Big)
			=\partial_{\eta }\lambda^{(\rm n-1)}_{+}\frac{\partial \phi^{(\rm n)}_{+}}{\partial \eta_{+,0}},
			\ \ \  \frac{\partial \phi^{(\rm n)}_{+}}{\partial \eta_{+,0}}(0,\eta_{+,0}) =1,
		\end{split}
	\end{eqnarray*}
	which leads to
	\begin{eqnarray*}
		\begin{split}
			\frac{\partial \phi^{(\rm n)}_{+}}{\partial \eta_{+,0}}=e^{\int^{\xi}_{0}\partial_{\eta}\lambda^{(\rm n-1)}_{+}(\tau, \phi^{(\rm n)}_{+}(\tau, \eta_{+,0}))d\tau}.
		\end{split}
	\end{eqnarray*}

{\color{black}Noticing that
\begin{eqnarray}\label{eq:4.47a}
\lambda^{(\rm n-1)}_{+}\big|_{\delta z^{(\rm n-1)}_{+}=\delta \tilde{B}^{(\rm h)}_{0}=\delta\tilde{S}^{(\rm h)}_{0}=0}=
-\lambda^{(\rm n-1)}_{-}\big|_{\delta z^{(\rm n-1)}_{-}=\delta \tilde{B}^{(\rm h)}_{0}=\delta\tilde{S}^{(\rm h)}_{0}=0}=\underline{\lambda}_{+}>0,
\end{eqnarray}
one has
\begin{eqnarray}\label{eq:4.47b}
\begin{split}
\|\lambda^{(\rm n-1)}_{\pm}-\underline{\lambda}_{\pm}\|_{1,\alpha; \tilde{\Omega}^{(\rm h)}}
&\leq \mathcal{C}\Big(\|\delta z^{(\rm n-1)}\|_{1,\alpha;\tilde{\Omega}^{(\rm h)}}+\|\delta \tilde{B}^{(\rm h)}_{0}\|_{1,\alpha;\tilde{\Gamma}^{(\rm h)}_{\rm in}}+\|\delta\tilde{S}^{(\rm h)}_{0}\|_{1,\alpha;\tilde{\Gamma}^{(\rm h)}_{\rm in}}\Big)\\[5pt]
&\leq \mathcal{C}(2\epsilon_{I}+\tilde{\epsilon}),
\end{split}
\end{eqnarray}
and hence there exist constants $\mathcal{C}'_{\rm h,1}>0$ and $\epsilon'_{\rm h,1}>0$ depending only on $\tilde{\underline{U}}$ and $L$, such that
for $\tilde{\epsilon}>0$ sufficiently small, if $\tilde{\epsilon}<\epsilon_{I} \in (\mathcal{C}'_{\rm h,1}\tilde{\epsilon}, \epsilon'_{\rm h,1})$,
the following holds,
\begin{eqnarray}\label{eq:4.47c}
\begin{split}
\frac{1}{2}\underline{\lambda}_{+} <\|\lambda^{(\rm n-1)}_{\pm}\|_{1,\alpha; \tilde{\Omega}^{(\rm h)}}<\frac{3}{2}\underline{\lambda}_{+}, \qquad
 \frac{1}{2}\underline{\lambda}_{+}\leq \pm\lambda^{(\rm n-1)}_{\pm}<\frac{3}{2}\underline{\lambda}_{+}.
\end{split}
\end{eqnarray}
}
Then, by \eqref{eq:4.47c}, it follows that
	\begin{eqnarray}\label{eq:4.48}
	\begin{split}
	\big|\phi^{(\rm n)}_{+}(\xi, \eta_{P_{+}(0)})-\phi^{(\rm n)}_{+}(\xi, \eta_{Q_{+}(0)})|\leq \mathcal{C}| \eta_{P_{+}(0)}-\eta_{Q_{+}(0)}|,
	\end{split}
	\end{eqnarray}
	for $0\leq\xi\leq\xi_{PQ}$.
	On the other hand, from \eqref{eq:4.45} and \eqref{eq:4.47b}, we have
	\begin{eqnarray}\label{eq:4.49}
	\begin{split}
	\big|\eta_{P_{+}(0)}-\eta_{Q_{+}(0)}\big|&\leq \big|\eta_{P}-\eta_{Q}\big|+\bigg|\int^{\xi_{Q}}_{\xi_{P}}\lambda^{(\rm n-1)}_{+}\big(Q_{+}(\tau)\big)d\tau\bigg|\\[5pt]
	&\ \ \ \ +\Bigg|\int^{\xi_{P}}_{0}\Big(\lambda^{(\rm n-1)}_{+}\big(Q_{+}(\tau)\big)
	-\lambda^{(\rm n-1)}_{+}\big(P_{+}(\tau)\big)\Big)d\tau\Bigg|\\[5pt]
	&\leq \mathcal{C}L\bigg(\big\|Dz^{(\rm n-1)}\big\|_{0,0;\tilde{\Omega}^{(\rm h)}}
	+\big\|\tilde{B}^{(\rm h)'}_{0}\big\|_{0,0;\tilde{\Gamma}^{(\rm h)}_{\rm in}}
	+\big\|\tilde{S}^{(\rm h)'}_{0}\big\|_{0,0;\tilde{\Gamma}^{(\rm h)}_{\rm in}}\bigg)\big|\eta_{P_{+}(0)}-\eta_{Q_{+}(0)}\big|\\[5pt]
	&\ \ \ \ +\big|\eta_{P}-\eta_{Q}\big| +\mathcal{C}\big|\xi_{P}-\xi_{Q}\big|\\[5pt]
	&\leq \mathcal{C}\Big(\big|\xi_{P}-\xi_{Q}\big|+\big|\eta_{P}-\eta_{Q}\big|\Big)
	+2\mathcal{C}L\big(2\epsilon_{I}+\tilde{\epsilon}\big)\big|\eta_{P_{+}(0)}-\eta_{Q_{+}(0)}\big|.
	\end{split}
	\end{eqnarray}
	By choosing constants $\mathcal{C}''_{\rm h,1}>0$ and $\epsilon''_{\rm h,1}>0$ depending only on $\tilde{\underline{U}}$ and $L$, so that
{for $\tilde{\epsilon}>0$ sufficiently small and $\tilde{\epsilon}<\epsilon_{I} \in (\mathcal{C}''_{\rm h,1}\tilde{\epsilon}, \epsilon''_{\rm h,1})$, we have $2\mathcal{C}L\big(2\epsilon_{I}+\tilde{\epsilon}\big)<\frac{1}{2}$.}
Then, it follows from \eqref{eq:4.49} that
	\begin{eqnarray}\label{eq:4.50}
	\begin{split}
	\big|\eta_{P_{+}(0)}-\eta_{Q_{+}(0)}\big|\leq 2\mathcal{C}\Big(\big|\xi_{P}-\xi_{Q}\big|+\big|\eta_{P}-\eta_{Q}\big|\Big).
	\end{split}
	\end{eqnarray}
	Combining the estimates \eqref{eq:4.48} and \eqref{eq:4.50}, we get the first estimate (\rm i)
 in \eqref{eq:4.47}.
	
	Next, taking $\xi_{P}=\xi_{Q}$ in \eqref{eq:4.49} and then applying estimates \eqref{eq:4.48} and \eqref{eq:4.50} again,
	we have the second estimate (\rm ii)
in \eqref{eq:4.47}.
	
	Finally, by \eqref{eq:4.45} and the estimate \eqref{eq:4.47c}, we have
	\begin{eqnarray*}
		\begin{split}
			d\big(Q_{+}(\xi_{P}), Q\big)
			=\big|\xi_{P}-\xi_{Q}\big|+\bigg|\int^{\xi_{Q}}_{\xi_{P}}\lambda^{(\rm n-1)}_{+}\big(Q_{+}(\tau)\big)d\tau\bigg|\leq \mathcal{C}\big|\xi_{P}-\xi_{Q}\big|,
		\end{split}
	\end{eqnarray*}
and
\begin{eqnarray*}
		\begin{split}
			d\big(Q_{+}(\xi_{P}), Q\big)
			=\big|\xi_{P}-\xi_{Q}\big|+\bigg|\int^{\xi_{Q}}_{\xi_{P}}\lambda^{(\rm n-1)}_{+}\big(Q_{+}(\tau)\big)d\tau\bigg|\geq \mathcal{C}^{-1}\big|\xi_{P}-\xi_{Q}\big|.
		\end{split}
	\end{eqnarray*}
	Then the estimate (\rm iii) in \eqref{eq:4.47} follows.
Finally, we take  $\mathcal{C}_{\rm h, 1}=\max\{\mathcal{C}'_{\rm h, 1}, \mathcal{C}''_{\rm h, 1}\}$ and $\epsilon_{\rm h,1}=\min\{\epsilon'_{\rm h,1}, \epsilon''_{\rm h,1}\}$,
then, {if $\tilde{\epsilon}>0$ is sufficiently small 
 and $\tilde{\epsilon}<\epsilon_{I}\in (\mathcal{C}_{\rm h, 1}\tilde{\epsilon}, \epsilon_{\rm h,1})$,} the results of the lemma hold.
\end{proof}

{\color{black}Now, we turn to the estimates for $\eta_{\pm, 0}$ given by \eqref{eq:4.44}. For any point $(\xi,\eta)\in \tilde{\Omega}^{(\rm h)}_{I}\cap
\tilde{\Omega}^{(\rm h)}_{II}$ (or $(\xi,\eta)\in \tilde{\Omega}^{(\rm h)}_{I}\cap
\tilde{\Omega}^{(\rm h)}_{III}$), there exists a unique $\eta_{+,0}$,
such that the characteristic curve corresponding to $\lambda^{(\rm n-1)}_{+}$ (or $\lambda^{(\rm n-1)}_{-}$) issuing from $(0, \eta_{+,0})$ (or $(0, \eta_{-,0})$) passes through $(\xi,\eta)$.
Hence, by \eqref{eq:4.47c} and the implicit function theorem, from \eqref{eq:4.44} we can regard $\eta_{+, 0}$ (or $\eta_{+, 0}$) as a function of $(\xi,\eta)$,
\emph{i.e.}, $\eta_{+,0}=\eta_{+, 0}(\xi,\eta)$ (or $\eta_{-,0}=\eta_{-, 0}(\xi,\eta)$).}

\begin{lemma}\label{lem:4.2}
	For any $\alpha\in (0,1)$ and $\delta z^{(\rm n-1)}
\in \mathscr{K}_{2\epsilon_{I}}$,
	{\color{black}there exist constants $\epsilon_{\rm h,2}>0$, $\mathcal{C}_{\rm h,2}>0$ and
$C_{44}>0$ depending only on $\tilde{\underline{U}}$, $L$ and $\alpha$, such that for $\tilde{\epsilon}>0$ sufficiently small 
and $\tilde{\epsilon}<\epsilon_{I} \in (\mathcal{C}_{\rm h,2}\tilde{\epsilon}, \epsilon_{\rm h,2})$,}
	it holds that
	\begin{eqnarray}\label{eq:4.52}
	\begin{split}
	\|D\eta_{+, 0}\|_{0,\alpha; 
		\tilde{\Omega}^{(\rm h)}_{I}\cup\tilde{\Omega}^{(\rm h)}_{II}}
	+\|D\eta_{-,0}\|_{0,\alpha; 
		\tilde{\Omega}^{(\rm h)}_{I}\cup\tilde{\Omega}^{(\rm h)}_{III}}\leq C_{44}.
	\end{split}
	\end{eqnarray}
\end{lemma}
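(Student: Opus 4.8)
The plan is to obtain $D\eta_0$ explicitly from the implicit relation \eqref{eq:4.44} and then control its $C^{0,\alpha}$ seminorm by the Hölder estimates for the characteristic curves proved in Lemma \ref{lem:4.1}. Fix one family of characteristics, say $\lambda^{(\rm n-1)}_+$ on $\tilde{\Omega}^{(\rm h)}_I\cup\tilde{\Omega}^{(\rm h)}_{II}$ — the other family and the region $\tilde{\Omega}^{(\rm h)}_I\cup\tilde{\Omega}^{(\rm h)}_{III}$ are handled identically. Writing $\eta=\phi^{(\rm n)}_+(\xi,\eta_0)$ with $\eta_0=\eta_0(\xi,\eta)$ and differentiating in $\eta$ and $\xi$ gives
\begin{equation*}
\partial_\eta\eta_0=\Big(\partial_{\eta_0}\phi^{(\rm n)}_+(\xi,\eta_0)\Big)^{-1},\qquad
\partial_\xi\eta_0=-\,\lambda^{(\rm n-1)}_+(\xi,\eta)\,\Big(\partial_{\eta_0}\phi^{(\rm n)}_+(\xi,\eta_0)\Big)^{-1}.
\end{equation*}
From the variational equation for \eqref{eq:4.43} one has $\partial_{\eta_0}\phi^{(\rm n)}_+(\xi,\eta_0)=\exp\!\big(\int_0^\xi\partial_\eta\lambda^{(\rm n-1)}_+(\tau,\phi^{(\rm n)}_+(\tau,\eta_0))\,d\tau\big)$, exactly as in the proof of Lemma \ref{lem:4.1}. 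Since $z^{(\rm n-1)}\in\mathscr{K}_{2\epsilon}$ and $\tilde{B}^{(\rm h)}_0,\tilde{S}^{(\rm h)}_0$ are $C^{1,\alpha}$ and close to the background, $\lambda^{(\rm n-1)}_\pm\in C^{1,\alpha}(\tilde{\Omega}^{(\rm h)})$ with $\|\lambda^{(\rm n-1)}_\pm\|_{1,\alpha;\tilde{\Omega}^{(\rm h)}}$ bounded by a constant depending only on $\tilde{\underline{U}},L,\alpha$; hence $\partial_{\eta_0}\phi^{(\rm n)}_+$ is bounded above and below by positive constants, and the sup part $\|D\eta_0\|_{0,0}\le C_{44}$ follows at once.

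The heart of the matter is the Hölder seminorm of the map $(\xi,\eta)\mapsto\partial_{\eta_0}\phi^{(\rm n)}_+(\xi,\eta_0(\xi,\eta))$. For two points $P=(\xi_P,\eta_P)$, $Q=(\xi_Q,\eta_Q)$ with, say, $\xi_{PQ}=\xi_P\le\xi_Q$ and foot points $\eta_{P,0}=\eta_0(P)$, $\eta_{Q,0}=\eta_0(Q)$, I would estimate
\begin{equation*}
\Big|\int_0^{\xi_P}\!\partial_\eta\lambda^{(\rm n-1)}_+(\tau,\phi^{(\rm n)}_+(\tau,\eta_{P,0}))\,d\tau-\int_0^{\xi_Q}\!\partial_\eta\lambda^{(\rm n-1)}_+(\tau,\phi^{(\rm n)}_+(\tau,\eta_{Q,0}))\,d\tau\Big|,
\end{equation*}
splitting at $\xi_P$. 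On $[0,\xi_P]$ the integrand difference is at most $[\partial_\eta\lambda^{(\rm n-1)}_+]_{0,\alpha}\,d^{\alpha}\!\big(P(\tau),Q(\tau)\big)$ with $P(\tau)=(\tau,\phi^{(\rm n)}_+(\tau,\eta_{P,0}))$, $Q(\tau)=(\tau,\phi^{(\rm n)}_+(\tau,\eta_{Q,0}))$, and by estimate (1) of Lemma \ref{lem:4.1} this is $\le \mathcal{C}\,|P-Q|^\alpha$ after integrating over $\xi_P\le L$; on $[\xi_P,\xi_Q]$ the remainder is $\le\|\partial_\eta\lambda^{(\rm n-1)}_+\|_{0,0}\,|\xi_P-\xi_Q|\le \mathcal{C} L^{1-\alpha}|P-Q|^\alpha$ since $\xi$ ranges over $[0,L]$. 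As $t\mapsto e^t$ is Lipschitz on the bounded range of these integrals, this yields $\big[\partial_{\eta_0}\phi^{(\rm n)}_+(\cdot,\eta_0(\cdot))\big]_{0,\alpha}\le C_{44}$; then the uniform lower bound on $\partial_{\eta_0}\phi^{(\rm n)}_+$ together with $|a^{-1}-b^{-1}|=|a-b|/|ab|$ gives $[\partial_\eta\eta_0]_{0,\alpha}\le C_{44}$, and the Hölder product rule applied to $\partial_\xi\eta_0=-\lambda^{(\rm n-1)}_+(\xi,\eta)\,\partial_\eta\eta_0$ (both factors $C^{0,\alpha}$ with controlled norms) gives $[\partial_\xi\eta_0]_{0,\alpha}\le C_{44}$. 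Summing the $\pm$ families over the two regions finishes the proof.

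I expect the main obstacle to be the bookkeeping in this Hölder estimate: when $\xi_P\ne\xi_Q$ one must keep track of how the foot points $\eta_{P,0},\eta_{Q,0}$ and the whole characteristic curves through $P$ and $Q$ vary with $P,Q$, and this is precisely what the three estimates of Lemma \ref{lem:4.1} (together with the bound $|\eta_{P,0}-\eta_{Q,0}|\le \mathcal{C}\,|P-Q|$ extracted from its proof) are designed to supply. A secondary point is that the smallness of $\epsilon$ enters twice — once to secure the strict hyperbolicity $\tilde{q}^{(\rm h)}>\tilde{c}^{(\rm h)}$, so that $\lambda^{(\rm n-1)}_\pm$ are real, $C^{1,\alpha}$ and separated from $0$, and once, as in Lemma \ref{lem:4.1}, to absorb the term $2\mathcal{C} L\epsilon\,|\eta_{P,0}-\eta_{Q,0}|$ when solving for $|\eta_{P,0}-\eta_{Q,0}|$.
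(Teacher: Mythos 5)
Your argument is essentially the paper's: both derive explicit formulas for $D\eta_0$ from the characteristic relation and control the Hölder seminorm by splitting the $\xi$-integral at $\xi_{PQ}$ and invoking Lemma~\ref{lem:4.1}; your observation that the paper's denominator $1+\int_0^\xi\partial_\eta\lambda^{(\rm n-1)}_+\,e^{\int_0^\tau\partial_\eta\lambda^{(\rm n-1)}_+ds}\,d\tau$ is literally equal to $\partial_{\eta_0}\phi^{(\rm n)}_+=e^{\int_0^\xi\partial_\eta\lambda^{(\rm n-1)}_+d\tau}$ is a minor but clean simplification, and your treatment of $\partial_\xi\eta_0$ as a product is a cosmetic reorganization of the paper's $I_{11}+I_{12}$ split. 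The proof is correct and complete.
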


\begin{proof}
We only prove 
the estimate for $D\eta_{+,0}$ in $\tilde{\Omega}^{(\rm h)}_{I}\cup\tilde{\Omega}^{(\rm h)}_{II}$, 
since the estimate for $D\eta_{-,0}$ in $\tilde{\Omega}^{(\rm h)}_{I}\cup\tilde{\Omega}^{(\rm h)}_{III}$ can be treated in the same way.
{First, $\|D \eta_{+,0}\|_{0,0;\tilde{\Omega}^{(\rm h)}_{I}\cup\tilde{\Omega}^{(\rm h)}_{II}}$ can be estimated easily by choosing constants $\mathcal{C}_{\rm h,2}>0$ and $\epsilon_{\rm h,2}>0$ depending only on $\tilde{\underline{U}}$ and $L$ such that $\tilde{\epsilon}<\epsilon_{I} \in (\mathcal{C}_{\rm h,2}\tilde{\epsilon}, \epsilon_{\rm h,2})$ for $\tilde{\epsilon}>0$ sufficiently small as done in \cite[Lemma 4.1]{hkwx}.
So it remains to estimate $[D \eta_{+,0}]_{0,\alpha; \tilde{\Omega}^{(\rm h}_{I}\cup\tilde{\Omega}^{(\rm h)}_{II})}$.}
	Taking derivatives on \eqref{eq:4.44} with respect to $\xi$ and $\eta$, we have
	\begin{eqnarray}\label{eq:4.53x}
	\begin{split}
	&\frac{\partial \eta_{+,0}}{\partial \xi}=-\frac{\lambda^{(\rm n-1)}_{+}(\xi,\eta)}
	{1+\int^{\xi}_{0}\partial_{\eta}\lambda^{(\rm n-1)}_{+}
		e^{\int^{\tau}_{0}\partial_{\eta}\lambda^{(\rm n-1)}_{+}ds}d\tau},\\[5pt]
	&\frac{\partial \eta_{+,0}}{\partial \eta}=\frac{1}
	{1+\int^{\xi}_{0}\partial_{\eta}\lambda^{(\rm n-1)}_{+}
		e^{\int^{\tau}_{0}\partial_{\eta}\lambda^{(\rm -1)}_{+}ds}d\tau}.
	\end{split}
	\end{eqnarray}
	Taking two points $P_{1}=(\xi_{1}, \eta_{1})$ and $P_{2}=(\xi_{2}, \eta_{2})$, which satisfy \eqref{eq:4.45}
	with initial conditions {\color{black}$\eta_{P_{1,+}(0)}=\eta_{+,0}(P_{1})$ and $\eta_{P_{2,+}(0)}=\eta_{+,0}(P_{2})$,} respectively.
Then we need to estimate the term $d^{-\alpha}(P_{1},P_{2})\big|D \eta_{+,0}(P_{1})-D \eta_{+,0}(P_{2})\big|$.
	Notice that
	\begin{eqnarray*}
		\begin{split}
			&d^{-\alpha}(P_{1},P_{2})\big|D \eta_{+,0}(P_{1})-D \eta_{+,0}(P_{2})\big|\\[5pt]
			&\ \ \  \leq d^{-\alpha}(P_{1},P_{2})\big|\partial_{\xi}\eta_{+,0}(P_{1})-\partial_{\xi} \eta_{+,0}(P_{2})\big|
			+  d^{-\alpha}(P_{1},P_{2}) \big|\partial_{\eta}\eta_{+,0}(P_{1})-\partial_{\eta} \eta_{+,0}(P_{2})\big|\\[5pt]
			&\ \ \ =: I_{1}+I_{2}.
		\end{split}
	\end{eqnarray*}
	For $I_{1}$, by \eqref{eq:4.53x}, we have
	\begin{eqnarray*}
		\begin{split}
			I_{1}&=d^{-\alpha}(P_{1},P_{2})\Bigg|\frac{\lambda^{(\rm n-1)}_{+}(P_{1})}
			{1+\int^{\xi_{1}}_{0}\partial_{\eta}\lambda^{(\rm n-1)}_{+}e^{\int^{\tau}_{0}\partial_{\eta}\lambda^{(\rm n-1)}_{+}ds}d\tau}
			-\frac{\lambda^{(\rm n-1)}_{+}(P_{2})}{1+\int^{\xi_{2}}_{0}\partial_{\eta}\lambda^{(\rm n-1)}_{+}e^{\int^{\tau}_{0}\partial_{\eta}\lambda^{(\rm n-1)}_{+}ds}d\tau}\Bigg|\\[5pt]
			&\leq I_{11}+I_{12},
		\end{split}
	\end{eqnarray*}
	where
	\begin{eqnarray*}
		\begin{split}
			I_{11}=\Bigg|\frac{d^{-\alpha}(P_{1},P_{2})\lambda^{(\rm n-1)}_{+}(P_1)\Big(\int^{\xi_{2}}_{0}\partial_{\eta}\lambda^{(\rm n-1)}_{+}
				e^{\int^{\tau}_{0}\partial_{\eta}\lambda^{(\rm n-1)}_{+}ds}d\tau-\int^{\xi_{1}}_{0}\partial_{\eta}\lambda^{(\rm n-1)}_{+}
				e^{\int^{\tau}_{0}\partial_{\eta}\lambda^{(\rm n-1)}_{+}ds}d\tau\Big)}
			{\big(1+\int^{\xi_1}_{0}\partial_{\eta}\lambda^{(\rm n-1)}_{+}e^{\int^{\tau}_{0}\partial_{\eta}\lambda^{(\rm n-1)}_{+}ds}d\tau\big)
				\big(1+\int^{\xi_2}_{0}\partial_{\eta}\lambda^{(\rm n-1)}_{+}e^{\int^{\tau}_{0}\partial_{\eta}\lambda^{(\rm n-1)}_{+}ds}d\tau\big)}\Bigg|,
		\end{split}
	\end{eqnarray*}
	and
	\begin{eqnarray*}
		\begin{split}
			I_{12}&=d^{-\alpha}(P_{1},P_{2})\Bigg|\Big(1+\int^{\xi_{1}}_{0}\partial_{\eta}\lambda^{(\rm n-1)}_{+}
			e^{\int^{\tau}_{0}\partial_{\eta}\lambda^{(\rm n-1)}_{+}ds}d\tau\Big)^{-1}\Bigg|\big|\lambda^{(\rm n-1)}_{+}(P_1)-\lambda^{(\rm n-1)}_{+}(P_2)\big|.
		\end{split}
	\end{eqnarray*}
By Lemma \ref{lem:4.1} and by choosing constants still denoted by $\mathcal{C}_{\rm h,2}>0$ and $\epsilon_{\rm h,2}>0$ depending only on $\tilde{\underline{U}}$, $L$ and $\alpha$ such that {for
$\tilde{\epsilon}<\epsilon_{I} \in (\mathcal{C}_{\rm h,2}\tilde{\epsilon}, \epsilon_{\rm h,2})$ with $\tilde{\epsilon}>0$ is sufficiently small,} we have
	\begin{eqnarray*}
		\begin{split}
			I_{11}&=\mathcal{C}d^{-\alpha}(P_{1},P_{2})\bigg|\int^{\xi_{2}}_{\xi_{1}}\big(\partial_{\eta}\lambda^{(\rm n-1)}_{+}
			e^{\int^{\tau}_{0}\partial_{\eta}\lambda^{(\rm n-1)}_{+}ds}\big)(P_{2,+}(\tau))d\tau\bigg|\\[5pt]
			&\ \ \ +\mathcal{C}d^{-\alpha}(P_{1},P_{2})\int^{\xi_{1}}_{0}\Big|\big(\partial_{\eta}\lambda^{(\rm n-1)}_{+}
			e^{\int^{\tau}_{0}\partial_{\eta}\lambda^{(\rm n-1)}_{+}ds}\big)(P_{1,+}(\tau))-\big(\partial_{\eta}\lambda^{(\rm n-1)}_{+}
			e^{\int^{\tau}_{0}\partial_{\eta}\lambda^{(\rm n-1)}_{+}ds}\big)(P_{2,+}(\tau))\Big|d\tau\\[5pt]
			&\leq \mathcal{C},
		\end{split}
	\end{eqnarray*}
where $\mathcal{C}>0$ depends only on $\tilde{\underline{U}}$, $L$ and $\alpha$.
	Similarly, we can get the estimates for $I_{12}$ and $I_{2}$ to obtain $I_{12}+I_{2}\leq \mathcal{C}$ for some constant $\mathcal{C}>0$ depending only on $\tilde{\underline{U}}$, $L$ and $\alpha$.
	Then, with the estimates for $I_{11}$, $I_{12}$ and $I_{2}$, we have the desired estimate on $[D \eta_{+,0}]_{0,\alpha;
		\tilde{\Omega}^{(\rm h)}_{I}\cup\tilde{\Omega}^{(\rm h)}_{II}}$.
	This completes the proof of the lemma.
\end{proof}

Next, let us consider the characteristic curves issuing from the lower nozzle wall. Let $\xi^{*}_{\rm cd,0}$ be the $\xi$-coordinate of the intersection point of characteristic
$\eta=\phi^{(\rm n)}_{+}(\xi, {\color{black}-m^{(\rm h)}})$
and the transonic contact discontinuity $\eta=0$, and let {\color{black}$\xi^{*}_{-m^{(\rm h)},0}$} be the $\xi$-coordinate of the intersection point of the characteristics $\eta=\phi^{(\rm n)}_{-}(\xi, {\color{black}0})$
and the nozzle wall $\eta=-m^{(\rm h)}$, i.e.,
\begin{eqnarray}\label{eq:4.54}
\begin{split}
&\int^{\xi^{*}_{\rm cd, 0}}_{0}\lambda^{(\rm n-1)}_{+}(\tau,\phi^{(\rm n)}_{+}(\tau, -m^{(\rm h)}))d\tau=m^{(\rm h)},\\[5pt]
&\int^{{\color{black}\xi^{*}_{-m^{(\rm h)},0}}}_{0}\lambda^{(\rm n-1)}_{-}(\tau,\phi^{(\rm n)}_{-}(\tau, 0))d\tau= -m^{(\rm h)}.
\end{split}
\end{eqnarray}
Let $\xi=\chi^{(\rm n)}_{+}(\eta, {\color{black}\xi_{-m^{(\rm h)},0}})$ be the characteristic curves corresponding to $\lambda^{(\rm n-1)}_+$ issuing from the point $({\color{black}\xi_{-m^{(\rm h)},0}},-m^{(\rm h)})$, i.e., 
\begin{eqnarray}\label{eq:4.71}
\begin{split}
\left\{
\begin{array}{llll}
\frac{d\chi^{(\rm n)}_{+}}{d\eta}=\frac{1}{\lambda^{(\rm n-1)}_{+}(\chi^{(\rm n)}_{+}(\eta,  {\color{black}\xi_{-m^{(\rm h)},0}}),\eta)},\\[5pt]
\chi^{(\rm n)}_{+}(-m^{(\rm h)},{\color{black}\xi_{-m^{(\rm h)},0}})={\color{black}\xi_{-m^{(\rm h)},0}},
\end{array}
\right.
\end{split}
\end{eqnarray}
for ${\color{black}\xi_{-m^{(\rm h)},0}}\in [0, {\color{black}\xi^{*}_{-m^{(\rm h)},0}}]$, where ${\color{black}\xi^{*}_{-m^{(\rm h)},0}}$ is given by \eqref{eq:4.54}.
Integrating  \eqref{eq:4.71} from $-m^{(\rm h)}$ to $\eta$, one has 
\begin{eqnarray}\label{eq:4.72}
\begin{split}
\xi={\color{black}\xi_{-m^{(\rm h)},0}}+\int^{\eta}_{-m^{(\rm h)}}\frac{d\tau}{\lambda^{(\rm n-1)}_{+}(\chi^{(\rm n)}_{+}(\tau,{\color{black}\xi_{-m^{(\rm h)},0}}),\tau)}.
\end{split}
\end{eqnarray}

Given any two points $\hat{P}=(\xi_{\hat{P}},\eta_{\hat{P}}),\ \hat{Q}=(\xi_{\hat{Q}},\eta_{\hat{Q}}) \in 
\tilde{\Omega}^{(\rm h)}_{III}\cup \tilde{\Omega}^{(\rm h)}_{IV}\cup\tilde{\Omega}^{(\rm h)}_{V}$,
there exist unique points $\hat{P}_{+}(-m^{(\rm h)})=(\xi_{\hat{P}_{+}(-m^{(\rm h)})},-m^{(\rm h)})$ and $\hat{Q}_{+}(-m^{(\rm h)})=(\xi_{\hat{Q}_{+}(-m^{(\rm h)})},-m^{(\rm h)})$
such that the characteristics corresponding to $\lambda^{(\rm n-1)}_{+}$ issuing from them  pass through $\hat{P}$ and $\hat{Q}$, respectively.
Define $\hat{P}_{+}(\eta)=(\chi^{(\rm n)}_{+}(\eta, \xi_{\hat{P}_{+}(-m^{(\rm h)})}), \eta)$ and $\hat{Q}_{+}(\eta)=(\chi^{(\rm n)}_{+}(\eta, \xi_{\hat{Q}_{+}(-m^{(\rm h)})}), \eta)$. Then

\begin{eqnarray}\label{eq:4.73}
\begin{split}
&\xi_{\hat{P}_{+}(\eta)}=\chi^{(\rm n)}_{+}(\eta, \xi_{\hat{P}_{+}(-m^{(\rm h)})})=\xi_{\hat{P}_{+}(-m^{(\rm h)})}
+\int^{\eta}_{-m^{(\rm h)}}\frac{d\tau}{\lambda^{(\rm n-1)}_{+}(\hat{P}_{+}(\tau))}, \\[5pt]
&\xi_{\hat{Q}_{+}(\eta)}=\chi^{(\rm n)}_{+}(\eta, \xi_{\hat{Q}_{+}(-m^{(\rm h)})})=\xi_{\hat{Q}_{+}(-m^{(\rm h)})}
+\int^{\eta}_{-m^{(\rm h)}}\frac{d\tau}{\lambda^{(\rm n-1)}_{+}(\hat{Q}_{+}(\tau))}.
\end{split}
\end{eqnarray}
Now, we define the distance in the Euclidean norm for this case as
\begin{eqnarray}\label{eq:4.74}
\begin{split}
d\big(\hat{P}_{+}(\eta),\hat{Q}_{+}(\eta)\big)=\big|\hat{P}_{+}(\eta)-\hat{Q}_{+}(\eta)\big|,\quad  \ d^{\alpha}\big(\hat{P}_{+}(\eta),\hat{Q}_{+}(\eta)\big)=\big|\hat{P}_{+}(\eta)-\hat{Q}_{+}(\eta)\big|^{\alpha}.
\end{split}
\end{eqnarray}

We have the following lemma.
\begin{lemma}\label{lem:4.3}
	For any $\alpha\in (0,1)$ and $\delta z^{(\rm n-1)}
\in \mathscr{K}_{2\epsilon_{I}}$,
	there exist positive constants $\mathcal{C}_{\rm h,3}>0$, $\epsilon_{\rm h,3}>0$ and $\hat{C}_{4j}, (j=1,2,3)$ depending only on $\tilde{\underline{U}}$, $L$ and $\alpha$,
{\color{black}such that for $\tilde{\epsilon}>0$ sufficiently small 
and $\tilde{\epsilon}<\epsilon_{I} \in (\mathcal{C}_{\rm h,3}\tilde{\epsilon}, \epsilon_{\rm h,3})$,}  we have
	\begin{eqnarray}\label{eq:4.75}
	\begin{split}
	(\rm i)\ \ & d^{\alpha}\big(\hat{P}_{+}(\eta),\hat{Q}_{+}(\eta)\big)\leq \hat{C}_{41}d^{\alpha}\big(\hat{P},\hat{Q}\big),\quad & \eta\in[-m^{(\rm h)}, \eta_{\hat{P}\hat{Q}}], \\[5pt]
	(\rm ii)\ \ & d^{\alpha}\big(\hat{P}_{+}(\eta),\hat{Q}_{+}(\eta)\big)\leq\hat{C}_{42}d^{\alpha}\big(\hat{P}_{+}(\eta_{\hat{P}\hat{Q}}),\hat{Q}_{+}(\eta_{\hat{P}\hat{Q}})\big),\quad & \eta\in[-m^{(\rm h)}, \eta_{\hat{P}\hat{Q}}], \\[5pt]
	(\rm iii)\ \ & {\hat{C}^{-1}_{43}\big|\eta_{\hat{P}\hat{Q}}-\eta_{\hat{Q}}\big|^{\alpha}\leq d^{\alpha}\big(\hat{Q}_{+}(\eta_{\hat{P}\hat{Q}}),\hat{Q}\big)\leq \hat{C}_{43}\big|\eta_{\hat{P}\hat{Q}}-\eta_{\hat{Q}}\big|^{\alpha},}
	\end{split}
	\end{eqnarray}
where $\eta_{\hat{P}\hat{Q}}=\min\{\eta_{\hat{P}}, \ \eta_{\hat{Q}}\}$, and $\hat{P}_{+}(\eta_{\hat{P}\hat{Q}})=\big(\chi^{(\rm n)}_{+}(\eta_{\hat{P}\hat{Q}}, \xi_{\hat{P}_{+}(-m^{(\rm h)})}), \eta_{\hat{P}\hat{Q}}\big)$
and $\hat{Q}_{+}(\eta_{\hat{P}\hat{Q}})=\big(\chi^{(\rm n)}_{+}(\eta_{\hat{P}\hat{Q}}, \xi_{\hat{Q}_{+}(-m^{(\rm h)})}), \eta_{\hat{P}\hat{Q}}\big)$.
\end{lemma}

\begin{proof}
	Without loss of the generality, we only consider the case that $\eta_{\hat{P}\hat{Q}}=\eta_{\hat{P}}$ because the other case can be dealt similarly.
	First, taking derivatives with respect to $\xi_{-m^{(\rm h)},0}$ in \eqref{eq:4.72}, we have
	\begin{eqnarray*}
		\begin{split}
			\left\{
			\begin{array}{llll}
				\frac{\partial}{\partial \eta}\Big(\frac{\partial \chi^{(\rm n)}_{+}}{\partial \xi_{-m^{(\rm h)},0}}\Big)
				=\partial_{\xi }\Big(\frac{1}{\lambda^{(\rm n-1)}_{+}(\chi^{(\rm n)}_{+}(\eta,\xi_{-m^{(\rm h)},0}),\eta)}\Big)\frac{\partial \chi^{(\rm n)}_{+}}{\partial \xi_{-m^{(\rm h)},0}},\\[5pt]
				\frac{\partial \chi^{(\rm n)}_{+}}{\partial \xi_{-m^{(\rm h)},0}}(-m^{(\rm h)},\xi_{-m^{(\rm h)},0}) =1,
			\end{array}
			\right.
		\end{split}
	\end{eqnarray*}
	which implies that
	\begin{eqnarray*}
		\begin{split}
			\frac{\partial \chi^{(\rm n)}_{+}}{\partial \xi_{-m^{(\rm h)},0}}=e^{\int^{\eta}_{-m^{(\rm h)}}\partial_{\xi }\big(\frac{1}{\lambda^{(\rm n-1)}_{+}(\chi^{(\rm n)}_{+}(\tau,\xi_{-m^{(\rm h)},0}),\tau)}\big)d\tau}.
		\end{split}
	\end{eqnarray*}

{\color{black} By \eqref{eq:4.47a}-\eqref{eq:4.47c}, we can obtain
\begin{eqnarray}\label{eq:4.75a}
\begin{split}
\bigg\|\frac{1}{\lambda^{(\rm n-1)}_{\pm}}-\frac{1}{\underline{\lambda}_{\pm}}\bigg\|_{1,\alpha; \tilde{\Omega}^{(\rm h)}}
&\leq \mathcal{C}\Big(\|\delta z^{(\rm n-1)}\|_{1,\alpha;\tilde{\Omega}^{(\rm h)}}+\|\delta \tilde{B}^{(\rm h)}_{0}\|_{1,\alpha;\tilde{\Gamma}^{(\rm h)}_{\rm in}}+\|\delta\tilde{S}^{(\rm h)}_{0}\|_{1,\alpha;\tilde{\Gamma}^{(\rm h)}_{\rm in}}\Big).
\end{split}
\end{eqnarray}
Then, there exist some constants $\mathcal{C}_{\rm h,3}>0$ and $\epsilon_{\rm h,3}>0$ depending only on $\tilde{\underline{U}}$ and $L$, such that for $\tilde{\epsilon}>0$ sufficiently small and
$\tilde{\epsilon}<\epsilon_{I} \in (\mathcal{C}_{\rm h,3}\tilde{\epsilon}, \epsilon_{\rm h,3})$, we can further get
\begin{eqnarray}\label{eq:4.75b}
\begin{split}
\frac{1}{2|\underline{\lambda}_{\pm}|} <\bigg\|\frac{1}{\lambda^{(\rm n-1)}_{\pm}}\bigg\|_{1,\alpha; \tilde{\Omega}^{(\rm h)}}<\frac{3}{2|\underline{\lambda}_{\pm}|}.
\end{split}
\end{eqnarray}
}
Thus, for $0\leq\eta\leq\ \eta_{\hat{P}}$, {\color{black}by \eqref{eq:4.75b}, we have}
	\begin{eqnarray}\label{eq:4.76}
	\begin{split}
	\big|\chi^{(\rm n)}_{+}(\eta, \xi_{\hat{P}_{+}(-m^{(\rm h)})})-\chi^{(\rm n)}_{+}(\eta, \xi_{\hat{Q}_{+}(-m^{(\rm h)})})|
	\leq \mathcal{C}\big|\xi_{\hat{P}_{+}(-m^{(\rm h)})}-\xi_{\hat{Q}_{+}(-m^{(\rm h)})}\big|.
	\end{split}
	\end{eqnarray}
 On the other hand,  we can also deduce from \eqref{eq:4.73} that
	\begin{align}\label{eq:4.77}
	\begin{split}
	&\quad\ \big|\xi_{\hat{P}_{+}(-m^{(\rm h)})}-\xi_{\hat{Q}_{+}(-m^{(\rm h)})}\big|\\[5pt]
&\leq \big|\xi_{\hat{P}}-\xi_{\hat{Q}}\big|+\int^{\eta_{\hat{Q}}}_{\eta_{\hat{P}}}
	\bigg|\frac{1}{\lambda^{(\rm n-1)}_{+}\big(\hat{Q}_{+}(\tau)\big)}\bigg|d\tau\\[5pt]
	&\ \ \ \ +\int^{\eta_{\hat{P}}}_{-m^{(\rm h)}}\bigg|\frac{1}{\lambda^{(\rm n-1)}_{+}\big(\hat{Q}_{+}(\tau)\big)}
	-\frac{1}{\lambda^{(\rm n-1)}_{+}\big(\hat{P}_{+}(\tau)\big)}\bigg|d\tau\\[5pt]
	&\leq \mathcal{C}m^{(\rm h)}\bigg(\big\|D\delta z^{(\rm n-1)}\big\|_{0,0;\tilde{\Omega}^{(\rm h)}}
	+\big\|\big(\delta\tilde{B}^{(\rm h)'}_{0},\delta\tilde{S}^{(\rm h)'}_{0}\big)\big\|_{0,0;\tilde{\Gamma}^{(\rm h)}_{\rm in}}\bigg)\big|\xi_{\hat{P}_{+}(-m^{(\rm h)})}-\xi_{\hat{Q}_{+}(-m^{(\rm h)})}\big|\\[5pt]
	&\ \ \ \ +\big|\xi_{\hat{P}}-\xi_{\hat{Q}}\big| +\mathcal{C}\big|\eta_{\hat{P}}-\eta_{\hat{Q}}\big|\\[5pt]
	&\leq \mathcal{C}\Big(\big|\xi_{\hat{P}}-\xi_{\hat{Q}}\big|+\big|\eta_{\hat{P}}-\eta_{\hat{Q}}\big|\Big)
	+\mathcal{C}m^{(\rm h)}\big(2\epsilon_{I}+\tilde{\epsilon}\big)\big|\xi_{\hat{P}_{+}(-m^{(\rm h)})}-\xi_{\hat{Q}_{+}(-m^{(\rm h)})}\big|,
	\end{split}
	\end{align}
	where $\mathcal{C}$ is a positive constant depending only on $\tilde{\underline{U}}$ and $L$.
	Therefore, by choosing constants still denoted by $\mathcal{C}_{\rm h,3}>0$ and $\epsilon_{\rm h,3}>0$ depending only on $\tilde{\underline{U}}$ and $L$, such that {for $\tilde{\epsilon}>0$ sufficiently small 
and $\tilde{\epsilon}<\epsilon_{I} \in (\mathcal{C}_{\rm h,3}\tilde{\epsilon}, \epsilon_{\rm h,3})$, we have $\mathcal{C}m^{(\rm h)}\big(2\epsilon_{I}+\tilde{\epsilon}\big)<\frac{1}{2}$}, which implies that
	\begin{eqnarray}\label{eq:4.78}
	\begin{split}
	\big|\xi_{\hat{P}_{+}(-m^{(\rm h)})}-\xi_{\hat{Q}_{+}(-m^{(\rm h)})}\big|
	\leq 2\mathcal{C}\Big(\big|\xi_{\hat{P}}-\xi_{\hat{Q}}\big|+\big|\eta_{\hat{P}}-\eta_{\hat{Q}}\big|\Big).
	\end{split}
	\end{eqnarray}
	Hence, the estimate (\rm i) in \eqref{eq:4.75} follows from the estimates \eqref{eq:4.76} and \eqref{eq:4.78}.

Next, by taking $\eta_{\hat{P}}=\eta_{\hat{Q}}$ in \eqref{eq:4.77}, the estimate  (\rm ii) in \eqref{eq:4.75} follows from the estimates
	\eqref{eq:4.76} and \eqref{eq:4.78}.
	\par Finally, by \eqref{eq:4.73} with $k=\hat{Q}$, and the estimates \eqref{eq:4.47c} and \eqref{eq:4.75b}, we have 
	\begin{eqnarray*}
		\begin{split}
			d\big(\hat{Q}_{+}(\eta_{\hat{P}}), \hat{Q}\big)
			&=\big|\eta_{\hat{P}}-\eta_{\hat{Q}}\big|+\bigg|\int^{\eta_{\hat{Q}}}_{\eta_{\hat{P}}}
			\frac{d\tau}{\lambda^{(\rm n-1)}_{+}\big(\hat{Q}_{+}(\tau)\big)}\bigg|\\[5pt]
			&\leq \mathcal{C}\big|\eta_{\hat{P}}-\eta_{\hat{Q}}\big|,
		\end{split}
	\end{eqnarray*}

and
\begin{eqnarray*}
		\begin{split}
			d\big(\hat{Q}_{+}(\eta_{\hat{P}}), \hat{Q}\big)
			&=\big|\eta_{\hat{P}}-\eta_{\hat{Q}}\big|+\bigg|\int^{\eta_{\hat{Q}}}_{\eta_{\hat{P}}}
			\frac{d\tau}{\lambda^{(\rm n-1)}_{+}\big(\hat{Q}_{+}(\tau)\big)}\bigg|\\[5pt]
			&\geq \mathcal{C}^{-1}\big|\eta_{\hat{P}}-\eta_{\hat{Q}}\big|,
		\end{split}
	\end{eqnarray*}
	which gives   (\rm iii) in \eqref{eq:4.75}. The proof is completed.
\end{proof}

Based on the definition, for any point $(\xi,\eta)$ in $\tilde{\Omega}^{(\rm h)}_{III}\cap
\tilde{\Omega}^{(\rm h)}_{IV}\cup\tilde{\Omega}^{(\rm h)}_{V}$, there exists a unique $\xi_{-m^{(\rm h)}, 0}$,
such that the characteristics corresponding to $\lambda^{(\rm n-1)}_{+}$ issuing from $(\xi_{-m^{(\rm h)},0},-m^{(h)})$ passes through $(\xi,\eta)$.
{Hence, by \eqref{eq:4.75b} and implicit function theorem, we can regard $\xi_{-m^{(\rm h)}, 0}$ as a function of $(\xi,\eta)$ in $\tilde{\Omega}^{(\rm h)}_{III}\cap
\tilde{\Omega}^{(\rm h)}_{IV}\cup\tilde{\Omega}^{(\rm h)}_{V}$, \emph{i.e.}, $\xi_{-m^{(\rm h)},0}=\xi_{-m^{(\rm h)}, 0}(\xi,\eta)$.}
We have the following lemma. 

\begin{lemma}\label{lem:4.4}
For any $\alpha\in (0,1)$ and $\delta z^{(\rm n-1)}
\in \mathscr{K}_{2\epsilon_{I}}$, {\color{black}there exist constants $\mathcal{C}_{\rm h,4}>0$ and $\epsilon_{\rm h,4}>0$ depending only on $\tilde{\underline{U}}$, $L$ and $\alpha$,
such that for $\tilde{\epsilon}>0$ sufficiently small and $\tilde{\epsilon}<\epsilon_{I} \in (\mathcal{C}_{\rm h,4}\tilde{\epsilon}, \epsilon_{I,4})$, one has 
}
	\begin{eqnarray}\label{eq:4.79}
	\begin{split}
	\|D \xi_{-m^{(\rm h)},0}\|_{0, \alpha; 
		\tilde{\Omega}^{(\rm h)}_{III}
		\cup \tilde{\Omega}^{(\rm h)}_{IV}\cup\tilde{\Omega}^{(\rm h)}_{V}}\leq \hat{C}_{44},
	\end{split}
	\end{eqnarray}
where the constant $\hat{C}_{44}=\hat{C}_{44}(\underline{\tilde{U}}, L, \alpha)>0$ is independent of $\epsilon_{I}$ and $\rm n$.
\end{lemma}

\begin{proof}
{By choosing the {constants $\mathcal{C}_{\rm h,4}>0$ and $\epsilon_{\rm h,4}>0$ depending only on $\tilde{\underline{U}}$ and $L$,
such that for $\tilde{\epsilon}<\epsilon_{I} \in (\mathcal{C}_{\rm h,4}\tilde{\epsilon}, \epsilon_{\rm h,4})$ with $\tilde{\epsilon}>0$ sufficiently small},
the estimate for $\|D\xi_{-m^{(\rm h)},0}\|_{0,0; \tilde{\Omega}^{(\rm h)}_{III}\cap \tilde{\Omega}^{(\rm h)}_{IV}\cup\tilde{\Omega}^{(\rm h)}_{V}}$ follows easily as done in \cite[Lemma 4.2]{hkwx}.
Next, let us consider $[D\xi_{-m^{(\rm h)},0}]_{0,\alpha;\tilde{\Omega}^{(\rm h)}_{III}\cap \tilde{\Omega}^{(\rm h)}_{IV}\cup\tilde{\Omega}^{(\rm h)}_{V}}$.}
By $\eqref{eq:4.72}$, the straightforward computation shows   that
\begin{eqnarray*}
		\begin{split}
			&\frac{\partial \xi_{-m^{(\rm h)},0}}{\partial \xi}=\frac{1}{1+\int^{\eta}_{-m^{(\rm h)}}
				\partial_{\xi}\big(\frac{1}{\lambda^{(\rm n-1)}_{+}}\big)e^{\int^{\tau}_{-m^{(\rm h)}}
					\partial_{\xi}\big(\frac{1}{\lambda^{(\rm n-1)}_{+}}\big) ds} d\tau},\\[5pt]
			&\frac{\partial \xi_{-m^{(\rm h)},0}}{\partial \eta}=-\frac{1}{\Big(1+\int^{\eta}_{-m^{(\rm h)}}
				\partial_{\xi}\big(\frac{1}{\lambda^{(\rm n-1)}_{-}}\big)e^{\int^{\tau}_{-m^{(\rm h)}}
					\partial_{\xi}\big(\frac{1}{\lambda^{(\rm n-1)}_{-}}\big) ds} d\tau\Big)\lambda^{(\rm n-1)}_{+}}.
		\end{split}
\end{eqnarray*}
For the two points $\hat{P}=(\xi_{\hat{P}},\eta_{\hat{P}}),\ \hat{Q}=(\xi_{\hat{Q}},\eta_{\hat{Q}}) \in 
\tilde{\Omega}^{(\rm h)}_{III}\cup \tilde{\Omega}^{(\rm h)}_{IV}\cup\tilde{\Omega}^{(\rm h)}_{V}$,
there exist unique points $\hat{P}_{+}(-m^{(\rm h)})=(\xi_{\hat{P}_{+}(-m^{(\rm h)})},-m^{(\rm h)})=(\xi_{-m^{(\rm h)}, 0}(\hat{P}), -m^{(\rm h)})$ and $\hat{Q}_{+}(-m^{(\rm h)})=(\xi_{\hat{Q}_{+}(-m^{(\rm h)})},-m^{(\rm h)})=(\xi_{-m^{(\rm h)}, 0}(\hat{Q}), -m^{(\rm h)})$ so that the characteristics determined by $\lambda^{(\rm n-1)}_{+}$ issuing from them pass through $\hat{P}$ and $\hat{Q}$, respectively.
Direct computation yields that
\begin{eqnarray*}
		\begin{split}
			&d^{-\alpha}(\hat{P},\hat{Q})\Big|D \xi_{-m^{(\rm h)},0}(\hat{P})-D \xi_{-m^{(\rm h)},0}(\hat{Q})\Big|\\[5pt]
			\leq& d^{-\alpha}(\hat{P},\hat{Q})\Big|\partial_{\xi}\xi_{-m^{(\rm h)},0}(\hat{P})
			-\partial_{\xi} \xi_{-m^{(\rm h)},0}(\hat{Q})\Big|\\[5pt]
			&\qquad \
			+d^{-\alpha}(\hat{P},\hat{Q})\Big|\partial_{\eta}\xi_{-m^{(\rm h)},0}(\hat{P})
			-\partial_{\eta} \xi_{-m^{(\rm h)},0}(\hat{Q})\Big|\\[5pt]
=:&\hat{I}_{1}+\hat{I}_{2}.
		\end{split}
\end{eqnarray*}
For $\hat{I}_{1}$, by \eqref{eq:4.73}, we have
	\begin{eqnarray*}
		\begin{split}
&\hat{I}_{1}= d^{-\alpha}(\hat{P},\hat{Q})\big|\partial_{\xi}\xi_{-m^{(\rm h)},0}(\hat{P})-\partial_{\xi} \xi_{-m^{(\rm h)},0}(\hat{Q})\big|\\[5pt]
=&\frac{d^{-\alpha}(\hat{P},\hat{Q})\Big|\int^{\eta_{\hat{P}}}_{-m^{(\rm h)}}\partial_{\xi}\big(\frac{1}{\lambda^{(\rm n-1)}_{+}}\big)e^{\int^{\tau}_{-m^{(\rm h)}}
\partial_{\xi}\big(\frac{1}{\lambda^{(\rm n-1)}_{+}}\big) ds} d\tau -\int^{\eta_{\hat{Q}}}_{-m^{(\rm h)}}
\partial_{\xi}\big(\frac{1}{\lambda^{(\rm n-1)}_{+}}\big)e^{\int^{\tau}_{-m^{(\rm h)}}
\partial_{\xi}\big(\frac{1}{\lambda^{(\rm n-1)}_{+}}\big) ds} d\tau\Big|}{\bigg|\Big(1+\int^{\eta_{\hat{P}}}_{-m^{(\rm h)}}
\partial_{\xi}\big(\frac{1}{\lambda^{(\rm n-1)}_{+}}\big)e^{\int^{\tau}_{-m^{(\rm h)}}
\partial_{\xi}\big(\frac{1}{\lambda^{(\rm n-1)}_{+}}\big) ds} d\tau\Big)\Big(1+\int^{\eta_{\hat{Q}}}_{-m^{(\rm h)}}
\partial_{\xi}\big(\frac{1}{\lambda^{(\rm n-1)}_{+}}\big)e^{\int^{\tau}_{-m^{(\rm h)}}
\partial_{\xi}\big(\frac{1}{\lambda^{(\rm n-1)}_{+}}\big) ds} d\tau\Big)\bigg|}\\[5pt]
\leq& \mathcal{C}d^{-\alpha}\big(\hat{P},\hat{Q}\big)
\Bigg\{\bigg|\int^{\eta_{\hat{Q}}}_{\eta_{\hat{P}}}\partial_{\xi}\Big(\frac{1}{\lambda^{(\rm n-1)}_{+}(\hat{Q}_{+}(\tau))}\Big)e^{\int^{\tau}_{-m^{(\rm h)}}
\partial_{\xi}\big(\frac{1}{\lambda^{(\rm n-1)}_{+}}\big) ds}d\tau\bigg|\\[5pt]
&\quad +\int^{\eta_{\hat{P}}}_{-m^{(\rm h)}}\bigg|\partial_{\xi}\Big(\frac{1}{\lambda^{(\rm n-1)}_{+}(\hat{P}_{+}(\tau))}\Big)e^{\int^{\tau}_{-m^{(\rm h)}}
\partial_{\xi}\big(\frac{1}{\lambda^{(\rm n-1)}_{+}}\big) ds}-
\partial_{\xi}\Big(\frac{1}{\lambda^{(\rm n-1)}_{+}(\hat{Q}_{+}(\tau))}\Big)e^{\int^{\tau}_{-m^{(\rm h)}}
\partial_{\xi}\big(\frac{1}{\lambda^{(\rm n-1)}_{+}}\big) ds}\bigg|d\tau\Bigg\}.
		\end{split}
\end{eqnarray*}
Then, we can get, for $\tilde{\epsilon}<\epsilon_{I} \in (\mathcal{C}_{\rm h,4}\tilde{\epsilon}, \epsilon_{\rm h,4})$ with $\tilde{\epsilon}>0$ sufficiently small,
\begin{eqnarray*}
		\begin{split}
\hat{I}_{1}\leq \mathcal{C}\big|\eta_{\hat{P}}-\eta_{\hat{Q}}\big|^{1-\alpha}
+\mathcal{C}m^{(\rm h)}\bigg(\big\|D\delta z^{(\rm n-1)}\big\|_{0,0;\tilde{\Omega}^{(\rm h)}}
+\big\|\big(\delta\tilde{B}^{(\rm h)'}_{0},\delta\tilde{S}^{(\rm h)'}_{0}\big)\big\|_{0,0;\tilde{\Gamma}^{(\rm h)}_{\rm in}}\bigg)
\leq\mathcal{C},
		\end{split}
\end{eqnarray*}
{by \eqref{eq:4.75a} and by choosing constants $\mathcal{C}_{\rm h,4}>0$ and $\epsilon_{\rm h,4}>0$ depending only on $\tilde{\underline{U}}$ and $L$.} 
  Similarly, one can also get $\hat{I}_{2}\leq \mathcal{C}$. We therefore have
	\begin{eqnarray*}
		\begin{split}
			&[D\xi_{-m^{(\rm h)},0}]_{0,\alpha;
				\tilde{\Omega}^{(\rm h)}_{III}\cup\tilde{\Omega}^{(\rm h)}_{IV}
				\cup\tilde{\Omega}^{(\rm h)}_{V}}\\[5pt]
=&\sup_{\hat{P},\hat{Q} \in 
\tilde{\Omega}^{(\rm h)}_{III}\cup \tilde{\Omega}^{(\rm h)}_{IV}\cup\tilde{\Omega}^{(\rm h)}_{V}}
			d^{-\alpha}(\hat{P},\hat{Q})
			\Big|D \xi_{-m^{(\rm h)},0}(\hat{P})-D\xi_{-m^{(\rm h)},0}(\hat{Q})\Big|\\[5pt]
\quad& \leq \mathcal{C}.
		\end{split}
	\end{eqnarray*}
This completes the proof of this lemma.
\end{proof}

Now we consider the   characteristics issuing from the transonic contact discontinuity $\tilde{\Gamma}_{\rm cd}$. Let $\xi=\chi^{(\rm n)}_{-}(\eta, \xi_{\rm cd, 0})$ be the characteristic curves corresponding to
$\lambda^{(\rm n-1)}_-$, and issuing from point $(\xi_{\rm cd, 0},0)$, where $\xi_{\rm cd, 0}\in [0, \xi^{*}_{\rm cd,0}]$ and $\xi^{*}_{\rm cd,0}$ is given by \eqref{eq:4.54}, i.e., they are defined by
\begin{eqnarray}\label{eq:4.98}
\begin{split}
\left\{
\begin{array}{llll}
\frac{d\chi^{(\rm n)}_{-}}{d\eta}=\frac{1}{\lambda^{(\rm n-1)}_{-}(\chi^{(\rm n)}_{-}(\eta,\xi_{\rm cd, 0}),\eta)},\\[5pt]
\chi^{(\rm n)}_{-}(0,\xi_{\rm cd, 0})=\xi_{\rm cd, 0},
\end{array}
\right.
\end{split}
\end{eqnarray}
where $\xi_{\rm cd, 0}\in [0, \xi^{*}_{\rm cd,0}]$. Then, 
\begin{eqnarray}\label{eq:4.99}
\begin{split}
\xi=\xi_{\rm cd, 0}+\int^{\eta}_{0}\frac{d\tau}{\lambda^{(\rm n-1)}_{-}(\chi^{(\rm n)}_{-}(\tau,\xi_{\rm cd,0}),\tau)}.
\end{split}
\end{eqnarray}
For any two points $\check{P}=(\xi_{\check{P}},\eta_{\check{P}}),\ \check{Q}=(\xi_{\check{Q}},\eta_{\check{Q}}) \in 
\tilde{\Omega}^{(\rm h)}_{II}\cup \tilde{\Omega}^{(\rm h)}_{IV}\cup \tilde{\Omega}^{(\rm h)}_{VI}$, there exist unique points $\check{P}_{-}(0)=(\xi_{\check{P}_{-}(0)},0),\
\check{Q}_{-}(0)=(\xi_{\check{Q}_{-}(0)},0)$ so that the curve
$\xi=\chi^{(\rm n)}_{-}(\eta, \xi_{\rm cd, 0})$ issuing from
them pass through $\check{P}$ and $\check{Q}$, respectively. Define $\check{P}_{-}(\eta)=(\chi^{(\rm n)}_{-}(\eta, \xi_{\check{P}_{-}(0)}), \eta)$
and $\check{Q}_{-}(\eta)=(\chi^{(\rm n)}_{-}(\eta, \xi_{\check{Q}_{-}(0)}), \eta)$, then
\begin{eqnarray}\label{eq:4.100}
\begin{split}
&\xi_{\check{P}_{-}(\eta)}=\chi^{(\rm n)}_{-}(\eta, \xi_{\check{P}_{-}(0)})=\xi_{\check{P}_{-}(0)}+\int^{\eta}_{0}\frac{d\tau}{\lambda^{(\rm n-1)}_{-}(\check{P}_{-}(\tau))}, \\[5pt]
&\xi_{\check{Q}_{-}(\eta)}=\chi^{(\rm n)}_{-}(\eta, \xi_{\check{Q}_{-}(0)})=\xi_{\check{Q}_{-}(0)}+\int^{\eta}_{0}\frac{d\tau}{\lambda^{(\rm n-1)}_{-}(\check{Q}_{-}(\tau))}.
\end{split}
\end{eqnarray}
%
Define
\begin{align}\label{eq:4.101}
\begin{split}
d\big(\check{P}_{-}(\eta),\check{Q}_{-}(\eta)\big)=\big|\check{P}_{-}(\eta)-\check{Q}_{-}(\eta)\big|,\
\ d^{\alpha}\big(\check{P}_{-}(\eta),\check{Q}_{-}(\eta)\big)=\big|\check{P}_{-}(\eta)-\check{Q}_{-}(\eta)\big|^{\alpha}, \ \forall \alpha\in (0,1).
\end{split}
\end{align}
Then, we have the following lemma,  the proof  of which is analogous to that of Lemma \ref{lem:4.3} and thus omitted.
\begin{lemma}\label{lem:4.7}
For any $\alpha\in (0,1)$ and $\delta z^{(\rm n-1)}  
\in \mathscr{K}_{2\epsilon_{I}}$, {\color{black}there exist constants $\mathcal{C}_{\rm h,5}>0$ and $\epsilon_{\rm h,5}>0$ depending only on $\tilde{\underline{U}}$, $L$ and $\alpha$,
such that for $\tilde{\epsilon}>0$ sufficiently small and $\tilde{\epsilon}<\epsilon_{I} \in (\mathcal{C}_{\rm h,5}\tilde{\epsilon}, \epsilon_{I,5})$, the following holds}
	\begin{eqnarray}\label{eq:4.102}
	\begin{split}
	(\rm a)\ \ & d^{\alpha}\big(\check{P}_{-}(\eta),\check{Q}_{-}(\eta)\big)\leq \check{C}_{41}d^{\alpha}\big(\check{P},\check{Q}\big),\quad & \eta\in[0,\eta_{\check{P}\check{Q}}], \\[5pt]
	(\rm b)\ \ & d^{\alpha}\big(\check{P}_{-}(\eta),\check{Q}_{-}(\eta)\big)\leq\check{C}_{42}d^{\alpha}\big(\check{P}_{-}(\eta_{\check{P}\check{Q}}),\check{Q}_{-}(\eta_{\check{P}\check{Q}})\big),
	\quad & \eta\in[0,\eta_{\check{P}\check{Q}}], \\[5pt]
	(\rm c)\ \ & {\check{C}^{-1}_{43}\big|\eta_{\check{P}\check{Q}}-\eta_{\check{Q}}\big|^{\alpha}\leq d^{\alpha}\big(\check{Q}_{-}(\eta_{\check{P}\check{Q}}),\check{Q}\big)\leq \check{C}_{43}\big|\eta_{\check{P}\check{Q}}-\eta_{\check{Q}}\big|^{\alpha},}
	\end{split}
	\end{eqnarray}
where {\color{black} the constants $\check{C}_{4j}>0, (j=1,2,3)$ depend only on $\tilde{\underline{U}}$, $L$ and $\alpha$,} and $\eta_{\check{P}\check{Q}}=\min\{\eta_{\check{P}}, \ \eta_{\check{Q}}\}$ and $\check{P}_{-}(\eta_{\check{P}\hat{Q}})=\big(\chi^{(\rm n)}_{-}(\eta_{\check{P}\check{Q}}, \xi_{\check{P}_{-}(0)}), \eta_{\check{P}\check{Q}}\big)$ and $\check{Q}_{-}(\eta_{\check{P}\check{Q}})=\big(\chi^{(\rm n)}_{-}(\eta_{\check{P}\check{Q}}, \xi_{\check{Q}_{-}(0)}), \eta_{\check{P}\check{Q}}\big)$.

\end{lemma}

Finally, for any point $(\xi,\eta)$ in $
\tilde{\Omega}^{(\rm h)}_{II}\cup \tilde{\Omega}^{(\rm h)}_{IV}\cup \tilde{\Omega}^{(\rm h)}_{VI}$,
there exists a unique $\xi_{\rm cd, 0}$, such that the characteristics corresponding to $\lambda^{(\rm n-1)}_{-}$ issuing from $(\xi_{\rm cd,0},0)$ passes through $(\xi,\eta)$.
So we can regard $\xi_{\rm cd, 0}$ as a function of $(\xi,\eta)$ in $\tilde{\Omega}^{(\rm h)}_{II}\cup \tilde{\Omega}^{(\rm h)}_{IV}\cup \tilde{\Omega}^{(\rm h)}_{VI}$, \emph{i.e.},
$\xi_{\rm cd, 0}=\xi_{\rm cd,0}(\xi, \eta)$.
Then, we also have the following lemma, the  proof of which is similar to that of Lemma \ref{lem:4.4} and thus omitted as well.
\begin{lemma}\label{lem:4.6}
	For any $\alpha\in (0,1)$ and $\delta z^{(\rm n-1)}
\in \mathscr{K}_{2\epsilon_{I}}$, {\color{black}there exist constants $\mathcal{C}_{\rm h,6}>0$ and $\epsilon_{\rm h,6}>0$ depending only on $\tilde{\underline{U}}$, $L$ and $\alpha$,
such that for $\tilde{\epsilon}>0$ sufficiently small and $\tilde{\epsilon}<\epsilon_{I} \in (\mathcal{C}_{\rm h,6}\tilde{\epsilon}, \epsilon_{\rm h,6})$, one has}
	\begin{eqnarray}\label{eq:4.103}
	\begin{split}
	\|D \xi_{\rm cd, 0}\|_{0, \alpha; 
\tilde{\Omega}^{(\rm h)}_{II}\cup \tilde{\Omega}^{(\rm h)}_{IV}
		\cup \tilde{\Omega}^{(\rm h)}_{VI}}\leq \check{C}_{44},
	\end{split}
	\end{eqnarray}
{\color{black}where the constant $\check{C}_{44}>0$ depends only on $\tilde{\underline{U}}$, $L$ and $\alpha$.}
\end{lemma}

\subsubsection{Estimates of the solutions to the problem $(\widetilde{\mathbf{FP}})_{\rm n}$ in $\tilde{\Omega}^{(\rm h)}_{I}\cup\tilde{\Omega}^{(\rm h)}_{II}\cup\tilde{\Omega}^{(\rm h)}_{III}$}
In this subsection, we will consider the estimate of solutions in the supersonic region $\Omega^{(\rm h)}$ for the problem $(\widetilde{\mathbf{FP}})_{\rm n}$
near the background state $\underline{z}$, which 
can be written as the following:
\begin{eqnarray}\label{eq:4.42}
\begin{split}
\left\{
\begin{array}{llll}
\partial_{\xi}\delta z^{(n)}_{-}+\lambda^{(\rm n-1)}_{+}\partial_{\eta}\delta z^{(\rm n)}_{-}=0,
& \qquad \ \  \mbox{in}\ \ \ 
\tilde{\Omega}^{(\rm h)}_{I}\cup\tilde{\Omega}^{(\rm h)}_{II}\cup\tilde{\Omega}^{(\rm h)}_{III}, \\[5pt]
\partial_{\xi}\delta z^{(\rm n)}_{+}+\lambda^{(\rm n-1)}_{-}\partial_{\eta}\delta z^{(\rm n)}_{+}=0,
& \qquad \ \  \mbox{in}\ \ \ 
\tilde{\Omega}^{(\rm h)}_{I}\cup\tilde{\Omega}^{(\rm h)}_{II}\cup\tilde{\Omega}^{(\rm h)}_{III},\\[5pt]
\delta z^{(\rm n)}=\delta z_{0}, & \qquad \ \  \mbox{on} \ \ \ \tilde{\Gamma}^{(\rm h)}_{\rm in}.
\end{array}
\right.
\end{split}
\end{eqnarray}

\par Our main result in this subsection can be stated below. 
\begin{proposition}\label{prop:4.3}
For any given $\alpha\in(0,1)$, 
{\color{black}there exist constants $\mathcal{C}_{\rm h,7}>0$ and $\epsilon_{\rm h,7}>0$ depending only on $\tilde{\underline{U}}$, $L$ and $\alpha$,
such that for $\tilde{\epsilon}<\epsilon_{I} \in (\mathcal{C}_{\rm h,7}\tilde{\epsilon}, \epsilon_{\rm h,7})$ with $\tilde{\epsilon}>0$ sufficiently small,
if $\delta z^{(\rm n-1)}
\in \mathscr{K}_{2\epsilon_{I}}$,} the solution $\delta z^{(\rm n)}$ of the problem \eqref{eq:4.42} satisfies
\begin{eqnarray}\label{eq:4.55}
\begin{split}
\|\delta z^{(\rm n)}_{-}\|_{1,\alpha;
	\tilde{\Omega}^{(\rm h)}_{I}\cup\tilde{\Omega}^{(\rm h)}_{II}}
+\|\delta z^{(\rm n)}_{+}\|_{1,\alpha;
	\tilde{\Omega}^{(\rm h)}_{I}\cup\tilde{\Omega}^{(\rm h)}_{III}}
\leq \tilde{C}^{*}_{41}\Big(\|\delta z_{-,0}\|_{1,\alpha; \tilde{\Gamma}^{(\rm h)}_{\rm in}}+\|\delta z_{+,0}\|_{1,\alpha; \tilde{\Gamma}^{(\rm h)}_{\rm in}}\Big),
\end{split}
\end{eqnarray}
where $\tilde{C}^{*}_{41}>0$ depends only on $\tilde{\underline{U}}$, $L$ and $\alpha$.
\end{proposition}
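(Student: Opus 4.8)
\textbf{Proof proposal for Proposition \ref{prop:4.3}.}
The plan is to solve the decoupled linear transport equations in \eqref{eq:4.42} by the method of characteristics and then to bootstrap the $C^{1,\alpha}$ bound from the already-prepared H\"older regularity of the characteristic map. First I would note that the two equations are independent: $\delta z^{(\rm n)}_{-}$ is constant along the $\lambda^{(\rm n-1)}_{+}$-characteristics $\eta=\phi^{(\rm n)}_{+}(\xi,\eta_0)$ and $\delta z^{(\rm n)}_{+}$ is constant along the $\lambda^{(\rm n-1)}_{-}$-characteristics, so for a point $(\xi,\eta)$ traced back to the inlet we have $\delta z^{(\rm n)}_{\mp}(\xi,\eta)=\delta z_{\mp,0}(\eta_0(\xi,\eta))$, where $\eta_0(\cdot,\cdot)$ is exactly the function studied in Lemma \ref{lem:4.1} and Lemma \ref{lem:4.2}. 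This identity must first be justified on $\tilde{\Omega}^{(\rm h)}_I$, where every backward characteristic reaches $\tilde{\Gamma}^{(\rm h)}_{\rm in}$; the extension to $\tilde{\Omega}^{(\rm h)}_{II}$ (for $\delta z^{(\rm n)}_-$, whose $\lambda_+$-characteristics still originate from the inlet) and to $\tilde{\Omega}^{(\rm h)}_{III}$ (for $\delta z^{(\rm n)}_+$, whose $\lambda_-$-characteristics still originate from the inlet) is immediate once one checks that the relevant family of characteristics covers those subregions, which is precisely the geometric picture behind Fig. \ref{fig:4.6}.

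Next I would derive the estimates. The $C^0$ bound is trivial: $\|\delta z^{(\rm n)}_{\mp}\|_{0,0}\le \|\delta z_{\mp,0}\|_{0,0;\tilde{\Gamma}^{(\rm h)}_{\rm in}}$ since the solution only takes boundary values. For the first-derivative bound, differentiate $\delta z^{(\rm n)}_{\mp}(\xi,\eta)=\delta z_{\mp,0}(\eta_0(\xi,\eta))$ by the chain rule to get $D\delta z^{(\rm n)}_{\mp}=\delta z_{\mp,0}'(\eta_0)\,D\eta_0$, so $\|D\delta z^{(\rm n)}_{\mp}\|_{0,0}\le \|\delta z_{\mp,0}'\|_{0,0}\,\|D\eta_0\|_{0,0}$, and the right-hand factor is controlled uniformly by the $C^0$ part of Lemma \ref{lem:4.2}. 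For the H\"older seminorm of the gradient, write, for two points $P_1,P_2$,
\begin{eqnarray*}
\big|D\delta z^{(\rm n)}_{\mp}(P_1)-D\delta z^{(\rm n)}_{\mp}(P_2)\big|
&\le& \big|\delta z_{\mp,0}'(\eta_0(P_1))-\delta z_{\mp,0}'(\eta_0(P_2))\big|\,\big|D\eta_0(P_1)\big|\\[5pt]
& & +\,\big|\delta z_{\mp,0}'(\eta_0(P_2))\big|\,\big|D\eta_0(P_1)-D\eta_0(P_2)\big|,
\end{eqnarray*}
and estimate the two terms separately: the first uses $[\delta z_{\mp,0}']_{0,\alpha}$ together with the Lipschitz bound $|\eta_0(P_1)-\eta_0(P_2)|\le \mathcal{C}\,d(P_1,P_2)$ coming from \eqref{eq:4.50} in Lemma \ref{lem:4.1}, and the second uses $\|\delta z_{\mp,0}'\|_{0,0}$ together with the H\"older bound $[D\eta_0]_{0,\alpha}\le C_{44}$ from Lemma \ref{lem:4.2}. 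Dividing by $d^\alpha(P_1,P_2)$ and combining gives $[D\delta z^{(\rm n)}_{\mp}]_{0,\alpha}\le \mathcal{C}\big(\|\delta z_{\mp,0}\|_{1,\alpha;\tilde{\Gamma}^{(\rm h)}_{\rm in}}\big)$. Summing the three pieces yields \eqref{eq:4.55} with a constant $\tilde{C}^{*}_{41}$ depending only on $\tilde{\underline{U}}$, $L$, $\alpha$, provided $\epsilon$ is small enough for the characteristic lemmas to apply.

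The one point that requires a little care, rather than being purely routine, is the behavior near the \emph{corners} of the subregions—in particular the point $\mathcal O=(0,0)$ where $\tilde{\Gamma}^{(\rm h)}_{\rm in}$ meets $\tilde{\Gamma}_{\rm cd}$ and the vertex where $l^0_\pm$ meet—because there the two families of characteristics and the boundary data on different sides must be matched; here one uses that the background speeds $\underline\lambda_\pm$ are strictly separated (the eigenvalues in \eqref{eq:3.36}, with $\lambda_+>0>\lambda_-$ by the supersonic condition \eqref{eq:2.5}), so that the characteristic triangles genuinely tile the strip and the traced-back foot-point $\eta_0$ stays inside the data interval with the uniform bounds of Lemmas \ref{lem:4.1}--\ref{lem:4.2}; no compatibility at the corner is lost since $\delta z^{(\rm n)}_-$ and $\delta z^{(\rm n)}_+$ propagate in genuinely different directions and are each determined by a single boundary condition. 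I expect the main (though still modest) obstacle to be the bookkeeping that the characteristic-foot map and its derivatives extend with the stated bounds up to the boundaries $\tilde{\Gamma}_{\rm cd}$ and $\tilde{\Gamma}_-$ of the subregions $\tilde{\Omega}^{(\rm h)}_{II}$ and $\tilde{\Omega}^{(\rm h)}_{III}$, which is handled exactly by Lemma \ref{lem:4.1} applied on $\tilde{\Omega}^{(\rm h)}_I\cup\tilde{\Omega}^{(\rm h)}_{II}$ and $\tilde{\Omega}^{(\rm h)}_I\cup\tilde{\Omega}^{(\rm h)}_{III}$ respectively.
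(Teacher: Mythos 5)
Your proposal is correct, and it reaches the estimate \eqref{eq:4.55} by a genuinely more direct route than the paper's own proof. The paper does not invoke the explicit representation $\delta z^{(\rm n)}_{\mp}(\xi,\eta)=\delta z_{\mp,0}(\eta_0(\xi,\eta))$; instead it differentiates the transport equation, integrates along characteristics to obtain integral identities for $D\delta z^{(\rm n)}_{\mp}$ (essentially \eqref{eq:4.63}--\eqref{eq:4.64}, with an integral remainder involving $D\lambda^{(\rm n-1)}_{\pm}\,\partial_\eta\delta z^{(\rm n)}_{\mp}$), decomposes the H\"older difference into a same-time part $J_1$ and a same-characteristic part $J_2$, and finally closes the loop with a Gronwall argument. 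You instead observe that the linear transport equation with coefficient $\lambda^{(\rm n-1)}_{\pm}$ depending only on the \emph{previous} iterate yields the closed-form solution $\delta z_{\mp,0}\circ\eta_0$, then apply the chain rule $D\delta z^{(\rm n)}_{\mp}=\delta z_{\mp,0}'(\eta_0)\,D\eta_0$ and the H\"older product rule, which turns everything into the two ingredients already delivered by Lemma \ref{lem:4.1} (Lipschitz control of $\eta_0$) and Lemma \ref{lem:4.2} ($C^{0,\alpha}$ control of $D\eta_0$). Both proofs rest on the same two lemmas and give the same constant structure; yours is shorter and entirely Gronwall-free for this proposition, while the paper's integral/Gronwall framework is designed to carry over verbatim to Propositions \ref{prop:4.4}--\ref{prop:4.5}, where the reflected boundary conditions on $\tilde{\Gamma}_{-}$ and $\tilde{\Gamma}_{\rm cd}$ genuinely mix the two components and the explicit formula is no longer as clean. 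One small economy: for the corner issue you flag, the domains $\tilde{\Omega}^{(\rm h)}_{I}\cup\tilde{\Omega}^{(\rm h)}_{II}$ and $\tilde{\Omega}^{(\rm h)}_{I}\cup\tilde{\Omega}^{(\rm h)}_{III}$ are in fact each a single triangle (bounded by the inlet, one boundary edge, and one extremal characteristic $l^0_{\pm}$), hence convex, so the Lipschitz bound for $\eta_0$ in the Euclidean distance follows directly from $\|D\eta_0\|_{0,0}\le C_{44}$ with no further care needed there.
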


\begin{proof}
We only consider the estimate for 
the solution $z^{(\rm n)}_{-}$ in $\tilde{\Omega}^{(\rm h)}_I\cup\tilde{\Omega}^{(\rm h)}_{II}$,
because the proof for $z^{(\rm n)}_{+}$ in $\tilde{\Omega}^{(\rm h)}_I\cup\tilde{\Omega}^{(\rm h)}_{III}$ is similar.
{\color{black}First, for any point $(\xi, \eta)\in \tilde{\Omega}^{(\rm h)}_I\cup\tilde{\Omega}^{(\rm h)}_{II}$, we can draw a characteristic line $\eta=\phi^{(\rm n)}_{+}(\xi,\eta_{+,0})$
 defined by \eqref{eq:4.43}-\eqref{eq:4.44} such that it intersects with  $\tilde{\Gamma}^{(\rm h)}_{\rm in}$ at the point $(0, \eta_{+,0})$. Then, one has $\delta z^{(\rm n)}_{-}(\xi, \eta)=\delta z_{-,0}(\eta_{+,0})$. Moreover, differentiating the equation $\eqref{eq:4.42}_{1}$ with respect to $\xi$ and $\eta$, and then integrating them along the characteristics $\eta=\phi^{(\rm n)}_{+}(\xi,\eta_{+,0})$, we have
\begin{eqnarray}\label{eq:4.55a}
\begin{split}
\partial_{\xi} \delta z^{(\rm n)}_{-}(\xi,\eta)&=\partial_{\xi}\delta z^{(\rm n)}_{-}(0, \eta_{+,0})
-\int^{\xi}_{0}\big(\partial_{\tau}\lambda^{(\rm n-1)}_{+}\partial_{\eta}\delta z^{(\rm n)}_{-}\big)(\tau, \phi^{(\rm n)}_+(\tau,\eta_{+,0}))d\tau\\[5pt]
&=\partial_{\xi} \eta_{+,0}\partial_{\eta_{+,0}}\delta z_{-}(0,\eta_{+,0})
-\int^{\xi}_{0}\big(\partial_{\tau}\lambda^{(\rm n-1)}_{+}\partial_{\eta}\delta z^{(\rm n)}_{-}\big)(\tau, \phi^{(\rm n)}_+(\tau,\eta_{+,0}))d\tau,
\end{split}
\end{eqnarray}
and
\begin{eqnarray}\label{eq:4.55b}
\begin{split}
\partial_{\eta} \delta z^{(\rm n)}_{-}(\xi, \eta)&=\partial_{\eta}\delta z^{(\rm n)}_{-}(0, \eta_{+,0})
-\int^{\xi}_{0}\big(\partial_{\eta}\lambda^{(\rm n-1)}_{+}\partial_{\eta}\delta z^{(\rm n)}_{-}\big)(\tau, \phi^{(\rm n)}_+(\tau,\eta_{+,0}))d\tau\\[5pt]
&=\partial_{\eta} \eta_{+,0}\partial_{\eta_{+,0}}\delta z_{-}(0,\eta_{+,0})
-\int^{\xi}_{0}\big(\partial_{\eta}\lambda^{(\rm n-1)}_{+}\partial_{\eta}\delta z^{(\rm n)}_{-}\big)(\tau, \phi^{(\rm n)}_+(\tau,\eta_{+,0}))d\tau.
\end{split}
\end{eqnarray}
}

Therefore, we can follow the approach in \cite{hkwx} for proving the Proposition 4.1 and apply the Lemma \ref{lem:4.2} as well as the estimate \eqref{eq:4.47c} to get 
\begin{eqnarray}\label{eq:4.62}
\begin{split}
\big\|\delta z^{(\rm n)}_{-}\big\|_{1,0;
\tilde{\Omega}^{(\rm h)}_I\cup\tilde{\Omega}^{(\rm h)}_{II}}\leq
2\mathcal{C}\big\|\delta z_{-,0}\big\|_{1,0;\tilde{\Gamma}^{(\rm h)}_{\rm in}},
\end{split}
\end{eqnarray}
provided that {$\tilde{\epsilon}<\epsilon_{I}\in (\mathcal{C}'_{\rm h,7}\tilde{\epsilon}, \epsilon'_{\rm h,7})$ with $\tilde{\epsilon}>0$ is sufficiently small,} where the constants $\mathcal{C}'_{\rm h,7}>\max\{\mathcal{C}_{\rm h,1}, \mathcal{C}_{\rm h,2}\}$, $0<\epsilon'_{\rm h,7}<\min\{\epsilon_{\rm h,1}, \epsilon_{\rm h,2}\}$ and
$\mathcal{C}>0$ depends only on 
$\tilde{\underline{U}}$ and $L$.

\par Next, let us turn to  
the estimate of $[D\delta z^{(\rm n)}_{-}]_{0,\alpha; \tilde{\Omega}^{(\rm h)}
\cap(\tilde{\Omega}_I\cup\tilde{\Omega}_{II})}$.
For the two points $P=(\xi_{P},\eta_{P})$, $Q=(\xi_{Q},\eta_{Q})\in \tilde{\Omega}^{(\rm h)}_I\cup \tilde{\Omega}^{(\rm h)}_{II}$ lying on the
characteristics corresponding to $\lambda^{(\rm n-1)}_{+}$ and intersecting with $\tilde{\Gamma}^{(\rm h)}_{\rm in}$ at
the points $P_{+}(0)=(0,\eta_{P_{+}(0)})$ and $Q_{+}(0)=(0,\eta_{Q_{+}(0)})$, respectively,
  by \eqref{eq:4.55a}-\eqref{eq:4.55b}, we have
\begin{eqnarray}\label{eq:4.63}
\begin{split}
\partial_{\xi} \delta z^{(\rm n)}_{-}(P)
&=\partial_{\xi}\eta_{+,0}(P)\partial_{\eta_{+,0}}\delta z_{-}(P_{+}(0))
-\int^{\xi_{P}}_{0}\big(\partial_{\tau}\lambda^{(\rm n-1)}_{+}\partial_{\eta}
\delta z^{(\rm n)}_{-}\big)(P_{+}(\tau))d\tau,\\[5pt]
\partial_{\eta} \delta z^{(\rm n)}_{-}(P)
&=\partial_{\eta}\eta_{+,0}(P)\partial_{\eta_{+,0}}\delta z_{-}(P_{+}(0))
-\int^{\xi_{P}}_{0}\big(\partial_{\eta}\lambda^{(\rm n-1)}_{+}\partial_{\eta}\delta z^{(\rm n)}_{-}\big)(P_{+}(\tau))d\tau,
\end{split}
\end{eqnarray}
and
\begin{eqnarray}\label{eq:4.64}
\begin{split}
\partial_{\xi} \delta z^{(\rm n)}_{-}(Q)
&=\partial_{\xi}\eta_{+,0}(Q)\partial_{\eta_{+,0}}\delta z_{-}(Q_{+}(0))
-\int^{\xi_{Q}}_{0}\big(\partial_{\tau}\lambda^{(\rm n-1)}_{+}\partial_{\eta}
\delta z^{(\rm n)}_{-}\big)(Q_{+}(\tau))d\tau,\\[5pt]
\partial_{\eta} \delta z^{(\rm n)}_{-}(Q)
&=\partial_{\eta}\eta_{+,0}(Q)\partial_{\eta_{+,0}}\delta z_{-}(Q_{+}(0))
-\int^{\xi_{Q}}_{0}\big(\partial_{\eta}\lambda^{(\rm n-1)}_{+}\partial_{\eta}\delta z^{(\rm n)}_{-}\big)(Q_{+}(\tau))d\tau,
\end{split}
\end{eqnarray}
where $\eta_{P_{+}(0)}=\eta_{+,0}(P)$ and $\eta_{Q_{+}(0)}=\eta_{+,0}(Q)$.
%

Without loss of the generality, we asume that $\xi_{P}<\xi_{Q}$ and
let $Q_{+}(\xi_{P})=(\xi_{P}, \phi^{(\rm n)}_{+}(\xi_{P},\eta_{Q_{+}(0)}))$.
Then, by a direct computation and by Lemma \ref{lem:4.1}, one can get that
\begin{align}
&d^{-\alpha}(P,Q)\Big|D\delta z^{(\rm n)}_{-}(P)-D\delta z^{(\rm n)}_{-}(Q)\Big|\nonumber\\[5pt]
& \quad \ \  \leq d^{-\alpha}(P,Q)\Big|D\delta z^{(\rm n)}_{-}(P)-D\delta z^{(\rm n)}_{-}(Q_{+}(\xi_{P}))\Big|\nonumber\\[5pt]
& \quad \quad \quad \ \ + d^{-\alpha}(P,Q)\Big|D\delta z^{(\rm n)}_{-}(Q_{+}(\xi_{P}))-D\delta z^{(\rm n)}_{-}(Q)\Big|\label{eq:4.65}\\[5pt]
& \quad \ \  \leq \mathcal{C}d^{-\alpha}\big(P,Q_{+}(\xi_{P})\big)\Big|D\delta z^{(\rm n)}_{-}(P)-D\delta z^{(\rm n)}_{-}(Q_{+}(\xi_{P}))\Big|\nonumber\\[5pt]
& \quad \quad \quad \ \ + \mathcal{C}d^{-\alpha}\big(Q_{+}(\xi_{P}),Q\big)\Big|D\delta z^{(\rm n)}_{-}(Q_{+}(\xi_{P}))-D\delta z^{(\rm n)}_{-}(Q)\Big|\nonumber\\[5pt]
& \quad \ \  =:\mathcal{C}(J_{1}+J_{2}),\nonumber
\end{align}
where constant $\mathcal{C}>0$ depends only on $\tilde{\underline{U}}$, $L$ and $\alpha$.

Now, we will estimate $J_{1}$ and $J_{2}$. 
For the term $J_{1}$, by \eqref{eq:4.63} and \eqref{eq:4.55a}-\eqref{eq:4.55b} for $Q_{+}(\xi_{p})$, we have
\begin{eqnarray*}
\begin{split}
J_{1}&\leq d^{-\alpha}\big(P,Q_{+}(\xi_{P})\big)\Big|D\eta_{+,0}(P)\partial_{\eta_{+,0}}\delta z_{-}(P_{+}(0))
-D\eta_{+,0}(Q_{+}(\xi_{P}))\partial_{\eta_{+,0}}\delta z_{-}(Q_{+}(0))\Big|\\[5pt]
&\quad \ \ +d^{-\alpha}\big(P,Q_{+}(\xi_{P})\big)\int^{\xi_{P}}_{0}\Big|\big(D\lambda^{(\rm n-1)}_{+}\partial_{\tau}\delta z^{(\rm n)}_{-}\big)(P_{+}(\tau))
-\big(D\lambda^{(\rm n-1)}_{+}\partial_{\tau}\delta z^{(\rm n)}_{-}\big)(Q_{+}(\tau))\Big|d\tau\\[5pt]
&=: J_{11}+J_{12}.
\end{split}
\end{eqnarray*}
For  $J_{11}$, it follows from Lemma \ref{lem:4.1} and Lemma \ref{lem:4.2} that
\begin{eqnarray*}
\begin{split}
J_{11} &=d^{-\alpha}\big(P,Q_{+}(\xi_{P})\big)\Big|D\eta_{+,0}(P)\partial_{\eta_{+,0}}\delta z_{-}(P_{+}(0))-D\eta_{+,0}(Q_{+}(\xi_{P}))\partial_{\eta_{+,0}}\delta z_{-}(Q_{+}(0))\Big|\\[5pt]
&\leq d^{-\alpha}\big(P,Q_{+}(\xi_{P})\big)\Big|D\eta_{+,0}(P)-D\eta_{+,0}(Q_{+}(\xi_{P}))\Big| \big|\partial_{\eta_{+,0}}\delta z_{-}(P_{+}(0))\big|\\[5pt]
&\quad \ \ +d^{-\alpha}\big(P,Q_{+}(\xi_{P})\big)\Big|\partial_{\eta_{+,0}}\delta z_{-}(P_{+}(0))-\partial_{\eta_{+,0}}\delta z_{-}(Q_{+}(0))\Big| \big|D\eta_{0,+}(Q_{+}(\xi_{P}))\big|\\[5pt]
&\leq \big|\partial_{\eta_{+,0}}\delta z_{-}(P_{+}(0))\big|d^{-\alpha}\big(P,Q_{+}(\xi_{P})\big)\Big|D\eta_{+,0}(P)-D\eta_{+,0}(Q_{+}(\xi_{P}))\Big| \\[5pt]
&\quad \ \ +\mathcal{C}\big|D\eta_{+,0}(Q_{+}(\xi_{P}))\big|d^{-\alpha}\big(P_{+}(0),Q_{+}(0)\big)\Big|\partial_{\eta_{+,0}}\delta z_{-}(P_{+}(0))-\partial_{\eta_{+,0}}\delta z_{-}(Q_{+}(0))\Big| \\[5pt]
&\leq \mathcal{C}\big\|\delta z'_{-,0}\big\|_{0,\alpha;\tilde{\Gamma}^{(\rm h)}_{\rm in}},
\end{split}
\end{eqnarray*}
where $\mathcal{C}=\mathcal{C}(\tilde{\underline{U}}, L,\alpha)>0$ is independent of $\epsilon_{I}$ and $\rm n$.
Next, for $J_{12}$, a direct computation shows that
\begin{eqnarray*}
\begin{split}
J_{12}&=d^{-\alpha}\big(P,Q_{+}(\xi_{P})\big)\int^{\xi_{P}}_{0}\Big|\Big(D\lambda^{(\rm n-1)}_{+}\partial_{\tau}\delta z^{(\rm n)}_{-}\Big)(P_{+}(\tau))-\Big(D\lambda^{(\rm n-1)}_{+}\partial_{\tau}\delta z^{(\rm n)}_{-}\Big)(Q_{+}(\tau))\Big|d\tau\\[5pt]
&\leq d^{-\alpha}\big(P,Q_{+}(\xi_{P})\big)\int^{\xi_{P}}_{0}\Big|\Big(D\lambda^{(\rm n-1)}_{+}(P_{+}(\tau))
-D\lambda^{(\rm n-1)}_{+}(Q_{+}(\tau))\Big)\partial_{\tau}\delta z^{(\rm n)}_{-}(P_{+}(\tau))\Big|d\tau\\[5pt]
&\quad \ \ +d^{-\alpha}\big(P,Q_{+}(\xi_{P})\big)\int^{\xi_{P}}_{0}\Big|\Big(\partial_{\tau}\delta z^{(\rm n)}_{-}(P_{+}(\tau))
-\partial_{\tau}\delta z^{(\rm n)}_{-}(Q_{+}(\tau))\Big) D\lambda^{(\rm n-1)}_{+}(Q_{+}(\tau))\Big|d\tau.
\end{split}
\end{eqnarray*}
Then, by the estimates \eqref{eq:4.47c} and \eqref{eq:4.62}, we obtain
\begin{eqnarray*}
\begin{split}
J_{12}&\leq \mathcal{C}\|\lambda^{(\rm n-1)}_{+}\|_{1,\alpha; \tilde{\Omega}^{(\rm h)}}
\int^{\xi_{P}}_{0}\big|D\delta z^{(\rm n)}_{-}(P(\tau))\big|d\tau\\[5pt]
&\quad \ \ +\mathcal{C}\Big(\big\|\delta z^{(\rm n-1)}\big\|_{1,0; \tilde{\Omega}^{(\rm h)}}
+\big\|\big(\delta\tilde{B}^{(\rm h)'}_{0},\delta\tilde{S}^{(\rm h)'}_{0}\big)\big\|_{1,0;\tilde{\Gamma}^{(\rm h)}_{\rm in}}\Big)\\[5pt]
&\qquad \quad \ \times \int^{\xi_{P}}_{0}d^{-\alpha}\big(P_{+}(\tau),Q_{+}(\tau)\big)\Big|D\delta z^{(\rm n)}_{-}(P_{+}(\tau))-D\delta z^{(\rm n)}_{-}(Q_{+}(\tau))\Big|d\tau\\[5pt]
&\leq \mathcal{C}{\big\|\delta z_{-,0}\big\|_{1,0;\tilde{\Gamma}^{(\rm h)}_{\rm in}}}
+\mathcal{C}\big(2\epsilon_{I}+\tilde{\epsilon}\big)\int^{\xi_{P}}_{0}d^{-\alpha}\big(P_{+}(\tau),Q_{+}(\tau)\big)\Big|D\delta z^{(\rm n)}_{-}(P_{+}(\tau))-D\delta z^{(\rm n)}_{-}(Q_{+}(\tau))\Big|d\tau,
\end{split}
\end{eqnarray*}
where $\mathcal{C}=\mathcal{C}(\tilde{\underline{U}}, L,\alpha)>0$ is independent of $\epsilon_{I}$ and $\rm n$.

Combining the estimates on $J_{11}$ with $J_{12}$, we can deduce by the Gronwall inequality that
\begin{eqnarray*}
\begin{split}
J_{1}=d^{-\alpha}\big(P,Q_{+}(\xi_{P})\big)\big|D\delta z^{(\rm n)}_{-}(P)-D\delta z^{(\rm n)}_{-}(Q_{+}(\xi_{P}))\big|
\leq \mathcal{C}\big\|\delta z'_{-,0}\big\|_{0,\alpha;\tilde{\Gamma}^{(\rm h)}_{\rm in}}e^{2\mathcal{C}(2\epsilon_{I}+\tilde{\epsilon})\xi_{P}},
\end{split}
\end{eqnarray*}
which implies
\begin{equation}\label{eq:4.66}
\begin{split}
J_{1}&\leq \sup_{P,Q_{+}(\xi_{P})\in
	\tilde{\Omega}^{(\rm h)}_I\cup\tilde{\Omega}^{(\rm h)}_{II}}
d^{-\alpha}\big(P,Q_{+}(\xi_{P})\big)\Big|D\delta z^{(\rm n)}_{-}(P)-D\delta z^{(\rm n)}_{-}(Q_{+}(\xi_{P}))\Big|\\[5pt]
&\leq 2\mathcal{C}{\big\|\delta z_{-,0}\big\|_{1,\alpha;\tilde{\Gamma}^{(\rm h)}_{\rm in}}}.
\end{split}
\end{equation}
Here, we also choose the constants $\mathcal{C}''_{\rm h,7}>0$ and $\epsilon''_{\rm h,7}>0$ depends only on 
$\tilde{\underline{U}}$ and $L$, such that for $\tilde{\epsilon}>0$ sufficiently small and $\tilde{\epsilon}<\epsilon_{I}\in (\mathcal{C}''_{\rm h,7}\tilde{\epsilon}, \epsilon''_{\rm h,7})$,
it holds $e^{2\mathcal{C}L(2\epsilon_{I}+\tilde{\epsilon})}\leq 2$.

\par The remaining task is to deal with $J_{2}$.
By the equations in \eqref{eq:4.64}, \eqref{eq:4.55a}-\eqref{eq:4.55b} for $Q_{+}(\xi_{p})$ and the estimate \eqref{eq:4.47c}, and Lemmas \ref{lem:4.1}-\ref{lem:4.2}, we have
{\color{black}
\begin{align}\label{eq:4.67}
\begin{split}
J_{2}&= d^{-\alpha}\big(Q_{+}(\xi_{P}),Q\big)
\Big|D\delta z^{(\rm n)}_{-}(Q_{+}(\xi_{P}))-D\delta z^{(\rm n)}_{-}(Q)\Big|\\[5pt]
&\leq d^{-\alpha}\big(Q_{+}(\xi_{P}),Q\big)\Big|D\eta_{+,0}(Q_{+}(\xi_{P}))-D\eta_{+,0}(Q)\Big||\partial_{\eta_{+,0}}\delta z_{-}(Q_{+}(0))|\\[5pt]
&\quad +d^{-\alpha}\big(Q_{+}(\xi_{P}),Q\big)\int^{\xi_{Q}}_{\xi_{P}}\Big|\big(D\lambda^{(\rm n-1)}_{+}\partial_{\eta}\delta z^{(\rm n)}_{-}\big)(Q_{+}(\tau))\Big|d\tau\\[5pt]
&\leq \mathcal{C}\Big(\|D\eta_{+,0}\|_{0, \alpha; \tilde{\Omega}^{(\rm h)}_{I}\cup\tilde{\Omega}^{(\rm h)}_{II}}\big\|\delta z'_{-,0}\big\|_{0,0;\tilde{\Gamma}^{(\rm h)}_{\rm in}}
+\big|\xi_{P}-\xi_{Q}\big|^{1-\alpha}
\big\|\lambda^{(\rm n-1)}_{+}\big\|_{1,0;\tilde{\Omega}^{(\rm h)}}
\big\|\delta z^{(\rm n-1)}\big\|_{1,0;
	\tilde{\Omega}^{(\rm h)}_I\cup\tilde{\Omega}^{(\rm h)}_{II}}\Big)\\[5pt]
&\leq \mathcal{C}\big\|\delta z_{-,0}\big\|_{1,0;\tilde{\Gamma}^{(\rm h)}_{\rm in}}.
\end{split}
\end{align}
}
  Combining all the estimates from \eqref{eq:4.65} to \eqref{eq:4.67} above together,
we therefore get that
\begin{eqnarray}\label{eq:4.68}
\begin{split}
&[D\delta z^{(\rm n)}_{-}]_{0,\alpha;
	\tilde{\Omega}^{(\rm h)}_I\cup\tilde{\Omega}^{(\rm h)}_{II}}\\[5pt]
=&\sup_{P,Q\in
	\tilde{\Omega}^{(\rm h)}_I\cup\tilde{\Omega}^{(\rm h)}_{II}}
d^{-\alpha}(P,Q)\Big|D\delta z^{(\rm n)}_{-}(P)-D\delta z^{(\rm n)}_{-}(Q)\Big|\\[5pt]
\leq &\mathcal{C}\sup_{P,Q_{+}(\xi_{P})\in
	\tilde{\Omega}^{(\rm h)}_I\cup\tilde{\Omega}^{(\rm h)}_{II}}
d^{-\alpha}\big(P,Q_{+}(\xi_{P})\big)\Big|D\delta z^{(\rm n)}_{-}(P)-D\delta z^{(\rm n)}_{-}(Q_{+}(\xi_{P}))\Big|\\[5pt]
&\quad +\sup_{Q_{+}(\xi_{P}),Q\in
	\tilde{\Omega}^{(\rm h)}_I\cup\tilde{\Omega}^{(\rm h)}_{II}}
d^{-\alpha}(Q_{+}(\xi_{P}),Q\big)\Big|D\delta z^{(\rm n)}_{-}(Q_{+}(\xi_{P}))-D\delta z^{(\rm n)}_{-}(Q)\Big|\\[5pt]
\leq& \mathcal{C}\big\|\delta z'_{-,0}\big\|_{0,\alpha;\tilde{\Gamma}^{(\rm h)}_{\rm in}},
\end{split}
\end{eqnarray}
where 
the constant $\mathcal{C}>0$ depends only on $\tilde{\underline{U}}$, $L$ and $\alpha$.

\par Finally, it follows from the estimates \eqref{eq:4.62} and \eqref{eq:4.68} that 
{we can choose the constants $\tilde{C}^{*}_{41}>0$, $\mathcal{C}_{\rm h,7}=\max\{\mathcal{C}'_{\rm h,7}, \mathcal{C}''_{\rm h,7}\}$ and
$\epsilon_{\rm h,7}=\min\{\epsilon'_{\rm h,7},\epsilon''_{\rm h,7} \}$ depending  only on $\tilde{\underline{U}}$, $L$ and $\alpha$
such that for $\tilde{\epsilon}<\epsilon_{I} \in (\mathcal{C}_{\rm h,7}\tilde{\epsilon}, \epsilon_{\rm h,7})$ with $\tilde{\epsilon}>0$ sufficiently small, }
it holds that
\begin{eqnarray*}
\begin{split}
\|\delta z^{(\rm n)}_{-}\|_{1,\alpha;
	\tilde{\Omega}^{(\rm h)}_I\cup\tilde{\Omega}^{(\rm h)}_{II}}
\leq  \tilde{C}^{*}_{41} \|\delta z_{-,0}\|_{1,\alpha;\tilde{\Gamma}^{(\rm h)}_{\rm in}}.
\end{split}
\end{eqnarray*}
The proof of this proposition is completed.
\end{proof}

\begin{remark}\label{rem:4.2}
By the estimate \eqref{eq:4.55}, we actually have the estimates of $\delta z_{\pm}^{(\rm n)}$ in $\tilde{\Omega}^{(\rm h)}_I$. Moreover, 
on the contact discontinuity $\tilde{\Gamma}_{\rm cd}\cap\tilde{\Omega}^{(\rm h)}_{II}$
and the lower wall of the nozzle $\tilde{\Gamma}_{-}\cap\tilde{\Omega}^{(\rm h)}_{III}$, we also have
\begin{eqnarray}\label{eq:4.69}
\begin{split}
\big\|\delta z^{(\rm n)}_{-}\big\|_{1,\alpha;\tilde{\Gamma}_{\rm cd}\cap\tilde{\Omega}^{(\rm h)}_{II}}
+\big\|\delta z^{(\rm n)}_{+}\big\|_{1,\alpha;\tilde{\Gamma}_{-}\cap\tilde{\Omega}^{(\rm h)}_{III}}
\leq \tilde{C}^{*}_{42}\bigg(\big\|\delta z_{-,0}\big\|_{1,\alpha; \tilde{\Gamma}^{(\rm h)}_{\rm in}}
+\big\|\delta z_{+,0}\big\|_{1,\alpha; \tilde{\Gamma}^{(\rm h)}_{\rm in}}\bigg).
\end{split}
\end{eqnarray}
\end{remark}

\subsubsection{Estimates of the solutions to the problem $(\widetilde{\mathbf{FP}})_{\rm n}$ in $\tilde{\Omega}^{(\rm h)}_{III}\cup\tilde{\Omega}^{(\rm h)}_{IV}\cup\tilde{\Omega}^{(\rm h)}_{V}$}
Problem $(\widetilde{\mathbf{FP}})_{\rm n}$ in $\tilde{\Omega}^{(\rm h)}_{III}\cup\tilde{\Omega}^{(\rm h)}_{IV}\cup\tilde{\Omega}^{(\rm h)}_{V}$ can be formulated as the following:
\begin{eqnarray}\label{eq:4.70}
\begin{split}
\left\{
\begin{array}{llll}
\partial_{\xi}\delta z^{(\rm n)}_{-}+\lambda^{(\rm n-1)}_{+}\partial_{\eta}\delta z^{(\rm n)}_{-}=0,
& \qquad \ \  \mbox{in} \ \ \ 
\tilde{\Omega}^{(\rm h)}_{III}
\cup\tilde{\Omega}^{(\rm h)}_{IV}\cup\tilde{\Omega}^{(\rm h)}_{V}, \\[5pt]
\delta z^{(\rm n)}_{-}+\delta z^{(\rm n)}_{+}=2\arctan g'_{-},  & \qquad \ \   \mbox{on} \ \ \
\tilde{\Gamma}_{-}\cap\{0\leq \xi\leq \xi^{*}_{-m^{(\rm h)},0}\}.
\end{array}
\right.
\end{split}
\end{eqnarray}
  Similar to the argument in Section 4.2.2, we also study problem \eqref{eq:4.70} via the characteristics method, but the difference 
is that there are reflections of the characteristics on the lower wall $\tilde{\Gamma}_{-}$ of the nozzle. Let $\eta=\chi^{(\rm n)}_{+}(\xi, \xi^{*}_{-m^{(\rm h)},0})$ be the reflected characteristics which issues from $(\xi^{*}_{-m^{(\rm h)},0}, -m^{(\rm h)})$ and
intersects with the transonic contact discontinuity $\tilde{\Gamma}_{\rm cd}$ at the point $(\xi^{*}_{\rm cd,1},0)$. Then, it holds that
\begin{eqnarray}\label{eq:4.80}
\begin{split}
&\int^{0}_{-m^{(\rm h)}}\frac{d\tau}{\lambda^{(\rm n-1)}_{+}(\chi^{(\rm n)}_{+}(\tau, \xi^{*}_{-m^{(\rm h)},0}),\tau)}=\xi^{*}_{\rm cd,1}.
\end{split}
\end{eqnarray}

We have the following estimates for $\delta z^{(\rm n)}_{-}$ in $\tilde{\Omega}^{(\rm h)}_{III}\cup\tilde{\Omega}^{(\rm h)}_{IV}\cup\tilde{\Omega}^{(\rm h)}_{V}$.
\begin{proposition}\label{prop:4.4}
For any given $\alpha\in (0,1)$, 
{\color{black}there exist constants $\mathcal{C}_{\rm h,8}>0$ and $\epsilon_{\rm h,8}>0$ depending only on $\tilde{\underline{U}}$, $L$ and $\alpha$,
such that for $\tilde{\epsilon}<\epsilon_{I} \in (\mathcal{C}_{\rm h,8}\tilde{\epsilon}, \epsilon_{\rm h,8})$ with $\tilde{\epsilon}>0$ sufficiently small, if $\delta z^{(\rm n-1)}\in \mathscr{K}_{2\epsilon_{I}}$,}
  the solution $z^{(\rm n)}_{-}$ to the problem \eqref{eq:4.70} satisfies
\begin{align}\label{eq:4.81}
\begin{split}
&\big\|\delta z^{(\rm n)}_{-}\big\|_{1,\alpha;
\tilde{\Omega}^{(\rm h)}_{III}\cup\tilde{\Omega}^{(\rm h)}_{IV} \cup\tilde{\Omega}^{(\rm h)}_{V}}\\[5pt]
&\quad \leq
\tilde{C}^{*}_{43}\bigg(\big\|\delta z^{(\rm n)}_{+}\big\|_{1,\alpha;\tilde{\Gamma}_{-}\cap\{0\leq \xi\leq \xi^{*}_{-m^{(\rm h)},0}\}}
+\big\|g_{-}+1\big\|_{2,\alpha; \tilde{\Gamma}_{-}\cap\{0\leq \xi\leq \xi^{*}_{-m^{(\rm h)},0}}\bigg),
\end{split}
\end{align}
where $\xi^{*}_{-m^{(\rm h)},0}$ is given in \eqref{eq:4.54} and the constant $\tilde{C}^{*}_{43}>0$ depends only on $\underline{\tilde{U}}$, $L$ and $\alpha$.
\end{proposition}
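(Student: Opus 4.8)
The plan is to solve \eqref{eq:4.70} by the method of characteristics, exploiting that $\delta z^{(\rm n)}_{-}$ is transported by $\lambda^{(\rm n-1)}_{+}$ and is therefore constant along each curve $\xi=\chi^{(\rm n)}_{+}(\eta,\xi_{-,0})$ of \eqref{eq:4.71}, every such curve emanating from a point $(\xi_{-,0},-m^{(\rm h)})$ of the lower wall $\tilde{\Gamma}_{-}$. The first step is to identify the Cauchy data on $\tilde{\Gamma}_{-}$: by the reflection condition in \eqref{eq:4.70},
\begin{equation*}
\delta z^{(\rm n)}_{-}\big|_{\tilde{\Gamma}_{-}}=2\arctan g'_{-}(\xi)-\delta z^{(\rm n)}_{+}\big|_{\tilde{\Gamma}_{-}},\qquad 0\le\xi\le\xi^{*}_{-,0}.
\end{equation*}
On this segment the trace of $\delta z^{(\rm n)}_{+}$ has already been estimated in $C^{1,\alpha}$ in the previous subsection (this is exactly \eqref{eq:4.69} in Remark \ref{rem:4.2}, once one recalls that the $\lambda^{(\rm n-1)}_{-}$-characteristics reaching $\tilde{\Gamma}_{-}\cap\{0\le\xi\le\xi^{*}_{-,0}\}$ originate in $\tilde{\Omega}^{(\rm h)}_{I}\cup\tilde{\Omega}^{(\rm h)}_{III}$), and $\arctan g'_{-}\in C^{1,\alpha}$ because $g_{-}\in C^{2,\alpha}$; hence the right-hand side above is bounded in $C^{1,\alpha}(\tilde{\Gamma}_{-}\cap\{0\le\xi\le\xi^{*}_{-,0}\})$ by the quantity on the right of \eqref{eq:4.81}. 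Composing with the foot map $\xi_{-,0}=\xi_{-,0}(\xi,\eta)$, whose $C^{0}$ norm is controlled for $\epsilon$ small, gives immediately the $C^{0}$ bound for $\delta z^{(\rm n)}_{-}$.

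For the $C^{1,\alpha}$ estimate I would differentiate the transport equation to obtain transport equations, along the same $\lambda^{(\rm n-1)}_{+}$-characteristics, for $\partial_{\xi}\delta z^{(\rm n)}_{-}$ and $\partial_{\eta}\delta z^{(\rm n)}_{-}$, whose source terms are linear in $D\delta z^{(\rm n)}_{-}$ with coefficients $D\lambda^{(\rm n-1)}_{+}$ that are $O(\epsilon)$ in $C^{0,\alpha}$ since $z^{(\rm n-1)},\tilde{B}^{(\rm h)}_{0},\tilde{S}^{(\rm h)}_{0}\in\mathscr{K}_{2\epsilon}$. Integrating these ODEs along the characteristics and differentiating the Cauchy data by the chain rule, one controls $D\delta z^{(\rm n)}_{-}$ at a point by $\|D\xi_{-,0}\|_{0,\alpha}$ (furnished by Lemma \ref{lem:4.4}) times the $C^{1,\alpha}$ norm of the boundary data, plus an $O(\epsilon)$ feedback term that is absorbed for $\epsilon$ small. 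The Hölder seminorm $[D\delta z^{(\rm n)}_{-}]_{0,\alpha}$ is then handled exactly as in the proof of Proposition \ref{prop:4.3}: for two points $\hat{P}(\eta_{\hat{P}}),\hat{Q}(\eta_{\hat{Q}})$ one splits
\begin{equation*}
\big|D\delta z^{(\rm n)}_{-}(\hat{P})-D\delta z^{(\rm n)}_{-}(\hat{Q})\big|\le\big|D\delta z^{(\rm n)}_{-}(\hat{P}(\eta_{\hat{P}}))-D\delta z^{(\rm n)}_{-}(\hat{Q}(\eta_{\hat{P}}))\big|+\big|D\delta z^{(\rm n)}_{-}(\hat{Q}(\eta_{\hat{P}}))-D\delta z^{(\rm n)}_{-}(\hat{Q}(\eta_{\hat{Q}}))\big|,
\end{equation*}
bounding the first term (two points on characteristics from the wall, at the same $\eta$) using Lemma \ref{lem:4.3}(i)--(ii) and Lemma \ref{lem:4.4}, and the second term (two points on one characteristic) using Lemma \ref{lem:4.3}(iii); a Gronwall argument in $\eta$ then closes the estimate. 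This is carried out successively in $\tilde{\Omega}^{(\rm h)}_{III}$ and then across the reflected characteristic $l^1_{+}$ into $\tilde{\Omega}^{(\rm h)}_{IV}$ and $\tilde{\Omega}^{(\rm h)}_{V}$, the matching of $\delta z^{(\rm n)}_{-}$ and its derivatives across $l^1_{+}$ being automatic because $\delta z^{(\rm n)}_{-}$ is smooth along its own characteristics.

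The main obstacle will be the bookkeeping imposed by the reflections on the Lipschitz boundary: one must check that the $C^{1,\alpha}$ regularity of the incoming trace $\delta z^{(\rm n)}_{+}$ is genuinely available on the whole segment $\tilde{\Gamma}_{-}\cap\{0\le\xi\le\xi^{*}_{-,0}\}$, including up to the reflection corner where $l^0_{-}$ meets $\tilde{\Gamma}_{-}$, which relies on the compatibility conditions at the corners together with the estimates of the previous subsection, and that $\xi_{-,0}$ remains merely $C^{0,\alpha}$ up to those corners — there is no gain beyond $C^{1,\alpha}$ precisely because of this Hölder (and no better) regularity of the foot map. Once these geometric and compatibility points are settled, the analytic core is a transcription of the characteristic estimates already used for Proposition \ref{prop:4.3}, with $\arctan g'_{-}$ and the trace of $\delta z^{(\rm n)}_{+}$ in the roles of the Cauchy data.
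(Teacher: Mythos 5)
Your proposal follows essentially the same strategy the paper uses: rewrite the transport equation with $\eta$ as the evolution variable (paper's \eqref{eq:4.83}), take Cauchy data on $\tilde{\Gamma}_{-}$ by eliminating $\delta z^{(\rm n)}_{-}$ through the reflection condition $\delta z^{(\rm n)}_{-}+\delta z^{(\rm n)}_{+}=2\arctan g'_{-}$, feed in the $C^{1,\alpha}$ trace of $\delta z^{(\rm n)}_{+}$ from Remark~\ref{rem:4.2}, and then propagate $C^{1,\alpha}$ regularity along the $\lambda^{(\rm n-1)}_{+}$-characteristics emanating from the wall using the foot-map estimate of Lemma~\ref{lem:4.4}, the along-characteristic comparisons of Lemma~\ref{lem:4.3}, the two-term splitting of the Hölder difference, and a Gronwall absorption of the $O(\epsilon)$ feedback. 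This is precisely the decomposition into $\hat J_1$ (variation of the boundary trace through the foot map) and $\hat J_2$ (accumulated along-characteristic variation) that the paper carries out; the only cosmetic difference is that you describe propagating region by region across $l^1_+$, whereas the paper integrates the characteristic ODE over its full extent in $\tilde{\Omega}^{(\rm h)}_{III}\cup\tilde{\Omega}^{(\rm h)}_{IV}\cup\tilde{\Omega}^{(\rm h)}_{V}$ at once.
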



\begin{proof}
By $\eqref{eq:4.70}_{1}$, we have
\begin{eqnarray}\label{eq:4.83}
\begin{split}
\partial_{\eta}\delta z^{(\rm n)}_{-}+\frac{1}{{\lambda}^{(\rm n-1)}_{+}}\partial_{\xi}\delta z^{(\rm n)}_{-}=0.
\end{split}
\end{eqnarray}

{\color{black}For any point $(\xi, \eta)\in \tilde{\Omega}^{(\rm h)}_{III}\cup\tilde{\Omega}^{(\rm h)}_{IV} \cup\tilde{\Omega}^{(\rm h)}_{V}$, we can define a characteristic line
$\xi=\chi^{(\rm n)}_{+}(\eta,\xi_{-m^{(\rm h)},0})$ by \eqref{eq:4.71}-\eqref{eq:4.72} such that it intersects with the boundary $\tilde{\Gamma}_{-}$ at the point $(\xi_{-m^{(\rm h)},0},-m^{(\rm h)})$.
Then, 
by the boundary condition on $\tilde{\Gamma}_{-}\cap\{0\leq \xi\leq \xi^{*}_{-m^{(\rm h)},0}\}$,
we get
\begin{eqnarray}\label{eq:4.83a}
\begin{split}
\delta z^{(\rm n)}_{-}(\xi, \eta)=\delta z_{-}(\xi_{-m^{(\rm h)},0},-m^{(\rm h)})=2\arctan g'_{-}(\xi_{-m^{(\rm h)},0})-\delta z^{(\rm n)}_{+}(\xi_{-m^{(\rm h)},0},-m^{(\rm h)}).
\end{split}
\end{eqnarray}

{\color{black}Next, differentiating the equation $\eqref{eq:4.83}$ with respect to $\xi$ and $\eta$, and then integrating them along the characteristics $\xi=\chi^{(\rm n)}_{+}(\eta,\xi_{-m^{(\rm h)},0})$,} we obatin
\begin{align}\label{eq:4.83b}
\begin{split}
&\partial_{\xi} \delta z^{(\rm n)}_{-}(\xi,\eta)\\[5pt]
&=\partial_{\xi}\delta z^{(\rm n)}_{-}(\xi_{-m^{(\rm h)},0}, -m^{(\rm h)})
-\int^{\eta}_{-m^{(\rm h)}}\partial_{\xi}\Big(\frac{1}{\lambda^{(\rm n-1)}_{+}}\Big)
\partial_{\xi}\delta z^{(\rm n)}_{-}\big(\chi^{(\rm n)}_{+}(\tau,\xi_{-m^{(\rm h)},0}),\tau \big)d\tau\\[5pt]
&=\partial_{\xi} \xi_{-m^{(\rm h)},0}\partial_{\xi_{-m^{(\rm h)},0}}\delta z^{(\rm n)}_{-}(\xi_{-m^{(\rm h)},0}, -m^{(\rm h)})
-\int^{\eta}_{-m^{(\rm h)}}\partial_{\xi}\Big(\frac{1}{\lambda^{(\rm n-1)}_{+}}\Big)
\partial_{\xi}\delta z^{(\rm n)}_{-}\big(\chi^{(\rm n)}_{+}(\tau,\xi_{-m^{(\rm h)},0}),\tau \big)d\tau,
\end{split}
\end{align}
and
\begin{align}\label{eq:4.83c}
\begin{split}
&\partial_{\eta} \delta z^{(\rm n)}_{-}(\xi,\eta)\\[5pt]
&=\partial_{\eta}\delta z^{(\rm n)}_{-}(\xi_{-m^{(\rm h)},0}, -m^{(\rm h)})
-\int^{\eta}_{-m^{(\rm h)}}\partial_{\tau}\Big(\frac{1}{\lambda^{(\rm n-1)}_{+}}\Big)
\partial_{\xi}\delta z^{(\rm n)}_{-}\big(\chi^{(\rm n)}_{+}(\tau,\xi_{-m^{(\rm h)},0}),\tau \big)d\tau\\[5pt]
&=\partial_{\eta} \xi_{-m^{(\rm h)},0}\partial_{\xi_{-m^{(\rm h)},0}}\delta z^{(\rm n)}_{-}(\xi_{-m^{(\rm h)},0}, -m^{(\rm h)})
-\int^{\eta}_{-m^{(\rm h)}}\partial_{\tau}\Big(\frac{1}{\lambda^{(\rm n-1)}_{+}}\Big)
\partial_{\xi}\delta z^{(\rm n)}_{-}\big(\chi^{(\rm n)}_{+}(\tau,\xi_{-m^{(\rm h)},0}),\tau \big)d\tau.
\end{split}
\end{align}
}
By using the boundary condition $\eqref{eq:4.70}_2$ on $\tilde{\Gamma}_{-}\cap\{0\leq \xi\leq \xi^{*}_{-m^{(\rm h)},0}\}$, we further have for
$\partial_{\xi_{-m^{(\rm h)},0}}\delta z^{(\rm n)}_{-}(\xi_{-m^{(\rm h)},0}, -m^{(\rm h)})$ that
\begin{align}\label{eq:4.83d}
\begin{split}
\partial_{\xi_{-m^{(\rm h)},0}}\delta z^{(\rm n)}_{-}(\xi_{-m^{(\rm h)},0}, -m^{(\rm h)})
=\frac{2g''_{-}}{1+(g'_{-})^{2}}-\partial_{\xi_{-m^{(\rm h)},0}}\delta z^{(\rm n)}_{+}(\xi_{-m^{(\rm h)},0}, -m^{(\rm h)}).
\end{split}
\end{align}

Then, following the same argument as in \cite{hkwx} for the proof of Proposition \ref{prop:4.2} and by Lemma \ref{lem:4.4},
there exist   constants $\mathcal{C}'_{\rm h,8}>\max\{\mathcal{C}_{\rm h,3}, \mathcal{C}_{\rm h,4} \}$, $\epsilon'_{\rm h,8}<\min\{\epsilon_{\rm h,3}, \epsilon_{\rm h,4}\}$ and
$\mathcal{C}>0$ depending  only on
$\tilde{\underline{U}}$ and $L$,  such that for {$\tilde{\epsilon}<\epsilon_{I}\in (\mathcal{C}'_{\rm h,8}\tilde{\epsilon}, \epsilon'_{\rm h, 8})$ with $\tilde{\epsilon}>0$ sufficiently small,} we have
\begin{eqnarray}\label{eq:4.84}
\begin{split}
&\big\|\delta z^{(\rm n)}_{-}\big\|_{1,0;
\tilde{\Omega}^{(\rm h)}_{III}
\cup \tilde{\Omega}^{(\rm h)}_{IV}\cup\tilde{\Omega}^{(\rm h)}_{V}}
\leq \mathcal{C}\Big(\big\|\delta z^{(\rm n)}_{+}\big\|_{1,0; \tilde{\Gamma}_{-}\cap\{0\leq \xi\leq\xi^{*}_{-m^{(\rm h)},0}\}}
+\big\|g'_{-}\big\|_{1,0; \tilde{\Gamma}_{-}\cap\{0\leq \xi\leq\xi^{*}_{-m^{(\rm h)},0}\}}\Big).
\end{split}
\end{eqnarray}

Therefore, it remains to obtain the estimate of $\big[D\delta z^{(\rm n)}_{-}\big]_{0,\alpha; \tilde{\Omega}^{(\rm h)}_{III}\cup\tilde{\Omega}^{(\rm h)}_{IV}\cup\tilde{\Omega}^{(\rm h)}_{V}}$. 
For the given two points $\hat{P}=(\xi_{\hat{P}},\eta_{\hat{P}}),\ \hat{Q}=(\xi_{\hat{Q}},\eta_{\hat{Q}}) \in \tilde{\Omega}^{(\rm h)}_{III}\cup \tilde{\Omega}^{(\rm h)}_{IV}\cup\tilde{\Omega}^{(\rm h)}_{V}$, satisfying \eqref{eq:4.73} with $\hat{P}_{+}(-m^{(\rm h)})=(\xi_{\hat{P}_{+}(-m^{(\rm h)})},-m^{(\rm h)})$ and $\hat{Q}_{+}(-m^{(\rm h)})=(\xi_{\hat{Q}_{+}(-m^{(\rm h)})},-m^{(\rm h)})$,
  substituting them into \eqref{eq:4.83b}-\eqref{eq:4.83c}, we obtain
\begin{align}\label{eq:4.90}
\begin{split}
\partial_{\xi} \delta z^{(\rm n)}_{-}(\hat{P})
&=\partial_{\xi}\xi_{-m^{(\rm h)},0}(\hat{P})\partial_{\xi_{-m^{(\rm h)},0}}\delta z^{(\rm n)}_{-}(\hat{P}_{+}(-m^{(\rm h)})) \\[5pt]
&\qquad\qquad\qquad-\int^{\eta_{\hat{P}}}_{-m^{(\rm h)}}\partial_{\xi}\Big(\frac{1}{\lambda^{(\rm n-1)}_{+}}\Big)\partial_{\xi}\delta z^{(\rm n)}_{-}\big(\hat{P}_{+}(\tau)\big)d\tau,\\[5pt]
\partial_{\eta} \delta z^{(\rm n)}_{-}(\hat{P})
&=\partial_{\eta}\xi_{-m^{(\rm h)},0}(\hat{P})\partial_{\xi_{-m^{(\rm h)},0}}\delta z^{(\rm n)}_{-}(\hat{P}_{+}(-m^{(\rm h)})) \\[5pt]
&\qquad\qquad\qquad-\int^{\eta_{\hat{P}}}_{-m^{(\rm h)}}\partial_{\tau}\Big(\frac{1}{\lambda^{(\rm n-1)}_{+}}\Big)
\partial_{\xi}\delta z^{(\rm n)}_{-}\big(\hat{P}_{+}(\tau)\big)d\tau,
\end{split}
\end{align}
and
\begin{align}\label{eq:4.91}
\begin{split}
\partial_{\xi} \delta z^{(\rm n)}_{-}(\hat{Q})
&=\partial_{\xi}\xi_{-m^{(\rm h)},0}(\hat{Q})\partial_{\xi_{-m^{(\rm h)},0}}\delta z^{(\rm n)}_{-}(\hat{Q}_{+}(-m^{(\rm h)}))\\[5pt]
&\qquad\qquad\qquad-\int^{\eta_{\hat{Q}}}_{-m^{(\rm h)}}\partial_{\xi}\Big(\frac{1}{\lambda^{(\rm n-1)}_{+}}\Big)
\partial_{\xi}\delta z^{(\rm n)}_{-}\big(\hat{Q}_{+}(\tau)\big)d\tau,\\[5pt]
\partial_{\eta} \delta z^{(\rm n)}_{-}(\hat{Q})
&=\partial_{\eta}\xi_{-m^{(\rm h)},0}(\hat{Q})\partial_{\xi_{-m^{(\rm h)},0}}\delta z^{(\rm n)}_{-}(\hat{Q}_{+}(-m^{(\rm h)})) \\[5pt]
&\qquad\qquad\qquad-\int^{\eta_{\hat{Q}}}_{-m^{(\rm h)}}\partial_{\tau}\Big(\frac{1}{\lambda^{(\rm n-1)}_{+}}\Big)
\partial_{\xi}\delta z^{(\rm n)}_{-}\big(\hat{Q}_{+}(\tau)\big)d\tau.
\end{split}
\end{align}

Similarly to the arguments in 
proving the Proposition \ref{prop:4.3}, we first 
derive the estimate for
$d^{-\alpha}\big(\hat{P},\hat{Q}\big)\big|D\delta z^{(\rm n)}_{-}(\hat{P})
-D\delta z^{(\rm n)}_{-}(\hat{Q})\big|$.
To this end, we assume that $\eta_{\hat{P}}<\eta_{\hat{Q}}$
and let $\hat{Q}_{+}(\eta_{\hat{P}})=(\chi^{(\rm n)}_{+}(\eta_{\hat{P}}, \xi_{\hat{Q}_{+}(-m^{(\rm h)})}),\eta_{\hat{P}})$, and the other cases can be treated  in the same way.
By \eqref{eq:4.90} and \eqref{eq:4.83c}--\eqref{eq:4.83d}, we have
\begin{align}\label{eq:4.92}
\begin{split}
& d^{-\alpha}(\hat{P},\hat{Q}_{+}(\eta_{\hat{P}}))\Big|D\delta z^{(\rm n)}_{-}(\hat{P})-D\delta z^{(\rm n)}_{-}(\hat{Q}_{+}(\eta_{\hat{P}}))\Big|\\[5pt]
\leq &d^{-\alpha}(\hat{P},\hat{Q}_{+}(\eta_{\hat{P}}))
\Big|D\xi_{-m^{(\rm h)},0}(\hat{P})\partial_{\xi_{-m^{(\rm h)},0}}\delta z^{(\rm n)}_{-}(\hat{P}_{+}(-m^{(\rm h)}))\\[5pt]
&\qquad\qquad\qquad\quad -D\xi_{-m^{(\rm h)},0}(\hat{Q}_{+}(\eta_{\hat{P}}))\partial_{\xi_{-m^{(\rm h)},0}}\delta z^{(\rm n)}_{-}(\hat{Q}_{+}(-m^{(\rm h)}))\Big|\\[5pt]
&   +d^{-\alpha}(\hat{P},\hat{Q}_{+}(\eta_{\hat{P}}))\int^{\eta_{\hat{P}}}_{-m^{(\rm h)}}\bigg|\Big(D\big(\frac{1}{\lambda^{(\rm n-1)}_{+}}\big)
\partial_{\xi}\delta z^{(\rm n)}_{-}\Big)(\hat{P}_{+}(\tau))-\Big(D\big(\frac{1}{\lambda^{(\rm n-1)}_{+}}\big)
\partial_{\xi}\delta z^{(\rm n)}_{-}\Big)(\hat{Q}_{+}(\tau))\bigg|d\tau\\[5pt]
&=:\hat{J}_{1}+\hat{J}_{2}.
\end{split}
\end{align}

For $\hat{J}_{1}$, by Lemma \ref{lem:4.3} and Lemma \ref{lem:4.4}, one has
\begin{eqnarray}\label{eq:4.92a}
\begin{split}
\hat{J}_{1} & \leq d^{-\alpha}\big(\hat{P},\hat{Q}_{+}(\eta_{\hat{P}})\big)
\Big|D\xi_{-m^{(\rm h)}, 0}(\hat{P})-D\xi_{-m^{(\rm h)}, 0}(\hat{Q}_{+}(\eta_{\hat{P}}))\Big|\big|\partial_{\xi_{-m^{(\rm h)}, 0}}\delta z^{(\rm n)}_{-}(\hat{P}_{+}(-m^{(\rm h)}))\big|\\[5pt]
&\quad \ \ + \mathcal{C}\big|D\xi_{-m^{(\rm h)}, 0}(\hat{Q}_{+}(\eta_{\hat{P}}))\big|d^{-\alpha}\big(\hat{P}(-m^{(\rm h)}),\hat{Q}(-m^{(\rm h)})\big)\\[5pt]
&\qquad\qquad \ \ \times\Big|\partial_{\xi_{-m^{(\rm h)}, 0}}\delta z^{(\rm n)}_{-}(\hat{P}_{+}(-m^{(\rm h)}))-\partial_{\xi_{-m^{(\rm h)}, 0}}\delta z^{(\rm n)}_{-}(\hat{Q}_{+}(-m^{(\rm h)}))\Big|
\\[5pt]
& \leq \mathcal{C}\|D\xi_{-m^{(\rm h)}, 0}\|_{0,\alpha;\tilde{\Gamma}_{-}\cap\{0\leq \xi\leq\xi^{*}_{-m^{(\rm h)},0}\}}
\big\|\partial_{\xi}\delta z^{(\rm n)}_{-}\big\|_{0,0;\tilde{\Gamma}_{-}\cap\{0\leq \xi\leq\xi^{*}_{-m^{(\rm h)},0}\}}\\[5pt]
&\quad \ \ + \mathcal{C}\big\|D\xi_{-m^{(\rm h)}, 0}\big\|_{0,0;\tilde{\Gamma}_{-}\cap\{0\leq \xi\leq\xi^{*}_{-m^{(\rm h)},1}\}}d^{-\alpha}\big(\hat{P}(-m^{(\rm h)}),\hat{Q}(-m^{(\rm h)})\big)\\[5pt]
&\qquad\qquad \ \ \times\Big|\partial_{\xi_{-m^{(\rm h)}, 0}}\delta z^{(\rm n)}_{-}(\hat{P}_{+}(-m^{(\rm h)}))-\partial_{\xi_{-m^{(\rm h)}, 0}}\delta z^{(\rm n)}_{-}(\hat{Q}_{+}(-m^{(\rm h)}))\Big|
\\[5pt]
&\leq \mathcal{C}\big\|\partial_{\xi}\delta z^{(\rm n)}_{-}\big\|_{0,0;\tilde{\Gamma}_{-}\cap\{0\leq \xi\leq\xi^{*}_{-m^{(\rm h)},0}\}}+\mathcal{C}d^{-\alpha}\big(\hat{P}(-m^{(\rm h)}),\hat{Q}(-m^{(\rm h)})\big)\\[5pt]
&\qquad\qquad\qquad\qquad \times \Big|\partial_{\xi_{-m^{(\rm h)}, 0}}\delta z^{(\rm n)}_{-}(\hat{P}_{+}(-m^{(\rm h)}))-\partial_{\xi_{-m^{(\rm h)}, 0}}\delta z^{(\rm n)}_{-}(\hat{Q}_{+}(-m^{(\rm h)}))\Big|,
\end{split}
\end{eqnarray}
where 
the constant $\mathcal{C}=\mathcal{C}(\tilde{\underline{U}}, L, \alpha)>0$ is independent of $\epsilon_{I}$ and $\rm n$.
%
By \eqref{eq:4.83d}, we know that
\begin{align*}
\begin{split}
\big\|\partial_{\xi}\delta z^{(\rm n)}_{-}\big\|_{0,0;\tilde{\Gamma}_{-}\cap\{0\leq \xi\leq\xi^{*}_{-m^{(\rm h)},0}\}}\leq
\big\|g''_{-}\big\|_{0,0;\tilde{\Gamma}_{-}\cap\{0\leq \xi\leq\xi^{*}_{-m^{(\rm h)},0}\}}
+\big\|\partial_{\xi}\delta z^{(\rm n)}_{+}\big\|_{0,0;\tilde{\Gamma}_{-}\cap\{0\leq \xi\leq\xi^{*}_{-m^{(\rm h)},0}\}}.
\end{split}
\end{align*}
For the last term in \eqref{eq:4.92a}, by using  \eqref{eq:4.83d} again and choosing the constants $\mathcal{C}''_{\rm h,8}>0$, $\epsilon''_{\rm h,8}>0$ depending only on
$\tilde{\underline{U}}$ and $L$, such that for $\tilde{\epsilon}<\epsilon_{I}\in (\mathcal{C}''_{\rm h,8}\tilde{\epsilon}, \epsilon''_{\rm h,8})$ with {$\tilde{\epsilon}>0$ be sufficiently small,}
we have
\begin{align*}
&d^{-\alpha}\big(\hat{P}(-m^{(\rm h)}),\hat{Q}(-m^{(\rm h)})\big)
\Big|\partial_{\xi_{-m^{(\rm h)}, 0}}\delta z^{(\rm n)}_{-}(\hat{P}_{+}(-m^{(\rm h)}))-\partial_{\xi_{-m^{(\rm h)}, 0}}\delta z^{(\rm n)}_{-}(\hat{Q}_{+}(-m^{(\rm h)}))\Big|\\[5pt]
\leq&d^{-\alpha}\big(\hat{P}(-m^{(\rm h)}),\hat{Q}(-m^{(\rm h)})\big)\Bigg|\frac{g''_{-}(\xi_{\hat{P}_{+}(-m^{(\rm h)})})}{1+\big(g'_{-}(\xi_{\hat{P}_{+}(-m^{(\rm h)})})\big)^{2}}
-\frac{g''_{-}(\xi_{\hat{Q}_{+}(-m^{(\rm h)})})}{1+\big(g'_{-}(\xi_{\hat{Q}_{+}(-m^{(\rm h)})})\big)^{2}}\Bigg|\\[5pt]
&+d^{-\alpha}\big(\hat{P}(-m^{(\rm h)}),\hat{Q}(-m^{(\rm h)})\big)\Big|\partial_{\xi_{-m^{(\rm h)}, 0}}\delta z^{(\rm n)}_{+}(\hat{P}_{+}(-m^{(\rm h)}))-\partial_{\xi_{-m^{(\rm h)}, 0}}\delta z^{(\rm n)}_{+}(\hat{Q}_{+}(-m^{(\rm h)}))\Big|\\[5pt]
\leq& \mathcal{C}\big(1+(g'_{-})^{2}\big) d^{-\alpha}\big(\hat{P}(-m^{(\rm h)}),\hat{Q}(-m^{(\rm h)})\big)
\Big|g''_{-}(\xi_{\hat{P}_{+}(-m^{(\rm h)})})-g''_{-}(\xi_{\hat{Q}_{+}(-m^{(\rm h)})})\Big|\\[5pt]
&+\mathcal{C}\big|g''_{-}\big| d^{-\alpha}\big(\hat{P}(-m^{(\rm h)}),\hat{Q}(-m^{(\rm h)})\big)\Big|\big(g'_{-}(\xi_{\hat{P}_{+}(-m^{(\rm h)})})\big)^{2}-\big(g'_{-}(\xi_{\hat{Q}_{+}(-m^{(\rm h)})})\big)^{2}\Big|\\[5pt]
&\qquad +\big\|\partial_{\xi}\delta z^{(\rm n)}_{+}\big\|_{0,\alpha;\tilde{\Gamma}_{-}\cap\{0\leq \xi\leq\xi^{*}_{-m^{(\rm h)},0}\}}\\[5pt]
\leq& \mathcal{C}\Big(\big\|\partial_{\xi}\delta z^{(\rm n)}_{+}\big\|_{0,\alpha;\tilde{\Gamma}_{-}\cap\{0\leq \xi\leq\xi^{*}_{-m^{(\rm h)},0}\}}
+\big\|g''_{-}\big\|_{0,\alpha; \tilde{\Gamma}_{-}\cap\{0\leq \xi\leq\xi^{*}_{-m^{(\rm h)},0}\}}\Big),
\end{align*}
where the constant $\mathcal{C}=\mathcal{C}(\tilde{\underline{U}},L,\alpha)>0$ is independent of $\epsilon_{I}$ and $\rm n$. 
Hence, substituting the above two estimates into \eqref{eq:4.92a}, we obtain the estimate for $\hat{J}_{1}$ that
\begin{eqnarray}\label{eq:4.93}
\begin{split}
\hat{J}_{1} &\leq \mathcal{C}\Big(\big\|\partial_{\xi}\delta z^{(\rm n)}_{+}\big\|_{0,\alpha;\tilde{\Gamma}_{-}\cap\{0\leq \xi\leq\xi^{*}_{-m^{(\rm h)},0}\}}
+\big\|g''_{-}\big\|_{0,\alpha; \tilde{\Gamma}_{-}\cap\{0\leq \xi\leq\xi^{*}_{-m^{(\rm h)},0}\}}\Big),
\end{split}
\end{eqnarray}
where the constant $\mathcal{C}>0$ depends only on $\tilde{\underline{U}}$, $L$ and $\alpha$.

Let us consider $\hat{J}_{2}$ now. By Lemma \ref{lem:4.3},  a direct computation yields
\begin{eqnarray*}
\begin{split}
\hat{J}_{2}&\leq d^{-\alpha}\big(\hat{P},\hat{Q}_{+}(\eta_{\hat{P}})\big)
\int^{\eta_{\hat{P}}}_{-m^{(\rm h)}}\bigg|D\Big(\frac{1}{\lambda^{(\rm n-1)}_{+}}\Big)(\hat{P}_{+}(\tau))\bigg|\Big|\partial_{\xi}\delta z^{(\rm n)}_{-}(\hat{P}_{+}(\tau))
-\partial_{\xi}\delta z^{(\rm n)}_{-}(\hat{Q}_{+}(\tau))\Big|d\tau\\[5pt]
& \ \ \ \ +d^{-\alpha}\big(\hat{P},\hat{Q}_{+}(\eta_{\hat{P}})\big)\int^{\eta_{\hat{P}}}_{-m^{(\rm h)}}
\bigg|D\Big(\frac{1}{\lambda^{(\rm n-1)}_{+}}\Big)(\hat{P}_{+}(\tau))-D\Big(\frac{1}{\lambda^{(\rm n-1)}_{+}}\Big)(\hat{Q}_{+}(\tau))\bigg|
\Big|\partial_{\xi}\delta z^{(\rm n)}_{-}(\hat{Q}_{+}(\tau))\Big|d\tau\\[5pt]
&=: \hat{J}_{21}+\hat{J}_{22}.
\end{split}
\end{eqnarray*}
For $\hat{J}_{21}$, by Lemma \ref{lem:4.3} and \eqref{eq:4.75}, there exists a constant $\mathcal{C}>0$ depending only on $\tilde{\underline{U}}$, $L$ and $\alpha$
such that
\begin{eqnarray*}
\begin{split}
\hat{J}_{21}&\leq \mathcal{C}\Big(\|\delta z^{(\rm n-1)}\|_{1,0; \tilde{\Omega}^{(\rm h)}}
+\big\|\big(\delta\tilde{B}^{(\rm h)'}_{0},\delta\tilde{S}^{(\rm h)'}_{0}\big)\big\|_{1,0;\tilde{\Gamma}^{(\rm h)}_{\rm in}}\Big)\\[5pt]
&\quad \ \ \   \times\int^{\eta_{\hat{P}}}_{-m^{(\rm h)}}d^{-\alpha}\big(\hat{P}_{+}(\tau),\hat{Q}_{+}(\tau)\big)
\Big|D\delta z^{(\rm n)}_{-}(\hat{P}_{+}(\tau))-D\delta z^{(\rm n)}_{-}(\hat{Q}_{+}(\tau))\Big|d\tau\\[5pt]
&\leq \mathcal{C}\big(2\epsilon_{I}+\tilde{\epsilon}\big)\int^{\eta_{\hat{P}}}_{-m^{(\rm h)}}d^{-\alpha}\big(\hat{P}_{+}(\tau),\hat{Q}_{+}(\tau)\big)
\Big|D\delta z^{(\rm n)}_{-}(\hat{P}_{+}(\tau))-D\delta z^{(\rm n)}_{-}(\hat{Q}_{+}(\tau))\Big|d\tau.
\end{split}
\end{eqnarray*}
For $\hat{J}_{22}$, by \eqref{eq:4.75b}\eqref{eq:4.84} and Lemma \ref{lem:4.3}, we have
\begin{eqnarray*}
\begin{split}
\hat{J}_{22}&\leq \mathcal{C}\big\|\big(\lambda^{(\rm n-1)}_{+}\big)^{-1}\big\|_{1,\alpha; \tilde{\Omega}^{(\rm h)}}
\int^{\eta_{\hat{P}}}_{-m^{(\rm h)}}\Big|D\delta z^{(\rm n)}_{-}(\hat{Q}_{+}(\tau))\Big|d\tau\\[5pt]
&\leq \mathcal{C}\Big( \big\|\delta z^{(\rm n)}_{+}\big\|_{1,0;\tilde{\Gamma}_{-}\cap\{0\leq\xi\leq\xi^{*}_{-m^{(\rm h)},0}\}}
+\big\|g'_{-}\big\|_{1,0;\tilde{\Gamma}_{-}\cap\{0\leq\xi\leq\xi^{*}_{-m^{(\rm h)},0}\}}\Big),
\end{split}
\end{eqnarray*}
where $\mathcal{C}>0$ depends only on $\tilde{\underline{U}}$, $L$ and $\alpha$. 
 By \eqref{eq:4.93} as well as the estimates for $\hat{J}_{21}$ and $\hat{J}_{22}$,
we have
\begin{eqnarray*}
\begin{split}
&d^{-\alpha}\big(\hat{P},\hat{Q}_{+}(\eta_{\hat{P}})\big)
\Big|D\delta z^{(\rm n)}_{-}(\hat{P})-D\delta z^{(\rm n)}_{-}(\hat{Q}_{+}(\eta_{\hat{P}}))\Big|\\[5pt]
&\qquad \leq \mathcal{C}\Big(\big\|\delta z^{(\rm n)}_{+}\big\|_{1,\alpha;\tilde{\Gamma}_{-}\cap\{0\leq\xi\leq\xi^{*}_{-,0}\}}
+\big\|g'_{+}\big\|_{1,\alpha;\tilde{\Gamma}_{-}\cap\{0\leq\xi\leq\xi^{*}_{-,0}\}}\Big)\\[5pt]
&\qquad\qquad\ \   +\mathcal{C}\big(2\epsilon_{I}+\tilde{\epsilon}\big)\int^{\eta_{\hat{P}}}_{-m^{(\rm h)}}d^{-\alpha}\big(\hat{P}(\tau),\hat{Q}(\tau)\big)\Big|D\delta z^{(\rm n)}_{-}(\hat{P}(\tau))
-D\delta z^{(\rm n)}_{-}(\hat{Q}(\tau))\Big|d\tau.
\end{split}
\end{eqnarray*}
Then it follows from the Gronwall inequality that
\begin{eqnarray*}
\begin{split}
&d^{-\alpha}\big(\hat{P},\hat{Q}_{+}(\eta_{\hat{P}})\big)
\Big|D\delta z^{(\rm n)}_{-}(\hat{P})-D\delta z^{(\rm n)}_{-}(\hat{Q}_{+}(\eta_{\hat{P}}))\Big|\\[5pt]
&\quad \quad \ \
\leq \mathcal{C}\Big( \big\|\delta z^{(\rm n)}_{+}\big\|_{1,\alpha;\tilde{\Gamma}_{-}\cap\{0\leq\xi\leq\xi^{*}_{-m^{(\rm h)},0}\}}
+\big\|g'_{+}\big\|_{1,\alpha;\tilde{\Gamma}_{-}\cap\{0\leq\xi\leq\xi^{*}_{-m^{(\rm h)},0}\}}\Big)e^{\mathcal{C}(2\epsilon_{I}+\tilde{\epsilon})m^{(\rm h)}}.
\end{split}
\end{eqnarray*}
Taking the constants $\mathcal{C}'''_{\rm h,8}>0$, $\epsilon'''_{\rm h,8}>0$ depending only on
$\tilde{\underline{U}}$, $L$ and $\alpha$, and {letting $\tilde{\epsilon}>0$ be sufficiently small such that for $\tilde{\epsilon}<\epsilon_{I}\in (\mathcal{C}'''_{\rm h,8}\tilde{\epsilon}, \epsilon'''_{\rm h,8})$,} one has
\begin{eqnarray}\label{eq:4.94}
\begin{split}
&\sup_{\hat{P},\hat{Q}_{+}(\eta_{\hat{P}})\in 
\tilde{\Omega}^{(\rm h)}_{III}\cup \tilde{\Omega}^{(\rm h)}_{IV}\cup\tilde{\Omega}^{(\rm h)}_{V}}d^{-\alpha}\big(\hat{P}_{+},\hat{Q}_{+}(\eta_{\hat{P}})\big)
\Big|D\delta z^{(\rm n)}_{-}(\hat{P})-D\delta z^{(\rm n)}_{-}(\hat{Q}_{+}(\eta_{\hat{P}}))\Big|\\[5pt]
&\quad \ \ \
\leq 2\mathcal{C}\Big( \big\|\delta z^{(\rm n)}_{+}\big\|_{1,\alpha;\tilde{\Gamma}_{-}\cap\{0\leq\xi\leq\xi^{*}_{-m^{(\rm h)},0}\}}
+\big\|g'_{+}\big\|_{1,\alpha;\tilde{\Gamma}_{-}\cap\{0\leq\xi\leq\xi^{*}_{-m^{(\rm h)},0}\}}\Big).
\end{split}
\end{eqnarray}

\par On the other hand, by \eqref{eq:4.83c}--\eqref{eq:4.83d} and \eqref{eq:4.91}, we can also deduce that
\begin{eqnarray*}\label{eq:4.95}
\begin{split}
&\ d^{-\alpha}\big(\hat{Q}_{+}(\eta_{\hat{P}}),\hat{Q}\big)
\Big|D\delta z^{(\rm n)}_{-}(\hat{Q}_{+}(\eta_{\hat{P}}))-D\delta z^{(\rm n)}_{-}(\hat{Q})\Big|\\[5pt]
\leq &d^{-\alpha}\big(\hat{Q}_{+}(\eta_{\hat{P}}),\hat{Q}\big)\big|D\xi_{-m^{(\rm h)},0}(\hat{Q}_{+}(\eta_{\hat{P}}))-D\xi_{-m^{(\rm h)},0}(\hat{Q})\big||\partial_{\xi_{-m^{(\rm h)},0}}D\delta z_{-}(Q_{+}(0))|\\[5pt]
&\quad+d^{-\alpha}\big(\hat{Q}_{+}(\eta_{\hat{P}}),\hat{Q}\big)
\int^{\eta_{\hat{Q}}}_{\eta_{\hat{P}}}\Big|D\Big(\frac{1}{\lambda^{(\rm n-1)}_{+}}\Big)
\partial_{\xi}\delta z^{(\rm n)}_{-}(\hat{Q}_{+}(\tau))\Big|d\tau,
\end{split}
\end{eqnarray*}
which  implies by Lemma \ref{lem:4.3} and \eqref{eq:4.84} that
\begin{eqnarray*}
\begin{split}
&\sup_{\hat{Q}_{+}(\eta_{\hat{P}}),\hat{Q}\in 
\tilde{\Omega}^{(\rm h)}_{III}\cup \tilde{\Omega}^{(\rm h)}_{IV}\cup\tilde{\Omega}^{(\rm h)}_{V}}d^{-\alpha}\big(\hat{Q}_{+}(\eta_{\hat{P}}),\hat{Q}\big)
\Big|D\delta z^{(\rm n)}_{-}(\hat{Q}_{+}(\eta_{\hat{P}}))-D\delta z^{(\rm n)}_{-}(\hat{Q})\Big|\\[5pt]
&\qquad\qquad \qquad\qquad \leq\mathcal{C}\Big(\|D\delta z^{(\rm n)}_{+}\|_{0,\alpha;\tilde{\Gamma}_{-}\cap\{0\leq\xi\leq\xi^{*}_{-m^{(\rm h)},0}\}}
+\big\|g''_{+}\big\|_{0,\alpha;\tilde{\Gamma}_{-}\cap\{0\leq\xi\leq\xi^{*}_{-m^{(\rm h)},0}\}}\Big),
\end{split}
\end{eqnarray*}
provided that {$\tilde{\epsilon}<\epsilon_{I}\in (\mathcal{C}'_{\rm h,8}\tilde{\epsilon}, \epsilon'_{\rm h,8})$ with $\tilde{\epsilon}>0$ be sufficiently small.}
Here, the constant $\mathcal{C}=\mathcal{C}(\tilde{\underline{U}},L, \alpha)>0$ is independent of $\epsilon_{I}$ and $\rm n$.
Combining \eqref{eq:4.93} and \eqref{eq:4.94} together, we get
\begin{eqnarray}\label{eq:4.96}
\begin{split}
&\sup_{\hat{P},\hat{Q}\in 
\tilde{\Omega}^{(\rm h)}_{III}\cup \tilde{\Omega}^{(\rm h)}_{IV}\cup\tilde{\Omega}^{(\rm h)}_{V}}d^{-\alpha}\big(\hat{P},\hat{Q}\big)
\Big|D\delta z^{(\rm n)}_{-}(\hat{P})-D\delta z^{(\rm n)}_{-}(\hat{Q})\Big|\\[5pt]
& \ \ \ \leq \sup_{\hat{P},\hat{Q}_{+}(\eta_{\hat{P}})\in 
\tilde{\Omega}^{(\rm h)}_{III}\cup \tilde{\Omega}^{(\rm h)}_{IV}\cup\tilde{\Omega}^{(\rm h)}_{V}}d^{-\alpha}\big(\hat{P},\hat{Q}_{+}(\eta_{\hat{P}})\big)
\Big|D\delta z^{(\rm n)}_{-}(\hat{P})-D\delta z^{(\rm n)}_{-}(\hat{Q}_{+}(\eta_{\hat{P}}))\Big|\\[5pt]
&\quad  \ \ \ \ +\sup_{\hat{Q}_{+}(\eta_{\hat{P}}),\hat{Q}\in 
\tilde{\Omega}^{(\rm h)}_{III}\cup \tilde{\Omega}^{(\rm h)}_{IV}\cup\tilde{\Omega}^{(\rm h)}_{V}}d^{-\alpha}\big(\hat{Q}_{+}(\eta_{\hat{P}}),\hat{Q}\big)
\Big|D\delta z^{(\rm n)}_{-}(\hat{Q}_{+}(\eta_{\hat{P}}))-D\delta z^{(\rm n)}_{-}(\hat{Q})\Big|\\[5pt]
& \ \ \ \leq\mathcal{C}\Big(\big\|\delta z^{(\rm n)}_{+}\big\|_{1,\alpha;\tilde{\Gamma}_{-}\cap\{0\leq\xi\leq\xi^{*}_{-m^{(\rm h)},0}\}}
+\big\|g'_{+}\big\|_{1,\alpha;\tilde{\Gamma}_{-}\cap\{0\leq\xi\leq\xi^{*}_{-m^{(\rm h)},0}\}}\Big).
\end{split}
\end{eqnarray}

Finally, by \eqref{eq:4.84} and \eqref{eq:4.96}, 
we can choose constants $\tilde{C}^{*}_{44}>0$, $\mathcal{C}_{\rm h,8}=\max\{\mathcal{C}'_{\rm h,8}, \mathcal{C}''_{\rm h,8},\mathcal{C}'''_{\rm h,8}\}$
 and $\epsilon_{\rm h,8}=\min\{\epsilon'_{\rm h,8}, \epsilon''_{\rm h,8}, \epsilon'''_{\rm h,8}\}$ depending only on
$\tilde{\underline{U}}$ and $L$ such that for {$\tilde{\epsilon}<\epsilon_{I}\in (\mathcal{C}_{\rm h,8}\tilde{\epsilon}, \epsilon_{\rm h,8})$ with {$\tilde{\epsilon}>0$ be sufficiently small},
the inequality \eqref{eq:4.81} holds.} 
\end{proof}

\subsubsection{Estimates of the solutions to the problem $(\widetilde{\mathbf{FP}})_{\rm n}$ in the supersonic region $\tilde{\Omega}^{(\rm h)}_{II}\cup\tilde{\Omega}^{(\rm h)}_{IV}\cup\tilde{\Omega}^{(\rm h)}_{VI}$}
Based on the results in Sections 4.1-- 4.2.3, we will consider 
the 
problem $(\widetilde{\mathbf{FP}})_{\rm n}$ in $\tilde{\Omega}^{(\rm h)}_{II}\cup\tilde{\Omega}^{(\rm h)}_{IV}\cup\tilde{\Omega}^{(\rm h)}_{VI}$,
which involves the transonic contact discontinuity $\tilde{\Gamma}_{\rm cd}$ as well as the refection on it.
Problem
$(\widetilde{\mathbf{FP}})_{\rm n}$ in this region is
\begin{eqnarray}\label{eq:4.97}
\left\{
\begin{array}{llll}
\partial_{\xi}\delta z^{(\rm n)}_{+}+ \lambda^{(\rm n-1)}_{-}\partial_{\eta}\delta z^{(\rm n)}_{+}=0,
&\  \mbox{in} \ \ 
\tilde{\Omega}^{(\rm h)}_{II}\cup\tilde{\Omega}^{(\rm h)}_{IV}
\cup\tilde{\Omega}^{(\rm h)}_{VI},  \\[5pt]
\delta z^{(\rm n)}_{+}=\delta z^{(\rm n)}_{-}-2\beta^{(\rm n-1)}_{\rm cd,1}\partial_{\xi}\delta\varphi^{(\rm n)}
-2\beta^{(\rm n-1)}_{\rm cd, 2}\partial_{\eta}\delta\varphi^{(\rm n)}-2c_{\rm cd}(\xi),
&\  \mbox{on} \ \ \tilde{\Gamma}_{\rm cd}\cap\{0\leq \xi \leq \xi^{*}_{\rm cd,0}\}.
\end{array}
\right.
\end{eqnarray}
%

\par 
Suppose that the characteristics $\xi=\chi^{(\rm n)}_{-}(\eta, {\color{black}\xi^{*}_{\rm cd, 0}})$ issuing from the point {\color{black}$(\xi^{*}_{\rm cd,0}, 0)$} on $\tilde{ \Gamma}_{\rm cd}$ intersects with the lower nozzle wall $\tilde{\Gamma}_{-}$ at the point $(\xi^{*}_{-,1}, -m^{(\rm h)})$. Then, it satisfies that
\begin{eqnarray}\label{eq:4.104}
\begin{split}
&\int^{-m^{(\rm h)}}_{0}\frac{d\tau}{\lambda^{(\rm n-1)}_{-}(\chi^{(\rm n)}_{-}(\tau, \xi^{*}_{\rm cd,0}),\tau)}=\xi^{*}_{-m^{(\rm h)},1}.
\end{split}
\end{eqnarray}

We first have the following proposition for $\delta z^{(\rm n)}_{+}$ in $\tilde{\Omega}^{(\rm h)}_{II}\cup\tilde{\Omega}^{(\rm h)}_{IV}\cup\tilde{\Omega}^{(\rm h)}_{VI}$.
\begin{proposition}\label{prop:4.5}
For any given $\alpha\in(0,1)$ and for {\color{black} $\tilde{\epsilon}>0$ sufficiently small}, {\color{black}there exist constants 
$\tilde{C}^{*}_{46}>0$, $\mathcal{C}_{\rm h,9}>0$ and $\epsilon_{\rm h,9}>0$ depending only on $\underline{\tilde{U}}$, $L$ and $\alpha$,
such that for $\tilde{\epsilon}<\epsilon_{I}\in (\mathcal{C}_{\rm h,9}\tilde{\epsilon},\epsilon_{\rm h,9})$, if $(\delta\varphi^{(\rm n-1)}, \delta z^{(\rm n-1)})\in \mathscr{K}_{2\epsilon_{I}}$},
  the solution $z^{(\rm n)}_{+}$ of the problem \eqref{eq:4.97} satisfies
\begin{eqnarray}\label{eq:4.105}
\begin{split}
&\|\delta z^{(\rm n)}_{+}\|_{1,\alpha;
\tilde{\Omega}^{(\rm h)}_{II}\cup\tilde{\Omega}^{(\rm h)}_{IV}\cup\tilde{\Omega}^{(\rm h)}_{VI}}\\[5pt]
\leq& \tilde{C}^{*}_{46}\bigg(\|\delta \varphi^{(\rm n)}\|_{2,\alpha;\overline{\tilde{\Gamma}}_{\rm cd}\cap\{0\leq \xi\leq \xi^{*}_{\rm cd, 0}\}}
+\|\delta z^{(\rm n)}_{-}\|_{1,\alpha;\tilde{\Gamma}_{\rm cd}\cap\{0\leq \xi\leq \xi^{*}_{\rm cd, 0}\}}\\[5pt]
&\quad \ \ \
+\sum_{k=\rm e, \rm h}\big\|\delta\tilde{B}^{(\rm k)}_{0}\big\|_{1,\alpha;\tilde{\Gamma}^{(\rm k)}_{\rm in}}
+\sum_{k=\rm e, \rm h}\big\|\delta\tilde{S}^{(\rm k)}_{0}\big\|_{1,\alpha;\tilde{\Gamma}^{(\rm k)}_{\rm in}}\bigg),
\end{split}
\end{eqnarray}
where $\xi^{*}_{\rm cd, 0}$ is defined by \eqref{eq:4.54}.
\end{proposition}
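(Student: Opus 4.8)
The plan is to solve the linear transport equation $\eqref{eq:4.97}_1$ for $\delta z^{(\rm n)}_{+}$ by the method of characteristics, propagating the data prescribed on $\tilde{\Gamma}_{\rm cd}\cap\{0\le\xi\le\xi^{*}_{\rm cd,0}\}$ through the subregions $\tilde{\Omega}^{(\rm h)}_{II}$, $\tilde{\Omega}^{(\rm h)}_{IV}$ and $\tilde{\Omega}^{(\rm h)}_{VI}$ along the $\lambda^{(\rm n-1)}_{-}$-characteristics $\xi=\chi^{(\rm n)}_{-}(\eta,\xi_{\rm cd,0})$ defined by $\eqref{eq:4.98}$--$\eqref{eq:4.99}$, and then to estimate the result in the $C^{1,\alpha}$-norm in terms of the quantities on the right-hand side of $\eqref{eq:4.105}$. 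The boundary datum on $\tilde{\Gamma}_{\rm cd}$ is, by $\eqref{eq:4.97}_2$, a combination of $\delta z^{(\rm n)}_{-}|_{\tilde{\Gamma}_{\rm cd}}$ — already controlled in $C^{1,\alpha}$ by Remark \ref{rem:4.2} and Proposition \ref{prop:4.4} (here one uses that $\tilde{\Gamma}_{\rm cd}\cap\{0\le\xi\le\xi^{*}_{\rm cd,0}\}=\tilde{\Gamma}_{\rm cd}\cap\partial\tilde{\Omega}^{(\rm h)}_{II}$) — of $D\delta\varphi^{(\rm n)}|_{\tilde{\Gamma}_{\rm cd}}$, controlled in $C^{1,\alpha}$ up to the corner $\mathcal{O}$ by $\|\delta\varphi^{(\rm n)}\|_{2,\alpha;\tilde{\Gamma}_{\rm cd}\cup\{\mathcal{O}\}}$ via Remark \ref{rem:4.1}, and of the coefficients $\beta^{(\rm n-1)}_{\rm cd,1},\beta^{(\rm n-1)}_{\rm cd,2},c_{\rm cd}$ from $\eqref{eq:3.72}$--$\eqref{eq:3.74}$, whose $C^{1,\alpha}$-norms are bounded by $\mathcal{C}$ with the $O(\epsilon)$-small pieces coming from $\delta z^{(\rm n-1)}$, $\delta\tilde{B}^{(\rm k)}_{0}$, $\delta\tilde{S}^{(\rm k)}_{0}$.

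Step 1 (the $C^{1}$-estimate). Since the right-hand side of $\eqref{eq:4.97}_1$ vanishes and $\lambda^{(\rm n-1)}_{-}\neq0$ in the supersonic region, $\delta z^{(\rm n)}_{+}$ is constant along each characteristic, so $\delta z^{(\rm n)}_{+}(\xi,\eta)=\delta z^{(\rm n)}_{+}\big(\xi_{\rm cd,0}(\xi,\eta),0\big)$ and hence $D\delta z^{(\rm n)}_{+}=\big(\tfrac{d}{d\xi_{\rm cd,0}}\delta z^{(\rm n)}_{+}|_{\tilde{\Gamma}_{\rm cd}}\big)D\xi_{\rm cd,0}$. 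Using the $C^{0}$-bound for $D\xi_{\rm cd,0}$ (obtained as in \cite[Lemma 4.2]{hkwx}) together with the $\xi$-derivative of $\eqref{eq:4.97}_2$ along $\tilde{\Gamma}_{\rm cd}$ — which brings in $\partial^{2}_{\xi\xi}\delta\varphi^{(\rm n)}$ and $\partial^{2}_{\xi\eta}\delta\varphi^{(\rm n)}$ on $\tilde{\Gamma}_{\rm cd}$, controlled by Remark \ref{rem:4.1} — I would obtain, for $\epsilon$ sufficiently small, a bound for $\|\delta z^{(\rm n)}_{+}\|_{1,0;\tilde{\Omega}^{(\rm h)}_{II}\cup\tilde{\Omega}^{(\rm h)}_{IV}\cup\tilde{\Omega}^{(\rm h)}_{VI}}$ by $\mathcal{C}$ times the right-hand side of $\eqref{eq:4.105}$.

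Step 2 (the Hölder estimate and conclusion). For two points $\check{P}(\eta_{\check{P}}),\check{Q}(\eta_{\check{Q}})$ in the region I would split $d^{-\alpha}\big|D\delta z^{(\rm n)}_{+}(\check{P})-D\delta z^{(\rm n)}_{+}(\check{Q})\big|$ into a ``same-$\eta$-level'' part and a ``move-in-$\eta$'' part, exactly as in the proofs of Proposition \ref{prop:4.3} and Proposition \ref{prop:4.4}, using Lemma \ref{lem:4.7}(a)--(c) to compare distances along the $\lambda^{(\rm n-1)}_{-}$-characteristics and Lemma \ref{lem:4.6} to control $[D\xi_{\rm cd,0}]_{0,\alpha}$. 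Differentiating the characteristic-integral representation $\partial_{j}\delta z^{(\rm n)}_{+}(\check{k}(\eta))=\partial_{j}\delta z^{(\rm n)}_{+}(\check{k}(0))-\int^{\eta}_{0}\partial_{j}\big(\tfrac{1}{\lambda^{(\rm n-1)}_{-}}\big)\partial_{\xi}\delta z^{(\rm n)}_{+}(\check{k}(\tau))\,d\tau$ for $\check{k}\in\{\check{P},\check{Q}\}$ (the analogue of $\eqref{eq:4.90}$--$\eqref{eq:4.91}$), the boundary terms are absorbed by the $C^{1,\alpha}$-control of the data on $\tilde{\Gamma}_{\rm cd}$ and by $[D\xi_{\rm cd,0}]_{0,\alpha}$, while the integral terms produce a factor $\le 2\mathcal{C}\epsilon\int^{\eta}_{0}d^{-\alpha}(\check{P}(\tau),\check{Q}(\tau))\big|D\delta z^{(\rm n)}_{+}(\check{P}(\tau))-D\delta z^{(\rm n)}_{+}(\check{Q}(\tau))\big|\,d\tau$; Gronwall's inequality then closes the estimate once $\epsilon$ is small, and combining with Step 1 yields $\eqref{eq:4.105}$ with a constant depending only on $\underline{\tilde{U}}$, $L$ and $\alpha$.

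The main obstacle will be Step 2, and within it the behaviour near the corner $\mathcal{O}=(0,0)$: the datum $\eqref{eq:4.97}_2$ is built from $D\delta\varphi^{(\rm n)}$, which a priori only lies in the weighted space $C^{2,\alpha}_{(-1-\alpha,\tilde{\Sigma}^{(\rm e)}\backslash\{\mathcal{O}\})}(\tilde{\Omega}^{(\rm e)})$, so the argument relies crucially on the genuine (un-weighted) $C^{2,\alpha}$-regularity of $\delta\varphi^{(\rm n)}$ on $\tilde{\Gamma}_{\rm cd}\cup\{\mathcal{O}\}$ supplied by Remark \ref{rem:4.1} (i.e.\ by Step~3 of the proof of Proposition \ref{prop:4.1}); without it the Hölder seminorm of the boundary datum, hence of $\delta z^{(\rm n)}_{+}$, would degenerate at $\mathcal{O}$ and be transported into $\tilde{\Omega}^{(\rm h)}$. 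A secondary subtlety is the uniformity of all the constants in $\mathrm{n}$ and $\epsilon$, which holds because the $\epsilon$-dependence enters only through $\lambda^{(\rm n-1)}_{\pm}$, $\beta^{(\rm n-1)}_{\rm cd,j}$ and the perturbations of $(\tilde{B}^{(\rm h)}_{0},\tilde{S}^{(\rm h)}_{0})$, all absorbed in the Gronwall step and in Lemmas \ref{lem:4.6}--\ref{lem:4.7}.
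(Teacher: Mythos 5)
Your proposal is correct and follows essentially the same route as the paper's proof: a $C^{1}$-estimate by reading the boundary data on $\tilde{\Gamma}_{\rm cd}\cap\{0\le\xi\le\xi^{*}_{\rm cd,0}\}$ off $\eqref{eq:4.97}_2$ and propagating it along $\lambda^{(\rm n-1)}_{-}$-characteristics using Lemma~\ref{lem:4.6}, followed by the Hölder seminorm via the same two-term decomposition and the characteristic-integral representation (the analogue of \eqref{eq:4.112}--\eqref{eq:4.113}) closed by a Gronwall argument, exactly as the paper does in the proof of Proposition~\ref{prop:4.5} (mirroring Proposition~\ref{prop:4.4}). Your emphasis on the un-weighted $C^{2,\alpha}$-regularity of $\delta\varphi^{(\rm n)}$ at the corner $\mathcal{O}$ via Remark~\ref{rem:4.1} is precisely the ingredient the paper uses through the norm $\|\delta\varphi^{(\rm n)}\|_{2,\alpha;(\tilde{\Gamma}_{\rm cd}\cup\{\mathcal{O}\})\cap\{0\le\xi\le\xi^{*}_{\rm cd,0}\}}$.
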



\begin{proof}
We first rewrite the equation for $\delta z^{(\rm n)}_{+}$ as
\begin{eqnarray}\label{eq:4.107}
\begin{split}
\partial_{\eta}\delta z^{(\rm n)}_{+}+\frac{1}{\lambda^{(\rm n-1)}_{-}}\partial_{\xi}\delta z^{(\rm n)}_{+}=0,
\end{split}
\end{eqnarray}
{and still consider $\eta>0$, otherwise we can replace $\eta$ by $-\eta$.}

For any point $(\xi, \eta)\in \tilde{\Omega}^{(\rm h)}_{II}\cup\tilde{\Omega}^{(\rm h)}_{IV}\cup\tilde{\Omega}^{(\rm h)}_{VI}$, we can define a characteristic line
$\xi=\chi^{(\rm n)}_{-}(\eta,\xi_{\rm cd,0})$ by \eqref{eq:4.98} and \eqref{eq:4.99} such that it  intersects with the boundary $\tilde{\Gamma}_{\rm cd}\cap\{0\leq\xi\leq \xi^{*}_{\rm cd,0}\}$
at the point $(\xi_{\rm cd,0},0)$. Then, 
by the boundary condition $\eqref{eq:4.97}_{2}$ on $\tilde{\Gamma}_{\rm cd}\cap\{0\leq \xi\leq \xi^{*}_{\rm cd,0}\}$, we can derive that
\begin{eqnarray}\label{eq:4.107b}
\begin{split}
\delta z^{(\rm n)}_{+}(\xi, \eta)=\delta z_{+}(\xi_{\rm cd,0},0)&=\delta z^{(\rm n)}_{-}(\xi_{\rm cd,0},0)-2\beta^{(\rm n-1)}_{\rm cd,1}\partial_{\xi}\xi_{\rm cd,0}\partial_{\xi_{\rm cd,0}}\delta\varphi^{(\rm n)}(\xi_{\rm cd,0},0)\\[5pt]
&\qquad  \ \
-2\beta^{(\rm n-1)}_{\rm cd, 2}\partial_{\eta}\xi_{\rm cd,0}\partial_{\xi_{\rm cd,0}}\delta\varphi^{(\rm n)}(\xi_{\rm cd,0},0)-2c_{\rm cd}(\xi_{\rm cd,0}).
\end{split}
\end{eqnarray}
Thus, by Lemma \ref{lem:4.6}, there exist positive constants $\mathcal{C}'_{\rm h,9}>\max\{\mathcal{C}_{\rm h,5}, \mathcal{C}_{\rm h,6}\}$ and $0<\epsilon'_{\rm h,9}<\min\{\epsilon_{\rm h,5}, \epsilon_{\rm h,6}\}$ depending only on $\tilde{\underline{U}}$ and $L$,  such that {for $\tilde{\epsilon}<\epsilon_{I}\in (\mathcal{C}'_{\rm h,9}\tilde{\epsilon}, \epsilon'_{\rm h,9})$ with $\tilde{\epsilon}>0$ sufficiently small}, it holds that
{\color{black}
\begin{align}\label{eq:4.107b2}
\begin{split}
&\|\delta z^{(\rm n)}_{+}\|_{0,0;\tilde{\Gamma}_{\rm cd}\cap\{0\leq \xi\leq \xi^{*}_{\rm cd,0}\}}\\
&\leq \|\delta z^{(\rm n)}_{-}\|_{0,0;\tilde{\Gamma}_{\rm cd}\cap\{0\leq \xi\leq \xi^{*}_{\rm cd,0}\}}
+2\|\beta^{(\rm n-1)}_{\rm cd,1}\partial_{\xi}\xi_{\rm cd,0}\partial_{\xi_{\rm cd,0}}\delta\varphi^{(\rm n)}\|_{0,0;\overline{\tilde{\Gamma}}_{\rm cd}\cap\{0\leq \xi\leq \xi^{*}_{\rm cd,0}\}}\\[5pt]
&\quad
+2\|\beta^{(\rm n-1)}_{\rm cd, 2}\partial_{\eta}\xi_{\rm cd,0}\partial_{\xi_{\rm cd,0}}\delta\varphi^{(\rm n)}\|_{0,0;\overline{\tilde{\Gamma}}_{\rm cd}\cap\{0\leq \xi\leq \xi^{*}_{\rm cd,0}\}}+2\|c_{\rm cd}\|_{0,0;\tilde{\Gamma}_{\rm cd}\cap\{0\leq \xi\leq \xi^{*}_{\rm cd,0}\}}\\[5pt]
&\leq \mathcal{C}\Big(\|\delta z^{(\rm n)}_{-}\|_{0,0;\tilde{\Gamma}_{\rm cd}\cap\{0\leq \xi\leq \xi^{*}_{\rm cd,0}\}}
+\|D\delta\varphi^{(\rm n)}\|_{0,0;\overline{\tilde{\Gamma}}_{\rm cd}\cap\{0\leq \xi\leq \xi^{*}_{\rm cd,0}\}}\Big)\\[5pt]
&\quad +\mathcal{C}\Big(\sum_{k=\rm e, \rm h}\big\|\delta\tilde{B}^{(\rm k)}_{0}\big\|_{0,0;\tilde{\Gamma}^{(\rm k)}_{\rm in}}
+\sum_{k=\rm e, \rm h}\big\|\delta\tilde{S}^{(\rm k)}_{0}\big\|_{0,0;\tilde{\Gamma}^{(\rm k)}_{\rm in}}\Big),
\end{split}
\end{align}
}
where $\mathcal{C}=\mathcal{C}(\tilde{\underline{U}},L)>0$.

Next, differentiating the equation $\eqref{eq:4.107}$ with respect to $\xi$ and $\eta$, and then integrating them along the characteristics $\xi=\chi^{(\rm n)}_{+}(\eta,\xi_{\rm cd,0})$, we have
{\color{black}
\begin{align}\label{eq:4.107c}
\begin{split}
\partial_{\xi} \delta z^{(\rm n)}_{+}(\xi,\eta)&=\partial_{\xi}\delta z^{(\rm n)}_{+}(\xi_{\rm cd,0},0)
-\int^{\eta}_{0}\partial_{\xi}\Big(\frac{1}{\lambda^{(\rm n-1)}_{-}}\Big)
\partial_{\xi}\delta z^{(\rm n)}_{+}\big(\chi^{(\rm n)}_{-}(\tau,\xi_{\rm cd,0}),\tau \big)d\tau\\[5pt]
&=\partial_{\xi}\xi_{\rm cd,0}\partial_{\xi_{\rm cd,0}}\delta z^{(\rm n)}_{+}(\xi_{\rm cd,0},0)
-\int^{\eta}_{0}\partial_{\xi}\Big(\frac{1}{\lambda^{(\rm n-1)}_{-}}\Big)
\partial_{\xi}\delta z^{(\rm n)}_{+}\big(\chi^{(\rm n)}_{-}(\tau,\xi_{\rm cd,0}),\tau \big)d\tau,
\end{split}
\end{align}
and
\begin{align}\label{eq:4.107d}
\begin{split}
\partial_{\eta} \delta z^{(\rm n)}_{+}(\xi,\eta)&=\partial_{\eta}\delta z^{(\rm n)}_{+}(\xi_{\rm cd,0},0)
-\int^{\eta}_{0}\partial_{\tau}\Big(\frac{1}{\lambda^{(\rm n-1)}_{-}}\Big)
\partial_{\xi}\delta z^{(\rm n)}_{+}\big(\chi^{(\rm n)}_{-}(\tau,\xi_{\rm cd,0}),\tau \big)d\tau\\[5pt]
&=\partial_{\eta}\xi_{\rm cd,0}\partial_{\xi_{\rm cd,0}}\delta z^{(\rm n)}_{+}(\xi_{\rm cd,0},0)
-\int^{\eta}_{0}\partial_{\tau}\Big(\frac{1}{\lambda^{(\rm n-1)}_{-}}\Big)
\partial_{\xi}\delta z^{(\rm n)}_{+}\big(\chi^{(\rm n)}_{-}(\tau,\xi_{\rm cd,0}),\tau \big)d\tau.
\end{split}
\end{align}
}
For the terms $\partial_{\xi_{\rm cd,0}}\delta z^{(\rm n)}_{+}(\xi_{\rm cd,0},0)$, 
we employ the boundary condition $\eqref{eq:4.97}_{2}$ on $\tilde{\Gamma}_{\rm cd}\cap\{0\leq \xi\leq \xi^{*}_{\rm cd,0}\}$ again to deduce that
{\color{black}
\begin{align}\label{eq:4.107e}
\begin{split}
&\quad \ \partial_{\xi_{\rm cd,0}}\delta z^{(\rm n)}_{+}(\xi_{\rm cd,0},0)\\[5pt]
&=
\partial_{\xi_{\rm cd,0}}\delta z^{(\rm n)}_{-}(\xi_{\rm cd,0},0)
-2\Big(\partial_{\xi}\xi_{\rm cd,0}\partial_{\xi_{\rm cd,0}}\beta^{(\rm n-1)}_{\rm cd,1}
+\partial_{\eta}\xi_{\rm cd,0}\partial_{\xi_{\rm cd,0}}\beta^{(\rm n-1)}_{\rm cd,2}\Big)
\partial_{\xi_{\rm cd,0}}\delta\varphi^{(\rm n)}(\xi_{\rm cd,0},0) \\[5pt]
&\quad -2\Big(\partial_{\xi}\xi_{\rm cd,0}\beta^{(\rm n-1)}_{\rm cd,1} +\partial_{\eta}\xi_{\rm cd,0}\beta^{(\rm n-1)}_{\rm cd,2}\Big)
\partial^{2}_{\xi_{\rm cd,0}\xi_{\rm cd,0}}\delta\varphi^{(\rm n)}(\xi_{\rm cd,0},0)-2c'_{\rm cd}(\xi_{\rm cd,0}).
\end{split}
\end{align}
}

Thus, by Lemma \ref{lem:4.6} and \eqref{eq:4.107e}, and by letting {$\tilde{\epsilon}>0$} sufficiently small, there exist positive constants $\mathcal{C}'_{I,8}$ and $\epsilon'_{I,8}$
depending only on $\tilde{\underline{U}}$ and $L$,  such that for {$\tilde{\epsilon}<\epsilon_{I}\in (\mathcal{C}'_{I,8}\tilde{\epsilon}, \epsilon'_{I,8})$,} it holds that
{\color{black}
\begin{align}\label{eq:4.107f}
\begin{split}
\|D\delta z^{(\rm n)}_{+}\|_{0,0;\tilde{\Gamma}_{\rm cd}\cap\{0\leq \xi\leq \xi^{*}_{\rm cd, 0}\}}
&\leq \|D\xi_{\rm cd,0}\|_{0,0; \tilde{\Omega}^{(\rm h)}_{II}\cup\tilde{\Omega}^{(\rm h)}_{IV}\cup\tilde{\Omega}^{(\rm h)}_{VI}}\|\partial_{\xi_{\rm cd,0}}\delta z^{(\rm n)}_{+}\|_{0,0;\tilde{\Gamma}_{\rm cd}\cap\{0\leq \xi\leq \xi^{*}_{\rm cd,0}\}}\\[5pt]
&\leq \mathcal{C}\Big(\|D\delta z^{(\rm n)}_{-}\|_{0,0;\tilde{\Gamma}_{\rm cd}\cap\{0\leq \xi\leq \xi^{*}_{\rm cd,0}\}}
+\|D\delta\varphi^{(\rm n)}\|_{1,0;\tilde{\Gamma}_{\rm cd}\cap\{0\leq \xi\leq \xi^{*}_{\rm cd,0}\}}\Big)\\[5pt]
&\leq \mathcal{C}\Big(\|D\delta z^{(\rm n)}_{-}\|_{0,0;\tilde{\Gamma}_{\rm cd}\cap\{0\leq \xi\leq \xi^{*}_{\rm cd,0}\}}
+\|D\delta\varphi^{(\rm n)}\|_{1,0;\tilde{\Gamma}_{\rm cd}\cap\{0\leq \xi\leq \xi^{*}_{\rm cd,0}\}}\Big)\\[5pt]
&\quad\ +\mathcal{C}\Big(\sum_{k=\rm e, \rm h}\big\|\delta\tilde{B}^{(\rm k)'}_{0}\big\|_{0,0;\tilde{\Gamma}^{(\rm k)}_{\rm in}}
+\sum_{k=\rm e, \rm h}\big\|\delta\tilde{S}^{(\rm k)'}_{0}\big\|_{0,0;\tilde{\Gamma}^{(\rm k)}_{\rm in}}\Big),
\end{split}
\end{align}
}
where $\mathcal{C}=\mathcal{C}(\tilde{\underline{U}},L)>0$.

By the estimates \eqref{eq:4.107b} and \eqref{eq:4.107f}, following the proof of Proposition \ref{prop:4.3} in \cite{hkwx},
we can choose positive constants $\mathcal{C}'_{I,8}$ and $\epsilon'_{I,8}$ depending only on $\tilde{\underline{U}}$,  such that
{ for $\tilde{\epsilon}<\epsilon_{I}\in (\mathcal{C}'_{I,8}\tilde{\epsilon}, \epsilon'_{I,8})$ with $\tilde{\epsilon}>0$ sufficiently small,}
it holds that
{\color{black}
\begin{eqnarray}\label{eq:4.108}
\begin{split}
\big\|\delta z^{(\rm n)}_{+}\big\|_{1,0; 
	\tilde{\Omega}^{(\rm h)}_{II}\cup\tilde{\Omega}^{(\rm h)}_{IV}\cup\tilde{\Omega}^{(\rm h)}_{VI} }
&\leq \mathcal{C}\Big(\|\delta z^{(\rm n)}_{+}\|_{1,0;\tilde{\Gamma}_{\rm cd}\cap\{0\leq \xi\leq \xi^{*}_{\rm cd,0}\}}
+\|D\delta z^{(\rm n)}_{+}\|_{1,0;\tilde{\Gamma}_{\rm cd}\cap\{0\leq \xi\leq \xi^{*}_{\rm cd,0}\}}\Big) \\[5pt]
&\leq \mathcal{C}\bigg(\|\delta \varphi^{(\rm n)}\|_{{2,0;\overline{\tilde{\Gamma}}_{\rm cd}\cap\{0\leq \xi\leq \xi^{*}_{\rm cd, 0}\}}}
+\|\delta z^{(\rm n)}_{-}\|_{1,0;\tilde{\Gamma}_{\rm cd}\cap\{0\leq \xi\leq \xi^{*}_{\rm cd, 0}\}}\\[5pt]
&\qquad +\sum_{k=\rm e, \rm h}\big\|\delta\tilde{B}^{(\rm k)}_{0}\big\|_{1,0;\tilde{\Gamma}^{(\rm k)}_{\rm in}}
+\sum_{k=\rm e, \rm h}\big\|\delta\tilde{S}^{(\rm k)}_{0}\big\|_{1,0;\tilde{\Gamma}^{(\rm k)}_{\rm in}}\bigg).
\end{split}
\end{eqnarray}
}
Then we only need to estimate $[D\delta z^{(\rm n)}_{+}]_{0,\alpha; \tilde{\Omega}^{(\rm h)}\cap(\tilde{\Omega}^{(\rm h)}_{II}\cup\tilde{\Omega}^{(\rm h)}_{IV}\cup\tilde{\Omega}^{(\rm h)}_{VI})}$.
As in 
the proof of Proposition \ref{prop:4.4}, we take two points $\check{P}=(\xi_{\check{P}},\eta_{\check{P}}),\ \check{Q}=(\xi_{\check{Q}},\eta_{\check{Q}}) \in 
\tilde{\Omega}^{(\rm h)}_{II}\cup \tilde{\Omega}^{(\rm h)}_{IV}\cup \tilde{\Omega}^{(\rm h)}_{VI}$, satisfying \eqref{eq:4.100} with $\check{P}_{-}(0)=(\xi_{\check{P}_{-}(0)},0)$
and $\check{Q}_{-}(0)=(\xi_{\check{Q}_{-}(0)},0)$.
Next, 
we substitute them into \eqref{eq:4.107c} and \eqref{eq:4.107d} 
to obtain
\begin{align}\label{eq:4.112}
\begin{split}
\partial_{\xi} \delta z^{(\rm n)}_{+}(\check{P})&=\partial_{\xi}\xi_{\rm cd,0}(\check{P})\partial_{\xi_{\rm cd,0}}\delta z^{(\rm n)}_{+}(\check{P}_{-}(0))
-\int^{\eta_{\check{P}}}_{0}\partial_{\xi}\Big(\frac{1}{\lambda^{(\rm n-1)}_{-}}\Big)
\partial_{\xi}\delta z^{(\rm n)}_{+}\big(\check{P}_{-}(\tau)\big)d\tau,\\[5pt]
\partial_{\eta} \delta z^{(\rm n)}_{+}(\check{P})
&=\partial_{\eta}\xi_{\rm cd,0}(\check{P}_{-}(0))\partial_{\xi_{\rm cd,0}}\delta z^{(\rm n)}_{+}(\check{P}_{-}(0))
-\int^{\eta_{\check{P}}}_{0}\partial_{\tau}\big(\frac{1}{\lambda^{(\rm n-1)}_{-}}\big)
\partial_{\xi}\delta z^{(\rm n)}_{+}(\check{P}_{-}(\tau))d\tau,
\end{split}
\end{align}
and
\begin{align}\label{eq:4.113}
\begin{split}
\partial_{\xi} \delta z^{(\rm n)}_{+}(\check{Q})&=\partial_{\xi}\xi_{\rm cd,0}(\check{Q})\partial_{\xi_{\rm cd,0}}\delta z^{(\rm n)}_{+}(\check{Q}_{-}(0))
-\int^{\eta_{\check{Q}}}_{0}\partial_{\xi}\Big(\frac{1}{\lambda^{(\rm n-1)}_{-}}\Big)
\partial_{\xi}\delta z^{(\rm n)}_{+}\big(\check{Q}_{-}(\tau)\big)d\tau,\\[5pt]
\partial_{\eta} \delta z^{(\rm n)}_{+}(\check{Q})
&=\partial_{\eta}\xi_{\rm cd,0}(\check{Q}_{-}(0))\partial_{\xi_{\rm cd,0}}\delta z^{(\rm n)}_{+}(\check{Q}_{-}(0))
-\int^{\eta_{\check{Q}}}_{0}\partial_{\tau}\big(\frac{1}{\lambda^{(\rm n-1)}_{-}}\big)
\partial_{\xi}\delta z^{(\rm n)}_{+}(\check{Q}_{-}(\tau))d\tau.
\end{split}
\end{align}
Without loss of   generality, we also assume  that $\eta_{\check{P}}<\eta_{\check{Q}}$
and let $\check{Q}_{-}(\eta_{\check{P}})=(\chi^{(\rm n)}_{-}(\eta_{\check{P}}, \xi_{\check{Q}_{-}(0)}),\eta_{\check{P}})$.
By \eqref{eq:4.112} and \eqref{eq:4.107f} on the contact discontinuity $\tilde{\Gamma}_{\rm cd}\cap\{0\leq \xi\leq \xi^{*}_{\rm cd,0}\}$, we have
\begin{eqnarray*}
\begin{split}
&d^{-\alpha}\big(\check{P},\check{Q}_{-}(\eta_{\check{P}})\big)
\Big|D\xi_{\rm cd,0}(\check{P}) \partial_{\xi_{\rm cd,0}}\delta z^{(\rm n)}_{+}(\check{P}_{-}(0))-D\xi_{\rm cd,0}(\check{Q}_{-}(\eta_{\check{P}}))
\partial_{\xi_{\rm cd,0}}\delta z^{(\rm n)}_{+}(\check{Q}_{-}(0))\Big|\\[5pt]
\leq& d^{-\alpha}\big(\check{P},\check{Q}_{-}(\eta_{\check{P}})\big)
\Big|D\xi_{\rm cd, 0}(\check{P})-D\xi_{\rm cd, 0}(\check{Q}_{-}(\eta_{\check{P}})\Big| \big|\partial_{\xi_{\rm cd,0}}\delta z^{(\rm n)}_{+}(\check{P}_{-}(0))\big|\\[5pt]
&\quad \ \ + \mathcal{C}\big|D\xi_{\rm cd, 0}(\check{Q}_{-}(0))\big|d^{-\alpha}\big(\check{P}_{-}(0),\check{Q}_{-}(0)\big)
\Big|\partial_{\xi_{\rm cd,0}}\delta z^{(\rm n)}_{+}(\check{P}_{-}(0))-\partial_{\xi_{\rm cd,0}}\delta z^{(\rm n)}_{+}(\check{Q}_{-}(0))\Big|\\[5pt]
\leq& \mathcal{C}\big[D\xi_{\rm cd,0}\big]_{0,\alpha;
\tilde{\Omega}^{(\rm h)}_{II}\cup\tilde{\Omega}^{(\rm h)}_{IV}\cup\tilde{\Omega}^{(\rm h)}_{VI}}
\big\|\partial_{\xi}\delta z^{(\rm n)}_{+}\big\|_{0,0;\tilde{\Gamma}_{\rm cd}\cap\{0\leq \xi\leq \xi^{*}_{\rm cd,0}\}}\\[5pt]
&\quad \ \ + \mathcal{C}\big\|D\xi_{\rm cd, 0}\big\|_{0,0;
\tilde{\Omega}^{(\rm h)}_{II}\cup\tilde{\Omega}^{(\rm h)}_{IV}\cup\tilde{\Omega}^{(\rm h)}_{VI}}
d^{-\alpha}\big(\check{P}_{-}(0),\check{Q}_{-}(0)\big)\Big|\partial_{\xi_{\rm cd,0}}\delta z^{(\rm n)}_{+}(\check{P}_{-}(0))-\partial_{\xi_{\rm cd,0}}\delta z^{(\rm n)}_{+}(\check{Q}_{-}(0))\Big|.
\end{split}
\end{eqnarray*}

For the last term, by \eqref{eq:4.107e}, we can choose positive constants $\mathcal{C}''_{\rm h,9}$ and $\epsilon''_{\rm h,9}$ depending only on $\tilde{\underline{U}}$, $L$ and $\alpha$,  such that
for $\tilde{\epsilon}<\epsilon_{I}\in (\mathcal{C}''_{\rm h,9}\tilde{\epsilon}, \epsilon''_{\rm h,9})$ with  $\tilde{\epsilon}>0$ sufficiently small, it holds that
\begin{eqnarray*}
\begin{split}
&d^{-\alpha}\big(\check{P}_{-}(0),\check{Q}_{-}(0)\big)\Big|\partial_{\xi_{\rm cd,0}}\delta z^{(\rm n)}_{+}(\check{P}_{-}(0))-\partial_{\xi_{\rm cd,0}}\delta z^{(\rm n)}_{+}(\check{Q}_{-}(0))\Big|\\[5pt]
\leq &\mathcal{C}\bigg( \big\|D^{2}\delta\varphi^{(\rm n)}\big\|_{{0,\alpha;\overline{\tilde{\Gamma}}_{\rm cd}
\cap\{0\leq \xi\leq \xi^{*}_{\rm cd,0}\}}}+\big\|D\delta z^{(\rm n)}_{-}\big\|_{0,\alpha;\tilde{\Gamma}_{\rm cd}
\cap\{0\leq \xi\leq \xi^{*}_{\rm cd,0}\}}\\[5pt]
&\qquad\ +\sum_{k=\rm e, \rm h}\big\|(\delta\tilde{B}^{(\rm k)}_{0})'\big\|_{0,\alpha;\tilde{\Gamma}^{(\rm k)}_{\rm in}}
+\sum_{k=\rm e, \rm h}\big\|(\delta\tilde{S}^{(\rm k)}_{0})'\big\|_{0,\alpha;\tilde{\Gamma}^{(\rm k)}_{\rm in}}\bigg),
\end{split}
\end{eqnarray*}
where the constant $\mathcal{C}>0$ 
depends only on $\tilde{\underline{U}}$, $L$ and $\alpha$. 

From the estimate \eqref{eq:4.108} and by Lemma \ref{lem:4.6}, we can obtain
\begin{eqnarray*}
\begin{split}
&d^{-\alpha}\big(\check{P},\check{Q}_{-}(\eta_{\check{P}})\big)
\Big|D\xi_{\rm cd,0}(\check{P}) \partial_{\xi_{\rm cd,0}}\delta z^{(\rm n)}_{+}(\check{P}_{-}(0))-D\xi_{\rm cd,0}(\check{Q}_{-}(\eta_{\check{P}}))
\partial_{\xi_{\rm cd,0}}\delta z^{(\rm n)}_{+}(\check{Q}_{-}(0))\Big|\\[5pt]
\leq& \mathcal{C}\bigg( \big\|\delta\varphi^{(\rm n)}\big\|_{{2,\alpha;\overline{\tilde{\Gamma}}_{\rm cd}
\cap\{0\leq \xi\leq \xi^{*}_{\rm cd,0}\}}}+\big\|\delta z^{(\rm n)}_{-}\big\|_{1,\alpha;\tilde{\Gamma}_{\rm cd}
\cap\{0\leq \xi\leq \xi^{*}_{\rm cd,0}\}}\\[5pt]
&\qquad +\sum_{k=\rm e, \rm h}\big\|\delta\tilde{B}^{(\rm k)}_{0}\big\|_{1,\alpha;\tilde{\Gamma}^{(\rm k)}_{\rm in}}
+\sum_{k=\rm e, \rm h}\big\|\delta\tilde{S}^{(\rm k)}_{0}\big\|_{1,\alpha;\tilde{\Gamma}^{(\rm k)}_{\rm in}}\bigg),
\end{split}
\end{eqnarray*}
where the positive constant $\mathcal{C}$ depends only on $\tilde{\underline{U}}$, $L$ and $\alpha$.

Similarly  to the arguments in the proof of Proposition \ref{prop:4.4}, we can also choose positive constants $\mathcal{C}'''_{\rm h,9}$ and $\epsilon'''_{\rm h,9}$
depending only on $\tilde{\underline{U}}$, $L$ and $\alpha$,  such that for {$\tilde{\epsilon}<\epsilon_{I}\in (\mathcal{C}'''_{\rm h,9}\tilde{\epsilon}, \epsilon'''_{\rm h,9})$ with $\tilde{\epsilon}>0$ sufficiently small,} the following
\begin{eqnarray*}
\begin{split}
&\sup_{\check{P},\check{Q}_{-}(\eta_{\check{P}})\in \tilde{\Omega}^{(\rm h)}_{II}\cup \tilde{\Omega}^{(\rm h)}_{IV}\cup \tilde{\Omega}^{(\rm h)}_{VI}}
d^{-\alpha}\big(\check{P},\check{Q}_{-}(\eta_{\check{P}})\big)
\Big|D\delta z^{(\rm n)}_{+}(\check{P})
-D\delta z^{(\rm n)}_{+}(\check{Q}_{-}(\eta_{\check{P}}))\Big|\\[5pt]
&\quad \ \ \   \leq \mathcal{C}\bigg( \big\|\delta\varphi^{(\rm n)}\big\|_{{2,\alpha;\overline{\tilde{\Gamma}}_{\rm cd}
\cap\{0\leq \xi\leq \xi^{*}_{\rm cd,0}\}}}+\big\|\delta z^{(\rm n)}_{-}\big\|_{1,\alpha;\tilde{\Gamma}_{\rm cd}
\cap\{0\leq \xi\leq \xi^{*}_{\rm cd,0}\}}\\[5pt]
&\qquad\quad +\sum_{k=\rm e, \rm h}\big\|\delta\tilde{B}^{(\rm k)}_{0}\big\|_{1,\alpha;\tilde{\Gamma}^{(\rm k)}_{\rm in}}
+\sum_{k=\rm e, \rm h}\big\|\delta\tilde{S}^{(\rm k)}_{0}\big\|_{1,\alpha;\tilde{\Gamma}^{(\rm k)}_{\rm in}}\bigg),
\end{split}
\end{eqnarray*}
holds by the Gronwall inequality.

\par On the other hand, by the equations \eqref{eq:4.107c}--\eqref{eq:4.107d} and \eqref{eq:4.113}, and the estimates \eqref{eq:4.75a}-\eqref{eq:4.75b} and \eqref{eq:4.108}, we can also deduce that
\begin{eqnarray*}
\begin{split}
&\sup_{\check{Q}_{-}(\eta_{\check{P}}),\check{Q}\in \tilde{\Omega}^{(\rm h)}_{II}\cup \tilde{\Omega}^{(\rm h)}_{IV}\cup \tilde{\Omega}^{(\rm h)}_{VI}}
d^{-\alpha}\big(\check{Q}_{-}(\eta_{\check{P}}),\check{Q}\big)
\Big|D\delta z^{(\rm n)}_{+}(\check{Q}_{-}(\eta_{\check{P}}))-D\delta z^{(\rm n)}_{+}(\check{Q})\Big|\\[5pt]
& \ \ \ \leq \sup_{\check{Q}_{-}(\eta_{\check{P}}),\check{Q}\in \tilde{\Omega}^{(\rm h)}_{II}\cup\tilde{\Omega}^{(\rm h)}_{IV}\cup\tilde{\Omega}^{(\rm h)}_{VI}}
d^{-\alpha}\big(\check{Q}_{-}(\eta_{\check{P}}),\check{Q}\big)
\Big|D\xi_{\rm cd,0}(\check{Q}_{-}(\eta_{\check{P}}))-D\xi_{\rm cd,0}(\check{Q})
\Big||\partial_{\xi_{\rm cd,0}}\delta z^{(\rm n)}_{+}(\check{P}_{-}(0))|\\[5pt]
& \ \ \ + \sup_{\check{Q}_{-}(\eta_{\check{P}}),\check{Q}\in \tilde{\Omega}^{(\rm h)}_{II}\cup\tilde{\Omega}^{(\rm h)}_{IV}\cup\tilde{\Omega}^{(\rm h)}_{VI}}
d^{-\alpha}\big(\check{Q}_{-}(\eta_{\check{P}}),\check{Q}\big)
\int^{\eta_{\check{Q}}}_{\eta_{\check{P}}}\bigg|D\Big(\frac{1}{\lambda^{(\rm n-1)}_{-}}\Big)
\partial_{\xi}\delta z^{(\rm n)}_{+}(\check{Q}_{-}(\tau))\bigg|d\tau\\[5pt]
&\ \ \ \   \leq \mathcal{C}\bigg(\big\|D^{2}\delta \varphi^{(\rm n)}\big\|_{{0,0;\overline{\tilde{\Gamma}}_{\rm cd}\cap\{0\leq \xi\leq \xi^{*}_{\rm cd, 0}\}}}
+\sum_{k=\rm e, \rm h}\big\|\big(\delta\tilde{B}^{(\rm k)}_{0},\delta\tilde{S}^{(\rm k)}_{0}\big)\big\|_{1,0;\tilde{\Gamma}^{(\rm k)}_{\rm in}}\bigg),
\end{split}
\end{eqnarray*}
provided that {$\tilde{\epsilon}<\epsilon_{I}\in (\mathcal{C}'''_{\rm h,9}\tilde{\epsilon}, \epsilon'''_{\rm h,9})$ with $\tilde{\epsilon}>0$ sufficiently small.}
Here, the constant $\mathcal{C}>0$ depending only on $\tilde{\underline{U}}$, $L$ and $\alpha$.

\par With the two estimates above, we thus obtain
\begin{eqnarray*}\label{eq:4.53}
\begin{split}
&\sup_{\hat{P},\hat{Q}\in \tilde{\Omega}^{(\rm h)}_{II}\cup \tilde{\Omega}^{(\rm h)}_{IV}\cup \tilde{\Omega}^{(\rm h)}_{VI}}
d^{-\alpha}\big(\check{P},\check{Q}\big)
\Big|D\delta z^{(\rm n)}_{+}(\check{P})-D\delta z^{(\rm n)}_{+}(\check{Q})\Big|\\[5pt]
& \ \ \ \leq \sup_{\check{P}(\eta_{\check{P}}),\check{Q}(\eta_{\check{P}})\in \tilde{\Omega}^{(\rm h)}_{II}\cup \tilde{\Omega}^{(\rm h)}_{IV}\cup \tilde{\Omega}^{(\rm h)}_{VI}}
d^{-\alpha}\big(\check{P}(\eta_{\check{P}}),\check{Q}(\eta_{\check{P}})\big)
\Big|D\delta z^{(\rm n)}_{+}(\check{P}(\eta_{\check{P}}))-D\delta z^{(\rm n)}_{+}(\check{Q}(\eta_{\check{P}}))\Big|\\[5pt]
&\quad  \ \ \ \ +\sup_{\check{Q}(\eta_{\check{P}}),\check{Q}(\eta_{\check{Q}})\in \tilde{\Omega}^{(\rm h)}_{II}\cup \tilde{\Omega}^{(\rm h)}_{IV}\cup \tilde{\Omega}^{(\rm h)}_{VI}}
d^{-\alpha}\big(\check{Q}(\eta_{\check{P}}),\check{Q}(\eta_{\check{Q}})\big)
\Big|D\delta z^{(\rm n)}_{+}(\check{Q}(\eta_{\check{P}}))-D\delta z^{(\rm n)}_{+}(\check{Q}(\eta_{\check{Q}}))\Big|\\[5pt]
& \ \ \ \leq\mathcal{C}\bigg( \big\|D^{2}\delta\varphi^{(\rm n)}\big\|_{{0,\alpha;(\tilde{\Gamma}_{\rm cd}\cup \{\mathcal{O}\})\cap\{0\leq \xi\leq \xi^{*}_{\rm cd,0}\}}}+\big\|D\delta z^{(\rm n)}_{-}\big\|_{0,\alpha;\tilde{\Gamma}_{\rm cd}
\cap\{0\leq \xi\leq \xi^{*}_{\rm cd,0}\}}\\[5pt]
&\qquad\qquad  +\sum_{k=\rm e, \rm h}\big\|\delta\tilde{B}^{(\rm k)}_{0}\big\|_{1,\alpha;\tilde{\Gamma}^{(\rm k)}_{\rm in}}
+\sum_{k=\rm e, \rm h}\big\|\delta\tilde{S}^{(\rm k)}_{0}\big\|_{1,\alpha;\tilde{\Gamma}^{(\rm k)}_{\rm in}}\bigg),
\end{split}
\end{eqnarray*}
which gives  the estimates on $\big[D\delta z^{(\rm n)}_{+}\big]_{0,\alpha; 
\tilde{\Omega}^{(\rm h)}_{II}\cup \tilde{\Omega}^{(\rm h)}_{IV}\cup \tilde{\Omega}^{(\rm h)}_{VI}}$.

\par Finally, combining the estimates on {$\|\delta z^{(\rm n)}_{+}\|_{1,0; \tilde{\Omega}^{(\rm h)}_{II}\cup \tilde{\Omega}^{(\rm h)}_{IV}\cup \tilde{\Omega}^{(\rm h)}_{VI}}$}
and
$\big[D\delta z^{(\rm n)}_{+}\big]_{0,\alpha; \tilde{\Omega}^{(\rm h)}_{II}\cup \tilde{\Omega}^{(\rm h)}_{IV}\cup \tilde{\Omega}^{(\rm h)}_{VI}}$, there exist constants $\tilde{C}^{*}_{46}>0$, and
$\mathcal{C}_{\rm h,9}=\max\{\mathcal{C}'_{\rm h,9},\mathcal{C}''_{\rm h,9}, \mathcal{C}'''_{\rm h,9}\}$ and $\epsilon_{\rm h,9}=\min\{\epsilon'_{\rm h,9},\epsilon''_{\rm h,9}, \epsilon'''_{\rm h,9}\}$
depending only on $\tilde{\underline{U}}$, $L$ and $\alpha$, such that {for $\tilde{\epsilon}<\epsilon_{I}\in (\mathcal{C}_{\rm h,9}\tilde{\epsilon},\epsilon_{\rm h,9})$ with $\tilde{\epsilon}>0$ is sufficiently small,}
the estimate \eqref{eq:4.105} holds. 
\end{proof}

\par With Propositions \ref{prop:4.3}-\ref{prop:4.5}, we now give the proof of Proposition \ref{prop:4.2}.
\begin{proof}[Proof of Proposition \ref{prop:4.2}]
 We will show the estimate \eqref{eq:4.41} by adopting the induction argument.
It follows from \eqref{eq:4.55}, \eqref{eq:4.69}, \eqref{eq:4.81} and
\eqref{eq:4.104} that
\begin{eqnarray}\label{equ:4.112}
\begin{aligned}
\|\delta z^{(n)}_{-}\|_{1,\alpha; 
	\tilde{\Omega}_I\cup\Omega_{II}\cup\Omega_{III}\cup\Omega_{IV}}
\leq C^{*}_{1,-}\Big(\|\delta z_{0}\|_{1,\alpha; \tilde{\Gamma}^{(\rm h)}_{\rm in}}
+\|g_{-}-1\|_{2,\alpha; \tilde{\Gamma}_{-}\cap\{0\leq \xi\leq \xi^{*}_{-,0}\}}\Big),
\end{aligned}
\end{eqnarray}
and
\begin{eqnarray}\label{equ:4.113}
\begin{aligned}
\|\delta z^{(n)}_{+}\|_{1,\alpha; \tilde{\Omega}_I\cup\Omega_{II}\cup\Omega_{III}\cup\Omega_{IV}}
\leq& C^{*}_{1,+}\Big(\|\delta z_{0}\|_{1,\alpha; \tilde{\Gamma}^{(\rm h)}_{\rm in}}
+\sum_{k=\rm e, h}\|\delta\tilde{B}^{(\rm k)}_{0}\|_{1,\alpha; \tilde{\Gamma}^{(\rm k)}_{\rm in}}\\[5pt]
&\ \ \  +\sum_{k=\rm e, h}\|\delta\tilde{S}^{(\rm k)}_{0}\|_{1,\alpha; \tilde{\Gamma}^{(\rm k)}_{\rm in}}
+\|\delta \varphi^{(\rm n)}\|_{2,\alpha; \tilde{\Gamma}_{\rm cd}\cap\{0\leq \xi\leq \xi^{*}_{\rm cd, 0}\}}\Big).
\end{aligned}
\end{eqnarray}

\par Let $\xi_0^*=\min\{\xi^{*}_{\rm cd,0},\,\xi^{*}_{-m^{(\rm h)},0}\}$ and $\tilde{\Omega}_0^{(\rm h)}:=\tilde{\Omega}^{(\rm h)}\cap\{0\leq\xi\leq\xi^*_0\}$. {Then, there exist constants $C^{*}_{1}>0$ and
$\mathcal{C}_{\rm h,0}=\max\{\mathcal{C}_{\rm h,7},\mathcal{C}_{\rm h,8}, \mathcal{C}_{\rm h,9}\}$ and $\epsilon_{\rm h,0}=\min\{\epsilon_{\rm h,7},\epsilon_{\rm h,8}, \epsilon_{\rm h,9}\}$
depending only on $\tilde{\underline{U}}$, $L$ and $\alpha$, such that {for $\tilde{\epsilon}<\epsilon_{I}\in (\mathcal{C}_{\rm h,0}\tilde{\epsilon},\epsilon_{\rm h,0})$ with $\tilde{\epsilon}>0$ is sufficiently small,}
it holds that}
\begin{eqnarray*}
\begin{aligned}
\|\delta z^{(\rm n)}\|_{1,\alpha; \Omega^{(\rm h)}_{0}}
&\leq C^{*}_{1}\Big(\|\delta z_{0}\|_{1,\alpha; \tilde{\Gamma}^{(\rm h)}_{\rm in}}
+\sum_{k=\rm e, h}\|\delta\tilde{B}^{(\rm k)}_{0}\|_{1,\alpha; \tilde{\Gamma}^{(\rm k)}_{\rm in}}
 +\sum_{k=\rm e, h}\|\delta\tilde{S}^{(\rm k)}_{0}\|_{1,\alpha; \tilde{\Gamma}^{(\rm k)}_{\rm in}}\\[5pt]
&\ \ \ \ \ \ \qquad +\|\delta \varphi^{(\rm n)}\|_{2,\alpha;\overline{ \tilde{\Gamma}}_{\rm cd}\cap\{0\leq \xi\leq \xi^{*}_{0}\}}
+\|g_{-}-1\|_{2,\alpha; \tilde{\Gamma}_{-}\cap\{0\leq \xi\leq \xi^{*}_{0}\}}\Big).
\end{aligned}
\end{eqnarray*}

\par  Let $(\xi^{*}_{\rm cd,1},0)$ be the intersection point 
of the characteristics 
with the speed $\lambda_+$ starting from
$(\xi_0^*,-m^{(\rm h)})$ and the transonic contact discontinuity $\eta=0$.
Let $(\xi^{*}_{-m^{(\rm h)},1}, -m^{(h)})$ stand for the point 
where the characteristics 
with the speed $\lambda_-$ starting from $(\xi_0^*,0)$ and the lower nozzle wall $\tilde{\Gamma}^{(\rm h)}_{-}$ intersect.
Set $\tilde{\Omega}_1^{(\rm h)}:=\tilde{\Omega}^{(\rm h)}\cap \{\xi_0^*\leq\xi\leq \xi^*_1\}$.
Regarding the line $\xi=\xi_0^*$ as 
$\xi=0$ and then repeating the arguments for proving Propositions \ref{prop:4.3}--\ref{prop:4.5} again, we have
\begin{eqnarray}\label{eq:4.114}
\begin{aligned}
\|\delta z^{(\rm n)}\|_{1,\alpha; \tilde{\Omega}_1^{(\rm h)}}
&\leq C^{*}_{2}\Big(\|\delta z^{(\rm n)}\|_{1,\alpha; \{\xi=\xi^{*}_{1}\}}
+\sum_{k=\rm e, h}\|\delta\tilde{B}^{(\rm k)}_{0}\|_{1,\alpha; \tilde{\Gamma}^{(\rm k)}_{\rm in}}
+\sum_{k=\rm e, h}\|\delta\tilde{S}^{(\rm k)}_{0}\|_{1,\alpha; \tilde{\Gamma}^{(\rm k)}_{\rm in}}\\[5pt]
& \ \ \ \ \ +\|\delta \varphi^{(\rm n)}\|_{{2,\alpha; \overline{\tilde{\Gamma}}_{\rm cd}\cap\{\xi^{*}_{0}\leq \xi\leq \xi^{*}_{1}\}}}
+\|g_{-}-1\|_{2,\alpha; \tilde{\Gamma}_{-}\cap\{\xi^{*}_{0}\leq \xi\leq \xi^{*}_{1}\}}\Big)\\[5pt]
&\leq \tilde{C}^{*}_{2}\Big(\|\delta z_{0}\|_{1,\alpha; \tilde{\Gamma}^{(\rm h)}_{\rm in}}
+\sum_{k=\rm e, h}\|\delta\tilde{B}^{(\rm k)}_{0}\|_{1,\alpha; \tilde{\Gamma}^{(\rm k)}_{\rm in}}
 +\sum_{k=\rm e, h}\|\delta\tilde{S}^{(\rm k)}_{0}\|_{1,\alpha; \tilde{\Gamma}^{(\rm k)}_{\rm in}}\\[5pt]
&\ \ \ \ \ +\|\delta \varphi^{(\rm n)}\|_{{ 2,\alpha; \overline{\tilde{\Gamma}}_{\rm cd}\cap\{0\leq \xi\leq \xi^{*}_{1}\}}}
+\|g_{-}-1\|_{2,\alpha; \tilde{\Gamma}_{-}\cap\{0\leq \xi\leq \xi^{*}_{1}\}}\Big).
\end{aligned}
\end{eqnarray}

{\color{black}
Finally, by \eqref{eq:4.54}, we notice that $\xi_0^*$ depends on $\lambda^{(\rm n-1)}_{\pm}$ and $m^{(\rm h)}$. Furthermore, \eqref{eq:4.75b} and the uniform bound of  $m^{(\rm h)}$
imply that $\xi_0^*$ also has a uniform  lower bound independent of $\rm n$ and $\epsilon_{I}$. Thus,
 we can repeat the process $\ell$ times with $\ell=[\frac{L}{\xi^{*}_{0}}]+1$ for the finite length $L$, where $\ell$ has a uniform upper bound.}

Define $\xi_\ell^*$ and $\tilde{\Omega}_\ell^{(\rm h)}$.
Obviously $\tilde{\Omega}^{(\rm h)}=\cup_{0\leq k\leq \ell}\tilde{\Omega}_k^{(\rm h)}$.
By taking the summation of all the estimates as in \eqref{eq:4.114} altogether for $k=0$, $\cdot\cdot\cdot$, $\ell$,
we obtain \eqref{eq:4.41}. This completes the proof of Proposition \ref{prop:4.2}.
\end{proof}

Now we give the proof of the main result in this section.
\begin{proof}[Proof of Theorem \ref{thm:4.1}]
By the estimate \eqref{eq:4.4} and Proposition \ref{prop:4.2}, we have
\begin{eqnarray}\label{eq:4.115}
\begin{split}
\|\delta z^{(\rm n)}\|_{1,\alpha;\tilde{\Omega}^{(\rm h)}}
&\leq\mathcal{C}\bigg(\big\|\delta z_{0}\big\|_{1,\alpha;\tilde{\Gamma}^{(\rm h)}_{\rm in}}
+{\big\|\delta\tilde{p}^{(\rm e)}_{0}\big\|_{1,\alpha;\tilde{\Gamma}^{(\rm e)}_{\rm in}}}
+\sum_{k=\rm e, h}\|\delta\tilde{B}^{(\rm k)}_{0}\|_{1,\alpha; \tilde{\Gamma}^{(\rm k)}_{\rm in}}\\[5pt]
&\qquad  +\sum_{k=\rm e, h}\|\delta\tilde{S}^{(\rm k)}_{0}\|_{1,\alpha; \tilde{\Gamma}^{(\rm k)}_{\rm in}}
+\big\|g_{-}+1\big\|_{2,\alpha; \tilde{\Gamma}_{-}}+\big\|g_{+}-1\big\|_{2,\alpha;\tilde{ \Gamma}_{+}}\\[5pt]
&\qquad +\big\|\tilde{\omega}_{\rm e}\big\|^{(-1-\alpha, \{\mathcal{P}_{\rm e}, \mathcal{Q}_{\rm e}\})}_{2,\alpha;\tilde{\Gamma}^{(\rm e)}_{\rm ex}}
+{\big\|\omega_{\rm cd}\big\|^{(-\alpha, \{\mathcal{P}_{\rm e}\})}_{1,\alpha; \tilde{\Gamma}_{\rm cd}}}\bigg).
\end{split}
\end{eqnarray}

The we can choose  constants $\alpha_0\in(0,1)$ depending only on $\tilde{\underline{U}}$, $L$, and $\varrho_{0}>0$ depending on $\tilde{\epsilon}$, $\tilde{\underline{U}}$,
and let   $\epsilon^{*}_{I}=\min\{\varepsilon_{\rm e, 0},\epsilon^{*}_{\rm h, 0} \}$, $\mathcal{C}^{*}_0=\max\{\mathcal{C}_{\rm e, 0}, \mathcal{C}_{\rm h, 0}\}$,
such that for any $\alpha\in(0,\alpha_0)$, $\varrho\in (0,\varrho_{0})$ and $\tilde{\epsilon}<\epsilon_I\in (\mathcal{C}^{*}_0 \tilde{\epsilon},\epsilon^{*}_{I})$ with $\tilde{\epsilon}>0$ sufficiently small,  if $(\delta \varphi^{(\rm n-1)},\delta z^{(\rm n-1)})\in \mathscr{K}_{2\epsilon_I}$ and $\omega_{\rm cd}\in \mathcal{W}$ with $\|\omega_{\rm cd}\|^{(-\alpha, \{\mathcal{P}_{\rm e}\})}_{1,\alpha; \tilde{\Gamma}_{\rm cd}}\leq \varrho$,  we can get the estimate \eqref{eq:4.1} by Proposition \ref{prop:4.1} and \eqref{eq:4.115}. This completes the proof of the Theorem \ref{thm:4.1}.

\end{proof}

\bigskip

\section{Determination of the Flow Slope Function $\omega_{\rm cd}$ as a Fixed Point}\setcounter{equation}{0}
In this section, we employ the implicit function theorem to show that the map $\mathbf{T}_{\omega}$ admits a unique fixed point
$\omega_{\rm cd}$. {\color{black}For the precise statement of the implicit function theorem, we  refer to Theorem 4.B on page 150 of the book \cite{Zeidler}.}
For a fixed $\rm n$, we define
\begin{eqnarray}\label{eq:5.1x}
\mathscr{F}(\omega_{\rm cd}; \boldsymbol{\omega}_{0})=\mathbf{T}_{\omega}(\omega_{\rm cd}; \boldsymbol{\omega}_{0})-\omega_{\rm cd}
=\tan\Big(\frac{\delta z^{(\rm n)}_{-}+\delta z^{(\rm n)}_{+}}{2}\Big)-\omega_{\rm cd},
\end{eqnarray}
where $\boldsymbol{\omega}_{0}=( \tilde{p}^{(\rm e)}_{0}, \tilde{B}^{(\rm e)}_{0}, \tilde{S}^{(\rm e)}_{0}, z_{0}, \tilde{B}^{(\rm h)}_{0},
\tilde{S}^{(\rm h)}_{0},\tilde{\omega}_{\rm e}, g_{+}, g_{-})$.
Let
\begin{eqnarray}\label{eq:5.2x}
\begin{split}
\mathcal{W}_{0}&=C^{1,\alpha}(\tilde{\Gamma}^{(\rm e)}_{\rm in})\times C^{1,\alpha}(\tilde{\Gamma}^{(\rm e)}_{\rm in})\times C^{1,\alpha}(\tilde{\Gamma}^{(\rm e)}_{\rm in})\times (C^{1,\alpha}(\tilde{\Gamma}^{(\rm h)}_{\rm in}))^{2}
\\[5pt]
&\quad \times C^{1,\alpha}(\tilde{\Gamma}^{(\rm h)}_{\rm in})\times C^{1,\alpha}(\tilde{\Gamma}^{(\rm h)}_{\rm in})\times C^{2,\alpha}_{(-1-\alpha, \{\mathcal{P}_{\rm e}, \mathcal{Q}_{\rm e}\})}(\tilde{\Gamma}^{(\rm e)}_{\rm ex}) \times C^{2,\alpha}(\tilde{\Gamma}_{+})\times C^{2,\alpha}(\tilde{\Gamma}_{-}).
\end{split}
\end{eqnarray}
Then the map
\begin{eqnarray}\label{eq:5.3}
\mathscr{F}:\ {\mathcal{W}}
\times \mathcal{W}_{0} \longrightarrow {\mathcal{W}}
\ \ \ \ \mbox{with} \ \ \  \mathscr{F}(0, \underline{\boldsymbol{\omega}})=0,
\end{eqnarray}
with  $\underline{\boldsymbol{\omega}}=(\underline{p}^{(\rm e)}, \underline{\tilde{B}}^{(\rm e)}, \underline{\tilde{S}}^{(\rm e)}, \underline{z},
\underline{\tilde{B}}^{(\rm h)}, \underline{\tilde{S}}^{(\rm h)}, 0, 1, -1)$ has the property
in the following proposition.

\begin{proposition}\label{prop:5.1}
{\color{black}For any given $m^{(\rm e)} \in \mathcal{M}_{\sigma}$ with $\sigma>0$ sufficiently small, there exists a constant $\tilde{\epsilon}^{*}_{0}>0$  depending only on $\tilde{\underline{U}}$ and $L$,} 
such that if
\begin{eqnarray}\label{eq:5.4}
\underline{M}^{(\rm h)}\geq \sqrt{1+L^{2}/4},
\end{eqnarray}
with $\underline{M}^{(\rm h)}=\frac{\underline{u}^{(\rm h)}}{\underline{c}^{(\rm h)}}$
and 
\begin{eqnarray}\label{eq:5.5}
\|\boldsymbol{\omega}_{0}-\underline{\boldsymbol{\omega}}\|_{\mathcal{W}_{0}}<\tilde{\epsilon},
\end{eqnarray}
with {\color{black}$\tilde{\epsilon}\in (0,\tilde{\epsilon}^{*}_{0})$}, 
 the equation $\mathscr{F}(\omega_{\rm cd}; \boldsymbol{\omega}_{0})=0$
admits a unique solution $\omega_{\rm cd}\in {\color{black}C^{1,\alpha}_{(-\alpha, \{\mathcal{P}_{\rm e}\})}(\tilde{\Gamma}_{\rm cd})}$ 
satisfying
{\color{black}
\begin{eqnarray}\label{eq:5.6}
{\|\omega_{\rm cd}\|^{(-\alpha,{\color{black}\{\mathcal{P}_{\rm e}\}})}_{1,\alpha; \tilde{\Gamma}_{\rm cd}}}\leq C_{\omega}\tilde{\epsilon},
\end{eqnarray}
}
where $C_{\omega}>0$ is a constant depending only on $\underline{\boldsymbol{\omega}}$ and $L$.
\end{proposition}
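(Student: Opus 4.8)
The plan is to apply the implicit function theorem to the map $\mathscr{F}$ defined in \eqref{eq:5.1x}, viewed as a map between the Banach spaces $C^{1,\alpha}_{(-\alpha,\mathcal{P})}(\tilde{\Gamma}_{\rm cd})$ and itself with the parameter $\boldsymbol{\omega}_0$ ranging over $\mathcal{W}_0$. The base point is $(\omega_{\rm cd},\boldsymbol{\omega}_0)=(0,\underline{\boldsymbol{\omega}})$, at which $\mathscr{F}=0$ since the linearized problem $(\widetilde{\mathbf{FP}})_{\rm n}$ at the background state has trivial solution $\delta z^{(\rm n)}=0$ and $\delta\varphi^{(\rm n)}=0$. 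First I would verify that $\mathscr{F}$ is well-defined and continuously Fréchet differentiable in a neighborhood of this base point: for $\boldsymbol{\omega}_0$ close to $\underline{\boldsymbol{\omega}}$ and $\omega_{\rm cd}$ small in $\|\cdot\|^{(-\alpha,\mathcal{P})}_{1,\alpha;\tilde{\Gamma}_{\rm cd}}$, Proposition~\ref{prop:4.1} and Proposition~\ref{prop:4.2} (summarized in Theorem~\ref{thm:4.1}) give a unique solution $(\delta\varphi^{(\rm n)},\delta z^{(\rm n)})$ to $(\widetilde{\mathbf{FP}})_{\rm n}$ depending smoothly (indeed linearly, modulo the smooth nonlinear dependence of the coefficients) on the data and on $\omega_{\rm cd}$, and $\tan(\tfrac12(\delta z^{(\rm n)}_-+\delta z^{(\rm n)}_+))$ restricted to $\tilde{\Gamma}_{\rm cd}$ then lies in the right space by the trace estimate \eqref{eq:4.69} and Proposition~\ref{prop:4.5}. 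Differentiability follows from the smoothness of all the nonlinear functions ($\tilde{\rho}^{(\rm e)}$, $\tilde{p}^{(\rm e)}$, $\Theta$, $\lambda_\pm$, $\tan$) together with the linear elliptic and characteristic estimates, which depend continuously on the coefficients.

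The crucial step is to compute the partial Fréchet derivative $D_{\omega_{\rm cd}}\mathscr{F}(0,\underline{\boldsymbol{\omega}})$ and show it is a bounded invertible operator on $C^{1,\alpha}_{(-\alpha,\mathcal{P})}(\tilde{\Gamma}_{\rm cd})$. Since $\mathscr{F}(\omega_{\rm cd};\boldsymbol{\omega}_0)=\mathbf{T}_\omega(\omega_{\rm cd})-\omega_{\rm cd}$, we have $D_{\omega_{\rm cd}}\mathscr{F}(0,\underline{\boldsymbol{\omega}})=D_{\omega_{\rm cd}}\mathbf{T}_\omega(0)-\mathrm{Id}$. To compute $D_{\omega_{\rm cd}}\mathbf{T}_\omega(0)$ one linearizes the whole chain: an increment $\dot\omega_{\rm cd}$ feeds in through the boundary condition $\delta\varphi^{(\rm n)}=\int_0^\xi\omega_{\rm cd}(\nu)\,d\nu$ on $\tilde{\Gamma}_{\rm cd}$ in \eqref{eq:4.2}, producing a linearized elliptic solution $\dot\varphi$; this in turn enters the boundary condition for $\delta z^{(\rm n)}_-$ on $\tilde{\Gamma}_{\rm cd}$ in \eqref{eq:4.40}, hence a linearized hyperbolic solution $\dot z$; and finally $D_{\omega_{\rm cd}}\mathbf{T}_\omega(0)\dot\omega_{\rm cd}=\tfrac12(\dot z_-+\dot z_+)|_{\tilde{\Gamma}_{\rm cd}}$ (the derivative of $\tan$ at $0$ being $1$). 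Because everything is evaluated at the background state, where $\underline{\beta}_{\rm cd,1}=\underline{\beta}_{\rm cd,2}$ are constants, the reflection coefficients on the contact discontinuity and on $\tilde{\Gamma}_-$ are explicit constants determined by $\underline{U}$, and one can track the amplification of $\dot z$ through the finitely many reflections. The condition \eqref{eq:5.4}, i.e. $\underline{M}^{(\rm h)}\ge\sqrt{1+L^2/4}$, is precisely what guarantees that the characteristic from $\tilde{\Gamma}_-$ reaches $\tilde{\Gamma}_{\rm cd}$ within $[0,L]$ at most a bounded number of times and, more importantly, that the operator norm of the "reflection part" of $D_{\omega_{\rm cd}}\mathbf{T}_\omega(0)$ is strictly less than $1$ — actually, since we need invertibility of $D_{\omega_{\rm cd}}\mathbf{T}_\omega(0)-\mathrm{Id}$ rather than of $\mathbf{T}_\omega(0)$ itself, the right statement is that $\mathrm{Id}-D_{\omega_{\rm cd}}\mathbf{T}_\omega(0)$ has bounded inverse; this holds because $D_{\omega_{\rm cd}}\mathbf{T}_\omega(0)$ is a compact (smoothing) operator — it involves one elliptic solve, which gains a derivative — so $\mathrm{Id}-D_{\omega_{\rm cd}}\mathbf{T}_\omega(0)$ is Fredholm of index zero, and injectivity (which follows from the uniqueness part of Theorem~\ref{thm:4.1} applied to the homogeneous linearized problem) gives invertibility.

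Granting the invertibility of $D_{\omega_{\rm cd}}\mathscr{F}(0,\underline{\boldsymbol{\omega}})$, the implicit function theorem in Banach spaces yields a neighborhood $U\subset\mathcal{W}_0$ of $\underline{\boldsymbol{\omega}}$, a neighborhood $V\subset C^{1,\alpha}_{(-\alpha,\mathcal{P})}(\tilde{\Gamma}_{\rm cd})$ of $0$, and a $C^1$ map $\boldsymbol{\omega}_0\mapsto\omega_{\rm cd}(\boldsymbol{\omega}_0)$ with $\omega_{\rm cd}(\underline{\boldsymbol{\omega}})=0$ such that $\mathscr{F}(\omega_{\rm cd};\boldsymbol{\omega}_0)=0$ has $\omega_{\rm cd}(\boldsymbol{\omega}_0)$ as its unique solution in $V$. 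Choosing $\varrho_0$ so small that $\|\boldsymbol{\omega}_0-\underline{\boldsymbol{\omega}}\|_{\mathcal{W}_0}<\varrho\le\varrho_0$ forces $\boldsymbol{\omega}_0\in U$, and using the Lipschitz bound from the implicit function theorem together with $\omega_{\rm cd}(\underline{\boldsymbol{\omega}})=0$, we get $\|\omega_{\rm cd}\|^{(-\alpha,\mathcal{P})}_{1,\alpha;\tilde{\Gamma}_{\rm cd}}\le C\|\boldsymbol{\omega}_0-\underline{\boldsymbol{\omega}}\|_{\mathcal{W}_0}\le C_\omega\varrho$, with $C_\omega=C_\omega(\underline{\boldsymbol{\omega}})$ the norm of the derivative of the implicit map, which establishes \eqref{eq:5.6}. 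The main obstacle, and the place where the hypothesis \eqref{eq:5.4} on $\underline{M}^{(\rm h)}$ is genuinely used, is the explicit control of $D_{\omega_{\rm cd}}\mathbf{T}_\omega(0)$ and the verification that $\mathrm{Id}-D_{\omega_{\rm cd}}\mathbf{T}_\omega(0)$ is invertible; this requires carefully setting up the linearized coupled elliptic-hyperbolic problem, exploiting its compactness (the elliptic gain of regularity) and the uniqueness already proved in Theorem~\ref{thm:4.1}, rather than any naive contraction argument — indeed the failure of contractivity of $\mathbf{T}_\omega$ is exactly why the implicit function theorem, not the Banach fixed point theorem, is invoked here.
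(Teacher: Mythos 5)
Your overall strategy is the same as the paper's: view $\mathscr F$ as a map between Banach spaces, verify differentiability, show that the partial derivative in $\omega_{\rm cd}$ at the base point $(0,\underline{\boldsymbol\omega})$ is an isomorphism, and invoke the implicit function theorem. The paper's Steps~1 and~2 carry out exactly the differentiability/continuity verification you sketch, computing the G\^ateaux derivative by difference quotients and showing convergence in $C^{1,\alpha}_{(-\alpha,\mathcal P)}(\tilde\Gamma_{\rm cd})$. Up to that point the two arguments agree.

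The genuine gap is in your Step~3, the invertibility of $D_{\omega_{\rm cd}}\mathscr F(0,\underline{\boldsymbol\omega})=D_{\omega_{\rm cd}}\mathbf T_\omega(0)-\mathrm{Id}$. You assert that $D_{\omega_{\rm cd}}\mathbf T_\omega(0)$ is compact because ``it involves one elliptic solve, which gains a derivative,'' and then invoke a Fredholm-index-zero argument. This is not correct: the composite map is $\dot\omega_{\rm cd}\mapsto\dot\varphi\mapsto D\dot\varphi|_{\tilde\Gamma_{\rm cd}}\mapsto \dot z_\pm|_{\tilde\Gamma_{\rm cd}}\mapsto\frac12(\dot z_-+\dot z_+)$, and the first two arrows are a Dirichlet (or tangential-derivative)-to-Neumann-type boundary map for the second-order elliptic equation, which at the level of derivatives sends $C^{1,\alpha}\to C^{1,\alpha}$ boundedly but is \emph{not} compact — the elliptic gain of one derivative in the interior is exactly consumed by taking the normal derivative at the boundary. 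Moreover the injectivity you want to quote from Theorem~\ref{thm:4.1} is for the modified problem with $\omega_{\rm cd}$ \emph{prescribed}, not for the coupled linearized problem where $\omega_{\rm cd}$ is unknown, so that argument is circular. What the paper actually does in Step~3 is different and crucially uses hypothesis \eqref{eq:5.4}: at the background state the incoming data $\widehat{\dot z}_0$ vanish, and the inequality $\underline m^{(\rm h)}/\underline\lambda_+=\sqrt{(\underline M^{(\rm h)})^2-1}\ge L/2$ guarantees, via the explicit characteristic geometry, that a $\lambda_+$-characteristic reaching $\tilde\Gamma_{\rm cd}$ has undergone at most one reflection off $\tilde\Gamma_-$, and the $\lambda_-$-characteristic feeding that reflection came from the inlet rather than from $\tilde\Gamma_{\rm cd}$; hence $\widehat{\dot z}_-=0$ on $\tilde\Gamma_{\rm cd}$. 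This decouples the system: combining $\widehat{\dot z}_-=0$ with the jump condition on $\tilde\Gamma_{\rm cd}$ turns \eqref{eq:5.15} into the scalar oblique-derivative boundary condition $\partial_1\widehat{\dot\varphi}+\underline\beta_{\rm cd,2}\,\partial_2\widehat{\dot\varphi}=-f$ for the elliptic equation alone (problem \eqref{eq:5.18}), whose well-posedness — applied to $\widehat{\delta\psi}=\partial_1\widehat{\dot\varphi}$ in \eqref{eq:5.20} via Lieberman's result in \cite{lg2} — yields the inverse explicitly. So the role of \eqref{eq:5.4} is not to give a contraction or a small operator norm, and the invertibility is not obtained by a softness/compactness argument; it comes from the vanishing of the reflected wave on the interface, which the Mach number condition enforces at the length scale $L$.
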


\begin{proof}
We will divide the proof into three steps. For the simplicity of notation, {\color{black}we write $\mathscr{F}(\omega_{\rm cd}; \boldsymbol{\omega}_{0})$
and its derivatives $\mathscr{F}_{\omega_{\rm cd}}[\omega_{\rm cd}; \boldsymbol{\omega}_{0}]$ with respect to $\omega_{\rm cd}$ as $\mathscr{F}(\omega_{\rm cd})$
and $\mathscr{F}_{\omega_{\rm cd}}[\omega_{\rm cd}]$, respectively.}
\smallskip

\par  {\bf Step\ 1}: Definition of the map {\color{black}$\mathscr{F}_{\omega_{\rm cd}}[\omega_{\rm cd}]$.}
\smallskip
\par For any {\color{black}$\omega_{\rm cd}$, $\delta \omega_{\rm cd}\in \mathcal{W}$ 
satisfying the Theorem \ref{thm:4.1}, and for $\tau>0$,
we consider the limit of
\begin{eqnarray*}
\frac{\big\|\mathscr{F}(\omega_{\rm cd}+\tau\delta\omega_{\rm cd})-\mathscr{F}(\omega_{\rm cd})-\tau\mathscr{F}_{\omega_{\rm cd}}[\omega_{\rm cd}](\delta \omega_{\rm cd})\big\|^{(-\alpha,\{\mathcal{P}_{\rm e}\})}_{1,\alpha; \tilde{\Gamma}_{\rm cd}}}{\tau\big\|\delta \omega_{\rm cd}\big\|^{(-\alpha,\{\mathcal{P}_{\rm e}\})}_{1,\alpha; \tilde{\Gamma}_{\rm cd}}}
\end{eqnarray*}
as $\tau\rightarrow 0$.
} 

Let $(\delta\varphi^{(\rm n)}_{\omega_{\rm cd}+\tau \delta \omega_{\rm cd}}, \delta z^{(\rm n)}_{\omega_{\rm cd}+\tau \delta \omega_{\rm cd}})$ and $(\delta\varphi^{(\rm n)}_{\omega_{\rm cd}},\delta z^{(\rm n)}_{\omega_{\rm cd}})$ be the solutions of the problem {\color{black}$(\widetilde{\mathbf{FP}})_{\rm n}$} with the boundary conditions:
\begin{eqnarray*}\label{eq:5.7x}
\partial_{\xi}\delta\varphi^{(\rm n)}_{\omega_{\rm cd}+\tau \delta \omega_{\rm cd}}=\omega_{\rm cd}+\tau \delta \omega_{\rm cd},\ \ \ \
\partial_{\xi}\delta\varphi^{(\rm n)}_{\omega_{\rm cd}}=\omega_{\rm cd},
\end{eqnarray*}
on $\tilde{\Gamma}_{\rm cd}$, respectively.
Denote
\begin{eqnarray*}\label{eq:5.8x}
\dot{\varphi}^{(\rm n, \tau)}_{\omega_{\rm cd}}
=\frac{\delta\varphi^{(\rm n)}_{\omega_{\rm cd}+\tau\delta\omega_{\rm cd}}-\delta\varphi^{(\rm n)}_{\omega_{\rm cd}}}{\tau},\quad \ \
\dot{z}^{(\rm n, \tau)}_{\omega_{\rm cd}}
=\frac{\delta z^{(\rm n)}_{\omega_{\rm cd}+\tau\delta\omega_{\rm cd}}-\delta z^{(\rm n)}_{\omega_{\rm cd}}}{\tau}.
\end{eqnarray*}
By the direct computation, we know that $(\dot \varphi^{(\rm n, \tau)}_{\omega_{\rm cd}}, \dot z^{(\rm n, \tau)}_{\omega_{\rm cd}})$ satisfies the following boundary value problem:
\begin{eqnarray}\label{eq:5.7}
\left\{
\begin{array}{llll}
\sum_{i,j=1,2}\partial_{i}\big(a^{(\rm n)}_{ij, \omega_{\rm cd}}\partial_{j}\dot{\varphi}^{(\rm n, \tau)}_{\omega_{\rm cd}}\big)
=\sum_{i,j=1,2}\partial_{i}\Big(\frac{a^{(\rm n)}_{ij, \omega_{\rm cd}}-a^{(\rm n)}_{ij, \omega_{\rm cd}+\tau\delta\omega_{\rm cd}}}{\tau}
\partial_{j}\delta \varphi^{(\rm n)}_{\omega_{\rm cd}+\tau\delta\omega_{\rm cd}}\Big),
&\ \  \mbox{in}\ \ \  \tilde{\Omega}^{(\rm e)}, \\[5pt]
\partial_{1} \dot{z}^{(\rm n, \tau)}_{-,\omega_{\rm cd}}+ \lambda^{(\rm n-1)}_{+}\partial_{2} \dot{z}^{(\rm n, \tau)}_{-,\omega_{\rm cd}}
=0, &\ \  \mbox{in}\ \ \ \tilde{\Omega}^{(\rm h)},  \\[5pt]
\partial_{1}\dot{z}^{(\rm n, \tau)}_{+,\omega_{\rm cd}}+ \lambda^{(\rm n-1)}_{-}\partial_{2} \dot{z}^{(\rm n, \tau)}_{+,\omega_{\rm cd}}
=0, &\ \  \mbox{in}\ \ \ \tilde{\Omega}^{(\rm h)},  \\[5pt]
\dot{\varphi}^{(\rm n, \tau)}_{\omega_{\rm cd}}=g^{(\rm n, \tau)}_{0, \omega_{\rm cd}}(\eta), &\ \
\mbox{on}\ \ \ \tilde{\Gamma}^{(\rm e)}_{\rm in},\\[5pt]
\partial_{1}\dot{\varphi}^{(\rm n, \tau)}_{\omega_{\rm cd}}=0,  &\ \   \mbox{on}\ \ \ \tilde{\Gamma}^{(\rm e)}_{\rm ex},\\[5pt]
\dot{z}^{(\rm n, \tau)}_{\omega_{\rm cd}}=0,  &\ \   \mbox{on}\ \ \ \tilde{\Gamma}^{(\rm h)}_{\rm in},\\[5pt]
\dot{\varphi}^{(\rm n, \tau)}_{\omega_{\rm cd}}=g^{(\rm n, \tau)}_{+, \omega_{\rm cd}}(\xi), &\ \  \mbox{on}\ \ \ \tilde{\Gamma}_{+},\\[5pt]
\delta\varphi^{(\rm n, \tau)}_{\omega_{\rm cd}}=\int^{\xi}_{0}\delta\omega_{\rm cd}(\nu)d\nu,
&\ \   \mbox{on}\ \ \ \tilde{\Gamma}_{\rm cd},\\[5pt]
\dot{z}^{(\rm n, \tau)}_{-, \omega_{\rm cd}}- \dot{z}^{(\rm n, \tau)}_{+, \omega_{\rm cd}}
=2\beta^{(\rm n-1)}_{\rm cd, 1}\partial_{1}\dot{\varphi}^{(\rm n,\tau)}_{\omega_{\rm cd}}
+2\beta^{(\rm n-1)}_{\rm cd, 2}\partial_{2}\dot{\varphi}^{(\rm n,\tau)}_{\omega_{\rm cd}},
&\ \   \mbox{on}\ \ \ \tilde{\Gamma}_{\rm cd},\\[5pt]
\dot{z}^{(\rm n, \tau)}_{-, \omega_{\rm cd}}+\dot{z}^{(\rm n, \tau)}_{+, \omega_{\rm cd}}=0, &\ \  \mbox{on}\ \ \ \tilde{\Gamma}_{-},
\end{array}
\right.
\end{eqnarray}
where $\lambda^{(\rm n-1)}_{\pm}$,\ $\beta^{(\rm n-1)}_{0,i}$ and $\beta^{(\rm n-1)}_{\textrm {cd}, i}(i=1,2)$ are defined as
in \eqref{eq:3.70}-\eqref{eq:3.72} and \eqref{eq:3.71},
\begin{eqnarray*}
a^{(\rm n)}_{ij,k}=\int^{1}_{0}\partial_{\partial_{j}\varphi}\mathcal{N}_{i}
\big(D\underline{\varphi}+\varsigma D\delta\varphi^{(\rm n)}_{k};
\tilde{B}^{(\rm e)}_{0},\tilde{S}^{(\rm e)}_{0}\big)d\varsigma, \ \ (i,j=1,2),\ \ k=\omega_{\rm cd},\
\omega_{\rm cd}+\tau\delta\omega_{\rm cd},
\end{eqnarray*}
satisfying \eqref{eq:4.8a}-\eqref{eq:4.13},
\begin{eqnarray*}
\begin{split}
g^{(\rm n, \tau)}_{0,\omega_{\rm cd}}(\eta)
&=-\frac{1}{\underline{\beta}_{0,2}}\sum_{j=1,2}\int^{\eta}_{0}
\bigg(\beta^{(\rm n)}_{0,j, \omega_{\rm cd}+\tau\delta\omega_{\rm cd}}-\underline{\beta}_{0,j}\bigg)
\partial_{j}\dot{\varphi}^{(\rm n,\tau)}_{\omega_{\rm cd}}d\mu\\[5pt]
&\ \ \  -\frac{1}{\underline{\beta}_{0,2}}\sum_{j=1,2}\int^{\eta}_{0}
\bigg(\frac{\beta^{(\rm n)}_{0,j, \omega_{\rm cd}+\tau\delta\omega_{\rm cd}}-\beta^{(\rm n)}_{0,j, \omega_{\rm cd}}}{\tau}\bigg)
\partial_{j}\delta \varphi^{(\rm n)}_{\omega_{\rm cd}}d\mu,
\end{split}
\end{eqnarray*}
and $g^{(\rm n, \tau)}_{+,\omega_{\rm cd}}(\xi)=g^{(\rm n, \tau)}_{0,\omega_{\rm cd}}(m^{(\rm e)})$, with
\begin{eqnarray}\label{eq:3.71b}
\begin{split}
&\beta^{(\rm n)}_{0,j, \omega_{\rm cd}+\tau\delta\omega_{\rm cd}}
=\int^{1}_{0}\partial_{\partial_{j}\varphi}\tilde{p}^{(\rm e)}\big(D\underline{\varphi}+\nu D\delta\varphi^{(\rm n)}_{\omega_{\rm cd}+\tau\delta\omega_{\rm cd}};\tilde{B}^{(\rm e)}_{0},\tilde{S}^{(\rm e)}_{0}\big)d\nu,\\[5pt]
&\beta^{(\rm n)}_{0, j,\omega_{\rm cd} }
=\int^{1}_{0}\partial_{\partial_{j}\varphi}\tilde{p}^{(\rm e)}\big(D\underline{\varphi}+\nu D\delta\varphi^{(\rm n)}_{\omega_{\rm cd}};\tilde{B}^{(\rm e)}_{0},\tilde{S}^{(\rm e)}_{0}\big)d\nu.
\end{split}
\end{eqnarray}
As a special case of  
Section 4, the problem \eqref{eq:5.7} has a unique solution $\dot{\varphi}^{(\rm n, \tau)}_{\omega_{\rm cd}}\in {C^{(-1-\alpha,  \tilde{\Sigma}^{(\rm e)}
\setminus \{\mathcal{O}\})}_{2,\alpha}}
(\tilde{\Omega}^{(\rm e)})$ and $\dot{z}^{(\rm n, \tau)}_{\omega_{\rm cd}}\in C^{1, \alpha}(\tilde{\Omega}^{(\rm h)})$ {\color{black}satisfying   
\begin{align}\label{eq:3.71c}
\begin{split}
&\quad \ \|\dot{\varphi}^{(\rm n, \tau)}_{\omega_{\rm cd}}\|^{{(-1-\alpha,\tilde{\Sigma}^{(\rm e)}
\setminus \{\mathcal{O}\})}}_{2,\alpha;\tilde{\Omega}^{(\rm e)}}
+\|\dot{z}^{(\rm n, \tau)}_{\omega_{\rm cd}}\|_{1,\alpha;\tilde{\Omega}^{(\rm h)}}\\[5pt]
&\leq \mathcal{C}\Big(\|\delta\varphi^{(\rm n)}_{\omega_{\rm cd}+\tau\delta\omega_{\rm cd}}\|^{{(-1-\alpha,\tilde{\Sigma}^{(\rm e)}
\setminus \{\mathcal{O}\})}}_{2,\alpha;\tilde{\Omega}^{(\rm e)}}\|\dot{\varphi}^{(\rm n, \tau)}_{\omega_{\rm cd}}\|^{{(-1-\alpha,\tilde{\Sigma}^{(\rm e)}
\setminus \{\mathcal{O}\})}}_{2,\alpha;\tilde{\Omega}^{(\rm e)}}+\|\delta\omega_{\rm cd}\|^{{(-\alpha,\{\mathcal{P}_{\rm e}\})}}_{1,\alpha;\tilde{\Gamma}_{\rm cd}}\Big)\\[5pt]
&\leq \mathcal{C}\Big(\|\omega_{0}-\underline{\omega}\|_{\mathcal{W}_{0}}+\|\omega_{\rm cd}\|^{{(-\alpha,\{\mathcal{P}_{\rm e}\})}}_{1,\alpha;\tilde{\Gamma}_{\rm cd}}
+\tau\|\delta\omega_{\rm cd}\|^{{(-\alpha,\{\mathcal{P}_{\rm e}\})}}_{1,\alpha;\tilde{\Gamma}_{\rm cd}}\Big)\|\dot{\varphi}^{(\rm n, \tau)}_{\omega_{\rm cd}}\|^{{(-1-\alpha,\tilde{\Sigma}^{(\rm e)}
\setminus \{\mathcal{O}\})}}_{2,\alpha;\tilde{\Omega}^{(\rm e)}}\\[5pt]
&\quad\ +\mathcal{C}\|\delta\omega_{\rm cd}\|^{{(-\alpha,\{\mathcal{P}_{\rm e}\})}}_{1,\alpha;\tilde{\Gamma}_{\rm cd}}\\[5pt]
&\leq \mathcal{C}\Big(\tilde{\epsilon}+\varrho+\tau\|\delta\omega_{\rm cd}\|^{{(-\alpha,\{\mathcal{P}_{\rm e}\})}}_{1,\alpha;\tilde{\Gamma}_{\rm cd}}\Big)\|\dot{\varphi}^{(\rm n, \tau)}_{\omega_{\rm cd}}\|^{{(-1-\alpha,\tilde{\Sigma}^{(\rm e)}
\setminus \{\mathcal{O}\})}}_{2,\alpha;\tilde{\Omega}^{(\rm e)}}+\mathcal{C}\|\delta\omega_{\rm cd}\|^{{(-\alpha,\{\mathcal{P}_{\rm e}\})}}_{1,\alpha;\tilde{\Gamma}_{\rm cd}},
\end{split}
\end{align}
for $\alpha\in(0,\alpha_0)$ with $\alpha_{0}\in(0,1)$ depending only on $\tilde{\underline{U}}$ and $L$, where we have used the estimate \eqref{eq:4.1} in Theorem \ref{thm:4.1} for $\delta\varphi^{(\rm n)}_{\omega_{\rm cd}+\tau\delta\omega_{\rm cd}}$ by changing the boundary data
$\omega_{\rm cd}$ to $\omega_{\rm cd}+\tau\delta\omega_{\rm cd}$ on $\tilde{\Gamma}_{\rm cd}$. Here, the constant $\mathcal{C}>0$ depends only on $\tilde{\underline{U}}$, $L$ and $\alpha$.

First, we choose $\tilde{\epsilon}^{*}_{0}>0$, $\varrho^{*}_{0}>0$ and $\tau_{0}>0$ depending only on $\tilde{\underline{U}}$, $L$ and $\alpha$ such that $\mathcal{C}\tilde{\epsilon}^{*}_{0}\leq \frac{1}{4}$, $\mathcal{C}\varrho^{*}_{0}\leq \frac{1}{4}$ and $\mathcal{C}\tau_{0}\|\delta\omega_{\rm cd}\|^{{(-\alpha,\{\mathcal{P}_{\rm e}\})}}_{1,\alpha;\tilde{\Gamma}_{\rm cd}}\leq \frac{1}{4}$.
Then, for $\tilde{\epsilon}\in(0,\tilde{\epsilon}^{*}_{0})$, $\varrho\in(0,\varrho^{*}_0)$ and $\tau\in(0,\tau_0)$, we can get by \eqref{eq:3.71c} that
\begin{align}\label{eq:3.71d}
\begin{split}
\|\dot{\varphi}^{(\rm n, \tau)}_{\omega_{\rm cd}}\|^{{(-1-\alpha,\tilde{\Sigma}^{(\rm e)}
\setminus \{\mathcal{O}\})}}_{2,\alpha;\tilde{\Omega}^{(\rm e)}}
+\|\dot{z}^{(\rm n, \tau)}_{\omega_{\rm cd}}\|_{1,\alpha;\tilde{\Omega}^{(\rm h)}}
\leq \mathcal{C}\|\delta\omega_{\rm cd}\|^{{(-\alpha,\{\mathcal{P}_{\rm e}\})}}_{1,\alpha;\tilde{\Gamma}_{\rm cd}},
\end{split}
\end{align}
where the constant $\mathcal{C}>0$ depends only on $\tilde{\underline{U}}$, $L$ and $\alpha$.}
Therefore, by the compactness, we can
let $(\dot{\varphi}^{(\rm n,0)}_{\omega_{\rm cd}}, \dot{z}^{(\rm n,0)}_{\omega_{\rm cd}})$ be the limit of the solutions
$(\dot{\varphi}^{(\rm n, \tau)}_{\omega_{\rm cd}}, \dot{z}^{(\rm n, \tau)}_{\omega_{\rm cd}})$ as $\tau \rightarrow 0$. Then, we have
\begin{eqnarray}\label{eq:5.8}
\left\{
\begin{array}{llll}
\sum_{i,j=1,2}\partial_{i}\big(a^{(\rm n)}_{ij, \omega_{\rm cd}}\partial_{j}\dot{\varphi}^{(\rm n,0)}_{\omega_{\rm cd}}\big)
=\sum_{i,j=1,2}\partial_{i}\Big(Da^{(\rm n)}_{ij, \omega_{\rm cd}}\cdot D\dot{\varphi}^{(\rm n,0)}_{\omega_{\rm cd}}
\partial_{j}\delta \varphi^{(\rm n)}_{\omega_{\rm cd}}\Big), &\ \  \mbox{in}\ \ \  \tilde{\Omega}^{(\rm e)}, \\[5pt]
\partial_{1}\dot{z}^{(\rm n,0)}_{-,\omega_{\rm cd}}+ \lambda^{(\rm n-1)}_{+}\partial_{2}\dot{z}^{(\rm n,0)}_{-,\omega_{\rm cd}}
=0, &\ \  \mbox{in}\ \ \ \tilde{\Omega}^{(\rm h)},  \\[5pt]
\partial_{1}\dot{z}^{(\rm n,0)}_{+,\omega_{\rm cd}}+ \lambda^{(\rm n-1)}_{-}\partial_{2}\dot{z}^{(\rm n,0)}_{+,\omega_{\rm cd}}
=0, &\ \  \mbox{in}\ \ \ \tilde{\Omega}^{(\rm h)},  \\[5pt]
\dot{\varphi}^{(\rm n,0)}_{\omega_{\rm cd}}=g^{(\rm n,0)}_{0,\omega_{\rm cd}}(\eta), &\ \
\mbox{on}\ \ \ \tilde{\Gamma}^{(\rm e)}_{\rm in},\\[5pt]
\partial_{1}\dot{\varphi}^{(\rm n,0)}_{\omega_{\rm cd}}=0,  &\ \   \mbox{on}\ \ \ \tilde{\Gamma}^{(\rm e)}_{\rm ex},\\[5pt]
\dot{z}^{(\rm n,0)}_{\omega_{\rm cd}}=0,  &\ \   \mbox{on}\ \ \ \tilde{\Gamma}^{(\rm h)}_{\rm in},\\[5pt]
\dot{\varphi}^{(\rm n,0)}_{\omega_{\rm cd}}=g^{(\rm n,0)}_{+,\omega_{\rm cd}}(\xi), &\ \  \mbox{on}\ \ \ \tilde{\Gamma}_{+},\\[5pt]
\dot{\varphi}^{(\rm n,0)}_{\omega_{\rm cd}}=\int^{\xi}_{0}\delta\omega_{\rm cd}(\nu)d\nu,
&\ \   \mbox{on}\ \ \ \tilde{\Gamma}_{\rm cd},\\[5pt]
\dot{z}^{(\rm n,0)}_{-, \omega_{\rm cd}}-\dot{ z}^{(\rm n,0)}_{+, \omega_{\rm cd}}
=2\beta^{(\rm n-1)}_{\rm cd, 1}\partial_{1}\dot{\varphi}^{(\rm n,0)}_{\omega_{\rm cd}}
+2\beta^{(\rm n-1)}_{\rm cd, 2}\partial_{2}\dot{\varphi}^{(\rm n,0)}_{\omega_{\rm cd}},
&\ \   \mbox{on}\ \ \ \tilde{\Gamma}_{\rm cd},\\[5pt]
\dot{z}^{(\rm n,0)}_{-, \omega_{\rm cd}}+\dot{z}^{(\rm n,0)}_{+, \omega_{\rm cd}}=0, &\ \  \mbox{on}\ \ \ \tilde{\Gamma}_{-},
\end{array}
\right.
\end{eqnarray}
with  $g^{(\rm n, 0)}_{0,\omega_{\rm cd}}(\xi)=g^{(\rm n, 0)}_{+,\omega_{\rm cd}}(m^{(\rm e)})$ and
\begin{eqnarray}\label{eq:5.8a}
\begin{split}
g^{(\rm n, 0)}_{0,\omega_{\rm cd}}(\eta)
&=-\frac{1}{\underline{\beta}_{0,2}}\sum_{j=1,2}\int^{\eta}_{0}
\bigg(\beta^{(\rm n)}_{0,j, \omega_{\rm cd}}-\underline{\beta}_{0,j}\bigg)
\partial_{j}\dot{\varphi}^{(\rm n,0)}_{\omega_{\rm cd}}d\mu\\[5pt]
&\ \ \ -\frac{1}{\underline{\beta}_{0,2}}\sum_{j=1,2}\int^{\eta}_{0}D\beta^{(\rm n)}_{0,j, \omega_{\rm cd}}
D\dot{\varphi}^{(\rm n,0)}_{\omega_{\rm cd}}\partial_{j}\delta \varphi^{(\rm n)}_{\omega_{\rm cd}}d\mu.
\end{split}
\end{eqnarray}
{\color{black}Moreover, by the methods used in Section 4, we have the following estimate
\begin{eqnarray}\label{eq:5.8b}
\begin{split}
\|\dot{\varphi}^{(\rm n,0)}_{\omega_{\rm cd}}\|^{{(-1-\alpha,\tilde{\Sigma}^{(\rm e)}
\setminus \{\mathcal{O}\})}}_{2,\alpha;\tilde{\Omega}^{(\rm e)}}+\|\dot{z}^{(\rm n,0)}_{\omega_{\rm cd}}\|_{1,\alpha;\tilde{\Omega}^{(\rm h)}}
\leq \mathcal{C}\|\delta\omega_{\rm cd}\|^{(-\alpha;\{\mathcal{P}_{\rm e}\})}_{1,\alpha; \tilde{\Gamma}_{\rm cd}},
\end{split}
\end{eqnarray}
where the constant $\mathcal{C}>0$ depends only on $\tilde{\underline{U}}$, $L$ and $\alpha$.}

\par {Now, let us consider the convergence rate of the limits $\dot{\varphi}^{(\rm n, \tau)}_{\omega_{\rm cd}}-\dot{\varphi}^{(\rm n,0)}_{\omega_{\rm cd}}$
	and $\dot{z}^{(\rm n, \tau)}_{\omega_{\rm cd}}-\dot{z}^{(\rm n,0)}_{\omega_{\rm cd}}$ as $\tau\rightarrow0$.} By \eqref{eq:5.7} and \eqref{eq:5.8}, $\dot{\varphi}^{(\rm n, \tau)}_{\omega_{\rm cd}}-\dot{\varphi}^{(\rm n,0)}_{\omega_{\rm cd}}$
and $\dot{z}^{(\rm n, \tau)}_{\omega_{\rm cd}}-\dot{z}^{(\rm n,0)}_{\omega_{\rm cd}}$ satisfy the following problem:
\begin{eqnarray}\label{eq:5.9}
\left\{
\begin{array}{llll}
\sum_{i,j=1,2}\partial_{i}\big(a^{(\rm n)}_{ij, \omega_{\rm cd}}\partial_{j}
(\dot{\varphi}^{(\rm n, \tau)}_{\omega_{\rm cd}}-\dot{\varphi}^{(\rm n,0)}_{\omega_{\rm cd}})\big)\\[5pt]
\  =\sum_{i,j=1,2}\partial_{i}\bigg(\frac{a^{(\rm n)}_{ij, \omega_{\rm cd}}-a^{(\rm n)}_{ij, \omega_{\rm cd}+\tau\delta\omega_{\rm cd}}}{\tau}\partial_{j}\delta \varphi^{(\rm n)}_{\omega_{\rm cd}+\tau\delta\omega_{\rm cd}}
-Da^{(\rm n)}_{ij, \omega_{\rm cd}}\cdot D\dot{\varphi}^{(\rm n,0)}_{\omega_{\rm cd}}
\partial_{j}\delta \varphi^{(\rm n)}_{\omega_{\rm cd}}\bigg),
&\  \mbox{in}\   \tilde{\Omega}^{(\rm e)}, \\[5pt]
\partial_{1}(\dot{z}^{(\rm n, \tau)}_{-,\omega_{\rm cd}}-\dot{z}^{(\rm n,0)}_{-,\omega_{\rm cd}})+
\lambda^{(\rm n-1)}_{+}\partial_{2}(\dot{z}^{(\rm n,\tau)}_{-,\omega_{\rm cd}}-\dot{z}^{(\rm n,0)}_{-,\omega_{\rm cd}})=0,
&\ \mbox{in}\ \tilde{\Omega}^{(\rm h)},  \\[5pt]
\partial_{1}(\dot{z}^{(\rm n, \tau)}_{+,\omega_{\rm cd}}-\dot{z}^{(\rm n,0)}_{+,\omega_{\rm cd}})
+\lambda^{(\rm n-1)}_{-}\partial_{2}(\dot{z}^{(\rm n,\tau)}_{+,\omega_{\rm cd}}-\dot{z}^{(\rm n,0)}_{+,\omega_{\rm cd}})=0,
&\  \mbox{in}\ \tilde{\Omega}^{(\rm h)},  \\[5pt]
\dot{\varphi}^{(\rm n, \tau)}_{\omega_{\rm cd}}-\dot{\varphi}^{(\rm n,0)}_{\omega_{\rm cd}}
=g^{(\rm n,\tau)}_{0,\omega_{\rm cd}}(\eta)-g^{(\rm n,0)}_{0,\omega_{\rm cd}}(\eta),
&\ \mbox{on}\  \tilde{\Gamma}^{(\rm e)}_{\rm in},\\[5pt]
\partial_{1}(\dot{\varphi}^{(\rm n,\tau)}_{\omega_{\rm cd}}-\dot{\varphi}^{(\rm n)}_{\omega_{\rm cd}})=0,
&\   \mbox{on}\ \tilde{\Gamma}^{(\rm e)}_{\rm ex},\\[5pt]
\dot{z}^{(\rm n, \tau)}_{\omega_{\rm cd}}-\dot{z}^{(\rm n,0)}_{\omega_{\rm cd}}=0,  &\  \mbox{on}\ \tilde{\Gamma}^{(\rm h)}_{\rm in},\\[5pt]
\dot{\varphi}^{(\rm n, \tau)}_{\omega_{\rm cd}}-\dot{\varphi}^{(\rm n,0)}_{\omega_{\rm cd}}
=g^{(\rm n,\tau)}_{+,\omega_{\rm cd}}(\xi)-g^{(\rm n,0)}_{+,\omega_{\rm cd}}(\xi), &\  \mbox{on}\ \tilde{\Gamma}_{+},\\[5pt]
\dot{\varphi}^{(\rm n, \tau)}_{\omega_{\rm cd}}-\dot{\varphi}^{(\rm n,0)}_{\omega_{\rm cd}}=0,
&\   \mbox{on}\  \tilde{\Gamma}_{\rm cd},\\[5pt]
\dot{z}^{(\rm n, \tau)}_{-, \omega_{\rm cd}}-\dot{z}^{(\rm n,0)}_{-, \omega_{\rm cd}}
-(\dot{z}^{(\rm n, \tau)}_{+, \omega_{\rm cd}}-\dot{z}^{(\rm n,0)}_{+, \omega_{\rm cd}})\\[5pt]
\qquad \ \ =2\beta^{(\rm n-1)}_{\rm cd, 1}\partial_{1}(\dot{\varphi}^{(\rm n,\tau)}_{\omega_{\rm cd}}
-\dot{\varphi}^{(\rm n,0)}_{\omega_{\rm cd}})
+2\beta^{(\rm n-1)}_{\rm cd, 2}\partial_{2}(\dot{\varphi}^{(\rm n,\tau)}_{\omega_{\rm cd}}-\dot{\varphi}^{(\rm n,0)}_{\omega_{\rm cd}}),
&\   \mbox{on}\  \tilde{\Gamma}_{\rm cd},\\[5pt]
\dot{z}^{(\rm n, \tau)}_{-, \omega_{\rm cd}}-\dot{z}^{(\rm n,0)}_{-, \omega_{\rm cd}}
+\dot{z}^{(\rm n, \tau)}_{+, \omega_{\rm cd}}-\dot{z}^{(\rm n,0)}_{+, \omega_{\rm cd}}=0,
&\ \mbox{on}\ \tilde{\Gamma}_{-},
\end{array}
\right.
\end{eqnarray}
with $g^{(\rm n,\tau)}_{+,\omega_{\rm cd}}(\xi)-g^{(\rm n,0)}_{0,\omega_{\rm cd}}(\xi)
=g^{(\rm n,\tau)}_{0,\omega_{\rm cd}}(m^{(\rm e)})-g^{(\rm n,0)}_{0,\omega_{\rm cd}}(m^{(\rm e)})$  and
\begin{eqnarray}\label{eq:5.9a}
\begin{split}
&g^{(\rm n,\tau)}_{0,\omega_{\rm cd}}(\eta)-g^{(\rm n,0)}_{0,\omega_{\rm cd}}(\eta)\\[5pt]
&=-\frac{1}{\underline{\beta}_{0,2}}\sum_{j=1,2}\int^{\eta}_{0}
\bigg(\beta^{(\rm n)}_{0,j, \omega_{\rm cd}+\tau\delta\omega_{\rm cd}}-\underline{\beta}_{0,j}\bigg)
\Big(\partial_{j}\dot{\varphi}^{(\rm n,\tau)}_{\omega_{\rm cd}}-\partial_{j}\dot{\varphi}^{(\rm n,0)}_{\omega_{\rm cd}}\Big)d\mu\\[5pt]
&\ \ \ -\frac{1}{\underline{\beta}_{0,2}}\sum_{j=1,2}\int^{\eta}_{0}
\bigg(\beta^{(\rm n)}_{0,j, \omega_{\rm cd}+\tau\delta\omega_{\rm cd}}-\beta^{(\rm n)}_{0,j, \omega_{\rm cd}}\bigg)
\partial_{j}\dot{\varphi}^{(\rm n,0)}_{\omega_{\rm cd}}d\mu\\[5pt]
&\ \ \  -\frac{1}{\underline{\beta}_{0,2}}\sum_{j=1,2}\int^{\eta}_{0}
\bigg(\frac{\beta^{(\rm n)}_{0,j, \omega_{\rm cd}+\tau\delta\omega_{\rm cd}}-\beta^{(\rm n)}_{0,j, \omega_{\rm cd}}}{\tau}
-D\beta^{(\rm n)}_{0,j,\omega_{\rm cd}}D\dot{\varphi}^{(\rm n,0)}_{\omega_{\rm cd}}\bigg)
\partial_{j}\delta \varphi^{(\rm n)}_{\omega_{\rm cd}}d\mu,
\end{split}
\end{eqnarray}
satisfying
\begin{eqnarray*}
&&\big\|g^{(\rm n,\tau)}_{0,\omega_{\rm cd}}-g^{(\rm n,0)}_{0,\omega_{\rm cd}}\big\|^{{\color{black}(-1-\alpha,\{\mathcal{O}, \mathcal{Q}_{\rm I}\} )}}_{2,\alpha;\tilde{\Gamma}^{(\rm e)}_{\rm in}}\\[5pt]
&&\leq\frac{1}{\underline{\beta}_{0,2}}\Bigg\|\sum_{j=1,2}\int^{\eta}_{0}
\bigg(\beta^{(\rm n)}_{0,j, \omega_{\rm cd}+\tau\delta\omega_{\rm cd}}-\underline{\beta}_{0,j}\bigg)
\Big(\partial_{j}\dot{\varphi}^{(\rm n,\tau)}_{\omega_{\rm cd}}-\partial_{j}\dot{\varphi}^{(\rm n,0)}_{\omega_{\rm cd}}\Big)d\mu\Bigg\|^{{\color{black}(-1-\alpha,\{\mathcal{O}, \mathcal{Q}_{\rm I}\} )}}_{2,\alpha;\tilde{\Gamma}^{(\rm e)}_{\rm in}}\\[5pt]
&&\ \ \ +\frac{1}{\underline{\beta}_{0,2}}\Bigg\|\sum_{j=1,2}\int^{\eta}_{0}
\bigg(\beta^{(\rm n)}_{0,j, \omega_{\rm cd}+\tau\delta\omega_{\rm cd}}-\beta^{(\rm n)}_{0,j, \omega_{\rm cd}}\bigg)
\partial_{j}\dot{\varphi}^{(\rm n,0)}_{\omega_{\rm cd}}d\mu\Bigg\|^{{\color{black}(-1-\alpha,\{\mathcal{O}, \mathcal{Q}_{\rm I}\} )}}_{2,\alpha;\tilde{\Gamma}^{(\rm e)}_{\rm in}}\\[5pt]
&&\ \ \  +\frac{1}{\underline{\beta}_{0,2}}\Bigg\|\sum_{j=1,2}\int^{\eta}_{0}
\bigg(\frac{\beta^{(\rm n)}_{0,j, \omega_{\rm cd}+\tau\delta\omega_{\rm cd}}-\beta^{(\rm n)}_{0,j, \omega_{\rm cd}}}{\tau}
-D\beta^{(\rm n)}_{0,j,\omega_{\rm cd}}D\dot{\varphi}^{(\rm n,0)}_{\omega_{\rm cd}}\bigg)
\partial_{j}\delta \varphi^{(\rm n)}_{\omega_{\rm cd}}d\mu\Bigg\|^{{\color{black}(-1-\alpha,\{\mathcal{O}, \mathcal{Q}_{\rm I}\} )}}_{2,\alpha;\tilde{\Gamma}^{(\rm e)}_{\rm in}}\\[5pt]
&&\leq \mathcal{C}\big\|\delta\varphi^{(\rm n, \tau)}_{\omega_{\rm cd}+\tau\delta\omega_{\rm cd}}\big\|^{{(-1-\alpha,\tilde{\Sigma}^{(\rm e)}
\setminus \{\mathcal{O}\})}}_{2,\alpha;\tilde{\Omega}^{(\rm e)}}
\big\|\dot{\varphi}^{(\rm n, \tau)}_{\omega_{\rm cd}}-\dot{\varphi}^{(\rm n,0)}_{\omega_{\rm cd}}\big\|^{{(-1-\alpha,\tilde{\Sigma}^{(\rm e)}
\setminus \{\mathcal{O}\})}}_{2,\alpha;\tilde{\Omega}^{(\rm e)}}\\[5pt]
&&\ \ \  +\mathcal{C}\big\|\dot{\varphi}^{(\rm n, \tau)}_{\omega_{\rm cd}}\big\|^{{(-1-\alpha,\tilde{\Sigma}^{(\rm e)}
\setminus \{\mathcal{O}\})}}_{2,\alpha;\tilde{\Omega}^{(\rm e)}}\big\|\dot{\varphi}^{(\rm n, 0)}_{\omega_{\rm cd}}\big\|^{{(-1-\alpha,\tilde{\Sigma}^{(\rm e)}
\setminus \{\mathcal{O}\})}}_{2,\alpha;\tilde{\Omega}^{(\rm e)}}\tau\\[5pt]
&&\ \ \ +\mathcal{C}\bigg(\big\|\dot{\varphi}^{(\rm n, \tau)}_{\omega_{\rm cd}}\big\|^{{(-1-\alpha,\tilde{\Sigma}^{(\rm e)}
\setminus \{\mathcal{O}\})}}_{2,\alpha;\tilde{\Omega}^{(\rm e)}}\bigg)^{2}\big\|\delta\varphi^{(\rm n)}_{\omega_{\rm cd}}\big\|^{{(-1-\alpha,\tilde{\Sigma}^{(\rm e)}
\setminus \{\mathcal{O}\})}}_{2,\alpha;\tilde{\Omega}^{(\rm e)}}\tau,
\end{eqnarray*}
for $\sigma>0$ sufficiently small.
Here the constant $\mathcal{C}>0$ depends only on $\underline{U}^{(\rm e)}$, $L$ and $\alpha$.
In the same way, by choosing $\sigma>0$ sufficiently small, we also have
\begin{eqnarray}\label{eq:5.9c}
\begin{split}
&\big\|g^{(\rm n,\tau)}_{+,\omega_{\rm cd}}-g^{(\rm n,0)}_{+,\omega_{\rm cd}}\big\|^{{\color{black}(-1-\alpha, \{\mathcal{Q}_{\rm I}, \mathcal{Q}_{\rm e} \})}}_{2,\alpha;\tilde{\Gamma}_{+}}\\[5pt]
&\leq \mathcal{C}\big\|\delta\varphi^{(\rm n, \tau)}_{\omega_{\rm cd}+\tau\delta\omega_{\rm cd}}\big\|^{{(-1-\alpha,\tilde{\Sigma}^{(\rm e)}
\setminus \{\mathcal{O}\})}}_{2,\alpha;\tilde{\Omega}^{(\rm e)}}\big\|\dot{\varphi}^{(\rm n, \tau)}_{\omega_{\rm cd}}-\dot{\varphi}^{(\rm n,0)}_{\omega_{\rm cd}}\big\|^{{(-1-\alpha,\tilde{\Sigma}^{(\rm e)}
\setminus \{\mathcal{O}\})}}_{2,\alpha;\tilde{\Omega}^{(\rm e)}}\\[5pt]
&\ \ \ +\mathcal{C}\tau\big\|\dot{\varphi}^{(\rm n, \tau)}_{\omega_{\rm cd}}\big\|^{{(-1-\alpha,\tilde{\Sigma}^{(\rm e)}
\setminus \{\mathcal{O}\})}}_{2,\alpha;\tilde{\Omega}^{(\rm e)}}\big\|\dot{\varphi}^{(\rm n, 0)}_{\omega_{\rm cd}}\big\|^{{(-1-\alpha,\tilde{\Sigma}^{(\rm e)}
\setminus \{\mathcal{O}\})}}_{2,\alpha;\tilde{\Omega}^{(\rm e)}}\\[5pt]
&\ \ \ +\mathcal{C}\bigg(\big\|\dot{\varphi}^{(\rm n, \tau)}_{\omega_{\rm cd}}\big\|^{{(-1-\alpha,\tilde{\Sigma}^{(\rm e)}
\setminus \{\mathcal{O}\})}}_{2,\alpha;\tilde{\Omega}^{(\rm e)}}\bigg)^{2}\big\|\delta\varphi^{(\rm n)}_{\omega_{\rm cd}}\big\|^{{(-1-\alpha,\tilde{\Sigma}^{(\rm e)}
\setminus \{\mathcal{O}\})}}_{2,\alpha;\tilde{\Omega}^{(\rm e)}}\tau.
\end{split}
\end{eqnarray}

Similarly to the proof of Proposition \ref{prop:4.1} in Section 4.1, we have in $\tilde{\Omega}^{(\rm e)}$ that

\begin{align*}
&\big\|\dot{\varphi}^{(\rm n, \tau)}_{\omega_{\rm cd}}-\dot{\varphi}^{(\rm n,0)}_{\omega_{\rm cd}}\big\|^{{(-1-\alpha,\tilde{\Sigma}^{(\rm e)}
\setminus \{\mathcal{O}\})}}_{2,\alpha;\tilde{\Omega}^{(\rm e)}}\\[5pt]
\leq& \mathcal{C}\Bigg( \sum_{i,j=1,2}\bigg\|\frac{a^{(\rm n)}_{ij, \omega_{\rm cd}}-a^{(\rm n)}_{ij, \omega_{\rm cd}+\tau\delta\omega_{\rm cd}}}{\tau}\partial_{j}\delta \varphi^{(\rm n)}_{\omega_{\rm cd}+\tau\delta\omega_{\rm cd}}
-Da^{(\rm n)}_{ij, \omega_{\rm cd}}\cdot D\dot{\varphi}^{(\rm n,0)}_{\omega_{\rm cd}}
\partial_{j}\delta \varphi^{(\rm n)}_{\omega_{\rm cd}}\bigg\|^{{(-\alpha,\tilde{\Sigma}^{(\rm e)}
\setminus \{\mathcal{O}\})}}_{1,\alpha;\tilde{\Omega}^{(\rm e)}}\Bigg)\\[5pt]
&\ \ \ +\mathcal{C}\Bigg(\Big\|g^{(\rm n,\tau)}_{0,\omega_{\rm cd}}-g^{(\rm n,0)}_{0,\omega_{\rm cd}}\Big\|^{{\color{black}(-1-\alpha, \{\mathcal{O}, \mathcal{Q}_{\rm I}\})}}_{2,\alpha;\tilde{\Gamma}^{(\rm e)}_{\rm in}}+\Big\|g^{(\rm n,\tau)}_{+,\omega_{\rm cd}}-g^{(\rm n,0)}_{+,\omega_{\rm cd}}\Big\|^{{\color{black}(-1-\alpha,  \{\mathcal{Q}_{\rm I}, \mathcal{Q}_{\rm e}\})}}_{2,\alpha;\tilde{\Gamma}_{+}}\Bigg)\\[5pt]
\leq& \mathcal{C} \sum_{i,j=1,2}\bigg\|Da^{(\rm n)}_{ij, \omega_{\rm cd}}\cdot D\dot{\varphi}^{(\rm n,0)}_{\omega_{\rm cd}}
\Big(\partial_{j}\delta \varphi^{(\rm n)}_{\omega_{\rm cd}+\tau\delta\omega_{\rm cd}}-\partial_{j}\delta \varphi^{(\rm n)}_{\omega_{\rm cd}}\Big)\bigg\|^{{(-\alpha,\tilde{\Sigma}^{(\rm e)}
\setminus \{\mathcal{O}\})}}_{1,\alpha;\tilde{\Omega}^{(\rm e)}}\\[5pt]
&\quad +\mathcal{C} \sum_{i,j=1,2}\bigg\|\Big(\frac{a^{(\rm n)}_{ij, \omega_{\rm cd}}-a^{(\rm n)}_{ij, \omega_{\rm cd}+\tau\delta\omega_{\rm cd}}}{\tau}-Da^{(\rm n)}_{ij, \omega_{\rm cd}}\cdot D\dot{\varphi}^{(\rm n,0)}_{\omega_{\rm cd}}\Big)\partial_{j}\delta \varphi^{(\rm n)}_{\omega_{\rm cd}+\tau\delta\omega_{\rm cd}}\bigg\|^{{(-\alpha,\tilde{\Sigma}^{(\rm e)}
\setminus \{\mathcal{O}\})}}_{1,\alpha;\tilde{\Omega}^{(\rm e)}}\\[5pt]
&\ \ \ +\mathcal{C}\big\|\delta\varphi^{(\rm n, \tau)}_{\omega_{\rm cd}+\tau\delta\omega_{\rm cd}}\big\|^{{(-1-\alpha,\tilde{\Sigma}^{(\rm e)}
\setminus \{\mathcal{O}\})}}_{2,\alpha;\tilde{\Omega}^{(\rm e)}}\big\|\dot{\varphi}^{(\rm n, \tau)}_{\omega_{\rm cd}}-\dot{\varphi}^{(\rm n,0)}_{\omega_{\rm cd}}\big\|^{(-1-\alpha,\tilde{\Sigma}^{(\rm e)}
\setminus \{\mathcal{O}\} )}_{2,\alpha;\tilde{\Omega}^{(\rm e)}}\\[5pt]
&\ \ \ +\mathcal{C}\big\|\dot{\varphi}^{(\rm n, \tau)}_{\omega_{\rm cd}}\big\|^{(-1-\alpha,\tilde{\Sigma}^{(\rm e)}
\setminus \{\mathcal{O}\})}_{2,\alpha;\tilde{\Omega}^{(\rm e)}}\big\|\dot{\varphi}^{(\rm n, 0)}_{\omega_{\rm cd}}\big\|^{(-1-\alpha,\tilde{\Sigma}^{(\rm e)}
\setminus \{\mathcal{O}\})}_{2,\alpha;\tilde{\Omega}^{(\rm e)}}\tau\\[5pt]
&\ \ \ +\mathcal{C}\bigg(\big\|\dot{\varphi}^{(\rm n, \tau)}_{\omega_{\rm cd}}\big\|^{(-1-\alpha,\tilde{\Sigma}^{(\rm e)}
\setminus \{\mathcal{O}\})}_{2,\alpha;\tilde{\Omega}^{(\rm e)}}\bigg)^{2}\big\|\delta\varphi^{(\rm n)}_{\omega_{\rm cd}}\big\|^{(-1-\alpha,\tilde{\Sigma}^{(\rm e)}
\setminus \{\mathcal{O}\})}_{2,\alpha;\tilde{\Omega}^{(\rm e)}}\tau\\[5pt]
\leq& \mathcal{C}\|\dot{\varphi}^{(\rm n,0)}_{\omega_{\rm cd}}\|^{(-1-\alpha,\tilde{\Sigma}^{(\rm e)}
\setminus \{\mathcal{O}\})}_{2,\alpha;\tilde{\Omega}^{(\rm e)}}
\|\dot{\varphi}^{(\rm n,\tau)}_{\omega_{\rm cd}}\|^{(-1-\alpha,\tilde{\Sigma}^{(\rm e)}
\setminus \{\mathcal{O}\})}_{2,\alpha;\tilde{\Omega}^{(\rm e)}}\tau
+\mathcal{C}\Big(\big\|\dot{\varphi}^{(\rm n, \tau)}_{\omega_{\rm cd}}\big\|^{(-1-\alpha,\tilde{\Sigma}^{(\rm e)}
\setminus \{\mathcal{O}\})}_{2,\alpha; \tilde{\Omega}^{(\rm e)}}\Big)^{2}\tau\\[5pt]
&\ \ \ +\mathcal{C}\big\|\delta \varphi^{(\rm n)}_{\omega_{\rm cd}+\tau\delta\omega_{\rm cd}}\big\|^{(-1-\alpha,\tilde{\Sigma}^{(\rm e)}
\setminus \{\mathcal{O}\})}_{2,\alpha;\tilde{\Omega}^{(\rm e)}}\big\|\dot{\varphi}^{(\rm n, \tau)}_{\omega_{\rm cd}}-\dot{\varphi}^{(\rm n,0)}_{\omega_{\rm cd}}\big\|^{{(-1-\alpha,\tilde{\Sigma}^{(\rm e)}
\setminus \{\mathcal{O}\})}}_{2,\alpha;\tilde{\Omega}^{(\rm e)}}\\[5pt]
&\ \ \ +\mathcal{C}\big\|\dot{\varphi}^{(\rm n, \tau)}_{\omega_{\rm cd}}\big\|^{{(-1-\alpha,\tilde{\Sigma}^{(\rm e)}
\setminus \{\mathcal{O}\})}}_{2,\alpha;\tilde{\Omega}^{(\rm e)}}\big\|\dot{\varphi}^{(\rm n, 0)}_{\omega_{\rm cd}}\big\|^{{(-1-\alpha,\tilde{\Sigma}^{(\rm e)}
\setminus \{\mathcal{O}\})}}_{2,\alpha;\tilde{\Omega}^{(\rm e)}}\tau\\[5pt]
&\ \ \ +\mathcal{C}\big\|\delta\varphi^{(\rm n)}_{\omega_{\rm cd}}\big\|^{{(-1-\alpha,\tilde{\Sigma}^{(\rm e)}
\setminus \{\mathcal{O}\})}}_{2,\alpha;\tilde{\Omega}^{(\rm e)}}\bigg(\big\|\dot{\varphi}^{(\rm n, \tau)}_{\omega_{\rm cd}}\big\|^{{(-1-\alpha,\tilde{\Sigma}^{(\rm e)}
\setminus \{\mathcal{O}\})}}_{2,\alpha;\tilde{\Omega}^{(\rm e)}}\bigg)^{2}\tau,
\end{align*}
where the constant $\mathcal{C}=\mathcal{C}(\underline{\tilde{U}},L, \alpha)>0$  is independent of $\rm n$. 
Proposition \ref{prop:4.1} and   \eqref{eq:5.5} yield  
{\color{black}
\begin{eqnarray*}
\begin{split}
&\quad\ \big\|\dot{\varphi}^{(\rm n, \tau)}_{\omega_{\rm cd}}-\dot{\varphi}^{(\rm n,0)}_{\omega_{\rm cd}}\big\|^{{(-1-\alpha,\tilde{\Sigma}^{(\rm e)}
\setminus \{\mathcal{O}\})}}_{2,\alpha;\tilde{\Omega}^{(\rm e)}}\\[5pt]
&\leq \mathcal{C}\Big(\|\omega_{0}-\underline{\omega}\|_{\mathcal{W}_{0}}+\|\omega_{\rm cd}\|^{{(-\alpha,\{\mathcal{P}_{\rm e}\})}}_{1,\alpha;\tilde{\Gamma}_{\rm cd}}
+\tau\|\delta\omega_{\rm cd}\|^{{(-\alpha,\{\mathcal{P}_{\rm e}\})}}_{1,\alpha;\tilde{\Gamma}_{\rm cd}}\Big)
\big\|\dot{\varphi}^{(\rm n, \tau)}_{\omega_{\rm cd}}-\dot{\varphi}^{(\rm n,0)}_{\omega_{\rm cd}}\big\|^{{(-1-\alpha,\tilde{\Sigma}^{(\rm e)}
\setminus \{\mathcal{O}\})}}_{2,\alpha;\tilde{\Omega}^{(\rm e)}}\\[5pt]
&\ \ \ +\mathcal{C}\Big(\big\|\delta\omega_{\rm cd}\big\|^{{(-\alpha,\{\mathcal{P}_{\rm e}\})}}_{1,\alpha;\tilde{\Gamma}_{\rm cd}}\Big)^{2}\tau\\[5pt]
&\leq \mathcal{C}\Big(\tilde{\epsilon}+\varrho+\tau\|\delta\omega_{\rm cd}\|^{{(-\alpha,\{\mathcal{P}_{\rm e}\})}}_{1,\alpha;\tilde{\Gamma}_{\rm cd}}\Big)
\big\|\dot{\varphi}^{(\rm n, \tau)}_{\omega_{\rm cd}}-\dot{\varphi}^{(\rm n,0)}_{\omega_{\rm cd}}\big\|^{{(-1-\alpha,\tilde{\Sigma}^{(\rm e)}
\setminus \{\mathcal{O}\})}}_{2,\alpha;\tilde{\Omega}^{(\rm e)}}
+\mathcal{C}\Big(\big\|\delta\omega_{\rm cd}\big\|^{{(-\alpha,\{\mathcal{P}_{\rm e}\})}}_{1,\alpha;\tilde{\Gamma}_{\rm cd}}\Big)^{2}\tau.
\end{split}
\end{eqnarray*}
}
Thus we can choose $\tilde{\epsilon}^{*}_{0}>0$, $\varrho^{*}_{0}>0$ and $\tau_{0}>0$ depending only on $\tilde{\underline{U}}$, $L$ and $\alpha$ such that $\mathcal{C}\epsilon^{*}_{0}\leq \frac{1}{4}$, $\mathcal{C}\varrho^{*}_{0}\leq \frac{1}{4}$ and $\mathcal{C}\tau_{0}\|\delta\omega_{\rm cd}\|^{{(-\alpha,\{\mathcal{P}_{\rm e}\})}}_{1,\alpha;\tilde{\Gamma}_{\rm cd}}\leq \frac{1}{4}$, and then for $\tilde{\epsilon}\in(0,\tilde{\epsilon}^{*}_{0})$, $\varrho\in(0,\varrho^{*}_0)$ and $\tau\in(0,\tau_0)$, we obtain 
{\color{black}
\begin{eqnarray}\label{eq:5.10}
\begin{split}
\big\|\dot{\varphi}^{(\rm n, \tau)}_{\omega_{\rm cd}}-\dot{\varphi}^{(\rm n,0)}_{\omega_{\rm cd}}\big\|^{{(-1-\alpha,\tilde{\Sigma}^{(\rm e)}
\setminus \{\mathcal{O}\})}}_{2,\alpha;\tilde{\Omega}^{(\rm e)}}
\leq \mathcal{C}\Big(\big\|\delta\omega_{\rm cd}\big\|^{{(-\alpha,\{\mathcal{P}_{\rm e}\})}}_{1,\alpha;\tilde{\Gamma}_{\rm cd}}\Big)^{2} \tau,
\end{split}
\end{eqnarray}
where $\mathcal{C}>0$ is a constant depending  only on $\underline{\tilde{U}}$, $L$ and $\alpha$.
}
\par With the estimate for $\dot{\varphi}^{(\rm n, \tau)}_{\omega_{\rm cd}}-\dot{\varphi}^{(\rm n,0)}_{\omega_{\rm cd}}$ and together
with the boundary condition on $\tilde{\Gamma}_{\rm cd}$, we can further estimate the solution $\dot{z}^{(\rm n, \tau)}_{\omega_{\rm cd}}- \dot{z}^{(\rm n,0)}_{\omega_{\rm cd}}$ in $\tilde{\Omega}^{(\rm h)}$
as in Section 4.2 to deduce that
\begin{eqnarray}\label{eq:5.11}
\begin{split}
\big\|\dot{z}^{(\rm n, \tau)}_{\omega_{\rm cd}}-\dot{z}^{(\rm n,0)}_{\omega_{\rm cd}}\big\|_{1,\alpha;\tilde{\Omega}^{(\rm e)}}
\leq \mathcal{C}\big\|\dot{\varphi}^{(\rm n, \tau)}_{\omega_{\rm cd}}-\dot{\varphi}^{(\rm n,0)}_{\omega_{\rm cd}}\big\|^{{(-1-\alpha,\tilde{\Sigma}^{(\rm e)}
\setminus \{\mathcal{O}\})}}_{2,\alpha;\tilde{\Omega}^{(\rm e)}}.
\end{split}
\end{eqnarray}
Hence, it follows from \eqref{eq:5.10} and \eqref{eq:5.11} that
{\color{black}
\begin{eqnarray}\label{eq:5.11a}
\begin{split}
&\big\|\dot{\varphi}^{(\rm n, \tau)}_{\omega_{\rm cd}}-\dot{\varphi}^{(\rm n,0)}_{\omega_{\rm cd}}\big\|^{{(-1-\alpha,\tilde{\Sigma}^{(\rm e)}
\setminus \{\mathcal{O}\})}}_{2,\alpha;\tilde{\Omega}^{(\rm e)}}
+\big\|\dot{z}^{(\rm n, \tau)}_{\omega_{\rm cd}}-\dot{z}^{(\rm n,0)}_{\omega_{\rm cd}}\big\|_{1,\alpha;\tilde{\Omega}^{(\rm h)}}
\leq \mathcal{C}\Big(\big\|\delta\omega_{\rm cd}\big\|^{{(-\alpha,\{\mathcal{P}_{\rm e}\})}}_{1,\alpha;\tilde{\Gamma}_{\rm cd}}\Big)^{2}\tau.
\end{split}
\end{eqnarray}
}
Therefore,  $(\dot{\varphi}^{(\rm n, \tau)}_{\omega_{\rm cd}}, \dot{z}^{(\rm n, \tau)}_{\omega_{\rm cd}})
\rightarrow (\dot{\varphi}^{(\rm n,0)}_{\omega_{\rm cd}}, \dot{z}^{(\rm n,0)}_{\omega_{\rm cd}})$ as $\tau\rightarrow 0$ in
$C^{{(-1-\alpha,\tilde{\Sigma}^{(\rm e)}
\setminus \{\mathcal{O}\})}}_{2,\alpha}
(\tilde{\Omega}^{(\rm e)})\times C^{1, \alpha}(\tilde{\Omega}^{(\rm h)})$.

{\color{black}Define
\begin{align}\label{eq:5.12}
\mathscr{F}_{\omega_{\rm cd}}[\omega_{\rm cd}](\delta \omega_{\rm cd})
:=\frac{1}{2}\sec^{2}\Big(\frac{\delta z^{(\rm n)}_{-,\omega_{\rm cd}}+\delta z^{(\rm n)}_{+,\omega_{\rm cd}}}{2}\Big)
\big(\dot{z}^{(\rm n,0)}_{-,\omega_{\rm cd}}+\dot{z}^{(\rm n,0)}_{+,\omega_{\rm cd}}\big)-\delta \omega_{\rm cd},
\end{align}
which is a linear map.
Then, we have
\begin{eqnarray*}
\begin{split}
&\mathscr{F}(\omega_{\rm cd}+\tau\delta\omega_{\rm cd})-\mathscr{F}(\omega_{\rm cd})-\tau\mathscr{F}_{\omega_{\rm cd}}[\omega_{\rm cd}](\delta \omega_{\rm cd})\\[5pt]
&\quad =\tan\Big(\frac{\delta z^{(\rm n)}_{-,\omega_{\rm cd}+\tau \delta \omega_{\rm cd}}
+\delta z^{(\rm n)}_{+,\omega_{\rm cd}+\tau \delta \omega_{\rm cd}}}{2}\Big)-\tan\Big(\frac{\delta z^{(\rm n)}_{-,\omega_{\rm cd}}
+\delta z^{(\rm n)}_{+,\omega_{\rm cd}}}{2}\Big)-\tau\delta \omega_{\rm cd}\\[5pt]
&\ \  \ \ \quad - \frac{1}{2}\tau\sec^{2}\Big(\frac{\delta z^{(\rm n)}_{-,\omega_{\rm cd}}+\delta z^{(\rm n)}_{+,\omega_{\rm cd}}}{2}\Big)
\big(\dot{z}^{(\rm n,0)}_{-,\omega_{\rm cd}}+\dot{z}^{(\rm n,0)}_{+,\omega_{\rm cd}}\big)+ \tau\delta \omega_{\rm cd} \\[5pt]
&\quad =\frac{\tau}{2}\int^{1}_{0}\sec^{2}\Big(\frac{\delta z^{(\rm n)}_{-,\omega_{\rm cd}}+\delta z^{(\rm n)}_{+,\omega_{\rm cd}}
+\varsigma\tau(\dot{z}^{(\rm n, \tau)}_{-,\omega_{\rm cd}}+\dot{z}^{(\rm n, \tau)}_{+,\omega_{\rm cd}})}{2}\Big)d\varsigma
\Big(\dot{z}^{(\rm n, \tau)}_{-,\omega_{\rm cd}}+\dot{z}^{(\rm n, \tau)}_{+,\omega_{\rm cd}}\Big)\\[5pt]
&\ \  \ \ \quad - \frac{\tau}{2}\sec^{2}\Big(\frac{\delta z^{(\rm n)}_{-,\omega_{\rm cd}}+\delta z^{(\rm n)}_{+,\omega_{\rm cd}}}{2}\Big)
\big(\dot{z}^{(\rm n,0)}_{-,\omega_{\rm cd}}+\dot{z}^{(\rm n,0)}_{+,\omega_{\rm cd}}\big).
\end{split}
\end{eqnarray*}
Thus, by \eqref{eq:3.71d} and \eqref{eq:5.11a}, we can further deduce that
\begin{eqnarray*}
\begin{split}
& \big\|\mathscr{F}(\omega_{\rm cd}+\tau\delta\omega_{\rm cd})-\mathscr{F}(\omega_{\rm cd})-\tau\mathscr{F}_{\omega_{\rm cd}}[\omega_{\rm cd}](\delta \omega_{\rm cd})\big\|^{(-\alpha,\{\mathcal{P}_{\rm e}\})}_{1,\alpha; \tilde{\Gamma}_{\rm cd}}\\[5pt]
&\qquad\qquad\qquad\qquad \leq \mathcal{C}\Big(\tau\|\dot{z}^{(\rm n, \tau)}_{\omega_{\rm cd}}\big\|^2_{1,\alpha;\tilde{\Omega}^{(\rm h)}}+\big\|\dot{z}^{(\rm n, \tau)}_{\omega_{\rm cd}}-\dot{z}^{(\rm n,0)}_{\omega_{\rm cd}}\big\|_{1,\alpha;\tilde{\Omega}^{(\rm h)}}\Big)\tau\\[5pt]
&\qquad\qquad\qquad\qquad \leq \mathcal{C}\Big(\big\|\delta\omega_{\rm cd}\big\|^{{(-\alpha,\{\mathcal{P}_{\rm e}\})}}_{1,\alpha;\tilde{\Gamma}_{\rm cd}}\Big)^{2}\tau^2,
\end{split}
\end{eqnarray*}
which implies that
\begin{eqnarray*}
\frac{\big\|\mathscr{F}(\omega_{\rm cd}+\tau\delta\omega_{\rm cd})-\mathscr{F}(\omega_{\rm cd})-\tau\mathscr{F}_{\omega_{\rm cd}}[\omega_{\rm cd}](\delta \omega_{\rm cd})\big\|^{(-\alpha,\{\mathcal{P}_{\rm e}\})}_{1,\alpha; \tilde{\Gamma}_{\rm cd}}}{\tau\big\|\delta \omega_{\rm cd}\big\|^{(-\alpha,\{\mathcal{P}_{\rm e}\})}_{1,\alpha; \tilde{\Gamma}_{\rm cd}}} \rightarrow 0,
\end{eqnarray*}
as $\tau\rightarrow 0$. 
Thus, $\mathscr{F}_{\omega_{\rm cd}}[\omega_{\rm cd}]$ is a Fr\'{e}chet derivative with respect to $\omega_{\rm cd}$ for the functional $\mathscr{F}(\omega_{\rm cd})$.
}

\smallskip

{\bf Step\ 2}:  Continuity of the map $\mathscr{F}$ and $\mathscr{F}_{\omega_{\rm cd}}[\omega_{\rm cd}]$
at the point $(\omega_{\rm cd}, \boldsymbol{\omega}_{0})=(0,\underline{\boldsymbol{\omega}})$.
\smallskip

\par The continuity of map $\mathscr{F}$ at the point $(\omega_{\rm cd}, \boldsymbol{\omega}_{0})=(0,\underline{\boldsymbol{\omega}})$
follows directly from Theorem \ref{thm:4.1}. Now, let us consider the continuity of the map
$\mathscr{F}_{\omega_{\rm cd}}[\omega_{\rm cd}]$.
{\color{black}For any fixed $\delta\omega_{\textrm{cd}}$ with $\|\delta\omega_{\textrm{cd}}\|^{(-\alpha, \{\mathcal{P}_{\rm e}\})}_{1,\alpha;\tilde{\Gamma}_{\rm cd}}<+\infty$,}
we assume   $\omega^{k}_{\textrm{cd}}\rightarrow \omega_{\textrm{cd}}$ in ${ C^{1,\alpha}_{(-\alpha, {\color{black}\{\mathcal{P}_{\rm e}\}})}}(\tilde{ \Gamma}_{\rm cd})$
as $k\rightarrow \infty$, and  will show that as $k\rightarrow \infty$,
{\color{black}
\begin{eqnarray}\label{eq:5.13}
\mathscr{F}_{\omega_{\rm cd}}[\omega^{k}_{\rm cd}](\delta \omega_{\textrm cd})
\rightarrow \mathscr{F}_{\omega_{\rm cd}}[\omega_{\rm cd}](\delta \omega_{\textrm cd}),
\ \ \ \mbox{in}\ \  \ { C^{1,\alpha}_{(-\alpha, \{\mathcal{P}_{\rm e}\})}(\tilde{ \Gamma}_{\rm cd})}.
\end{eqnarray}
}
\par By \eqref{eq:5.8}, the solutions
$(\dot{\varphi}^{(\rm n,0)}_{\omega^{k}_{\textrm cd}},\dot{z}^{(\rm n,0)}_{\omega^{k}_{\rm cd}})$
corresponding to $\omega^{k}_{\textrm cd}$ satisfy
\begin{eqnarray}\label{eq:5.14}
\left\{
\begin{array}{llll}
\sum_{i,j=1,2}\partial_{i}\big(a^{(\rm n)}_{ij, \omega^{k}_{\rm cd}}\partial_{j}\dot{\varphi}^{(\rm n,0)}_{\omega^{k}_{\rm cd}}\big)
=\sum_{i,j=1,2}\partial_{i}\Big(Da^{(\rm n)}_{ij, \omega^{k}_{\rm cd}}\cdot D\dot{\varphi}^{(\rm n,0)}_{\omega^{k}_{\rm cd}}
\partial_{j}\delta \varphi^{(\rm n)}_{\omega^{k}_{\rm cd}}\Big), &\ \  \mbox{in}\ \ \  \tilde{\Omega}^{(\rm e)}, \\[5pt]
\partial_{1}\dot{z}^{(\rm n,0)}_{-,\omega^{k}_{\rm cd}}+ \lambda^{(\rm n-1)}_{+}\partial_{2}\dot{z}^{(\rm n,0)}_{-,\omega^{k}_{\rm cd}}
=0, &\ \  \mbox{in}\ \ \ \tilde{\Omega}^{(\rm h)},  \\[5pt]
\partial_{1}\dot{z}^{(\rm n,0)}_{+,\omega^{k}_{\rm cd}}+ \lambda^{(\rm n-1)}_{-}\partial_{2}\dot{z}^{(\rm n,0)}_{+,\omega^{k}_{\rm cd}}
=0, &\ \  \mbox{in}\ \ \ \tilde{\Omega}^{(\rm h)},  \\[5pt]
\dot{\varphi}^{(\rm n,0)}_{\omega^{k}_{\rm cd}}=g^{(\rm n,0)}_{0, \omega^{k}_{\rm cd}}, &\ \
\mbox{on}\ \ \ \tilde{\Gamma}^{(\rm e)}_{\rm in},\\[5pt]
\partial_{1}\dot{\varphi}^{(\rm n,0)}_{\omega^{k}_{\rm cd}}=0,  &\ \   \mbox{on}\ \ \ \tilde{\Gamma}^{(\rm e)}_{\rm ex},\\[5pt]
\dot{z}^{(\rm n,0)}_{\omega^{k}_{\rm cd}}=0,  &\ \   \mbox{on}\ \ \ \tilde{\Gamma}^{(\rm h)}_{\rm in},\\[5pt]
\dot{\varphi}^{(\rm n,0)}_{\omega^{k}_{\rm cd}}=g^{(\rm n,0)}_{+,\omega^{k}_{\rm cd}}, &\ \  \mbox{on}\ \ \ \tilde{\Gamma}_{+},\\[5pt]
\dot{\varphi}^{(\rm n,0)}_{\omega^{k}_{\rm cd}}=\int^{\nu}_{0}{\color{black}\delta\omega_{\rm cd}(\nu)}d\nu,
&\ \   \mbox{on}\ \ \ \tilde{\Gamma}_{\rm cd},\\[5pt]
\dot{z}^{(\rm n,0)}_{-, \omega^{k}_{\rm cd}}-\dot{z}^{(\rm n,0)}_{+, \omega^{k}_{\rm cd}}
=2\beta^{(\rm n-1)}_{\rm cd, 1}\partial_{1}\dot{\varphi}^{(\rm n,0)}_{\omega^{k}_{\rm cd}}
+2\beta^{(\rm n-1)}_{\rm cd, 2}\partial_{2}\dot{\varphi}^{(\rm n,0)}_{\omega^{k}_{\rm cd}},
&\ \   \mbox{on}\ \ \ \tilde{\Gamma}_{\rm cd},\\[5pt]
\dot{z}^{(\rm n,0)}_{-, \omega^{k}_{\rm cd}}+\dot{z}^{(\rm n,0)}_{+, \omega^{k}_{\rm cd}}=0, &\ \  \mbox{on}\ \ \ \tilde{\Gamma}_{-},
\end{array}
\right.
\end{eqnarray}
where $g^{(\rm n,0)}_{0, \omega^{k}_{\rm cd}}$ is given in \eqref{eq:5.8a} by changing $\omega_{\rm cd}$ to $\omega^{k}_{\rm cd}$
and $g^{(\rm n,0)}_{+, \omega^{k}_{\rm cd}}(\xi)=g^{(\rm n,0)}_{0, \omega^{k}_{\rm cd}}(m^{(\rm e)})$.

\par In the same way as   in {\bf Step 1}, we know that
{\color{black}
\begin{eqnarray*}
(\dot{\varphi}^{(\rm n,0)}_{\omega^{k}_{\rm cd}},
\dot{z}^{(\rm n,0)}_{\omega^{k}_{\rm cd}})\rightarrow (\dot{\varphi}^{(\rm n,0)}_{\omega_{\rm cd}},  \dot{z}^{(\rm n,0)}_{\omega_{\rm cd}}), \qquad \mbox{in} \qquad C^{{(-1-\alpha, \tilde{\Sigma}^{(\rm e)}
\setminus \{\mathcal{O}\})}}_{2,\alpha}(\tilde{\Omega}^{(\rm e)})\times C^{1, \alpha}(\tilde{\Omega}^{(\rm h)}),
\end{eqnarray*}
and
\begin{eqnarray*}
\delta z^{(\rm n)}_{\pm, \omega^{k}_{\rm cd}}\rightarrow \delta z^{(\rm n)}_{\pm, \omega_{\rm cd}}, \qquad \mbox{in} \quad C^{1, \alpha}(\tilde{\Omega}^{(\rm h)}), \quad \mbox{as} \quad k\rightarrow \infty.
\end{eqnarray*}
Then
\begin{eqnarray*}
\begin{split}
&\quad\ \big\|\mathscr{F}_{\omega_{\rm cd}}[\omega^{k}_{\rm cd}](\delta \omega_{\textrm cd})-\mathscr{F}_{\omega_{\rm cd}}[\omega_{\rm cd}](\delta \omega_{\textrm cd})\big\|^{(-\alpha, \{\mathcal{P}_{\rm e}\})}_{1,\alpha; \tilde{\Gamma}_{\rm cd}}\\[5pt]
& \leq \frac{1}{2}\bigg\|\sec^{2}\Big(\frac{\delta z^{(\rm n)}_{-,\omega^{k}_{\rm cd}}+\delta z^{(\rm n)}_{+,\omega^{k}_{\rm cd}}}{2}\Big)
\big(\dot{z}^{(\rm n,0)}_{-,\omega^{k}_{\rm cd}}+\dot{z}^{(\rm n,0)}_{+,\omega^{k}_{\rm cd}}\big)-\sec^{2}\Big(\frac{\delta z^{(\rm n)}_{-,\omega_{\rm cd}}+\delta z^{(\rm n)}_{+,\omega_{\rm cd}}}{2}\Big)
\big(\dot{z}^{(\rm n,0)}_{-,\omega_{\rm cd}}+\dot{z}^{(\rm n,0)}_{+,\omega_{\rm cd}}\big)\bigg\|^{(-\alpha, \{\mathcal{P}_{\rm e}\})}_{1,\alpha; \tilde{\Gamma}_{\rm cd}}\\[5pt]
&\leq \mathcal{C}\Big(\|\delta z^{(\rm n)}_{\omega^{k}_{\rm cd}}-\delta z^{(\rm n)}_{\omega_{\rm cd}}\|_{1,\alpha; \tilde{\Omega}^{(\rm h)}}
+\|\dot{z}^{(\rm n,0)}_{\omega^{k}_{\rm cd}}-\dot{z}^{(\rm n,0)}_{\omega_{\rm cd}}\|_{1,\alpha; \tilde{\Omega}^{(\rm h)}}\Big)\\[5pt]
& \rightarrow 0, \qquad \mbox{as} \qquad k\rightarrow \infty,
\end{split}
\end{eqnarray*}
which implies that \eqref{eq:5.13} holds by \eqref{eq:5.11} and Theorem \ref{thm:4.1}.
}
\medskip

\par {\bf Step\ 3}:  ${\left.\mathscr{F}_{\omega_{\rm cd}}[\omega_{\rm cd}]\right.}\big|_{(\omega_{\rm cd}, \boldsymbol{\omega}_{0})=(0,\underline{\boldsymbol{\omega}})}$ is an isomorphism.
\smallskip

\par We need to show that for any given function $f\in { C^{1,\alpha}_{(-\alpha, \{\mathcal{P}_{\rm e}\})}}(\tilde{ \Gamma}_{\rm cd})$,
there exists a unique $\widehat{\delta \omega}_{\rm cd}\in C^{1,\alpha}_{(-\alpha, \{\mathcal{P}_{\rm e}\})}(\tilde{ \Gamma}_{\rm cd})$ such that
$\Big(\left.\mathscr{F}_{\omega_{\rm cd}}[\omega_{\rm cd}]\right.\big|_{(\omega_{\rm cd}, \boldsymbol{\omega}_{0})=(0,\underline{\boldsymbol{\omega}})}\Big)(\widehat{\delta \omega}_{\rm cd}) =f$, i.e.,
\begin{align}\label{eq:5.15}
\frac{1}{2}\Big(\widehat{\dot{z}^{(\rm n,0)}}_{-,\omega_{\rm cd}}+\widehat{\dot{z}^{(\rm n,0)}}_{+,\omega_{\rm cd}}\Big)
-\widehat{\delta \omega}_{\rm cd}=f,\ \ \ \  \mbox{on}\ \ \  \underline{\tilde{\Gamma}}_{\rm cd}.
\end{align}

\par At the background state, the solution $\widehat{\delta z^{(\rm n,0)}}_{\omega_{\rm cd}}$ satisfies
\begin{eqnarray}\label{eq:5.16}
\left\{
\begin{array}{llll}
e_{1}\partial_{11}\widehat{\dot{\varphi}^{(\rm n,0)}}_{\omega_{\rm cd}}+e_{2}\partial_{22}\widehat{\dot{\varphi}^{(\rm n,0)}}_{\omega_{\rm cd}}=0, &\qquad \  \mbox{in}\ \ \  \underline{\tilde{\Omega}}^{(\rm e)}, \\[5pt]
\partial_{1}\widehat{\dot{z}^{(\rm n,0)}}_{-,\omega_{\rm cd}}+ \underline{\lambda}_{+}\partial_{2}\widehat{\dot{z}^{(\rm n,0)}}_{-,\omega_{\rm cd}}=0, &\qquad \  \mbox{in}\ \ \ \underline{\tilde{\Omega}}^{(\rm h)},  \\[5pt]
\partial_{1}\widehat{\dot{z}^{(\rm n,0)}}_{+,\omega_{\rm cd}}+ \underline{\lambda}_{-}\partial_{2}\widehat{\dot{z}^{(\rm n,0)}}_{+,\omega_{\rm cd}}=0, &\qquad \  \mbox{in}\ \ \ \underline{\tilde{\Omega}}^{(\rm h)},  \\[5pt]
\widehat{\dot{\varphi}^{(\rm n,0)}}_{\omega_{\rm cd}}=0, &\qquad \
\mbox{on}\ \ \ \underline{\tilde{\Gamma}}^{(\rm e)}_{\rm in},\\[5pt]
\partial_{1}\widehat{\dot{\varphi}^{(\rm n,0)}}_{\omega_{\rm cd}}=0,  &\qquad \   \mbox{on}\ \ \ \underline{\tilde{\Gamma}}^{(\rm e)}_{\rm ex},\\[5pt]
\widehat{\dot{z}^{(\rm n,0)}}_{\omega_{\rm cd}}=0,  &\qquad \   \mbox{on}\ \ \ \underline{\tilde{\Gamma}}^{(\rm h)}_{\rm in},\\[5pt]
\widehat{\dot{\varphi}^{(\rm n,0)}}_{\omega_{\rm cd}}=0, &\qquad \  \mbox{on}\ \ \ \underline{\tilde{\Gamma}}_{+},\\[5pt]
\partial_{1}\widehat{\dot{\varphi}^{(\rm n,0)}}_{\omega_{\rm cd}}=\widehat{\delta\omega}_{\rm cd},
&\qquad \   \mbox{on}\ \ \ \underline{\tilde{\Gamma}}_{\rm cd},\\[5pt]
\widehat{\dot{z}^{(\rm n,0)}}_{-, \omega_{\rm cd}}-\widehat{\dot{z}^{(\rm n,0)}}_{+, \omega_{\rm cd}}
=2\underline{\beta}_{\rm cd, 2}\partial_{2}\widehat{\dot{\varphi}^{(\rm n,0)}}_{\omega_{\rm cd}},
&\qquad \   \mbox{on}\ \ \ \underline{\tilde{\Gamma}}_{\rm cd},\\[5pt]
\widehat{\dot{z}^{(\rm n,0)}}_{-, \omega_{\rm cd}}+\widehat{\dot{z}^{(\rm n,0)}}_{+, \omega_{\rm cd}}=0, &\qquad \  \mbox{on}\ \ \ \underline{\tilde{\Gamma}}_{-},
\end{array}
\right.
\end{eqnarray}
where $e_{1}$, $e_{2}$ are given by \eqref{eq:4.8a} and
\begin{eqnarray*}
\begin{split}
\underline{\lambda}_{+}=-\underline{\lambda}_{-}=\frac{\underline{\rho}^{(\rm h)}\underline{u}^{(\rm h)}\underline{c}^{(\rm h)}}
{\sqrt{(\underline{u}^{(\rm h)})^{2}-(\underline{c}^{(\rm h)})^{2}}},\ \ \
\underline{\beta}_{\rm cd, 2}=\frac{(\underline{\rho}^{(\rm e)}\underline{c}^{(\rm e)})^{2}(\underline{u}^{(\rm e)})^{3}
\sqrt{(\underline{u}^{(\rm h)})^{2}-(\underline{c}^{(\rm h)})^{2}}}
{\underline{\rho}^{(\rm h)}\underline{c}^{(\rm h)}(\underline{u}^{(\rm h)})^{2}
\big((\underline{c}^{(\rm e)})^{2}-(\underline{u}^{(\rm e)})^{2}\big)}>0.
\end{split}
\end{eqnarray*}

From \eqref{eq:5.4}, since
\begin{align*}
\frac{\underline{m}^{(\rm h)}}{\underline{\lambda}_{+}}=\frac{\underline{\rho}^{(\rm h)}\underline{u}^{(\rm h)}}{\underline{\lambda}_{+}}
=\sqrt{(\underline{M}^{(\rm h)})^{2}-1}\geq L/2,
\end{align*}
by the characteristic method we know that $\widehat{\dot{z}^{(\rm n,0)}}_{-,\omega_{\rm cd}}=0$
in $\underline{\tilde{\Omega}}^{(\rm h)}$.  By  \eqref{eq:5.15}, the
boundary condition for $\widehat{\dot{\varphi}^{(\rm n,0)}}_{\omega_{\rm cd}}$ on $\underline{\tilde{\Gamma}}_{\rm cd}$ is
\begin{align}\label{eq:5.17}
\partial_{1}\widehat{\dot{\varphi}^{(\rm n,0)}}_{\omega_{\rm cd}}+\underline{\beta}_{\rm cd, 2}
\partial_{2}\widehat{\dot{\varphi}^{(\rm n,0)}}_{\omega_{\rm cd}}
=-f,\ \ \ \ \mbox{on} \ \ \  \  \underline{\tilde{\Gamma}}_{\rm cd}.
\end{align}
Then   solving $\widehat{\delta\omega}_{\rm cd}$ is equivalent to solving the following boundary value problem for
$\widehat{\dot{\varphi}^{(\rm n, 0)}}_{\omega_{\rm cd}}$ in $\underline{\tilde{\Omega}}^{(\rm e)}$ for a given function $f\in C^{1,\alpha}_{(-\alpha, \{\mathcal{P}_{\rm e}\})}(\tilde{\underline \Gamma}_{\rm cd})$:
\begin{eqnarray}\label{eq:5.18}
\left\{
\begin{array}{llll}
e_{1}\partial_{11}\widehat{\dot{\varphi}^{(\rm n, 0)}}_{\omega_{\rm cd}}+e_{2}\partial_{22}\widehat{\dot{\varphi}^{(\rm n, 0)}}_{\omega_{\rm cd}}=0, &\qquad \  \mbox{in}\ \ \  \underline{\tilde{\Omega}}^{(\rm e)}, \\[5pt]
\widehat{\dot{\varphi}^{(\rm n, 0)}}_{\omega_{\rm cd}}=0, &\qquad \ \mbox{on}\ \ \ \underline{\tilde{\Gamma}}^{(\rm e)}_{\rm in},\\[5pt]
\partial_{1}\widehat{\dot{\varphi}^{(\rm n, 0)}}_{\omega_{\rm cd}}=0,
&\qquad \   \mbox{on}\ \ \ \underline{\tilde{\Gamma}}^{(\rm e)}_{\rm ex},\\[5pt]
\partial_{1}\widehat{\dot{\varphi}^{(\rm n, 0)}}_{\omega_{\rm cd}}
+\underline{\beta}_{\rm cd, 2}\partial_{2}\widehat{\dot{\varphi}^{(\rm n, 0)}}_{\omega_{\rm cd}}
=-f,    &\qquad \   \mbox{on}\ \ \ \underline{\tilde{\Gamma}}_{\rm cd}.
\end{array}
\right.
\end{eqnarray}

\par In order to show the existence and uniqueness of problem \eqref{eq:5.18}, we introduce a new function
\begin{eqnarray}\label{eq:5.19}
\widehat{\delta\psi^{(\rm n)}}_{\omega_{\rm cd}}:=\partial_{1}\widehat{\dot{\varphi}^{(\rm n,0)}}_{\omega_{\rm cd}}.
\end{eqnarray}
 Then, $\widehat{\delta\psi^{(\rm n)}}$ satisfies
\begin{eqnarray}\label{eq:5.20}
\left\{
\begin{array}{llll}
e_{1}\partial_{11}\widehat{\delta\psi^{(\rm n)}}_{\omega_{\rm cd}}+e_{2}\partial_{22}\widehat{\delta\psi^{(\rm n)}}_{\omega_{\rm cd}}=0, &\qquad \  \mbox{in}\ \ \  \underline{\tilde{\Omega}}^{(\rm e)}, \\[5pt]
\partial_{1}\widehat{\delta\psi^{(\rm n)}}_{\omega_{\rm cd}}=0, &\qquad \ \mbox{on}\ \ \ \underline{\tilde{\Gamma}}^{(\rm e)}_{\rm in},\\[5pt]
\widehat{\delta\psi^{(\rm n)}}_{\omega_{\rm cd}}=0,
&\qquad \   \mbox{on}\ \ \ \underline{\tilde{\Gamma}}^{(\rm e)}_{\rm ex},\\[5pt]
\partial_{1}\widehat{\delta\psi^{(\rm n)}}_{\omega_{\rm cd}}
+\underline{\beta}_{\rm cd, 2}\partial_{2}\widehat{\delta\psi^{(\rm n)}}_{\omega_{\rm cd}}
=-\partial_{1}f,    &\qquad \   \mbox{on}\ \ \ \underline{\tilde{\Gamma}}_{\rm cd}.
\end{array}
\right.
\end{eqnarray}
  By Theorem 1.1 in \cite{lg2},   \eqref{eq:5.20} admits a unique solution
$\widehat{\delta\psi^{(\rm n)}}_{\omega_{\rm cd}}\in C^{(-\alpha,  {\tilde{\Sigma}^{(\rm e)}
\setminus \{\mathcal{O}\}} )}_{1,\alpha}(\underline{\tilde{\Omega}}^{(\rm e)})$.
Then, by \eqref{eq:5.19} and the boundary condition on $\underline{\tilde{\Gamma}}^{(\rm e)}_{\rm ex}$
for $\widehat{\delta\psi^{(\rm n)}}_{\omega_{\rm cd}}$, we see that
{\begin{eqnarray*}
\widehat{\dot{\varphi}^{(\rm n,0)}}_{\omega_{\rm cd}}(\xi,\eta)=\int^{\xi}_{L}\widehat{\delta\psi^{(\rm n)}}_{\omega_{\rm cd}}(s,\eta)ds.
\end{eqnarray*}}
{By \eqref{eq:5.20}}, it implies   $\widehat{\dot{\varphi}^{(\rm n,0)}}_{\omega_{\rm cd}}\in C^{(-1-\alpha,  {\tilde{\Sigma}^{(\rm e)}
\setminus \{\mathcal{O}\}})}_{2,\alpha}(\underline{\tilde{\Omega}}^{(\rm e)})$.

\medskip
Now, we can apply the implicit function theorem to conclude that, there exists a small constant $\tilde{\epsilon}^{*}_{0}>0$ depending only on $\underline{U}$ and $L$ such that
for $\epsilon\in (0, \tilde{\epsilon}^{*}_{0})$, the equation $\mathscr{F}(\omega_{\rm cd}, \boldsymbol{\omega}_{0})=0$ admits a unique solution
$\omega_{\rm cd}=\omega_{\rm cd}(\boldsymbol{\omega}_{0})$ satisfying
\begin{eqnarray*}
\begin{split}
{\|\omega_{\rm cd}\|^{(-\alpha, \{\mathcal{P}_{\rm e}\})}_{1,\alpha; \tilde{\Gamma}_{\rm cd}}}
\leq \tilde{C}_{\omega}\| \boldsymbol{\omega}_{0}-\underline{\boldsymbol{\omega}}\|_{\mathcal{W}_{0}}\leq \tilde{C}_{\omega}\tilde{\epsilon},
\end{split}
\end{eqnarray*}
where $\tilde{C}_{\omega}$ depends only on $\underline{\boldsymbol{\omega}}$ and $L$.

\end{proof}

\bigskip

\section{
Existence and Uniqueness of Problem $(\mathbf{FP})$}\setcounter{equation}{0}

Based on the arguments in Sections 3-4, we will prove in this section 
that the map $\mathcal{J}$   in \eqref{eq:3.76}   constructed by the iteration algorithm $(\mathbf{FP})_{\rm n}$
is a well-defined contractive map so that there exists a fixed point, which is actually the solution of the fixed boundary value problem $(\mathbf{FP})$. We summarize this in the following lemma.

\begin{lemma}\label{lem:6.1}
{\color{black}For any given $m^{(\rm e)} \in \mathcal{M}_{\sigma}$ with $\sigma>0$ sufficiently small and under the assumptions \eqref{eq:3.65}-\eqref{eq:3.66} in Theorem \ref{thm:3.3},
there exist some constants $\alpha_{0}\in(0,1)$, $\mathcal{C}^{*}_{I,0}>0$ and $\tilde{\epsilon}^{*}_{I,0}>0$ depending only on $\tilde{\underline{U}}$ and $L$, 
such that for each $\alpha\in(0,\alpha_{0})$ and $\tilde{\epsilon}<\epsilon_{I} \in(\mathcal{C}^{*}_{I,0}\tilde{\epsilon}, \tilde{\epsilon}^{*}_{I,0})$ with $\tilde{\epsilon}>0$ sufficiently small,}
the iteration algorithm $(\mathbf{FP})_{\rm n}$ generates a well-defined sequence $\big\{(\varphi^{(\rm n)}, z^{(\rm n)})\big\}^{\infty}_{n=1}$ in $\mathscr{K}_{2{\color{black}\epsilon_{I}}}$.
Moreover, the sequence $\big\{(\varphi^{(\rm n)}, z^{(\rm n)})\big\}^{\infty}_{\rm n=1}$
is convergent in {\color{black}$C^{1,\alpha}\big(\tilde{\Omega}^{(\rm e)}\big)\times C^{0,\alpha}\big(\tilde{\Omega}^{(\rm h)}\big)$}
and its limit is the unique solution of the fixed boundary value problem $(\mathbf{FP})$.
\end{lemma}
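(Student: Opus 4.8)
\textbf{Proof proposal for Lemma \ref{lem:6.1}.}
The plan is to verify the two properties announced for the map $\mathcal{J}$ in \eqref{eq:3.81}: that it is well-defined from $\mathscr{K}_{2\epsilon}$ to itself, and that it is a contraction in the lower-order norm $C^{1}(\tilde{\Omega}^{(\rm e)})\times C^{0}(\tilde{\Omega}^{(\rm h)})$; then the conclusion follows from the Banach fixed point theorem together with a compactness argument to upgrade the regularity of the limit. First I would treat well-definedness. For a given $(\varphi^{(\rm n-1)}, z^{(\rm n-1)})\in\mathscr{K}_{2\epsilon}$, one forms the coefficients $\beta^{(\rm n-1)}_{\rm cd, 1}$, $\beta^{(\rm n-1)}_{\rm cd, 2}$, $\lambda^{(\rm n-1)}_{\pm}$, and $c_{\rm cd}$ via \eqref{eq:3.72}--\eqref{eq:3.74}; by Proposition \ref{prop:5.1} there is a unique flow angle $\omega_{\rm cd}\in\mathcal{W}$ solving $\mathscr{F}(\omega_{\rm cd};\boldsymbol{\omega}_0)=0$ with $\|\omega_{\rm cd}\|^{(-\alpha,\mathcal{P})}_{1,\alpha;\tilde{\Gamma}_{\rm cd}}\le C_\omega\varrho\le C_\omega\epsilon$; feeding this $\omega_{\rm cd}$ into problem $(\widetilde{\mathbf{FP}})_{\rm n}$, Theorem \ref{thm:4.1} produces a unique $(\varphi^{(\rm n)}, z^{(\rm n)})$ satisfying the a priori bound \eqref{eq:4.1}. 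Since the fixed point condition on $\omega_{\rm cd}$ makes the boundary condition $\partial_\xi\varphi^{(\rm n)}=\tan\frac{z^{(\rm n)}_-+z^{(\rm n)}_+}{2}$ on $\tilde{\Gamma}_{\rm cd}$ hold, this $(\varphi^{(\rm n)}, z^{(\rm n)})$ actually solves $(\mathbf{FP})_{\rm n}$. The right-hand side of \eqref{eq:4.1} is bounded by $C^{*}\tilde{\varepsilon}$ by hypothesis \eqref{eq:3.65}, so choosing $\tilde{\varepsilon}$ so that $C^{*}\tilde{\varepsilon}\le 2\epsilon$ gives $(\varphi^{(\rm n)}, z^{(\rm n)})\in\mathscr{K}_{2\epsilon}$; this shows $\mathcal{J}:\mathscr{K}_{2\epsilon}\to\mathscr{K}_{2\epsilon}$.

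Next I would establish contractivity. Take two elements $(\varphi^{(\rm n-1)}_k, z^{(\rm n-1)}_k)\in\mathscr{K}_{2\epsilon}$, $k=1,2$, with images $(\varphi^{(\rm n)}_k, z^{(\rm n)}_k)$ under $\mathcal{J}$, and set $\Phi=\varphi^{(\rm n)}_1-\varphi^{(\rm n)}_2$, $Z=z^{(\rm n)}_1-z^{(\rm n)}_2$. Subtracting the two copies of $(\mathbf{FP})_{\rm n}$, one sees that $(\Phi, Z)$ solves a linear elliptic--hyperbolic problem of exactly the same structure as $(\widetilde{\mathbf{FP}})_{\rm n}$, but now with right-hand sides and boundary data that are quadratic in the differences: the elliptic equation for $\Phi$ picks up terms proportional to $\big(a^{(\delta\varphi^{(\rm n)}_1)}_{ij}-a^{(\delta\varphi^{(\rm n)}_2)}_{ij}\big)$ and to $\beta^{(\rm n-1)}_{\rm cd,\cdot}$-differences, while the transport equations for $Z$ acquire source terms from $\lambda^{(\rm n-1)}_{\pm,1}-\lambda^{(\rm n-1)}_{\pm,2}$ and the $\tilde{\Gamma}_{\rm cd}$-coupling picks up $\beta^{(\rm n-1)}_{\rm cd,i,1}-\beta^{(\rm n-1)}_{\rm cd,i,2}$. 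Each such difference is controlled, by the mean value form of \eqref{eq:3.72}--\eqref{eq:3.74} and the Schauder/weighted estimates, by $C\epsilon\big(\|\varphi^{(\rm n-1)}_1-\varphi^{(\rm n-1)}_2\|+\|z^{(\rm n-1)}_1-z^{(\rm n-1)}_2\|\big)$ in the relevant norms, with an extra smallness factor $\epsilon$ coming from the fact that the first-order coefficients vanish at the background state. Re-running the linear estimates of Proposition \ref{prop:4.1} (for $\Phi$) and Propositions \ref{prop:4.2}--\ref{prop:4.5} (for $Z$) — but now measuring only in $C^{1}(\tilde{\Omega}^{(\rm e)})\times C^{0}(\tilde{\Omega}^{(\rm h)})$ so that no loss of a derivative occurs — yields
\begin{eqnarray*}
\|\Phi\|_{1,0;\tilde{\Omega}^{(\rm e)}}+\|Z\|_{0,0;\tilde{\Omega}^{(\rm h)}}
\le C\epsilon\Big(\|\varphi^{(\rm n-1)}_1-\varphi^{(\rm n-1)}_2\|_{1,0;\tilde{\Omega}^{(\rm e)}}
+\|z^{(\rm n-1)}_1-z^{(\rm n-1)}_2\|_{0,0;\tilde{\Omega}^{(\rm h)}}\Big).
\end{eqnarray*}
Choosing $\epsilon$ small enough that $C\epsilon\le\frac12$, the map $\mathcal{J}$ is a contraction on $\mathscr{K}_{2\epsilon}$ in the $C^{1}\times C^{0}$-metric.

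Since $\mathscr{K}_{2\epsilon}$ is closed in the weighted $C^{2,\alpha}\times C^{1,\alpha}$-norm and bounded there, it is a complete metric space under the coarser $C^{1}\times C^{0}$-metric (a $C^{1}\times C^{0}$-Cauchy sequence in $\mathscr{K}_{2\epsilon}$ has, by the uniform \eqref{eq:4.1} bound and Arzel\`a--Ascoli, a subsequence converging in $C^{2,\alpha-}\times C^{1,\alpha-}$, whose limit lies in $\mathscr{K}_{2\epsilon}$ and equals the $C^1\times C^0$-limit). Hence the Banach fixed point theorem gives a unique fixed point $(\varphi, z)\in\mathscr{K}_{2\epsilon}$, i.e. $\mathcal{J}(\varphi, z)=(\varphi, z)$; at this point $\omega_{\rm cd}$ coincides with $\tan\frac{z_-+z_+}{2}$ on $\tilde{\Gamma}_{\rm cd}$ and $(\varphi, z)$ solves $(\mathbf{FP})$, with the bound \eqref{eq:3.67} inherited from \eqref{eq:4.1}. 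Uniqueness of the solution of $(\mathbf{FP})$ in $\mathscr{K}_{2\epsilon}$ follows because any such solution is a fixed point of $\mathcal{J}$. I expect the main obstacle to be the contraction estimate in the supersonic region: one must carry the difference $Z$ through all six subdomains $\tilde{\Omega}^{(\rm h)}_{I}$--$\tilde{\Omega}^{(\rm h)}_{VI}$ and their finitely many reflected copies as in the proof of Proposition \ref{prop:4.2}, tracking how the differences of the characteristic speeds $\lambda^{(\rm n-1)}_{\pm,k}$ perturb the characteristic curves (Lemmas \ref{lem:4.1}--\ref{lem:4.6}) and hence the traces of $Z$ on $\tilde{\Gamma}_{\rm cd}$ and $\tilde{\Gamma}_{-}$, and ensuring that each reflection contributes only a factor bounded uniformly in $\rm n$ so that the geometric smallness $C\epsilon$ is not destroyed after $\ell=[L/\xi_0^*]+1$ iterations; the condition \eqref{eq:3.66} on $\underline{M}^{(\rm h)}$ is exactly what guarantees that the reflection geometry closes up and the number of reflections stays under control.
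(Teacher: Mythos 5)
Your proposal is essentially the same two-step strategy as the paper's: (i) invoke Theorem \ref{thm:4.1} and Proposition \ref{prop:5.1} to show $\mathcal{J}$ maps $\mathscr{K}_{2\epsilon}$ to itself, (ii) estimate the difference of two iterates by an elliptic--hyperbolic system with quadratically small sources and couplings, conclude a contraction, and then upgrade the regularity of the limit by compactness. One technical point deserves care: the paper actually establishes the contraction in the H\"older norm $C^{1,\alpha}(\tilde{\Omega}^{(\rm e)})\times C^{0,\alpha}(\tilde{\Omega}^{(\rm h)})$ (inequality \eqref{equ:6.7}), not literally in $C^{1}\times C^{0}$ as you write; the Schauder-type estimates you invoke for the elliptic equation for $\Phi$ require H\"older continuity of the coefficient differences $(a^{(\rm n)}_{ij}-a^{(\rm n+1)}_{ij})$ and of the boundary data on $\tilde{\Gamma}_{\rm cd}$, so the natural (and in fact the one the paper uses) contraction norm carries the $\alpha$; the $C^{1}\times C^{0}$ convergence asserted in the statement is then a weakening of the $C^{1,\alpha}\times C^{0,\alpha}$ Cauchy property. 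Apart from this, your description of where the smallness factor $C\epsilon$ comes from (mean-value forms of \eqref{eq:3.72}--\eqref{eq:3.74}, vanishing of the first-order coefficients at the background state, the role of \eqref{eq:3.66} in bounding the number of characteristic reflections) and of the final compactness/semicontinuity argument to show the fixed point lies in $\mathscr{K}_{2\epsilon}$ with $C^{2,\alpha}_{(-1-\alpha,\tilde{\Sigma}^{(\rm e)}\setminus\{\mathcal{O}\})}\times C^{1,\alpha}$ regularity matches the paper.
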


\begin{proof}
Firstly, it follows from Theorem \ref{thm:4.1} and Proposition \ref{prop:5.1} that
\begin{eqnarray*}
\begin{split}
&\big\|\delta \varphi^{(\rm n)}\big\|^{{(-1-\alpha, \tilde{\Sigma}^{(\rm e)}
\setminus \{\mathcal{O}\})}}_{2, \alpha; \tilde{\Omega}^{(\rm e)}}
+\|\delta z^{(\rm n)}\|_{1, \alpha; \tilde{\Omega}^{(\rm h)}}\\[5pt]
\leq& C^{*}\bigg({\big\|\delta\tilde{p}^{(\rm e)}_{0}\big\|_{1,\alpha;\tilde{\Gamma}^{(\rm e)}_{\rm in} }}+
\sum_{k=\rm e, \rm h}\big\|\delta\tilde{B}^{(\rm k)}_{0}\big\|_{1,\alpha;\tilde{\Gamma}^{(\rm k)}_{\rm in}}
+\sum_{k=\rm e, \rm h}\big\|\delta\tilde{S}^{(\rm k)}_{0}\big\|_{1,\alpha;\tilde{\Gamma}^{(\rm k)}_{\rm in}}\\[5pt]
& \qquad +\big\|\delta z_{0}\big\|_{1,\alpha; \tilde{\Gamma}^{(\rm h)}_{\rm in}}
+{\big\|\omega_{\rm cd}\big\|^{{\color{black}(-\alpha,\{\mathcal{P}_{\rm e}\})}}_{1,\alpha;\tilde{\Gamma}_{\rm cd}}}+\big\|\tilde{\omega}_{\rm e}
\big\|^{{\color{black}(-1-\alpha, \{\mathcal{P}_{\rm e}, \mathcal{Q}_{\rm e}\})}}_{2,\alpha;\tilde{\Gamma}^{(\rm h)}_{\rm in}}
+\big\|g_{+}-1\big\|_{2,\alpha;\tilde{ \Gamma}_{+}}+\big\|g_{-}+1\big\|_{2,\alpha;  \tilde{\Gamma}_{-}}\bigg)\\[5pt]
\leq& \tilde{C}^{*}{\color{black}\tilde{\epsilon}},
\end{split}
\end{eqnarray*}
where the constant $\tilde{C}^*=\tilde{C}^*(\tilde{\underline{U}}, L)>0$ is independent of $\rm n$ and {\color{black}$\tilde{\epsilon}$}.
By taking {\color{black}$\tilde{\epsilon}$} small enough we can get that $\tilde{C}^{*}{\color{black}\tilde{\epsilon}}\leq 2{\color{black}\epsilon_{I}}$, which in turn
indicates that the iteration algorithm $(\mathbf{FP})_{\rm n}$ is well-defined in
$\mathscr{K}_{2{\color{black}\epsilon_{I}}}$. This means that the map $\mathcal{J}$ given in \eqref{eq:3.80} is well defined  and maps $\mathscr{K}_{2{\color{black}\epsilon_{I}}}$ to itself:
\begin{eqnarray*}
\mathcal{J}:\ \mathscr{K}_{2{\color{black}\epsilon_{I}}} \longmapsto \mathscr{K}_{2{\color{black}\epsilon_{I}}}.
\end{eqnarray*}

\par Next, we will show that $\mathcal{J}$ is a contraction mapping in $C^{1}(\tilde{\Omega}^{(\rm e)})\times C^{0}(\tilde{\Omega}^{(\rm h)})$ which 
admits a fixed point in $C^{2,\alpha}_{{(-1-\alpha, \tilde{\Sigma}^{(\rm e)}
\setminus \{\mathcal{O}\})}}(\tilde{\Omega}^{(\rm e)})\times C^{1}(\tilde{\Omega}^{(\rm h)})$.
Let
$(\varphi^{(\rm n+1)}, z^{(\rm n+1)}):=\mathcal{J}(\varphi^{(\rm n)}, z^{(\rm n)})$ and
\begin{eqnarray}\label{eq:5.1}
\begin{split}
\delta Z^{(\rm n+1)}=\delta z^{(\rm n+1)}-\delta z^{(\rm n)},\ \ \
\delta\Phi^{(\rm n+1)}=\delta\varphi^{(\rm n+1)}-\delta\varphi^{(\rm n)},
\end{split}
\end{eqnarray}
where $\delta z^{(\rm n+1)}, \ \delta z^{(\rm n)}$ and $\delta\varphi^{(\rm n+1)},\ \delta\varphi^{(\rm n)}$
are the solutions to the $(\mathbf{FP})_{\rm n}$, respectively.
 Then, $\delta Z^{(\rm n+1)}$ and $\delta\Phi^{(\rm n+1)}$ satisfy
\begin{eqnarray}\label{eq:6.2-a}
\left\{
\begin{array}{llll}
\sum_{i,j=1,2}\partial_{j}\Big(a^{(\rm n)}_{ij}\partial_{i}\delta\Phi^{(\rm n+1)}\Big)=
\sum_{i,j=1,2}\partial_{j}\Big(\big(a^{(\rm n)}_{ij}-a^{(\rm n+1)}_{ij}\big)\partial_{i}\delta\varphi^{(\rm n+1)}\Big),
&\ \ \  \mbox{in} \ \ \  \tilde{\Omega}^{(\rm e)},   \\[5pt]
\partial_{1}\delta Z^{(\rm n+1)}+\textrm{diag}\big(\lambda^{(\rm n)}_{+},\ \lambda^{(\rm n)}_{-}\big)
\partial_{2}\delta Z^{(\rm n+1)}\\[5pt]
\ \ \qquad\qquad  =-\textrm{diag} \Big(\lambda^{(\rm n)}_{+}-\lambda^{(\rm n-1)}_{+}, \
\lambda^{(\rm n)}_{-}-\lambda^{(\rm n-1)}_{-}\Big)\partial_{2}\delta z^{(\rm n)},
&\ \ \  \mbox{in} \ \ \ \tilde{\Omega}^{(\rm h)},  \\[5pt]
\delta \Phi^{(\rm n+1)}=g^{(\rm n+1)}_{0}(\eta)-g^{(\rm n)}_{0}(\eta), &\ \ \  \mbox{on} \ \ \ \tilde{\Gamma}^{(\rm e)}_{\rm in}, \\[5pt]
\partial_{1}\delta \Phi^{(\rm n+1)}=0,  &\ \ \  \mbox{on} \ \ \ \tilde{\Gamma}^{(\rm e)}_{\rm ex}, \\[5pt]
\delta Z^{(\rm n+1)}=0, &\ \ \  \mbox{on} \ \ \ \tilde{\Gamma}^{(\rm h)}_{\rm in}, \\[5pt]
\delta \Phi^{(\rm n+1)}= g^{(\rm n+1)}_{+}(\xi)-g^{(\rm n)}_{+}(\xi), &\ \ \  \mbox{on} \ \ \ \tilde{\Gamma}_{+},\\[5pt]
\delta Z^{(\rm n+1)}_{-}+\delta Z^{(\rm n+1)}_{+}=2\arctan\partial_{1}(\delta \varphi^{(\rm n)}+\delta \Phi^{(\rm n+1)})-2\arctan\partial_{1}\delta \varphi^{(\rm n)},
&\ \ \  \mbox{on} \ \ \ \tilde{\Gamma}_{\rm cd}, \\[5pt]
\delta Z^{(\rm n+1)}_{-}-\delta Z^{(\rm n+1)}_{+}=2\beta^{(\rm n)}_{\rm cd, 1}\partial_{1}\delta \Phi^{(\rm n+1)}
+2\beta^{(\rm n)}_{\rm cd,2}\partial_{2}\delta \Phi^{(\rm n+1)}\\[5pt]
\ \ \ \qquad\qquad  +2\big(\beta^{(\rm n)}_{\rm cd,1}-\beta^{(\rm n-1)}_{\rm cd, 1}\big)\partial_{1}\delta \varphi^{(\rm n)}
+2\big(\beta^{(\rm n)}_{\rm cd, 2}-\beta^{(\rm n-1)}_{\rm cd, 2}\big)\partial_{2}\delta \varphi^{(\rm n)},
&\ \ \  \mbox{on}\ \ \ \tilde{\Gamma}_{\rm cd}, \\[5pt]
\delta Z^{(\rm n+1)}_{-}+\delta Z^{(\rm n+1)}_{+}=0,   &\ \ \ \mbox{on}\ \ \ \tilde{\Gamma}_{-},
\end{array}
\right.
\end{eqnarray}
where $a^{(\ell)}_{ij}=\int^{1}_{0}\partial_{i}\mathcal{N}_{i}(D\underline{\varphi}+\varsigma D\delta\varphi^{(\ell)})d\varsigma$
for $\ell=\rm n,\ n+1$, and
\begin{eqnarray}\label{eq:6.2a}
\begin{split}
&g^{(\rm n+1)}_{0}-g^{(\rm n)}_{0}
=-\frac{1}{\underline{\beta}_{0,2}}\sum_{j=1,2}\int^{\eta}_{0}\bigg( \Big(\beta^{(\rm n+1)}_{0,j}
-\underline{\beta}_{0,j}\Big)\partial_{j}\delta\Phi^{(\rm n+1)}-\Big(\beta^{(\rm n+1)}_{0,j}
-\beta^{(\rm n)}_{0,j}\Big)\partial_{j}\delta\varphi^{(\rm n)}\bigg)d\mu,\\[5pt]
&g^{(\rm n+1)}_{+}-g^{(\rm n)}_{+}
=-\frac{1}{\underline{\beta}_{0,2}}\sum_{j=1,2}\int^{m^{(\rm e)}}_{0}\bigg( \Big(\beta^{(\rm n+1)}_{0,j}
-\underline{\beta}_{0,j}\Big)\partial_{j}\delta\phi^{(\rm n+1)}-\Big(\beta^{(\rm n+1)}_{0,j}
-\beta^{(\rm n)}_{0,j}\Big)\partial_{j}\delta\varphi^{(\rm n)}\bigg)d\mu.
\end{split}
\end{eqnarray}
Here $\beta^{(\rm n+1)}_{0,j}, \ \beta^{(\rm n)}_{0,j}, j=1,2$ are defined by \eqref{eq:3.71} for $\delta\varphi^{(\rm n)}$ and $\delta\varphi^{(\rm n+1)}$.

Choosing {$\tilde{\epsilon}>0$ sufficiently small} with $|\lambda_{\pm}|/m^{(\rm h)}\geq L/2 $ and then
following the argument in Section 4.2, we know that there exists a constant $C>0$
depending only on $\tilde{\underline{U}}$, $L$ and $\alpha$ such that
\begin{eqnarray}\label{eq:6.3}
\begin{split}
\big\|\delta Z^{(\rm n+1)}_{-}\big\|_{0,\alpha,\tilde{\Omega}^{(\rm h)}}
&
\leq C\bigg(\big\|\big(\lambda^{(\rm n)}_{+}-\lambda^{(\rm n-1)}_{+}\big)\partial_{2}\delta z^{(\rm n)}_{-}\big\|_{0,\alpha;\tilde{\Omega}^{(\rm h)}}
+\big\|\big(\lambda^{(\rm n)}_{-}-\lambda^{(\rm n-1)}_{-}\big)\partial_{2}\delta z^{(\rm n)}_{+}\big\|_{0,\alpha;\tilde{\Omega}^{(\rm h)}}\bigg)\\[5pt]
&\leq C \epsilon_{I}\big\|\delta Z^{(\rm n)}\big\|_{0,\alpha,\tilde{\Omega}^{(\rm h)}}.
\end{split}
\end{eqnarray}
On the other hand, eliminating $\delta Z^{(\rm n+1)}_{+}$ on $\tilde{\Gamma}_{\rm cd}$ gives the boundary condition:
\begin{eqnarray}\label{eq:6.4}
\begin{split}
&\tilde{\beta}^{(\rm n)}_{\rm cd, 1}\partial_{1}\delta \Phi^{(\rm n+1)}
+\tilde{\beta}^{(\rm n)}_{\rm cd,2}\partial_{2}\delta \Phi^{(\rm n+1)}\\[5pt]
=&\big(\beta^{(\rm n)}_{\rm cd,1}-\beta^{(\rm n-1)}_{\rm cd, 1}\big)\partial_{1}\delta \varphi^{(\rm n)}
+\big(\beta^{(\rm n)}_{\rm cd, 2}-\beta^{(\rm n-1)}_{\rm cd, 2}\big)\partial_{2}\delta \varphi^{(\rm n)}+\delta Z^{(\rm n+1)}_{-},
&\ \ \  \mbox{on}\ \ \ \tilde{\Gamma}_{\rm cd},
\end{split}
\end{eqnarray}
where
\begin{eqnarray*}
\begin{split}
\tilde{\beta}^{(\rm n)}_{\rm cd, 1}=\beta^{(\rm n)}_{\rm cd, 1}+\int^{1}_{0}\frac{d\varsigma}{1+(\partial_{1}\delta \varphi^{(\rm n)}+\varsigma\partial_{1}\delta \Phi^{(\rm n+1)})^{2}},
\qquad \ \   \tilde{\beta}^{(\rm n)}_{\rm cd, 2}=\beta^{(\rm n)}_{\rm cd, 2}.
\end{split}
\end{eqnarray*}
For the solution $\delta \Phi^{(\rm n+1)}$ in $\tilde{\Omega}^{(\rm e)}$ with boundary condition \eqref{eq:6.4},   we have
\begin{eqnarray}\label{eq:6.5}
\begin{split}
&\big\|\delta \Phi^{(\rm n+1)}\big\|_{1,\alpha,\tilde{\Omega}^{(\rm e)}}\\[5pt]
\leq& C\Big\|\sum_{i,j=1,2}\partial_{j}\Big(\big(a^{(\rm n)}_{ij}-a^{(\rm n+1)}_{ij}\big)\partial_{i}\delta\varphi^{(\rm n+1)}\Big)\Big\|_{0,\alpha;\tilde{\Omega}^{(\rm e)}}
+\big\|g^{(\rm n+1)}_{0}-g^{(\rm n)}_{0}\big\|_{0,\alpha; \tilde{\Gamma}^{(\rm e)}_{\rm in}}
+\big\|g^{(\rm n+1)}_{+}-g^{(\rm n)}_{+}\big\|_{0,\alpha; \tilde{\Gamma}_{+}}\\[5pt]
&+C\Big\|\big(\beta^{(\rm n)}_{\rm cd,1}-\beta^{(\rm n-1)}_{\rm cd, 1}\big)\partial_{1}\delta \varphi^{(\rm n)}
+\big(\beta^{(\rm n)}_{\rm cd, 2}-\beta^{(\rm n-1)}_{\rm cd, 2}\big)\partial_{2}\delta \varphi^{(\rm n)}+\delta Z^{(\rm n+1)}_{-}\Big\|
_{0,\alpha; \tilde{\Omega}^{(\rm e)}}\\[5pt]
\leq& C\sum_{j=1,2}\bigg\|\int^{\eta}_{0}\bigg( \Big(\beta^{(\rm n+1)}_{0,j}
-\underline{\beta}_{0,j}\Big)\partial_{j}\delta\Phi^{(\rm n+1)}-\Big(\beta^{(\rm n+1)}_{0,j}
-\beta^{(\rm n)}_{0,j}\Big)\partial_{j}\delta\varphi^{(\rm n)}\bigg)d\mu \bigg\|_{0,\alpha; \tilde{\Gamma}^{(\rm e)}_{\rm in}}\\[5pt]
&\ \ \ +C\Big\|\big(\beta^{(\rm n)}_{\rm cd,1}-\beta^{(\rm n-1)}_{\rm cd, 1}\big)\partial_{1}\delta \varphi^{(\rm n)}
+\big(\beta^{(\rm n)}_{\rm cd, 2}-\beta^{(\rm n-1)}_{\rm cd, 2}\big)\partial_{2}\delta \varphi^{(\rm n)}+\delta Z^{(\rm n+1)}_{-}\Big\|
_{0,\alpha; \tilde{\Omega}^{(\rm e)}}\\[5pt]
&\ \ \ +C \big\|\delta \varphi^{(\rm n)}\big\|^{{(-1-\alpha, \tilde{\Sigma}^{(\rm e)}
\setminus \{\mathcal{O}\})}}_{2,\alpha,\tilde{\Omega}^{(\rm e)}}\big\|\delta \Phi^{(\rm n)}\big\|_{1,\alpha,\tilde{\Omega}^{(\rm e)}}+\big\|\delta Z^{(\rm n+1)}_{-}\big\|_{0,\alpha;\tilde{\Gamma}_{\rm cd}}\\[5pt]
\leq& C{\color{black}\big(\tilde{\epsilon}+\epsilon_{I}\big)}\Big(\big\|\delta Z^{(\rm n)}\big\|_{0,\alpha,\tilde{\Omega}^{(\rm h)}}+\big\|\delta \Phi^{(\rm n)}\big\|_{1,\alpha,\tilde{\Omega}^{(\rm e)}}\Big)+C{\color{black}\epsilon_{I}}\big\|\delta \Phi^{(\rm n+1)}\big\|_{1,\alpha,\tilde{\Omega}^{(\rm e)}},
\end{split}
\end{eqnarray}
where we have used the estimate \eqref{eq:6.3}, provided that $\sigma>0$ is sufficiently small.
Then, similarly to Section 4.2, we can solve $\delta Z^{(\rm n+1)}_{+}$ with the boundary condition on $\tilde{\Gamma}_{\rm cd}$ and
the initial data on $\tilde{\Gamma}^{(\rm h)}_{\rm in}$, together with the estimates \eqref{eq:6.3} and \eqref{eq:6.5}. Thus,
\begin{eqnarray}\label{eq:6.6}
\begin{split}
&\big\|\delta Z^{(\rm n+1)}_{+}\big\|_{0,\alpha,\tilde{\Omega}^{(\rm h)}}\\[5pt]
\leq& C\Big\|2\beta^{(\rm n)}_{\rm cd, 1}\partial_{1}\delta \Phi^{(\rm n+1)}
+2\beta^{(\rm n)}_{\rm cd,2}\partial_{2}\delta \Phi^{(\rm n+1)}\Big\|_{0,\alpha;\tilde{\Gamma}_{\rm cd}}
 +C\Big\|2\big(\beta^{(\rm n)}_{\rm cd,1}-\beta^{(\rm n-1)}_{\rm cd, 1}\big)\partial_{1}\delta \varphi^{(\rm n)}\Big\|_{0,\alpha;\tilde{\Gamma}_{\rm cd}}\\[5pt]
&\quad +C\Big\|2\big(\beta^{(\rm n)}_{\rm cd, 2}-\beta^{(\rm n-1)}_{\rm cd, 2}\big)
\partial_{2}\delta \varphi^{(\rm n)}\Big\|_{0,\alpha;\tilde{\Gamma}_{\rm cd}}
+\big\|\delta Z^{(\rm n+1)}_{-}\big\|_{0,\alpha,\tilde{\Gamma}_{\rm cd}}\\[5pt]
\leq&  C{\color{black}\big(\tilde{\epsilon}+\epsilon_{I}\big)}\Big(\big\|\delta Z^{(\rm n)}\big\|_{0,\alpha,\tilde{\Omega}^{(\rm h)}}+\big\|\delta \Phi^{(\rm n)}\big\|_{1,\alpha,\tilde{\Omega}^{(\rm e)}}\Big)+C{\color{black}\epsilon_{I}}\big\|\delta \Phi^{(\rm n+1)}\big\|_{1,\alpha,\tilde{\Omega}^{(\rm e)}}.
\end{split}
\end{eqnarray}

\par Finally, combining the estimates \eqref{eq:6.3}, \eqref{eq:6.5} and \eqref{eq:6.6} together, we get
\begin{eqnarray*}
\begin{split}
&\big\|\delta \Phi^{(\rm n+1)}\big\|_{1,\alpha;\tilde{\Omega}^{(\rm e)}}
+\big\|\delta Z^{(\rm n+1)}\big\|_{0,\alpha,\tilde{\Omega}^{(\rm h)}}\\[5pt]
&\quad \leq  C{\color{black}\big(\tilde{\epsilon}+\epsilon_{I}\big)}\Big(\big\|\delta Z^{(\rm n)}\big\|_{0,\alpha,\tilde{\Omega}^{(\rm h)}}+\big\|\delta \Phi^{(\rm n)}\big\|_{1,\alpha,\tilde{\Omega}^{(\rm e)}}\Big)+C{\color{black}\epsilon_{I}}\big\|\delta \Phi^{(\rm n+1)}\big\|_{1,\alpha,\tilde{\Omega}^{(\rm e)}},
\end{split}
\end{eqnarray*}
where constant $C>0$ depends only on $\tilde{\underline{U}}$, $L$ and $\alpha$.

{Then we can choose constants $\mathcal{C}^{*}_{I,0}>0$, $\epsilon^{*}_{I,0}>0$ and $\alpha_0\in(0,1)$ depending on $\tilde{\underline{U}}$ and $L$,  such that for any $\alpha\in(0,\alpha_0)$
and $\tilde{\epsilon}<\epsilon_I\in (\mathcal{C}^{*}_{I,0} \tilde{\epsilon},\epsilon^{*}_{I})$ with $\tilde{\epsilon}>0$  sufficiently small, the following holds}
\begin{eqnarray}\label{equ:6.7}
\begin{split}
&\big\|\delta \Phi^{(\rm n+1)}\big\|_{1,\alpha;\tilde{\Omega}^{(\rm e)}}
+\big\|\delta Z^{(\rm n+1)}\big\|_{0,\alpha,\tilde{\Omega}^{(\rm h)}}
\leq \frac{1}{2}\Big(\big\|\delta \Phi^{(\rm n)}\big\|_{1,\alpha;\tilde{\Omega}^{(\rm e)}}
+\big\|\delta Z^{(\rm n)}\big\|_{0,\alpha;\tilde{\Omega}^{(\rm h)}}\Big),
\end{split}
\end{eqnarray}
i.e.,
\begin{eqnarray*}
\begin{split}
\big\|\mathcal{J}(\delta \Phi^{(\rm n)}, \delta Z^{(\rm n)})\big\|_{C^{1,\alpha}(\tilde{\Omega}^{(\rm e)})\times C^{0,\alpha}(\tilde{\Omega}^{(\rm h)})}
\leq \frac{1}{2}\big\|(\delta \Phi^{(\rm n)}, \delta Z^{(\rm n)})\big\|_{C^{1,\alpha}(\tilde{\Omega}^{(\rm e)})\times C^{0,\alpha}(\tilde{\Omega}^{(\rm h)})}.
\end{split}
\end{eqnarray*}
This implies that $\mathcal{J}$ is a contraction mapping. Hence, there exists a fixed point 
$(\delta{\varphi},\delta {z} )\in C^{1, \alpha}(\tilde{\Omega}^{(\rm e)})\times C^{0,\alpha}(\tilde{\Omega}^{(\rm h)})$ such that
\begin{eqnarray*}
\begin{split}
(\delta{\varphi}^{(\rm n)},\delta {z}^{(\rm n)} )\longrightarrow (\delta{\varphi},\delta {z} )\quad \ \
 \emph{in} \quad  \ {\color{black}C^{1,\alpha}(\tilde{\Omega}^{(\rm e)})\times C^{0,\alpha}(\tilde{\Omega}^{(\rm h)})},
\end{split}
\end{eqnarray*}
as $\rm {n}\rightarrow \infty$.

On the other hand, by  the estimate \eqref{eq:4.3} and  the compactness of the approximate solutions in  $C^{2,\alpha}_{{(-1-\alpha, \tilde{\Sigma}^{(\rm e)}
\setminus \{\mathcal{O}\})}}(\tilde{\Omega}^{(\rm e)})\times C^{1, \alpha}(\tilde{\Omega}^{(\rm h)})$, there exists a subsequence $\big\{(\delta{\varphi}^{(\rm n_{i})},\delta {z}^{(\rm n_{i})} )\big\}^{\infty}_{i=0}$ such that
\begin{eqnarray*}
\begin{split}
(\delta{\varphi}^{(\rm n_{i})},\delta {z}^{(\rm n_{i})} )\longrightarrow (\delta{\varphi}_{*},\delta {z}_{*} )\quad \ \
 \emph{in} \quad  \ C^{2,\alpha}_{{(-1-\alpha, \tilde{\Sigma}^{(\rm e)}
\setminus \{\mathcal{O}\})}}(\tilde{\Omega}^{(\rm e)})\times C^{1, \alpha}(\tilde{\Omega}^{(\rm h)}),
\end{split}
\end{eqnarray*}
as $i\rightarrow \infty$. By the uniqueness of the limit, one has
\begin{eqnarray*}
\begin{split}
(\delta{\varphi},\delta {z} )=(\delta{\varphi}_{*},\delta {z}_{*} )\in C^{2,\alpha}_{{(-1-\alpha, \tilde{\Sigma}^{(\rm e)}
\setminus \{\mathcal{O}\})}}(\tilde{\Omega}^{(\rm e)})
\times C^{1,\alpha}(\tilde{\Omega}^{(\rm h)}).
\end{split}
\end{eqnarray*}
Define
\begin{eqnarray*}
\varphi:=\delta \varphi+\underline{\varphi},\ \ \  z:=\delta z+\underline{z}.
\end{eqnarray*}
Obviously, $(\varphi, z)\in C^{2,\alpha}_{{(-1-\alpha, \tilde{\Sigma}^{(\rm e)}
\setminus \{\mathcal{O}\})}}(\tilde{\Omega}^{(\rm e)})\times C^{1,\alpha}(\tilde{\Omega}^{(\rm h)})$ is a weak solution of the problem $(\mathbf{FP})$. 
\end{proof}

\begin{proof}[ Proof of Theorem \ref{thm:3.3}]
Now we are ready to prove Theorem \ref{thm:3.3}. The existence of solutions to the problem $(\mathbf{FP})$
can be derived directly from Lemma \ref{lem:6.1}. Next, for the uniqueness, we take two solutions $(\varphi_{1}, z_{1})$ and $(\varphi_{2}, z_{2})$ which   satisfy the estimate \eqref{eq:3.67}.
Define
$$
\delta \Phi:=\delta \varphi_1-\delta \varphi_2,\quad \delta Z:=\delta z_1-\delta z_2.
$$
Then, following the  same method as   in the proof of  the Lemma \ref{lem:6.1}, one can show that
$(\delta \Phi, \delta Z)$ satisfies the estimate \eqref{equ:6.7}, 
where $(\delta \Phi^{(\rm n+1)}, \delta Z^{(\rm n+1)})$ and $(\delta \Phi^{(\rm n)}, \delta Z^{(\rm n)})$
are replaced by $(\delta \Phi, \delta Z)$. This means that $(\delta \Phi, \delta Z)=(0,0)$.
Thus we have $(\varphi_1,z_1)=(\varphi_2,z_{2})$.
\end{proof}

\bigskip

\section{Existence and Uniqueness of the Free Boundary Value Problem $(\mathbf{NP})$}\setcounter{equation}{0}

Up to now, we have proved that for any $m^{(\rm e)}\in \mathcal{M}_{\sigma}$,  the problem $(\mathbf{FP})$ admits a unique solution $\big(\varphi,z\big)$ satisfying the estimate \eqref{eq:3.67}.
Then, we can define a new number $\hat{m}^{(\rm e)}$ by
\begin{eqnarray}\label{eq:7.1}
\begin{split}
\int^{\hat{m}^{(\rm e)}}_{0}\frac{d\tau}{(\rho^{(\rm e)} u^{(\rm e)})(0,\tau)}=g_{+}(0).
\end{split}
\end{eqnarray}
By Theorem \ref{thm:3.3}, we know that $\rho^{(\rm e)} u^{(\rm e)}>0$. Then the unique existence of the number $\hat{m}^{(e)}$ of equation \eqref{eq:7.1} follows from the implicit function theorem, and hence the mapping  $\mathcal{T}$:
$\hat{m}^{(\rm e)}=\mathcal{T}(m^{(\rm e)})$ is well defined on $\mathcal{M}_{\sigma}$ (see \eqref{eq:3.62}). Notice that
\begin{eqnarray*}
\begin{split}
\int^{\underline{m}^{(\rm e)}}_{0}\frac{d\tau}{\underline{\rho}^{(\rm e)} \underline{u}^{(\rm e)}}=1,
\end{split}
\end{eqnarray*}
then
\begin{eqnarray*}
\begin{split}
\int^{\hat{m}^{(\rm e)}}_{0}\frac{d\tau}{(\rho^{(\rm e)} u^{(\rm e)})(0,\tau)}-\int^{\underline{m}^{(\rm e)}}_{0}\frac{d\tau}{\underline{\rho}^{(\rm e)} \underline{u}^{(\rm e)}}=g_{+}(0)-1,
\end{split}
\end{eqnarray*}
i.e.,
\begin{eqnarray*}
\begin{split}
\int^{\hat{m}^{(\rm e)}}_{\underline{m}^{(\rm e)}}\frac{d\tau}{(\rho^{(\rm e)} u^{(\rm e)})(0,\tau)}
=\int^{\underline{m}^{(\rm e)}}_{0}\Big(\frac{1}{(\rho^{(\rm e)} u^{(\rm e)})(0,\tau)}
-\frac{1}{\underline{\rho}^{(\rm e)} \underline{u}^{(\rm e)}}\Big)d\tau
+g_{+}(0)-1.
\end{split}
\end{eqnarray*}
Therefore,
\begin{eqnarray*}
\begin{split}
|\hat{m}^{(\rm e)}-\underline{m}^{(\rm e)}|&\leq \mathcal{C}\Big(\big\|\varphi-\underline{\varphi}\big\|^{{(-1-\alpha, \tilde{\Sigma}^{(\rm e)}
\setminus \{\mathcal{O}\})}}_{2, \alpha; \tilde{\Omega}^{(\rm e)}}
+\|g_{+}-1\|_{2,\alpha; \tilde{\Gamma}_{+}}\Big)
\leq \mathcal{C}{\color{black}\tilde{\epsilon}},
\end{split}
\end{eqnarray*}
where the constant $\mathcal{C}>0$ depends only on $\underline{U}$, $L$ and $\alpha$.
Hence, we can choose {\color{black}$\tilde{\epsilon}>0$} sufficiently small such that {\color{black}$\mathcal{C}\tilde{\epsilon}<\sigma$}.
Then the mapping $\mathcal{T}$ is from $\mathcal{M}_{\sigma}$ to $\mathcal{M}_{\sigma}$.

\par Next, we will show that the mapping $\mathcal{T}$ is contractive. Give two numbers $m^{(\rm e)}_{A},
m^{(\rm e)}_{B}\in \mathcal{M}_{\sigma}$ and the corresponding solutions are
$(\varphi_{A}, z_{A})$ and $(\varphi_{B}, z_{B})$, which are defined respectively in the domains $\tilde{\Omega}^{(\rm e)}_{A}\cup\tilde{\Omega}^{(\rm h)}_{A}$
and $\tilde{\Omega}^{(\rm e)}_{B}\cup\tilde{\Omega}^{(\rm h)}_{B}$. 
Let
\begin{eqnarray}\label{eq:7.2}
\pi_{A}:\ \left\{
\begin{array}{llll}
\breve{\xi}=\xi,\quad \breve{\eta}=\frac{\eta}{m^{(\rm e)}_{A}},
&\  (\xi,\eta) \in \tilde{\Omega}^{(\rm e)}_{A},   \\[5pt]
\breve{\xi}=\xi,\quad \breve{\eta}=\eta,
&\  (\xi,\eta) \in \tilde{\Omega}^{(\rm h)}_{A},  \\[5pt]
\end{array}
\right.
\end{eqnarray}
and
\begin{eqnarray}\label{eq:7.3}
\pi_{B}:\ \left\{
\begin{array}{llll}
\breve{\xi}=\xi,\quad \breve{\eta}=\frac{\eta}{m^{(\rm e)}_{B}},
&\ \ \ \ (\xi,\eta) \in \tilde{\Omega}^{(\rm e)}_{B},   \\[5pt]
\breve{\xi}=\xi,\quad \breve{\eta}=\eta,
&\ \ \ \ (\xi,\eta) \in \tilde{\Omega}^{(\rm h)}_{B}.
\end{array}
\right.
\end{eqnarray}
Then, under the coordinate transformation $\pi_{j}$, the domain $\tilde{\Omega}^{(\rm e)}_{j}\cup\tilde{\Omega}^{(\rm h)}_{j}(j=A,B)$ is transformed into the same domain:
\begin{eqnarray*}
\begin{split}
\breve{\Omega}^{(\rm e)}\cup\breve{\Omega}^{(\rm h)}
=\big\{(\breve{\xi}, \breve{\eta}):0<\breve{\xi}<1,\ 0<\breve{\eta}<1\big\}\cup
\big\{(\breve{\xi}, \breve{\eta}):0<\breve{\xi}<L,\ -m^{(\rm h)}<\breve{\eta}<0\big\}.
\end{split}
\end{eqnarray*}
The boundaries {\color{black}$\tilde{\Gamma}^{(\rm e)}_{{\rm in}, j}$, $\tilde{\Gamma}^{(\rm e)}_{{\rm ex}, j}$, $\tilde{\Gamma}^{(\rm h)}_{{\rm in},j}$}, $\tilde{\Gamma}_{+,j}$, $\tilde{\Gamma}_{{\rm cd},j}$ and $\tilde{\Gamma}_{-,j}$, {\color{black}$(j=A,B)$} become
\begin{eqnarray*}
\begin{split}
&\breve{\Gamma}^{(\rm e)}_{\rm in}=\big\{(\breve{\xi},\breve{\eta}):0<\breve{\eta}<1, \breve{\xi}=0\big\},\
\breve{\Gamma}^{(\rm e)}_{\rm ex}=\big\{(\breve{\xi},\breve{\eta}):0<\breve{\eta}<1, \breve{\xi}=L\big\},\\[5pt]
&\breve{\Gamma}^{(\rm h)}_{\rm in}=\big\{(\breve{\xi},\breve{\eta}):-m^{(\rm h)}<\breve{\eta}<0, \breve{\xi}=0\big\},\
\breve{\Gamma}_{\rm cd}=\big\{(\breve{\xi},\breve{\eta}):0<\breve{\xi}<L, \breve{\eta}=0\big\},
\end{split}
\end{eqnarray*}
and
\begin{eqnarray*}
\begin{split}
\breve{\Gamma}_{+}=\big\{(\breve{\xi},\breve{\eta}):0<\breve{\xi}<L, \breve{\eta}=1\big\},\
\breve{\Gamma}_{-}=\big\{(\breve{\xi},\breve{\eta}):0<\breve{\xi}<L, \breve{\eta}=-m^{(\rm h)}\big\}.
\end{split}
\end{eqnarray*}

\par For $j=A,\ B$, denote $\breve{D}^{(j)}=\big(\breve{\partial}^{(j)}_{1},\  \breve{\partial}^{(j)}_{2}\big)
=\big(\partial_{\breve{\xi}},\ \frac{\partial_{\breve{\eta}}}{m^{(\rm e)}_{j}}\big)$,
\begin{eqnarray}\label{eq:7.4}
\begin{split}
\breve{\varphi}_{j}(\breve{\xi}, \breve{\eta})=\varphi_{j}(\breve{\xi}, m^{(\rm e)}_{j}\breve{\eta}),\ \
\big(\breve{B}^{(\rm e)}_{0,j}, \breve{S}^{(\rm e)}_{0,j}\big)(\breve{\eta})
=\big(\tilde{B}^{(\rm e)}_{0}, \tilde{S}^{(\rm e)}_{0}\big)(\breve{\xi}, m^{(\rm e)}_{j}\breve{\eta}),
\end{split}
\end{eqnarray}
for $(\breve{\xi}, \breve{\eta})\in \breve{\Omega}^{(\rm e)}$, and
\begin{eqnarray}\label{eq:7.5}
\begin{split}
\breve{z}_{j}(\breve{\xi}, \breve{\eta})=z_{j}(\breve{\xi}, \breve{\eta}),\ \
\big(\breve{B}^{(\rm h)}_{0,j}, \breve{S}^{(\rm h)}_{0,j}\big)(\breve{\xi}, \breve{\eta})
=\big(\tilde{B}^{(\rm h)}_{0},\tilde{S}^{(\rm h)}_{0}\big)(\breve{\xi}, \breve{\eta}),
\end{split}
\end{eqnarray}
for $(\breve{\xi}, \breve{\eta})\in \breve{\Omega}^{(\rm h)}$.
Then, $\breve{\varphi}_{j}$ and $\breve{z}_{j}$, $(j=A, B)$ satisfy
\begin{eqnarray}\label{eq:7.6}
\left\{
\begin{array}{llll}
\partial_{\breve{\xi}}\mathcal{N}_{1}\big(\breve{D}^{(j)}\breve{\varphi}_{j}; \breve{B}^{(\rm e)}_{0,j}, \breve{S}^{(\rm e)}_{0,j}\big)
+m^{(\rm e)}_{j}\partial_{\breve{\eta}}\mathcal{N}_{2}\big(\breve{D}^{(j)}\breve{\varphi}_{j};
\breve{B}^{(\rm e)}_{0,j}, \breve{S}^{(\rm e)}_{0,j}\big)=0, &\ \ \  \mbox{in}\ \ \ \breve{\Omega}^{(\rm e)},   \\[5pt]
\partial_{\breve{\xi}}\breve{z}_{j}+\textrm{diag}( \breve{\lambda}_{+,j}, \breve{\lambda}_{-,j})\partial_{\breve{\eta}}\breve{z}_{j}=0,
&\ \ \ \mbox{in} \ \ \  \breve{\Omega}^{(\rm h)},  \\[5pt]
\breve{p}^{(\rm e)}_{j}=\tilde{p}_{0}(m^{(\rm e)}_{j}\breve{\eta}),     &\ \ \
\mbox{on}\ \ \ \breve{\Gamma}^{(\rm e)}_{\rm in},\\[5pt]
\partial_{\breve{\xi}}\breve{\varphi}_{j}=\tilde{\omega}_{\rm e}(m^{(\rm e)}_{j}\breve{\eta}),
&\ \ \  \mbox{on}\ \ \ \breve{\Gamma}^{(\rm e)}_{\rm ex},\\[5pt]
\breve{z}_{j}=z_{0}(\breve{\eta}),   &\ \ \   \mbox{on}\ \ \ \breve{\Gamma}^{(\rm h)}_{\rm in}, \\[5pt]
\partial_{\breve{\xi}}\breve{\varphi}_{j}= g_{+}(\breve{\xi}), &\ \ \  \mbox{on}\ \ \ \breve{\Gamma}_{+},\\[5pt]
\partial_{\breve{\xi}}\breve{\varphi}_{j}=\breve{\omega}^{(\rm h)}_{j}, \  \breve{p}^{(\rm e)}_{j}=\breve{p}^{(\rm h)}_{j},
&\ \ \ \mbox{on}\ \ \ \breve{\Gamma}_{\rm cd},\\[5pt]
\breve{z}_{-,j}+\breve{z}_{+,j}=2\arctan g'_{-}(\breve{\xi}), &\ \ \ \mbox{on} \ \ \ \breve{\Gamma}_{-},
\end{array}
\right.
\end{eqnarray}
where $\breve{\lambda}_{\pm}=\lambda_{\pm}\big(\breve{z}; \breve{B}^{(\rm h)}_{0},\breve{S}^{(\rm h)}_{0}\big)$ and
\begin{eqnarray}\label{eq:7.7}
\begin{split}
\breve{\omega}^{(\rm h)}_{j}=\tilde{\omega}^{(\rm h)}_{j}(\breve{\xi},\breve{\eta}), \quad \breve{p}^{(\rm e)}_{j}=\tilde{p}^{(\rm e)}\big(\breve{D}^{(j)}\breve{\varphi}_{j};\breve{B}^{(\rm e)}_{0,j}, \breve{S}^{(\rm e)}_{0,j}\big),
\quad \breve{p}^{(\rm h)}_{j}=\tilde{p}^{(\rm h)}_{j}(\breve{\xi},\breve{\eta}).
\end{split}
\end{eqnarray}
\par Set $\breve{\varphi}_{AB}=\breve{\varphi}_{A}-\breve{\varphi}_{B}$ and $ \breve{z}_{AB}=\breve{z}_{A}-\breve{z}_{B}$.
Then, $\breve{\varphi}_{AB}$ and $\breve{z}_{AB}$ satisfy the following initial boundary value problem:
\begin{eqnarray}\label{eq:7.8}
\left\{
\begin{array}{llll}
\sum_{i,j=1,2}\breve{\partial}^{(A)}_{i}\big(\breve{a}_{ij}(\breve{\xi},\breve{\eta})
\breve{\partial}^{(A)}_{j}\breve{\varphi}_{AB}\big)=\breve{b}, &\ \ \  \mbox{in}\ \ \ \breve{\Omega}^{(\rm e)},   \\[5pt]
\breve{\partial}^{(A)}_{1}\breve{z}_{AB}+\textrm{diag}(\breve{\lambda}_{+,A}, \breve{\lambda}_{-,A})\breve{\partial}^{(A)}_{2}\breve{z}_{AB}\\[5pt]
\ \ \qquad\qquad =\textrm{diag}\big(\breve{\lambda}_{+,B}-\breve{\lambda}_{+,A}, \breve{\lambda}_{-,B}-\breve{\lambda}_{-,A}\big)\breve{\partial}_{2}\breve{z}^{(A)}_{-,B},
&\ \ \ \mbox{in}\ \ \  \breve{\Omega}^{(\rm h)},  \\[5pt]
\breve{\varphi}_{AB}=\breve{g}_{0, AB}(\breve{\eta}),
&\ \ \  \mbox{on}\ \ \ \breve{\Gamma}^{(\rm e)}_{\rm in},\\[5pt]
\breve{\partial}^{(A)}_{1}\breve{\varphi}_{AB}=\tilde{\omega}_{\rm e}(m^{(\rm e)}_{A}\breve{\eta})
-\tilde{\omega}_{\rm e}(m^{(\rm e)}_{B}\breve{\eta}),
&\ \ \  \mbox{on}\ \ \ \breve{\Gamma}^{(\rm e)}_{\rm ex},\\[5pt]
\breve{z}_{AB}=0,   &\ \ \   \mbox{on}\ \ \ \breve{\Gamma}^{(\rm h)}_{\rm in}, \\[5pt]
\breve{\varphi}_{AB}= \breve{g}_{+, AB}(\breve{\xi}), &\ \ \  \mbox{on}\ \ \ \breve{\Gamma}_{+},\\[5pt]
\breve{z}_{-,AB}+\breve{z}_{+,AB}=2\arctan(\breve{\partial}^{(A)}_{1}\breve{\varphi}_{AB}+\breve{\partial}^{(A)}_{1}\breve{\varphi}_{B})
-2\arctan\breve{\partial}^{(B)}_{1}\breve{\varphi}_{B},
  &\ \ \ \mbox{on}\ \ \ \breve{\Gamma}_{\rm cd},\\[5pt]
\breve{z}_{-,AB}-\breve{z}_{+,AB}=2\breve{\beta}_{\rm cd, 1}\breve{\partial}^{(A)}_{1}\breve{\varphi}_{AB}
+2\breve{\beta}_{\rm cd, 2}\breve{\partial}^{(A)}_{2}\breve{\varphi}_{AB}+2\breve{c}_{\rm cd}(\breve{\xi}),
&\ \ \ \mbox{on}\ \ \ \breve{\Gamma}_{\rm cd},\\[5pt]
\breve{z}_{-,AB}+\breve{z}_{+,AB}=0, &\ \ \ \mbox{on}\ \ \ \breve{\Gamma}_{-},
\end{array}
\right.
\end{eqnarray}
where $\breve{a}_{ij}(\breve{\xi},\breve{\eta})=a_{ij}\big(\breve{D}^{(A)}\breve{\varphi}_{A}, \breve{D}^{(A)}\breve{\varphi}_{B},\breve{\eta}\big),$ and
$$\breve{\beta}_{0, \ell}=
\beta_{\rm 0, \ell}\big(\breve{D}^{(A)}\breve{\varphi}_{A}, \breve{D}^{(A)}\breve{\varphi}_{B},\breve{\eta}\big),  \quad 
 \breve{\beta}_{\rm cd,\ell}=\beta_{\rm cd, \ell} \big(\breve{D}^{(A)}\breve{\varphi}_{A},
\breve{D}^{(A)}\breve{\varphi}_{B}, \breve{\eta}\big)$$ for $\ell=1,2$;
\begin{eqnarray*}
\begin{split}
\breve{b}=&\sum_{\ell=1,2}\breve{\partial}^{(A)}_{\ell}\Big(\mathcal{N}_{\ell}\big(\breve{D}_{B}\delta\breve{\varphi}_{B};\breve{B}^{(\rm e)}_{0,A}, \breve{S}^{(\rm e)}_{0,A} \big)-\mathcal{N}_{\ell}\big(\breve{D}_{A}\delta\breve{\varphi}_{B};\breve{B}^{(\rm e)}_{0,A}, \breve{S}^{(\rm e)}_{0,A} \big)\Big)\\[5pt]
& +\sum_{\ell=1,2}\breve{\partial}^{(B)}_{\ell}\Big(\mathcal{N}_{\ell}\big(\breve{D}_{B}\delta\breve{\varphi}_{B};\breve{B}^{(\rm e)}_{0,B}, \breve{S}^{(\rm e)}_{0,B} \big)-\mathcal{N}_{\ell}\big(\breve{D}_{B}\breve{\varphi}_{B};\breve{B}^{(\rm e)}_{0,A}, \breve{S}^{(\rm e)}_{0,A} \big)\Big)\\[5pt]
& +\big(\breve{\partial}^{(B)}_{2}-\breve{\partial}^{(A)}_{2}\big)\mathcal{N}_{2}\big(\breve{D}_{B}\breve{\varphi}_{B};\breve{B}^{(\rm e)}_{0,A}, \breve{S}^{(\rm e)}_{0,A} \big), 
\end{split}
\end{eqnarray*}
$$\breve{c}_{\rm cd}(\breve{\xi})=\Theta\Big(p^{(\rm e)}\big(\breve{D}_{A}\breve{\varphi}_{B};\breve{B}^{(\rm e)}_{0,A}, \breve{S}^{(\rm e)}_{0,A} \big);\breve{B}^{(\rm h)}_{0},\breve{S}^{(\rm h)}_{0} \Big)
-\Theta\Big(p^{(\rm e)}\big(\breve{D}_{B}\breve{\varphi}_{B};\breve{B}^{(\rm e)}_{0,B}, \breve{S}^{(\rm e)}_{0,B} \big);\breve{B}^{(\rm h)}_{0}, \breve{S}^{(\rm h)}_{0} \Big),$$
\begin{eqnarray*}
\begin{split}
\breve{g}_{0,AB}(\check{\eta})&=\frac{1}{\underline{\beta}_{0,2}}
\int^{\check{\eta}}_{0}\Big(\tilde{p}^{(\rm e)}_{0}(m^{(\rm e)}_{A}\mu)-\tilde{p}^{(\rm e)}_{0}(m^{(\rm e)}_{B}\mu)
-\underline{\tilde{p}}^{(\rm e)}_{0}(m^{(\rm e)}_{A}\mu)+\underline{\tilde{p}}^{(\rm e)}_{0}(m^{(\rm e)}_{B}\mu)\Big)d\mu\\[5pt]
&\ \ \ -\frac{1}{\underline{\beta}_{0,2}}\sum_{j=1,2}\int^{\check{\eta}}_{0}
\big(\check{\beta}^{(A)}_{0,j}-\underline{\beta}_{0,j}\big)\check{\partial}^{(A)}_{j}\check{\varphi}_{AB}d\mu\\[5pt]
&\ \ \ -\frac{1}{\underline{\beta}_{0,2}}\sum_{j=1,2}\int^{\check{\eta}}_{0}
\big(\check{\beta}^{(A)}_{0,j}-\check{\beta}^{(B)}_{0,j}\big)\check{\partial}^{(A)}_{j}\delta\check{\varphi}_{B}d\mu\\[5pt]
&\ \ \ -\frac{1}{\underline{\beta}_{0,2}}\sum_{j=1,2}\int^{\check{\eta}}_{0} \big(\check{\beta}^{(B)}_{0,j}-\underline{\beta}_{0,j}\big)\Big(\check{\partial}^{(A)}_{j}-\check{\partial}^{(A)}_{j}\Big)
\delta\check{\varphi}_{B}d\mu,
\end{split}
\end{eqnarray*}
 and $\breve{g}_{+,AB}(\xi)=\breve{g}_{0,AB}(1)$.
Here $\check{\beta}^{(k)}_{0,j}$, $(k=A,B, j=1,2)$ are given by
\begin{eqnarray*}
\begin{split}
\breve{\beta}^{(k)}_{0, 1}
:=\int^{1}_{0}\partial_{\breve{\partial}^{(k)}_{1}\breve{\varphi}}\tilde{p}^{(\rm e)}\big(\breve{D}^{(k)}\underline{\varphi}+\nu \check{D}^{(k)}\delta\check{\varphi}_{k};\breve{\tilde{B}}^{(\rm e)}_{0,k},\breve{\tilde{S}}^{(\rm e)}_{0,k}\big)d\nu,\\[5pt]
\breve{\beta}^{(k)}_{0, 2}
:=\int^{1}_{0}\partial_{\breve{\partial}^{(k)}_{2}\breve{\varphi}}\tilde{p}^{(\rm e)}\big(\breve{D}^{(k)}\underline{\varphi}+\nu \breve{D}^{(k)}\delta\breve{\varphi}_{k};\breve{\tilde{B}}^{(\rm e)}_{0, k},\breve{\tilde{S}}^{(\rm e)}_{0, k}\big)d\nu,
\end{split}
\end{eqnarray*}
and $\delta\breve{\varphi}_{k}=\breve{\varphi}_{k}-\underline{\varphi}$ for $k=A, B$.

\par By direct computations, we can choose $\sigma_{0}>0$ which depends only on the background state $\tilde{\underline{U}}$
such that for any $\sigma\in (0,\sigma_{0})$, we have
\begin{eqnarray*}
\begin{split}
\|\breve{b}\|_{0,\alpha; \breve{\Omega}^{(\rm e)}}
&\leq \sum_{\ell=1,2}\Big\|\breve{\partial}^{(A)}_{\ell}\Big(\mathcal{N}_{\ell}\big(\breve{D}_{B}\breve{\varphi}_{B};\breve{B}^{(\rm e)}_{0,A}, \breve{S}^{(\rm e)}_{0,A} \big)-\mathcal{N}_{\ell}\big(\breve{D}_{A}\breve{\varphi}_{B};\breve{B}^{(\rm e)}_{0,A}, \breve{S}^{(\rm e)}_{0,A} \big)\Big)\Big\|_{0,\alpha; \breve{\Omega}^{(\rm e)}}\\[5pt]
&\ \ \ +\sum_{\ell=1,2}\Big\|\breve{\partial}^{(B)}_{\ell}\Big(\mathcal{N}_{\ell}\big(\breve{D}_{B}\breve{\varphi}_{B};\breve{B}^{(\rm e)}_{0,B}, \breve{S}^{(\rm e)}_{0,B} \big)-\mathcal{N}_{\ell}\big(\breve{D}_{B}\breve{\varphi}_{B};\breve{B}^{(\rm e)}_{0,A}, \breve{S}^{(\rm e)}_{0,A} \big)\Big)\Big\|_{0,\alpha; \breve{\Omega}^{(\rm e)}}\\[5pt]
&\ \ \   +\Big\|\big(\breve{\partial}^{(B)}_{2}-\breve{\partial}^{(A)}_{2}\big)\mathcal{N}_{2}\big(\breve{D}_{B}\breve{\varphi}_{B};\breve{B}^{(\rm e)}_{0,A}, \breve{S}^{(\rm e)}_{0,A} \big)\Big\|_{0,\alpha; \breve{\Omega}^{(\rm e)}}\\[5pt]
&\leq \mathcal{C}\Big(\|\delta\breve{\varphi}_{B}\|^{{(-1-\alpha, \tilde{\Sigma}^{(\rm e)}
\setminus \{\mathcal{O}\})}}_{2,\alpha;\tilde{\Omega}^{(\rm e)}}
+\big\|(\breve{B}^{(\rm e)}_{0}-\underline{B}^{(\rm e)}, \breve{S}^{(\rm e)}_{0} -\underline{S}^{(\rm e)})\big\|_{1,\alpha;\breve{\Gamma}^{(\rm e)}_{\rm in}}\Big)\big|m^{(\rm e)}_{A}-m^{(\rm e)}_{B}\big|\\[5pt]
&\leq \mathcal{C}{\color{black}\tilde{\epsilon}}\big|m^{(\rm e)}_{A}-m^{(\rm e)}_{B}\big|,
\end{split}
\end{eqnarray*}
\begin{eqnarray*}
\begin{split}
&\|\breve{c}_{\rm cd}(\breve{\xi})\|_{0,\alpha; \breve{\Gamma}_{\rm cd}}\\[5pt]
&\leq 2\bigg\|\Theta\Big(p^{(\rm e)}\big(\breve{D}_{A}\breve{\varphi}_{B};\breve{B}^{(\rm e)}_{0,A}, \breve{S}^{(\rm e)}_{0,A} \big);\breve{B}^{(\rm h)}_{0},\breve{S}^{(\rm h)}_{0} \Big)
-\Theta\Big(p^{(\rm e)}\big(\breve{D}_{B}\breve{\varphi}_{B};\breve{B}^{(\rm e)}_{0,B}, \breve{S}^{(\rm e)}_{0,B} \big);\breve{B}^{(\rm h)}_{0}, \breve{S}^{(\rm h)}_{0} \Big)\bigg\|_{0,\alpha; \breve{\Gamma}_{\rm cd}}\\[5pt]
&\leq \mathcal{C}\Big(\|\delta\breve{\varphi}_{B}\|_{1,\alpha;\breve{\Omega}^{(\rm e)}}
+\big\|(\breve{B}^{(\rm e)}_{0}-\underline{B}^{(\rm e)}, \breve{S}^{(\rm e)}_{0}-\underline{S}^{(\rm e)} )\big\|_{1,\alpha;\breve{\Gamma}^{(\rm e)}_{\rm in}}\Big)\big|m^{(\rm e)}_{A}-m^{(\rm e)}_{B}\big|\\[5pt]
&\leq \mathcal{C}{\color{black}\tilde{\epsilon}}\big|m^{(\rm e)}_{A}-m^{(\rm e)}_{B}\big|,
\end{split}
\end{eqnarray*}
and
\begin{eqnarray*}
\begin{split}
&\|\breve{g}_{0,AB}\|_{1,\alpha; \breve{\Gamma}^{(\rm e)}_{\rm in}}+\|\breve{g}_{+,AB}\|_{1,\alpha; \breve{\Gamma}_{+}}\\[5pt]
&\leq \mathcal{C}\Big\|\tilde{p}^{(\rm e)}_{0}(m^{(\rm e)}_{A}\breve{\eta})-\tilde{p}^{(\rm e)}_{0}(m^{(\rm e)}_{B}\breve{\eta})
-\underline{\tilde{p}}^{(\rm e)}_{0}(m^{(\rm e)}_{A}\breve{\eta})+\underline{\tilde{p}}^{(\rm e)}_{0}(m^{(\rm e)}_{B}\breve{\eta})\Big\|_{0,\alpha; \breve{\Gamma}^{(\rm e)}_{\rm ex}}\\[5pt]
&\ \ \  +\Big\|\big(\check{\beta}^{(A)}_{0,j}-\underline{\beta}_{0,j}\big)\check{\partial}^{(A)}_{j}\check{\varphi}_{AB}\Big\|_{0,\alpha; \breve{\Gamma}^{(\rm e)}_{\rm in}}
+\Big\|\big(\check{\beta}^{(A)}_{0,j}-\check{\beta}^{(B)}_{0,j}\big)\check{\partial}^{(A)}_{j}\delta\check{\varphi}_{B}\Big\|_{0,\alpha; \breve{\Gamma}^{(\rm e)}_{\rm in}}\\[5pt]
&\ \ \  +\Big\|\big(\check{\beta}^{(B)}_{0,j}-\underline{\beta}_{0,j}\big)\Big(\check{\partial}^{(A)}_{j}-\check{\partial}^{(A)}_{j}\Big)
\delta\check{\varphi}_{B}\Big\|_{0,\alpha; \breve{\Gamma}^{(\rm e)}_{\rm in}}\\[5pt]
&\leq \mathcal{C}\Big(\|\delta\breve{\varphi}_{B}\|_{1,\alpha;\breve{\Omega}^{(\rm e)}}
+\|\breve{p}^{(\rm e)}_{0}-\underline{p}^{(\rm e)}\|_{1,\alpha;{\color{black}\breve{\Gamma}^{(\rm e)}_{\rm in}}}\Big)\big|m^{(\rm e)}_{A}-m^{(\rm e)}_{B}\big|\\[5pt]
&\leq \mathcal{C}{\color{black}\tilde{\epsilon}}\big|m^{(\rm e)}_{A}-m^{(\rm e)}_{B}\big|.
\end{split}
\end{eqnarray*}
Here, the constant $\mathcal{C}>0$ depends only on the background state $\underline{U}$ and $\alpha$.
Now from  the similar argument in the proof of Theorem \ref{thm:3.3}, due to \eqref{eq:3.66}, if {\color{black}$\tilde{\epsilon}>0$} is sufficiently small, then in $\breve{\Omega}^{(\rm h)}$ there exists a constant $\mathcal{C}>0$ depending only on the background state $\tilde{\underline{U}}$, $L$ and $\alpha$ such that
\begin{eqnarray*}
\begin{split}
\big\|\breve{z}_{-,AB}\big\|_{0,\alpha;\breve{\Omega}^{(\rm h)}}
\leq \mathcal{C}\big\|\partial_{\breve{\eta}}\breve{z}_{-,B}\big\|_{0,\alpha; \breve{\Omega}^{(\rm h)}}
\big\|\breve{z}_{AB}\big\|_{0,\alpha; \breve{\Omega}^{(\rm h)}}\leq \mathcal{C}\tilde{\epsilon}
\big\|\breve{z}_{AB}\big\|_{0,\alpha; \breve{\Omega}^{(\rm h)}}.
\end{split}
\end{eqnarray*}

Next, we can solve $\breve{\varphi}_{AB}$ in $\breve{\Omega}^{(\rm e)}$ with the following boundary condition on $\breve{\Gamma}_{\rm cd}$:
\begin{eqnarray*}
\begin{split}
\tilde{\breve{\beta}}_{\rm cd, 1}\breve{\partial}^{(A)}_{\breve{\xi}}\breve{\varphi}_{AB}
+\tilde{\breve{\beta}}_{\rm cd, 2}\breve{\partial}^{(A)}_{\breve{\eta}}\breve{\varphi}_{AB}=\tilde{\breve{c}}_{\rm cd}(\breve{\xi}),
\end{split}
\end{eqnarray*}
where
\begin{eqnarray*}
\begin{split}
\tilde{\breve{\beta}}_{\rm cd, 1}=\breve{\beta}_{\rm cd, 1}
+\int^{1}_{0}\frac{d\varsigma}{1+(\varsigma\breve{\partial}^{(A)}_{\xi}\breve{\varphi}_{AB}
+\breve{\partial}^{(A)}_{\xi}\breve{\varphi}_{B})^{2}},\quad
\tilde{\breve{\beta}}_{\rm cd, 2}=\breve{\beta}_{\rm cd, 2},
\end{split}
\end{eqnarray*}
and
$\tilde{\breve{c}}_{\rm cd}(\breve{\xi})=-\breve{c}_{\rm cd}(\breve{\xi})+\arctan\breve{\partial}^{(B)}_{\xi}\breve{\varphi}_{B}
-\arctan\breve{\partial}^{(A)}_{\xi}\breve{\varphi}_{B}+\breve{z}_{-,AB}$.

Thus, by choosing a constant $\alpha_{0}\in(0,1)$ depending only on $\tilde{\underline{U}}$ and $L$ such that for $\alpha\in(0,\alpha_0)$, we have
\begin{eqnarray*}
\begin{split}
&{\big\|\breve{\varphi}_{AB}\big\|_{1,\alpha;\breve{\Omega}^{(\rm e)}}}\\[5pt]
\leq& \mathcal{C}\bigg(\|\breve{b}\|_{0,\alpha; \breve{\Omega}^{(\rm e)}}+\|\breve{g}_{0,AB}\|_{1,\alpha; {\color{black}\breve{\Gamma}^{(\rm e)}_{\rm in}}}+\|\breve{g}_{+,AB}\|_{1,\alpha; \breve{\Gamma}_{+}}
+\big\|\tilde{\omega}_{\rm e}(m^{(\rm e)}_{A}\breve{\eta})
-\tilde{\omega}_{\rm e}(m^{(\rm e)}_{B}\breve{\eta})\big\|_{0,\alpha;\breve{\Gamma}^{(\rm e)}_{\rm ex}}
+\|\tilde{\breve{c}}_{\rm cd}\|_{0,\alpha; \breve{\Gamma}_{\rm cd}}\bigg)\\[5pt]
\leq& \mathcal{C}{\color{black}\tilde{\epsilon}}\big|m^{(\rm e)}_{A}-m^{(\rm e)}_{B}\big|+\mathcal{C}{\color{black}\tilde{\epsilon}}\Big(\big\|\breve{z}_{AB}\big\|_{0,\alpha; \breve{\Omega}^{(\rm h)}}+\big\|\breve{\varphi}_{AB}\big\|_{1,\alpha;\breve{\Omega}^{(\rm e)}}\Big).
\end{split}
\end{eqnarray*}

Finally, we further estimate $\breve{z}_{+,AB}$ by using the boundary condition on $\breve{\Gamma}_{\rm cd}$ and the initial condition
on {\color{black}$\breve{\Gamma}^{(\rm h)}_{\rm in}$} to establish
\begin{eqnarray*}
\begin{split}
&\big\|\breve{z}_{+,AB}\big\|_{0,\alpha;\breve{\Omega}^{(\rm h)}}\\[5pt]
\leq& \mathcal{C}\bigg(\big\|2\breve{\beta}_{\rm cd, 1}\breve{\partial}^{(A)}_{\breve{\xi}}\breve{\varphi}_{AB}
+2\breve{\beta}_{\rm cd, 2}\breve{\partial}^{(A)}_{\breve{\eta}}\breve{\varphi}_{AB}+2\breve{c}_{\rm cd}(\breve{\xi})\big\|_{0,\alpha; \breve{\Gamma}_{\rm cd}}+\|\breve{z}_{-,AB}\|_{0,\alpha; \breve{\Gamma}_{\rm cd}}\bigg)+\mathcal{C}{\color{black}\tilde{\epsilon}}\big\|\breve{z}_{AB}\big\|_{0,\alpha; \breve{\Omega}^{(\rm h)}}\\[5pt]
\leq& \mathcal{C}\big\|\breve{\varphi}_{AB}\big\|_{1,\alpha;\breve{\Omega}^{(\rm e)}}
+\mathcal{C}\big\|\breve{c}_{\rm cd}(\breve{\xi})\big\|_{0,\alpha;\breve{\Omega}^{(\rm e)}}
+\mathcal{C}{\color{black}\tilde{\epsilon}}\big\|\breve{z}_{AB}\big\|_{0,\alpha; \breve{\Omega}^{(\rm h)}}\\[5pt]
\leq& \mathcal{C}{\color{black}\tilde{\epsilon}}\big|m^{(\rm e)}_{A}-m^{(\rm e)}_{B}\big|+\mathcal{C}{\color{black}\tilde{\epsilon}}\Big(\big\|\breve{z}_{AB}\big\|_{0,\alpha; \breve{\Omega}^{(\rm h)}}+\big\|\breve{\varphi}_{AB}\big\|_{1,\alpha;\breve{\Omega}^{(\rm e)}}\Big).
\end{split}
\end{eqnarray*}
%
Hence,
\begin{eqnarray*}
\begin{split}
\big\|\breve{\varphi}_{AB}\big\|_{1,\alpha;\breve{\Omega}^{(\rm e)}}
+\big\|\breve{z}_{AB}\big\|_{0,\alpha;\breve{\Omega}^{(\rm h)}}
\leq \mathcal{C}{\color{black}\tilde{\epsilon}}\Big(\|\breve{z}_{AB}\|_{0,\alpha; \breve{\Omega}^{(\rm h)}}
+\big\|\breve{\varphi}_{AB}\big\|_{1,\alpha;\breve{\Omega}^{(\rm e)}}+|m^{(\rm e)}_{A}-m^{(\rm e)}_{B}|\Big),
\end{split}
\end{eqnarray*}
where constant $\mathcal{C}>0$ depends only on $\underline{U}$, $\alpha$ and $L$. Thus, for  {\color{black}$\tilde{\epsilon}>0$} sufficiently small, it follows that
\begin{eqnarray}\label{eq:7.10}
\begin{split}
\big\|\breve{\varphi}_{AB}\big\|_{1,\alpha;\breve{\Omega}^{(\rm e)}}
+\big\|\breve{z}_{AB}\big\|_{0,\alpha;\breve{\Omega}^{(\rm h)}}
\leq \mathcal{C}{\color{black}\tilde{\epsilon}}|m^{(\rm e)}_{A}-m^{(\rm e)}_{B}|.
\end{split}
\end{eqnarray}
Therefore, by \eqref{eq:7.1}, we have that
\begin{eqnarray}\label{eq:7.11}
\begin{split}
\int^{m^{(\rm e)}_{B}}_{m^{(\rm e)}_{A}}\frac{d\tau}{\rho^{(\rm e)}_{B}u^{(\rm e)}_{B}}
=\int^{m^{(\rm e)}_{A}}_{0}\Big(\frac{1}{\rho^{(\rm e)}_{A}u^{(\rm e)}_{A}}-\frac{1}{\rho^{(\rm e)}_{B}u^{(\rm e)}_{B}}\Big)d\tau.
\end{split}
\end{eqnarray}
Then it follows from \eqref{eq:7.10} that
\begin{eqnarray*}
\begin{split}
\big|m^{(\rm e)}_{B}-m^{(\rm e)}_{A}\big|\leq \mathcal{C}\big\|\breve{\varphi}_{AB}\big\|_{1,\alpha;{\color{black}\breve{\Omega}^{(\rm e)}}}
\leq \mathcal{C}{\color{black}\tilde{\epsilon}}\big|m^{(\rm e)}_{A}-m^{(\rm e)}_{B}\big|.
\end{split}
\end{eqnarray*}
Let {\color{black}$\tilde{\epsilon}>0$} be sufficiently small such that {\color{black}$\mathcal{C}\tilde{\epsilon}\leq \frac{1}{2}$}. Then
\begin{eqnarray*}
\begin{split}
\big|m^{(\rm e)}_{B}-m^{(\rm e)}_{A}\big|\leq \frac{1}{2}\big|m^{(\rm e)}_{A}-m^{(\rm e)}_{B}\big|,
\end{split}
\end{eqnarray*}
which implies that the mapping $\mathcal{T}$ is contractive.

Based on the above argument, we can prove Theorem \ref{thm:3.2} easily.
\begin{proof}[Proof of Theorem \ref{thm:3.2}]
The existence and uniqueness of the solution to $(\mathbf{NP})$ follow from 
the Theorem \ref{thm:3.3} and the fact that the mapping $\mathcal{T}$ is contractive.
\end{proof}

\bigskip

\section*{Acknowledgements}
F. Huang  was partially supported by National Center for Mathematics and Interdisciplinary Sciences, AMSS, CAS and NSFC Grant No. 11371349 and 11688101.
J. Kuang was supported in part by the NSFC Project 11801549, NSFC Project 11971024 and the Start-Up Research Grant from Wuhan Institute of Physics and Mathematics, Chinese Academy of Sciences Project No. Y8S001104.
D. Wang was supported in part by NSF under grant  DMS-1907519.
W. Xiang was supported in part by the Research Grants Council of the HKSAR, China (Project No. CityU 11304817, Project No. CityU 11303518,   Project No. CityU 11304820, and Project No. CityU 11300021).

%
%
%
%
%
%
%
%

\bigskip

\end{document}